\titleformat{\chapter}[display]   
{\normalfont\huge\bfseries}{\chaptertitlename\ \thechapter}{20pt}{\Huge}   
\titlespacing*{\chapter}{0pt}{-30pt}{30pt}
\newtheorem{theorem}{Theorem}
\newtheorem{dftn}{Definition}
\newtheorem{lemme}{Lemma}
\newtheorem{remark}{Remark}
\newtheorem{prop}{Proposition}
\newtheorem{corol}{Corollary}
\begin{document}

\begin{spacing}{1}

\title{\Huge A periodic homogenization problem with defects rare at infinity}

\author{\huge R\'emi Goudey}

\affil{ \large CERMICS, Ecole des Ponts and MATHERIALS Team, INRIA, \\ 6 \& 8, avenue Blaise Pascal, 77455 Marne-La-Vall\'ee Cedex 2, FRANCE.\\ \textbf{remi.goudey@enpc.fr}}

\date{}

\maketitle

\pagestyle{plain}
\bibliographystyle{abbrv}

\begin{abstract}
We consider a homogenization problem for the diffusion equation $-\operatorname{div}\left(a_{\varepsilon} \nabla u_{\varepsilon} \right) = f$ when the  coefficient $a_{\varepsilon}$ is a non-local perturbation of a periodic coefficient. The perturbation does not vanish but becomes rare at infinity in a sense made precise in the text. We prove the existence of a corrector, identify the homogenized limit and study the convergence rates of $u_{\varepsilon}$ to its homogenized limit. 
\end{abstract}

% \newpage

% \tableofcontents

% \newpage

%-------------------------------------------------%
\section{Introduction}
%-------------------------------------------------%

%---------be explic----------------------------------------%
\subsection{Motivation}
%-------------------------------------------------%

The purpose of this paper is to address the homogenization problem for a second order elliptic equation in divergence form with a certain class of oscillating coefficients : 
\begin{equation}
\label{equationepsilon}
\left\{
\begin{array}{cc}
   -\operatorname{div}(a(x/\varepsilon) \nabla u ^{\varepsilon}) = f   & \text{in } \Omega, \\
    u^{\varepsilon}(x) = 0 & \text{in } \partial \Omega,
\end{array}
\right.
\end{equation}
where $\Omega$ is a bounded domain of $\mathbb{R}^d$ ($d \geq 1$) sufficiently regular (the regularity will be made precise later on) and $f$ is a function in $L^2(\Omega)$. The class of (matrix-valued) coefficients $a$ considered is that of the form 
\begin{equation}
\label{coefficientform}
a_{per} + \Tilde{a}, 
\end{equation}
which describes a periodic geometry encoded in the coefficient $a_{per}$ and perturbed by a coefficient $\Tilde{a}$ that represents a non-local perturbation (a "defect") that, although it does not vanish at infinity, becomes rare at infinity. More specifically, we consider coefficients~$\Tilde{a}$ that locally behave like $ L^2(\mathbb{R}^d)$ functions in the neighborhood of a set of points localized at an exponentially increasing distance from the origin. Formally, the coefficient~$\Tilde{a}$ is an infinite sum of localized perturbations, increasingly distant from one another. A prototypical one-dimensional example of such a defect reads as $\displaystyle\sum_{k \in \mathbb{Z}} \phi(x - \operatorname{sign}(k)2^{|k|})$ for some fixed $\phi \in \mathcal{D}(\mathbb{R})$, where $|k|$ denotes the absolute value of $k$ and $\operatorname{sign}(k)$ denotes its sign. It is depicted in Figure \ref{figf1} \\

%%%%%%%%%%%%%%%%%%%%%%%%%%%%%%%%%%%%%%%%%%%%%%%%

\begin{figure}[h!]
\centering
\includegraphics[scale=0.36]{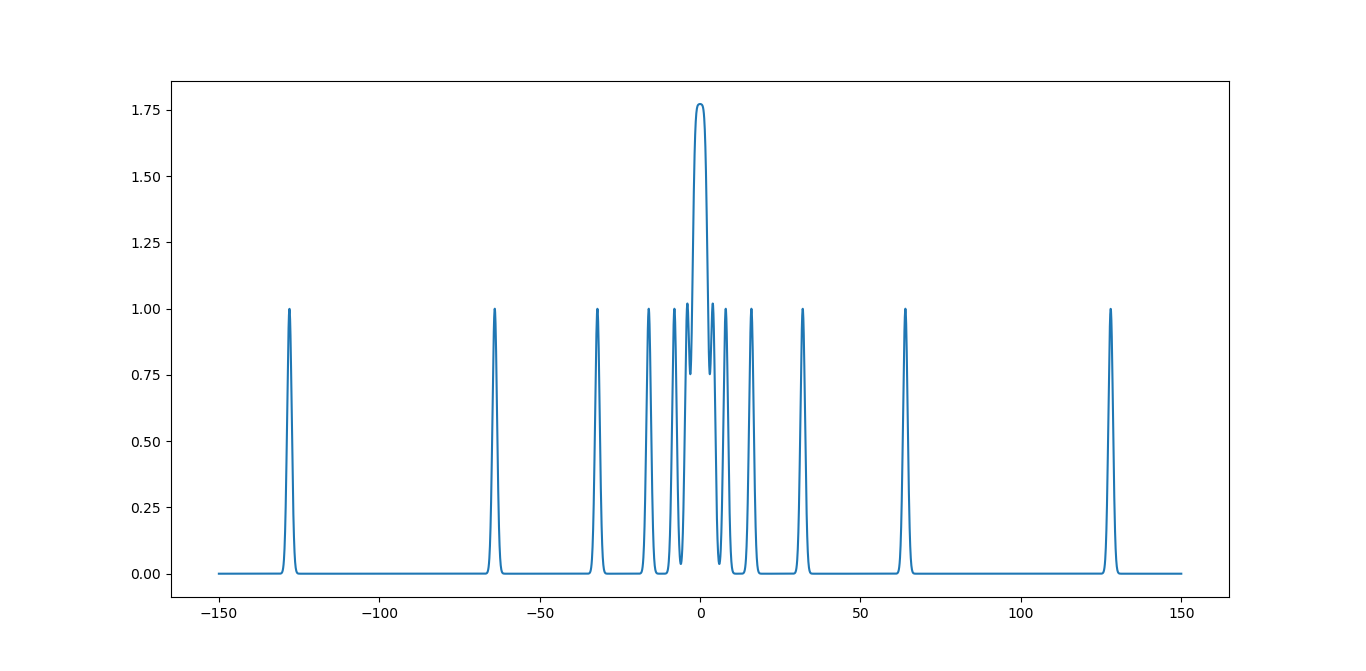}
%\caption
%{Example of points which satisfied our assumptions and their associated Voronoi diagram.}
\caption
{Prototype perturbation in dimension $d=1$.}
\label{figf1}
\end{figure}

%%%%%%%%%%%%%%%%%%%%%%%%%%%%%%%%%%%%%%%%%%%%%%%% 

Homogenization theory for the unperturbed periodic problem  \eqref{equationepsilon}-\eqref{coefficientform} when $\Tilde{a}=0$ is well-known (see for instance \cite{bensoussan2011asymptotic, jikov2012homogenization}). The solution $u^{\varepsilon}$ converges  strongly in $L^2(\Omega)$ and weakly in $H^1(\Omega)$ to $u^*$, solution to the homogenized problem : 
\begin{equation}
\label{homog}
\left\{
\begin{array}{cc}
  -\operatorname{div}(a^* \nabla u^*) = f   & \text{in } \Omega, \\
    u^{*}(x) = 0 & \text{in } \partial \Omega,
\end{array}
\right.
\end{equation}
where $a^*$ is a constant  matrix. The convergence in the $H^1(\Omega)$ norm is obtained upon introducing a corrector $w_{per,p}$ defined for all $p$ in $\mathbb{R}^d$ as the periodic solution (unique up to the addition of a constant) to :
\begin{equation}
\label{problemeperiodique}
    -\operatorname{div}(a_{per}(\nabla w_{per,p} + p)) = 0 \quad \text{in } \mathbb{R}^d. 
\end{equation}
This corrector allows to both make explicit the homogenized coefficient  
\begin{equation}
\label{homogenizedcoeff}   
    (a^*)_{i,j} = \int_{Q} e_i^T a_{per}(y) \left( e_j + \nabla w_{per,e_j} \right) dy, 
\end{equation}
(where $Q$ denotes the d-dimensional unit cube, $(e_i)$ the canonical basis of $\mathbb{R}^d$) and define the approximation 
\begin{equation}
\label{approximatesequence}
u^{\varepsilon,1} = u^*(.) + \varepsilon \displaystyle \sum_{i=1}^d \partial_{i}u^*(.) w_{per,e_i}(./ \varepsilon),
\end{equation}
such that $u^{\varepsilon,1} - u^{\varepsilon}$ strongly converges to $0$ in $H^1(\Omega)$ (see \cite{allaire1992homogenization} for more details). In addition, convergence rates can be made precise, with in particular :
%(see \cite{bensoussan2011asymptotic})% 
\begin{align}
    & \|\nabla u^{\varepsilon} - \nabla u^{\varepsilon,1}\|_{L^2(\Omega)} \leq C \sqrt{\varepsilon}\|f\|_{L^2(\Omega)}, \\
    & \|\nabla u^{\varepsilon} - \nabla u^{\varepsilon,1}\|_{L^2(\Omega_1)} \leq C \varepsilon\|f\|_{L^2(\Omega)} \quad \text{for every } \Omega_1 \subset \subset \Omega,
\end{align}
for some constants independent of $f$. 

Our purpose here is to extend the above results to the setting of the \emph{perturbed} problem~\eqref{equationepsilon}-\eqref{coefficientform}. The main difficulty is that the corrector equation 
$$- \operatorname{div}\left( \left(a_{per} + \Tilde{a}\right) \left(\nabla w_p +p \right) \right)=0,$$ (formally obtained by a two-scale expansion (see again \cite{allaire1992homogenization} for the details) and analogous to \eqref{problemeperiodique} in the periodic case) is defined on the whole space $\mathbb{R}^d$ and cannot be reduced to an equation posed on a bounded domain, as is the case in periodic context in particular. This prevents us from using classical techniques. 
The present work follows up on some previous works \cite{blanc2018precised, blanc2018correctors, blanc2015local, blanc2012possible} where the authors have developed an homogenization theory in the case where $\Tilde{a} \in L^p(\mathbb{R}^d)$ for $p \in ]1, \infty[$. The existence and uniqueness (again up to an additive constant) of a corrector, the gradient of which shares the same structure
"periodic + $L^p$" as the coefficient $a$, is established. Convergence rates are also made precise. Similarly to \cite{blanc2018precised, blanc2018correctors, blanc2015local, blanc2012possible}, we aim to show here, in a context of a perturbation rare at infinity, there also exists a corrector (unique up to the addition of a constant), and such that its gradient has the structure \eqref{coefficientform} of the diffusion coefficient : it can be decomposed as a sum of the gradient of a periodic corrector and a gradient that becomes rare at infinity (in a sense similar to that for $\Tilde{a}$, and made precise below).

%-------------------------------------------------%
\subsection{Functional setting}
%-------------------------------------------------%

We introduce here a suitable functional setting to describe the class of defects we consider. 

In order to formalize our mathematical setting, we first  define a generic infinite discrete set of points denoted by $\mathcal{G} = \left\{x_p \right\}_{p \in \mathbb{Z}^d}$. In the sequel, each point $x_p$ actually models the presence of a defect in the periodic background modeled by $a_{per}$ and our aim is to ensure these defects are sufficiently rare at infinity.

We next introduce the Voronoi diagram associated with our set of points. For $x_p \in \mathcal{G}$, we denote by $V_{x_p}$ the Voronoi cell containing the point $x_p$ and defined by 
\begin{equation}
\label{VoronoiDEF}
    V_{x_p} =  \bigcap_{x_q \in \mathcal{G}\setminus{\{x_p\}}} \left\{ x \in \mathbb{R}^d  \middle| |x-x_p| \leq |x - x_q| \right\}. 
\end{equation}
We now consider three geometric assumptions that ensure an appropriate distribution of the points in the space. The set $\mathcal{G}$ is required to satisfy the following three conditions~: 

\begin{equation}
\label{H1}
\tag{H1} 
\forall x_p \in \mathcal{G}, \quad \left|V_{x_p} \right| < \infty,
\end{equation}

\begin{equation}
\label{H2}
\tag{H2} 
\exists C_1>0, \ C_2>0, \ \forall x_p \in \mathcal{G}, \quad C_1 \leq \frac{1 + \left|x_p \right|}{D\left(x_p, \mathcal{G} \setminus \left\{x_p \right\}\right)} \leq C_2,
\end{equation}

\begin{equation}
\label{H3}
\tag{H3} 
\exists C_3>0, \forall x_p \in \mathcal{G}, \quad \frac{Diam\left(V_{x_p}\right)}{D\left(x_p, \mathcal{G} \setminus \left\{x_p \right\}\right)} \leq C_3,
\end{equation}

where $|A|$ denotes the volume of a subset $A \subset \mathbb{R}^d$, $Diam(A)$ the diameter of $A$ and $D(.,.)$ the euclidean distance.

Assumption \eqref{H2} is the most significant assumption in our case since it implies that the points are increasingly distant from one another far from the origin. It in particular implies 
\begin{equation}
 \lim_{x_p \in \mathcal{G}, \ |x_p| \rightarrow \infty} D\left(x_p, \mathcal{G} \setminus \left\{x_p \right\}\right) = + \infty.
\end{equation}
More precisely, it ensures the distance between a point $x_p$ and the others has the same growth as the norm $|x_p|$ and, therefore, requires the Voronoi cell $V_{x_p}$ (which contains a ball of radius $\displaystyle \frac{D\left(x_p, \mathcal{G}\setminus{\{x_p\}}\right)}{2}$ as a consequence of its definition) to be sufficiently large. In particular, this assumption ensures that the defects modeled by the points $x_p$ are sufficiently rare at infinity. In particular, we show in Section \ref{Section2} that Assumption \eqref{H2} ensures that the number of points $x_p$ contained in a ball $B_R$ of radius $R>0$ is bounded by the logarithm of $R$. This property is an essential element for the methods used in the proof of this article.

In contrast to \eqref{H2}, Assumptions \eqref{H1} and \eqref{H3} are only technical and not very restrictive. They limit the size of the Voronoi cells. In the case where these assumptions are not satisfied, our main results of Theorems \ref{theoreme1} and \ref{theoreme3} stated below still hold. Their proofs have to be adapted, upon splitting the Voronoi cells in several subsets such that each subset satisfies geometric constraints similar to \eqref{H1}, \eqref{H2} and \eqref{H3}. To some extent, our assumptions \eqref{H1} and \eqref{H3} ensure we consider the worst case scenario, where the set $\mathcal{G}$ contains as many points as possible while satisfying \eqref{H2}.

In addition, although we establish in Section \ref{Section2} all the geometric properties satisfied by the Voronoi cells $V_{x_p}$ which are required in our approach to study the homogenization problem \eqref{equationepsilon}
with the whole generality of Assumptions \eqref{H1}, \eqref{H2} and \eqref{H3}, we choose, for the sake of illustration and for pedagogic purposes, to work with a particular set of points (for which the coordinates are powers of 2) and to establish our main results of homogenization in this specific setting. There are, of course, many alternative sets that satisfy \eqref{H1}, \eqref{H2} and \eqref{H3} but our specific choice is convenient. To define our specific set of points, we first introduce a constant~$C_0>1$ and a set of indices $\mathcal{P}_{C_0}$ defined by :
\begin{equation}
\label{defPC0}
\mathcal{P}_{C_0} = \left\{p \in \mathbb{Z}^d \ \middle| \  \displaystyle \max_{p_i \neq 0 }\left\{|p_i|\right\}\leq C_0 + \displaystyle \min_{p_i \neq 0}\left\{|p_i|\right\} \right\}.
\end{equation}
Our specific set of points (see Figure \ref{figf2}) is then defined by : 
\begin{equation}
    \label{defG}
   \mathcal{G}_{C_0} = \left\{x_p = \left(\operatorname{sign}(p_i)2^{|p_i|}\right)_{i \in \left\{1,...d\right\}} \ \middle| \ \left(p_1,...,p_d\right)\in \mathcal{P}_{C_0} \right\}.
\end{equation}
We use here the convention $\operatorname{sign}(0) = 0$. The set of indices \eqref{defPC0} contains only the points with integer coordinates on the axes $\operatorname{Span}\left(e_i\right)$ and the points close to each diagonal of the form $\operatorname{Span}\left( e_{i_1} + ... + e_{i_k} \right)$ for $k \in \left\{2,..,d \right\}$ and $\left(i_1,...,i_k \right) \in \left\{1,...,d\right\}^k$. In this way, the points of $\mathcal{G}_{C_0}$ are exponentially distant from each other with respect to the norm of $p$.  In Section \ref{Section2}, we show that the set $\mathcal{G}_{C_0}$ defined by \eqref{defG} indeed satisfies Assumptions \eqref{H1}, \eqref{H2} and \eqref{H3}. 

%%%%%%%%%%%%%%%%%%%%%%%%%%%%%%%%%%%%%%%%%%%%%%%%

\begin{figure}[h!]
\centering
\includegraphics[scale=0.39]{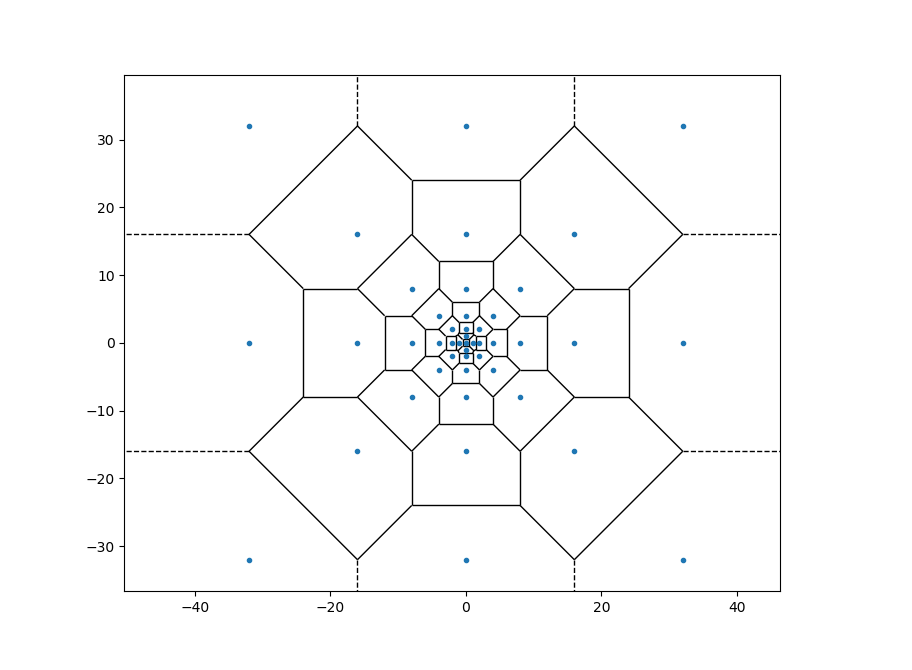}
\caption
{Example of points in ambient dimension 2 that satisfy our assumptions along with their associated Voronoi diagram.}
\label{figf2}
\end{figure}

%%%%%%%%%%%%%%%%%%%%%%%%%%%%%%%%%%%%%%%%%%%%%%%% 

In the sequel, we use the following notation : 
\begin{itemize}
    \item[$\bullet$] $B_R$ : the ball of radius $R>0$ centered at the origin ; $B_R(x)$ : the ball of radius $R>0$ and center $x\in \mathbb{R}^d$ ; $\displaystyle A_{R,R'}$ : the set $B_R \setminus{B_{R'}}$ for $R>R'>0$.
    \item[$\bullet$] $\displaystyle Q_R(x)$ : the set $\displaystyle \left\{ y \in \mathbb{R}^d \ \middle| \ \max_i |y_i - x_i| \leq R \right\}$  for $R>0$ and $x\in \mathbb{R}^d$ ; $\displaystyle Q_R$ : the set $Q_R(0)$.
    \item[$\bullet$] $\sharp B$ : the cardinality of a discrete set $B$. 
    \item[$\bullet$] $2^p$ : the point $x_p \in \mathcal{G}_{C_0}$ for $p\in \mathcal{P}_{C_0}$ ; $\tau_p$ : the translation $\tau_{2^p}$ where $\tau_x f = f(.+x)$~; $V_p$ : the Voronoi cell $V_{2^p}$.
    \item[$\bullet$] $|p|$ : the norm defined by  $\displaystyle \max_{i \in \{1,...,d\}} |p_i|$.
\end{itemize}
In addition, for a normed vector space $(X,\|.\|_X)$ and a matrix-valued function $f \in X^n$, $n\in \mathbb{N}$, we use the notation $\|f\|_{X} \equiv \|f \|_{X^n}$ when the context is clear. 

We associate to \eqref{defPC0}-\eqref{defG} the following functional space : 

\begin{equation}
\label{spacedef}
 \mathcal{B}^2(\mathbb{R}^d) = \left\{f \in L^2_{unif}(\mathbb{R}^d) \ \middle| \ \exists f_{\infty} \in L^2(\mathbb{R}^d),  \lim_{|p| \rightarrow \infty}  \int_{\textit{V}_p} |f(x) - \tau_{-p} f_{\infty}(x)|^2 dx = 0\right\},  
\end{equation}
equipped with the norm 
\begin{equation}
\label{normdef}
\|f \|_{\mathcal{B}^2(\mathbb{R}^d)} = \|f_{\infty}\| _{ L^2(\mathbb{R}^d)} + \|f\|_{L^2_{unif}(\mathbb{R}^d)}+ \sup_{p\in \mathcal{P}_{C_0}} \|f-\tau_{-p}f_{\infty}\| _{ L^2(\textit{V}_p)}.   
\end{equation}
In \eqref{spacedef}, \eqref{normdef} we have denoted by : 
\begin{equation}
L^2_{unif}(\mathbb{R}^d) = \left\{f \in L^2_{loc}(\mathbb{R}^d), \ \sup_{x \in \mathbb{R}^d}  \|f\|_{L^2(B_1(x))} < \infty\right\},
\end{equation}
and 
\begin{equation}
\|f \| _{ L^2_{unif}(\mathbb{R}^d)} = \sup_{x \in \mathbb{R}^d} \|f\|_{L^2(B_1(x))}.
\end{equation}

Intuitively, a function in $ \mathcal{B}^2(\mathbb{R}^d)$ behaves, locally at the "vicinity" of each point $x_p$, as a fixed $L^2$ function truncated over the domain $V_p$. We show several properties of the functional space $\mathcal{B}^2(\mathbb{R}^d)$ in Section~\ref{Section3}.

%-------------------------------------------------%
\subsection{Main results}
%-------------------------------------------------%

We henceforth assume that the ambient dimension $d$ is equal to or larger than 3. The one-dimensional and two-dimensional contexts are specific. Some results or proofs must be adapted in these particular cases but we will not proceed in that direction in all details. This is due to the asymptotic behavior of the Green function of the Laplacian operator in these two dimensions. In these two particular cases, we claim that it is still possible to show the existence of the corrector defined by Theorem \ref{theoreme1} below. However, the method used in Lemmas \ref{lemmeexistenceperiodique1} and \ref{existenceper}, both useful for the proof of Theorem \ref{theoreme1}, need to be adapted. The one-dimensional context can be addressed easily because the solution to \eqref{correcteur} is explicit. The two-dimensional case requires more work. We explain how to adapt our proof in Remark \ref{remarkcorrectordim2}. In contrast, in dimensions $d=1$ and $d=2$, the convergence rates of Theorem~\ref{theoreme3} no longer hold.  Indeed, the corrector $w_p$ is then not necessarily bounded (see Lemma \ref{lemmeborne} for details). We are only able to prove weaker results in these cases. Additional details about these cases may be found in Remarks \ref{remarkcorrectordim2}, \ref{remarkbornedimension2}, and \ref{dim1}.

For $\alpha \in ]0,1[$, we denote by $ \mathcal{C}^{0,\alpha}(\mathbb{R}^d)$ the space of \emph{uniformly} H\"older continuous and bounded functions with exponent $\alpha$, that is : 
$$ \mathcal{C}^{0,\alpha}(\mathbb{R}^d) = \left\{f \in L^{1}_{loc}(\mathbb{R}^d) \ \middle| \ \|f\|_{\mathcal{C}^{0,\alpha}(\mathbb{R}^d)} < \infty \right\},$$ where 
$$\|f\|_{\mathcal{C}^{0,\alpha}(\mathbb{R}^d)} = \|f\|_{L^{\infty}(\mathbb{R}^d)} + \sup_{x,y \in \mathbb{R}^d,\ x\neq y}\frac{|f(x) - f(y)|}{|x-y|^{\alpha}}.$$

We consider a matrix-valued coefficient of the form \eqref{coefficientform} with $a_{per} \in  L^2_{per}(\mathbb{R}^d)^{d \times d}$ and $\Tilde{a} \in \mathcal{B}^2(\mathbb{R}^d)^{d \times d}$. We denote by $\Tilde{a}_{\infty}$ the matrix-valued limit $L^2$-function associated with $\Tilde{a}$, where each coefficient $(\Tilde{a}_{\infty})_{i,j}$ is the limit $L^2$-function associated with $(\Tilde{a})_{i,j}\in \mathcal{B}^2(\mathbb{R}^d)$ and defined in~\eqref{spacedef}. We assume that $a_{per}$, $\Tilde{a}$ and $\Tilde{a}_{\infty}$ satisfy : 
\begin{equation}
\label{hypothèses1}
\exists \lambda> 0 \text{ such that for all $x$, $\xi \in \mathbb{R}^d$} \quad  \lambda |\xi|^2 \leq \langle a(x)\xi, \xi\rangle , \quad  \lambda |\xi|^2 \leq  \langle a_{per}(x)\xi, \xi\rangle,\\    
\end{equation}
and
\begin{equation}
\label{hypothèses2}
   a_{per}, \ \Tilde{a}, \ \Tilde{a}_{\infty}  \in \mathcal{C}^{0,\alpha}(\mathbb{R}^d)^{d \times d}, \qquad \text{ $\alpha \in ]0,1[$}.
\end{equation}

The coercivity \eqref{hypothèses1} and the $L^{\infty}$ bound on $a$ ensure that the sequence of solutions $\left(u^{\varepsilon}\right)_{\varepsilon>0}$ to \eqref{equationepsilon} converges in $weak-H^1(\Omega)$ and $strong-L^2(\Omega)$ up to an extraction when $\varepsilon \rightarrow 0$. Classical results of homogenization show the limit $u^*$ is a solution to a diffusion equation of the form \eqref{homog} for some matrix-valued coefficient $a^*$ to be determined. The questions that we examine in this paper are : What is the diffusion coefficient $a^*$ of the homogenized equation ? Is it possible to define an approximate sequence of solutions $u^{\varepsilon,1}$  as in \eqref{approximatesequence} ? For which topologies does this approximation correctly describe the behavior of $u^{\varepsilon}$ ? What is the convergence rate ? 

In answer to our first question, we prove in Proposition \ref{proplimithomogenization} that the homogenized coefficient $a^*$ is constant and is the same as in the periodic case. This result is a direct consequence of Proposition \ref{moyenne} which ensures that the perturbations of $\mathcal{B}^2(\mathbb{R}^d)$ have a zero average in a strong sens. Consequently, our perturbations are "small" at the macroscopic scale and do not affect the homogenization that occurs in the periodic case associated with the periodic coefficient $a_{per}$. In reply to the other questions, our main results are contained in the following two theorems :

\begin{theorem}
\label{theoreme1}
For every $p \in \mathbb{R}^d$, there exists a unique (up to an additive constant)
function $w_p \in H^1_{loc}(\mathbb{R}^d)$ such that $\nabla w_p \in \left(L^2_{\textit{per}}(\mathbb{R}^d) + \mathcal{B}^2(\mathbb{R}^d)\right)^d\cap \mathcal{C}^{0,\alpha}(\mathbb{R}^d)^d$, solution to : 
\begin{equation}
\label{correcteur}
\left\{
\begin{array}{cc}
  -\operatorname{div}((a_{\textit{per}} + \Tilde{a})(p+\nabla w_p)) = 0   &  \text{in $\mathbb{R}^d$},\\[0.2cm]
  \displaystyle \lim_{|x| \rightarrow \infty} \dfrac{|w_p(x)|}{1 + |x|} = 0. & 
\end{array}
\right.    
\end{equation}
\end{theorem}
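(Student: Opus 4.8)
The plan is to build the corrector $w_p$ in two stages, first producing the ``periodic part'' and then the ``rare defect'' part, and finally to establish uniqueness and regularity.

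\emph{Step 1: the periodic corrector.} First I would solve the purely periodic problem \eqref{problemeperiodique} to obtain $w_{per,p}$ with $\nabla w_{per,p} \in L^2_{per}(\mathbb{R}^d)^d$. By the De Giorgi--Nash--Moser theory and Schauder estimates applied to \eqref{problemeperiodique}, using \eqref{hypothèses2} (the H\"older continuity of $a_{per}$), one gets $\nabla w_{per,p} \in \mathcal{C}^{0,\alpha}(\mathbb{R}^d)^d$; periodicity then gives the sublinearity of $w_{per,p}$ for free. This is essentially classical and presumably covered by Lemmas \ref{lemmeexistenceperiodique1} and \ref{existenceper} invoked in the text.

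\emph{Step 2: the defect corrector.} Writing $w_p = w_{per,p} + \widetilde{w}_p$, the equation for the remainder is
\begin{equation}
-\operatorname{div}\bigl((a_{per}+\widetilde a)\nabla \widetilde w_p\bigr) = \operatorname{div}\bigl(\widetilde a\,(p+\nabla w_{per,p})\bigr)\quad\text{in }\mathbb{R}^d,
\end{equation}
whose right-hand side is a divergence of an $\mathcal{B}^2(\mathbb{R}^d)$ field (product of $\widetilde a \in \mathcal{B}^2$ with the bounded periodic field $p+\nabla w_{per,p}$; I would check in Section \ref{Section3} that $\mathcal{B}^2$ is stable under multiplication by bounded periodic functions). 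The natural strategy, following \cite{blanc2018correctors,blanc2018precised}, is to approximate: truncate $\widetilde a$ to $\widetilde a_N$ supported in $B_{R_N}$ (keeping only the defects $x_p$ with $|p|\le N$), solve the corresponding corrector equation on $\mathbb{R}^d$ — which is legitimate because $\widetilde a_N \in L^2(\mathbb{R}^d)$ and the theory of \cite{blanc2018correctors} applies — obtaining $\nabla \widetilde w_p^N \in L^2(\mathbb{R}^d)^d$, and then pass to the limit $N\to\infty$. The convergence requires a \emph{uniform} (in $N$) bound on $\|\nabla\widetilde w_p^N\|$ in the $\mathcal{B}^2$-type norm. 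This is where the geometry enters decisively: using the Green function representation $\nabla \widetilde w_p^N(x) = \int \nabla_x\nabla_y G^N(x,y)\,\widetilde a_N(y)(p+\nabla w_{per,p}(y))\,dy$ together with the Green function decay estimates valid for $d\ge 3$ (this is exactly why $d\ge 3$ is assumed), one estimates the $L^2(V_q)$ norm of $\nabla\widetilde w_p^N$ near a defect $x_q$ by summing contributions from all other defects $x_r$; the key combinatorial input is the logarithmic bound on $\sharp(\mathcal{G}\cap B_R)$ from Section \ref{Section2} (a consequence of \eqref{H2}), which makes the sum over $r$ convergent and uniformly bounded. Each localized contribution, rescaled, converges to a translate of a fixed $L^2$ field, which produces the limit function $(\nabla\widetilde w_p)_\infty$ required in the definition \eqref{spacedef} of $\mathcal{B}^2$.

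\emph{Step 3: regularity and uniqueness.} Once $\nabla w_p \in (L^2_{per}+\mathcal{B}^2)^d$ is obtained, the $\mathcal{C}^{0,\alpha}$ regularity follows from interior Schauder estimates for \eqref{correcteur}: locally $a_{per}+\widetilde a$ is $\mathcal{C}^{0,\alpha}$ by \eqref{hypothèses2}, and the $L^2_{unif}$ control of $\nabla w_p$ upgrades to $\mathcal{C}^{0,\alpha}_{unif}$ by a standard bootstrap on unit balls, uniformly in the center. The sublinearity $|w_p(x)|/(1+|x|)\to 0$ then follows from the sublinearity of $w_{per,p}$ and from controlling $\widetilde w_p(x)$ by integrating $\nabla\widetilde w_p$ along a path and using that its $L^2_{unif}$ norm tends to $0$ away from the defects while near each defect the contribution is a fixed $L^2$ quantity — a Poincar\'e-type argument on a chain of balls. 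For uniqueness, suppose $w$ solves \eqref{correcteur} with $p=0$ and $\nabla w \in (L^2_{per}+\mathcal{B}^2)^d$ sublinear; testing against $w$ on large balls and using the coercivity \eqref{hypothèses1} together with the sublinear growth to kill the boundary term (again the logarithmic count of defects ensures the boundary integral is negligible), one concludes $\nabla w \equiv 0$, hence $w$ is constant.

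\emph{Main obstacle.} The heart of the proof — and the step I expect to be genuinely delicate — is the \emph{uniform-in-$N$ estimate} in Step 2: controlling the $\mathcal{B}^2$-norm of $\nabla\widetilde w_p^N$ by carefully exploiting the Green function decay in dimension $d\ge 3$ against the logarithmic sparsity of the defects, and simultaneously identifying the limit profile $(\nabla\widetilde w_p)_\infty$ so that the limiting corrector genuinely lies in $\mathcal{B}^2(\mathbb{R}^d)$ rather than merely in $L^2_{unif}$. Everything else (periodic theory, Schauder bootstrap, the uniqueness energy argument) is comparatively standard once the right functional framework of Section \ref{Section3} is in place.
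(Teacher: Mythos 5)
Your decomposition $w_p = w_{per,p} + \widetilde{w}_p$ and the passage to the equation
$-\operatorname{div}\bigl((a_{per}+\widetilde{a})\nabla\widetilde{w}_p\bigr) = \operatorname{div}\bigl(\widetilde{a}(p+\nabla w_{per,p})\bigr)$
with right-hand side in $\mathcal{B}^2(\mathbb{R}^d)^d$ is exactly how the paper opens its proof of Theorem~\ref{theoreme1}, and your Step~3 (interior Schauder estimates for the $\mathcal{C}^{0,\alpha}$ regularity, Proposition~\ref{propsouslinearite} for the sublinearity) is also on track. However, there are two genuine gaps.

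\emph{Existence.} The paper does not truncate the coefficient to $\widetilde{a}_N$ and pass to the limit. It instead proves an a priori continuity estimate for the full operator $\nabla(-\operatorname{div}\,a\nabla)^{-1}\operatorname{div}$ on $\mathcal{B}^2(\mathbb{R}^d)^d$ via a compactness-concentration argument (Lemma~\ref{lemme2}), then combines this with a connectedness argument on the family $a_t = a_{per}+t\widetilde{a}$, $t\in[0,1]$ (Lemma~\ref{lemmexistence}); Green function bounds are used only for the \emph{periodic} operator. Your plan writes $\nabla\widetilde{w}^N_p(x) = \int \nabla_x\nabla_y G^N(x,y)\,\widetilde{a}_N(y)(p+\nabla w_{per,p})\,dy$ with $G^N$ the Green function of the \emph{truncated perturbed} operator, and then invokes the decay estimates \eqref{estimgreen1}--\eqref{estimgreen2}. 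But those estimates are quoted for $G_{per}$ only; obtaining the same decay for $\nabla_x\nabla_y G^N$ uniformly in $N$ is not a citation — it is essentially equivalent in difficulty to the corrector existence you want to prove (it requires the Lipschitz/compactness estimate of Avellaneda--Lin type for the perturbed family, which in turn rests on having the corresponding correctors). Trying to circumvent this by representing $\nabla\widetilde{w}^N_p$ through the \emph{periodic} Green function puts $\widetilde{a}_N\nabla\widetilde{w}^N_p$ on the right and leaves you with a fixed-point equation whose contraction constant is not small; this is precisely the obstruction that the connectedness argument in Lemma~\ref{lemmexistence} is designed to remove by dialing $t$ from $0$ to $1$. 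So you have identified the right obstacle, but the route you sketch does not close it.

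\emph{Uniqueness.} Testing against $\chi^2 w$ on $B_R$ with a cutoff $\chi$ and using the sublinear growth $|w(x)|\le\delta(|x|)(1+|x|)$, $\delta\to 0$, yields only $\int_{B_R}|\nabla w|^2 \lesssim \delta(R)^2 R^d = o(R^d)$. This does rule out a nonzero $L^2_{per}$ component of $\nabla w$ (which would force $\int_{B_R}|\nabla w|^2 \sim R^d$), but it says nothing about a $\mathcal{B}^2$ component, which contributes only $O(\log R)$ to that energy. To eliminate the $\mathcal{B}^2$ part one needs the full Caccioppoli \emph{iteration} of Lemma~\ref{lemme1}, whose hypothesis is $\sup_p\|\nabla w\|_{L^2(V_p)}<\infty$ — available only \emph{after} the periodic part has been killed. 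The paper therefore argues in two stages: it first translates by $2^q$ and lets $|q|\to\infty$ to pass to the limiting equation $-\operatorname{div}\bigl((a_{per}+\widetilde{a}_\infty)\nabla v_\infty\bigr)=0$ with $\nabla v_\infty \in (L^2_{per}+L^2)^d$, invoking the uniqueness result of \cite{blanc2012possible} to get $g_{per}=0$; only then is $\nabla v\in\mathcal{B}^2(\mathbb{R}^d)^d$ known and Lemma~\ref{lemme1} applicable. Your single energy test conflates these two mechanisms and leaves the $\mathcal{B}^2$ component of $\nabla w$ unaccounted for.
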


\begin{theorem}
\label{theoreme3}
Assume $\Omega$ is a ${C}^{2,1}$-bounded domain. Let $\displaystyle\Omega_1 \subset \subset \Omega$. We define $ \displaystyle u^{\varepsilon,1} =  u^* + \varepsilon \sum_{i=1}^{d}\partial_i u^{*} w_{e_i}(./\varepsilon)$ where $w_{e_i}$ is defined by Theorem \ref{theoreme1} for $p=e_i$ and $u^*$ is the solution to \eqref{homog}. Then $R^{\varepsilon} = u^{\varepsilon} - u^{\varepsilon,1}$ satisfies the following estimates : 
\begin{equation}
\label{estimate1}
\|R^{\varepsilon}\|_{L^2(\Omega)} \leq C_1 \varepsilon \|f\|_{L^2(\Omega)},
\end{equation}
\begin{equation}
\label{estimate2}
\|\nabla R^{\varepsilon}\|_{L^2(\Omega_1)} \leq C_2 \varepsilon \|f\|_{L^2(\Omega)},
\end{equation}
where $C_1$ and $C_2$ are two positive constants independent of $f$ and $\varepsilon$. 
\end{theorem}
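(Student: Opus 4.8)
The plan is to follow the classical two-scale strategy for quantitative homogenization, adapted to the "periodic $+$ rare defect" structure. Let $a^*$ be the homogenized matrix, which by Proposition \ref{proplimithomogenization} coincides with the periodic one, and let $u^*$ solve \eqref{homog}. Since $\Omega$ is $C^{2,1}$ and $f \in L^2(\Omega)$, elliptic regularity gives $u^* \in H^3(\Omega) \cap H^2_0(\Omega)$ with $\|u^*\|_{H^3(\Omega)} \le C\|f\|_{L^2(\Omega)}$; this is what will ultimately produce the linear-in-$\varepsilon$ rate rather than $\sqrt\varepsilon$. First I would compute $-\operatorname{div}(a(\cdot/\varepsilon)\nabla u^{\varepsilon,1})$ and compare it to $f = -\operatorname{div}(a^*\nabla u^*)$. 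Writing $a_\varepsilon = a(\cdot/\varepsilon)$, the discrepancy $\operatorname{div}(a_\varepsilon \nabla R^\varepsilon)$ organizes, after the standard algebra, into (i) a term carrying the factor $a_\varepsilon(e_i + \nabla w_{e_i}(\cdot/\varepsilon)) - a^* e_i$, which is a divergence-free-in-the-corrector expression whose "flux corrector" is controlled, times second derivatives of $u^*$; and (ii) genuinely lower-order terms of size $\varepsilon$ involving $w_{e_i}(\cdot/\varepsilon)\nabla^2 u^*$ and $\nabla w_{e_i}(\cdot/\varepsilon)\nabla^2 u^*$. Both $w_{e_i}$ and $\nabla w_{e_i}$ are bounded on $\mathbb{R}^d$ (this is exactly Lemma \ref{lemmeborne}, the place where $d \ge 3$ enters), so the $L^\infty$ norms of $w_{e_i}(\cdot/\varepsilon)$ and $\nabla w_{e_i}(\cdot/\varepsilon)$ are $O(1)$ uniformly in $\varepsilon$, and these terms contribute $O(\varepsilon)\|f\|_{L^2(\Omega)}$ to the right-hand side in $H^{-1}(\Omega)$.

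The key remaining ingredient is a \emph{divergence-form (flux) corrector} $B = (B_{ijk})$ with $B_{ijk} = -B_{ikj}$ antisymmetric in $(j,k)$ such that $a_{ij}(e_j + \nabla w_{e_j}) - a^*_{ij} = \partial_k B_{ijk}$ in the sense of distributions, and such that $B$ inherits the structure $L^2_{per} + \mathcal{B}^2$ and is bounded (again using $d\ge 3$ and Lemma \ref{lemmeborne}-type estimates). Constructing $B$ is the standard step: each row of $a(e_j+\nabla w_{e_j}) - a^* e_j$ is divergence-free and has zero average in the strong sense guaranteed by Proposition \ref{moyenne}, so it admits an antisymmetric potential; one solves a Poisson-type problem for $B_{ij\cdot}$ and uses the corrector estimates already established (the periodic part by classical theory, the $\mathcal{B}^2$ part via the Green-function arguments of Section \ref{Section3} and the bound from Lemma \ref{lemmeborne}) to get $B \in (L^2_{per} + \mathcal{B}^2)\cap L^\infty$. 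With $B$ in hand, the critical term in $\operatorname{div}(a_\varepsilon\nabla R^\varepsilon)$ becomes $\varepsilon\,\partial_k\big(B_{ijk}(\cdot/\varepsilon)\,\partial_{ij}^2 u^*\big)$ modulo a further $O(\varepsilon)$ remainder, i.e. it is $O(\varepsilon)$ in $H^{-1}(\Omega)$. Then \eqref{estimate1} follows from the energy estimate for $R^\varepsilon$ — one must be slightly careful because $u^{\varepsilon,1}$ does not vanish on $\partial\Omega$ (it has the boundary defect $\varepsilon\sum_i \partial_i u^* w_{e_i}(\cdot/\varepsilon)$, of size $O(\varepsilon)$ in $L^2(\partial\Omega)$ but only $O(\sqrt\varepsilon)$ in $H^{1/2}$); subtracting a harmless lift or using the boundary-layer argument as in \cite{blanc2018precised} handles this and still yields the $O(\varepsilon)$ rate in $L^2(\Omega)$ by a Aubin–Nitsche/duality argument.

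For the interior estimate \eqref{estimate2}, I would localize: fix $\Omega_1 \subset\subset \Omega' \subset\subset \Omega$ and a cutoff $\chi \equiv 1$ on $\Omega_1$, supported in $\Omega'$. Testing the equation for $R^\varepsilon$ against $\chi^2 R^\varepsilon$ (or against $\chi^2(R^\varepsilon - \text{const})$) and using the Caccioppoli-type absorption, together with the already-proven $L^2(\Omega)$ bound \eqref{estimate1} to control the zeroth-order cross terms, gives $\|\nabla R^\varepsilon\|_{L^2(\Omega_1)} \le C\varepsilon\|f\|_{L^2(\Omega)}$; away from $\partial\Omega$ the boundary-layer defect is invisible, which is precisely why one gets the full $\varepsilon$ rate (not $\sqrt\varepsilon$) on $\Omega_1$. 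The main obstacle I anticipate is not the abstract scheme but verifying that the flux corrector $B$ genuinely lies in $(L^2_{per} + \mathcal{B}^2)\cap\mathcal{C}^{0,\alpha}$ and is \emph{bounded} uniformly — this is where the "$\sharp\{x_p\} \cap B_R \lesssim \log R$" counting from Section \ref{Section2}, the decay of the Green function in $d\ge 3$, and the summability of the localized defect contributions all have to be combined; it is the exact analogue of the delicate corrector-boundedness lemma (Lemma \ref{lemmeborne}) and of the arguments in \cite{blanc2018precised, blanc2018correctors}, and it is genuinely false in $d=1,2$, consistent with the remarks in the text.
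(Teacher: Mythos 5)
Your plan follows the same route as the paper: construct the antisymmetric flux corrector $B=B_{per}+\tilde B$ solving $M=\operatorname{curl}B$ with $M^i_k = a^*_{ik}-\sum_j a_{ij}(\delta_{jk}+\partial_j w_k)$, rewrite $-\operatorname{div}(a_\varepsilon\nabla R^\varepsilon)=\operatorname{div}(H^\varepsilon)$ with $H^\varepsilon$ of size $O(\varepsilon)$ in $L^2$ thanks to the $L^\infty$ bounds on $w$ and $B$ (Lemmas~\ref{lemme_existence_B} and~\ref{lemmeborne}), and then invoke the $L^2$ and interior-$H^1$ estimates for $R^\varepsilon$ from \cite{blanc2018precised} together with $\|u^*\|_{H^2(\Omega)}\le C\|f\|_{L^2(\Omega)}$. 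This is exactly the paper's argument, so I'll just flag the one misstatement.

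Your opening claim that elliptic regularity yields $u^*\in H^3(\Omega)\cap H^2_0(\Omega)$ with $\|u^*\|_{H^3}\le C\|f\|_{L^2}$ is false and also unnecessary. With $f\in L^2(\Omega)$ and $\Omega$ of class $C^{2,1}$ one only gets $u^*\in H^2(\Omega)\cap H^1_0(\Omega)$ (not $H^2_0$, and not $H^3$ — that would require $f\in H^1$). Fortunately this does not hurt the rest of your argument: if you perform the flux-corrector algebra the way the paper does, the source $H^\varepsilon$ in \eqref{defHeps} involves only $D^2u^*$, never $D^3u^*$, so $\|H^\varepsilon\|_{L^2}\le C\varepsilon\|D^2 u^*\|_{L^2}\le C\varepsilon\|f\|_{L^2}$ and $H^2$ regularity suffices. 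Relatedly, the linear rate $\varepsilon$ (as opposed to $\sqrt\varepsilon$) is not produced by extra Sobolev regularity of $u^*$; it comes from the boundedness of $w$ and $B$ plus avoiding the boundary layer, either by the Green-function/duality argument for the $L^2$ norm over all of $\Omega$ or by interior localization for the gradient norm over $\Omega_1\subset\subset\Omega$. You in fact say this correctly later in the proposal, so the $H^3$ remark should simply be dropped.
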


% Consequently, Theorem \ref{theoreme3} ensures that the corrector introduced in Theorem \ref{theoreme1} allows to describe the behavior of the sequence $u^{\varepsilon}$ in $H^1$ and to obtain precised convergence rates. However since the perturbations of $\mathcal{B}^2(\mathbb{R}^d)$ are "small" at macroscopic scale, one can naturally ask if it possible to consider the periodic corrector $w_{per}$ instead of the adapted corrector of Theorem \ref{theoreme1} in order to define an approximation $u^{\varepsilon,1}_{per} : = u^* + \varepsilon \sum_{i=1}^{d}\partial_i u^{*} w_{per, e_i}(./\varepsilon)$ which could also precisely describe the behavior of $u^{\varepsilon}$ in $H^1$. 

Our article is organized as follows. In Section \ref{Section2} we prove some geometric properties satisfied by our set of points $\mathcal{G}_{C_0}$, in particular we show that it satisfies Assumptions \eqref{H1}, \eqref{H2} and \eqref{H3}. In section \ref{Section3}  we study the properties of $\mathcal{B}^2(\mathbb{R}^d)$ and its elements. In Section~\ref{Section4} we prove Theorem \ref{theoreme1}. Finally, in Section~\ref{Section5} we obtain the expected homogenization convergences stated in Theorem  \ref{theoreme3}. We conclude this introduction section with some comments.

%-------------------------------------------------%
\subsection{Extensions and perspectives}
%-------------------------------------------------%

 A first possible extension of the above results, which we study in  Appendix \ref{AnnexeA}, consists in considering the functional spaces  $\mathcal{B}^r$  for $r\neq 2$, $1<r<\infty$, defined similarly to $\mathcal{B}^2$, but using the $L^r$ topology. As in the study of the $L^r(\mathbb{R}^d)$ defects in \cite{blanc2018correctors, blanc2015local}, we show some modifications for the convergence rates of Theorem~\ref{theoreme3} depending upon the value of $r$ and the ambient dimension $d$. Indeed, in this case, some results related to the strict sub-linearity of the corrector allow us to show that the convergence rates of $R^{\varepsilon}$ is $\varepsilon^{\frac{d}{r}}|\log(\varepsilon)|^{\frac{1}{r}}$ if $r\leq d$ and $\varepsilon$ else.

In addition, although we have not pursued in these directions, we believe it is possible to extend the above results in several other manners.

\begin{itemize}
    \item[1)] First, under additional assumptions satisfied by the function $f$, we expect the estimates of Theorem~\ref{theoreme3} to hold, with possibly different rates, in other norms than $L^2$ such as $L^q$, for $1<q< \infty$ or $\mathcal{C}^{0,\alpha}$, for $\alpha \in ]0,1[$. It seems that such questions could be addressed by adapting the proofs of Section \ref{Section5} and consider the methods employed in \cite{blanc2018precised} using the behavior of the Green function associated with problem \eqref{equationepsilon}.
    
    \iffalse
    \item[3)] It is emphasized that we have chosen to treat a particular set of points \eqref{defG} satisfying Assumptions \eqref{H1}, \eqref{H2} and \eqref{H3}. We could actually have considered many other examples of admissible set of points to address our problem. In addition, in the sequel we see that these assumptions ensure the distance between points to scale exponentially, which is an essential element for our methods used in the proofs of this article. It is however likely that the results of Theorem~\ref{theoreme1} remain true if we suppose that the distance between our points increases in a different way, such as a polynomial of sufficiently large degree for example but, for now, we do not know how to prove this result.
    \fi
    \item[2)] We also believe that it is possible to show results analogous to that of Theorems \ref{theoreme1} and \ref{theoreme3} in the case of equations not in divergence form, instead of \eqref{equationepsilon}, 
    \begin{equation}
    \label{nondivergenceform}
    -a_{ij}\partial_{ij} u = f,     
    \end{equation}
    where $a$ is a periodic coefficients perturbed by a defect in $\mathcal{B}^2(\mathbb{R}^d)$ of the form \eqref{coefficientform}. One way to address this question could be to adapt the methods of \cite[Section 3]{blanc2018correctors} the the case of local perturbations, that is, to show the existence of an invariant measure $m = m_{per} + \Tilde{m}$ in $L
   ^2_{per}+ \mathcal{B}
   ^2(\mathbb{R}^d)$ solution to : 
   \begin{equation}
   \label{equation_invariant_measure}
     -\partial_{i,j}\left(a_{i,j} m_{i,j} \right) = 0 \quad \text{in } \mathbb{R}^d,  
   \end{equation}
   such that $\inf m >0$. Indeed, using the method presented in \cite{avellaneda1989compactness}, this study could be then reduced to a problem of divergence form operator as soon as such a measure $m$ exists and the results established in this article could allow to conclude.

    \item[3)] In the same way, another possible generalization concerns advection-diffusion equation in the form : 
    \begin{equation}
        -a_{ij}\partial_{ij}u + b_j \partial_j u = f \quad \text{in } \mathbb{R}^d,
    \end{equation}
    where $a$ and $b$ are two periodic coefficients perturbed by a defect in $\mathcal{B}^2(\mathbb{R}^d)$.
    The method \cite{blanc2019correctors} is likely to be adapted to this case, showing the existence of an invariant measure $m$ in $L
   ^2_{per}+ \mathcal{B}
   ^2(\mathbb{R}^d)$ solution to
      \begin{equation}
   \label{equation_invariant_measure2}
     -\partial_{i}\left(\partial_j\left(a_{i,j} m_{i,j} \right) + b_i m_{i,j}\right) = 0 \quad \text{in } \mathbb{R}^d.  
   \end{equation}
   
%   \iffalse
%   \item[7)] Various other extensions are possible, with a view to research the broadest possible generality of perturbations of periodic coefficients. In particular, our aim is to find a functional space able to describe a perturbed periodic background, as general as possible, in which an existence theorem similar to Theorem \ref{theoreme1} would be true as soon as the perturbation $\Tilde{a}$ belongs to this space. To this end, we can first remark that in the case of diffusion equations, results similar to that of Theorem \ref{theoreme1} and Theorem \ref{theoreme3} still hold if we consider a sum of perturbations for which the homogenization of problem \eqref{equationepsilon} has been already established. Precisely, if the coefficient $a$ is of the form $a= a_{per} + \Tilde{a}_1 + \Tilde{a}_2$ where $\Tilde{a}_1$ and $\Tilde{a}_2$ are two perturbations belonging to different spaces (consider for example $\Tilde{a}_1\in L
%   ^2(\mathbb{R}^d)$ and $\Tilde{a}_2\in \mathcal{B}^2(\mathbb{R}^d$)), it is always possible to show the existence of a corrector $w$ with the same structure using the existence results associated with each perturbation. In a second time, several other examples of perturbations allowing to describe such perturbed periodic background both in the case of diffusion equations and in the case of non-linear equations will be studied in a future work \cite{goudey}.
%     \fi
\end{itemize}

%-------------------------------------------------%
\section{Geometric properties of the Voronoi cells}
\label{Section2}
%-------------------------------------------------%

We start by studying the geometric properties of the Voronoi cells associated to every sets of points $\mathcal{G}$ satisfying the general Assumptions \eqref{H1}, \eqref{H2} and \eqref{H3}. In particular, we show these assumptions ensure the rarity of the points $x_{p}$ in the space proving, in Proposition \ref{numberpointAn} and Corollary \ref{corollogboundVP}, that the number of points of $\mathcal{G}$ contained in a ball of radius $R>0$ is bounded by the logarithm of $R$. In Propositions \ref{tailledesVP} and \ref{openconstruction}, we also show two technical properties regarding the size and the structure of the cells. All these properties are actually fundamental for the rest of our work since they allow us to prove several results regarding the existence and uniqueness of solutions to the class \eqref{equationref} of diffusion equations $-\operatorname{div}(a\nabla u) = \operatorname{div}(f)$ studied in Section \ref{Section4}. In particular, as we shall see in the proof of Lemma \ref{lemmeexistenceperiodique1}, we use these geometric properties to bound several integrals in order to define a solution to equation \eqref{eqper1},  that is \eqref{equationref} with $a=a_{per}$, using the associated Green function. 
To conclude this section, we also show that our specific set of points $\mathcal{G}_{C_0}$, defined by \eqref{defG}, satisfies \eqref{H1}, \eqref{H2} and \eqref{H3}.

%-------------------------------------------------%
\subsection{General properties}
%-------------------------------------------------%

In this subsection only, we proceed with the whole generality of Assumptions \eqref{H1}, \eqref{H2} and \eqref{H3} and we introduce several useful geometric properties satisfied by every sets of points $\mathcal{G}$ satisfying these assumptions. These properties relate to the size of the Voronoi cells, their volume and their distribution in the space $\mathbb{R}^d$.  

To start with, we show two properties regarding the volume of the Voronoi cells.

\begin{prop}
\label{volumeVx}
There exist $C_1>0$ and $C_2>0$ such that for every $x \in \mathcal{G}$, we have the following bounds : 
\begin{equation}
    C_1 |x|^d \leq \left|V_x\right| \leq C_2 |x|^d.
\end{equation}
\end{prop}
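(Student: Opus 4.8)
The plan is to bound the volume $|V_x|$ both from below and from above using Assumptions \eqref{H2} and \eqref{H3}, exploiting the elementary geometric fact already recorded in the text that each Voronoi cell $V_x$ contains a ball of radius $D(x,\mathcal{G}\setminus\{x\})/2$ centered at $x$.

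For the lower bound, I would start from the observation that $V_x \supset B_{D(x,\mathcal{G}\setminus\{x\})/2}(x)$, which follows immediately from the definition \eqref{VoronoiDEF}: if $|y-x| \le D(x,\mathcal{G}\setminus\{x\})/2$ then for any $x_q \neq x$ we have $|y-x_q| \ge |x-x_q| - |y-x| \ge D(x,\mathcal{G}\setminus\{x\}) - D(x,\mathcal{G}\setminus\{x\})/2 = D(x,\mathcal{G}\setminus\{x\})/2 \ge |y-x|$. Hence $|V_x| \ge \omega_d \left(D(x,\mathcal{G}\setminus\{x\})/2\right)^d$ where $\omega_d$ is the volume of the unit ball. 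By the left inequality in \eqref{H2}, $D(x,\mathcal{G}\setminus\{x\}) \ge (1+|x|)/C_2 \ge |x|/C_2$, so $|V_x| \ge \omega_d (2C_2)^{-d} |x|^d$, giving the constant $C_1 = \omega_d (2C_2)^{-d}$.

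For the upper bound, I would use that $V_x$ is contained in the ball $B_{Diam(V_x)}(x)$ (indeed $V_x \subset B_{Diam(V_x)}(y)$ for any $y \in V_x$, in particular for $y=x$, since $x \in V_x$), so $|V_x| \le \omega_d \, Diam(V_x)^d$. By \eqref{H3}, $Diam(V_x) \le C_3\, D(x,\mathcal{G}\setminus\{x\})$, and by the right inequality in \eqref{H2}, $D(x,\mathcal{G}\setminus\{x\}) \le (1+|x|)/C_1$. Combining, $|V_x| \le \omega_d C_3^d C_1^{-d}(1+|x|)^d$. Since we need a bound of the form $C_2|x|^d$ and $|x|$ may vanish only at finitely many points near the origin (and in any case $1+|x| \le 2|x|$ once $|x|\ge 1$; the finitely many points with $|x|<1$, if any, can be absorbed into the constant, or one can simply restate the bound with $(1+|x|)^d$ which is what the proof naturally yields), we obtain the claimed $C_2$. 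I would note that, strictly speaking, the cleanest statement uses $(1+|x|)^d$ on both sides, matching the form of \eqref{H2}; the version with $|x|^d$ follows because $0 \notin \mathcal{G}$ is not guaranteed but any point of $\mathcal{G}$ at the origin is isolated and handled separately, or one simply observes $\mathcal{G}$ has no accumulation point at $0$ by \eqref{H2}.

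There is no serious obstacle here; the only mild subtlety is the bookkeeping between $|x|^d$ and $(1+|x|)^d$, which is harmless. The key inputs are purely the two-sided control of $D(x,\mathcal{G}\setminus\{x\})$ by $1+|x|$ from \eqref{H2}, the sandwiching of $V_x$ between a ball of radius $\sim D(x,\mathcal{G}\setminus\{x\})$ and a ball of radius $\sim Diam(V_x)$, and the diameter bound \eqref{H3}; assembling these four facts yields the proposition in a few lines.
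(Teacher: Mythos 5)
Your proof is correct and follows essentially the same route as the paper's: sandwich $V_x$ between the inscribed ball $B_{D(x,\mathcal{G}\setminus\{x\})/2}(x)$ and a ball of radius $Diam(V_x)$, then invoke \eqref{H2} and \eqref{H3}. Your side remark that the argument naturally yields $(1+|x|)^d$ rather than $|x|^d$ is a genuine observation the paper glosses over (indeed, with the convention $\operatorname{sign}(0)=0$ the specific set $\mathcal{G}_{C_0}$ contains the origin, where the upper bound $|V_x|\le C_2|x|^d$ literally fails), so stating the proposition with $(1+|x|)^d$, as you suggest, is the more accurate formulation.
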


\begin{proof}
For every $x\in \mathcal{G}$, using the definition of the Voronoï diagram, we have the following inclusion : 
$$ B_{D(x, \mathcal{G}\setminus{\{x\}})/2}(x) \subset{V_x}.$$ 
Therefore, there exists a constant $C(d)>0$ such that :   
$$C(d) D(x, \mathcal{G}\setminus{\{x\}})^d = \left|B_{D(x, \mathcal{G}\setminus{\{x\}})/2}(x)\right| \leq \left|V_x \right| \leq Diam(V_x)^d.$$
We conclude using \eqref{H2} and \eqref{H3}. 
\end{proof}

\begin{prop}
\label{tailledesVP}
There exists a sequence $(x_{n})_{n\in \mathbb{N}} \in \mathcal{G}^{\mathbb{N}}$ such that $\left(V_{x_n} - x_n\right)$ is an increasing sequence of sets and : 
\begin{equation}
    \bigcup_{n \in \mathbb{N}} \left(V_{x_{n}} - x_n \right) = \mathbb{R}^d.
\end{equation}
\end{prop}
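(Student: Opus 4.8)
The goal is to find a sequence of points $x_n \in \mathcal{G}$ whose re-centered Voronoi cells $V_{x_n} - x_n$ form an increasing (for inclusion) sequence exhausting $\mathbb{R}^d$. The natural strategy is to pick the points going to infinity along a fixed direction and exploit the scaling behavior of the cells captured by Proposition \ref{volumeVx} together with assumptions \eqref{H2} and \eqref{H3}. First I would show that the inradius of $V_{x_n}$ (centered at $x_n$) tends to infinity: by the definition of the Voronoi diagram, $V_{x_n}$ contains the ball $B_{D(x_n,\mathcal{G}\setminus\{x_n\})/2}(x_n)$, and by \eqref{H2} the radius $D(x_n,\mathcal{G}\setminus\{x_n\})/2$ grows like $|x_n|$, hence $\to \infty$ as soon as $|x_n| \to \infty$. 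So any sequence $x_n$ with $|x_n| \to \infty$ gives cells $V_{x_n} - x_n$ that eventually contain any fixed ball $B_R$; this yields the exhaustion property $\bigcup_n (V_{x_n} - x_n) = \mathbb{R}^d$ automatically, for \emph{any} such sequence.

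The real content is the \emph{monotonicity} $V_{x_n} - x_n \subset V_{x_{n+1}} - x_{n+1}$, which is not automatic. The plan is to build the sequence greedily: having chosen $x_0, \ldots, x_n$, one wants to choose $x_{n+1} \in \mathcal{G}$ with $|x_{n+1}|$ large enough that the inradius of $V_{x_{n+1}}$ exceeds the circumradius (half-diameter) of $V_{x_n}$. Concretely, using \eqref{H3} the diameter of $V_{x_n}$ is at most $C_3\, D(x_n,\mathcal{G}\setminus\{x_n\}) \le C_3 (1+|x_n|)/C_1$ by \eqref{H2}, so $V_{x_n} - x_n \subset B_{\rho_n}$ with $\rho_n := C_3(1+|x_n|)/C_1$. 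On the other hand $V_{x_{n+1}} - x_{n+1} \supset B_{r_{n+1}}$ with $r_{n+1} := D(x_{n+1},\mathcal{G}\setminus\{x_{n+1}\})/2 \ge (1+|x_{n+1}|)/(2C_2)$ by \eqref{H2}. Since $\mathcal{G}$ is infinite and, by \eqref{H2} again, $|x_p| \to \infty$ along $\mathcal{G}$ (indeed \eqref{H2} forces $D(x_p,\mathcal{G}\setminus\{x_p\}) \to \infty$, and in a locally finite set this forces $|x_p|\to\infty$), we may choose $x_{n+1} \in \mathcal{G}$ with $(1+|x_{n+1}|)/(2C_2) \ge C_3(1+|x_n|)/C_1$, i.e. $r_{n+1} \ge \rho_n$, whence
\begin{equation}
V_{x_n} - x_n \subset B_{\rho_n} \subset B_{r_{n+1}} \subset V_{x_{n+1}} - x_{n+1}.
\end{equation}
This gives the increasing property. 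One should also check $|x_n| \to \infty$ for the sequence so constructed, which is immediate since $|x_{n+1}| \ge |x_n|$ and the lower bound forces strict unbounded growth (each step multiplies $1+|x_n|$ by at least $2C_2C_3/C_1$; if that ratio is $\le 1$ one simply notes $\mathcal{G}$ is infinite with $|x_p|\to\infty$ so a valid choice with $|x_{n+1}|>|x_n|$ still exists). Combining the monotonicity with the inradius $\to \infty$ observation from the first paragraph yields $\bigcup_n (V_{x_n}-x_n) = \mathbb{R}^d$.

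The main obstacle, and the only place requiring care, is ensuring that a legitimate choice of $x_{n+1}$ \emph{exists} at each step — i.e. that $\mathcal{G}$ actually contains points of arbitrarily large norm with the required lower bound on $r_{n+1}$. This is exactly what \eqref{H2} provides (it rules out $\mathcal{G}$ being bounded or accumulating, since $D(x_p,\mathcal{G}\setminus\{x_p\})$ would then stay bounded while $1+|x_p|$ need not, contradicting the upper bound $C_2$; more precisely $\mathcal{G}$ infinite plus \eqref{H1} plus \eqref{H2} force $\sup_{x_p\in\mathcal{G}}|x_p| = \infty$). Once existence of the choice is secured, the rest is the elementary ball-sandwiching displayed above. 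I would present the argument in this order: (i) derive from \eqref{H1}–\eqref{H2} that $|x_p|\to\infty$ along $\mathcal{G}$; (ii) record the two-sided containment $B_{r_p} \subset V_{x_p}-x_p \subset B_{\rho_p}$ with $r_p, \rho_p$ comparable to $|x_p|$; (iii) perform the greedy construction; (iv) conclude monotonicity and exhaustion.
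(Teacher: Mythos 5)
Your proof is correct and takes essentially the same approach as the paper's: both sandwich each re-centered cell $V_{x_n} - x_n$ between an inner ball $B_{r_n}$ (with $r_n \gtrsim |x_n|$ from \eqref{H2} and the definition of the Voronoi cell) and an outer ball, then choose $x_{n+1}$ with $|x_{n+1}|$ large enough that the inner ball of $V_{x_{n+1}}-x_{n+1}$ contains the outer ball of $V_{x_n}-x_n$, which gives both the monotonicity and the exhaustion. The only cosmetic difference is that you invoke \eqref{H3} together with \eqref{H2} for a quantitative bound on $\operatorname{Diam}(V_{x_n})$, whereas the paper cites \eqref{H1} (finite volume, hence bounded cell) and passes to a subsequence; both routes are valid and of the same length.
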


\begin{proof}
We consider a sequence $(x_{n})_{n\in \mathbb{N}} \in \mathcal{G}^{\mathbb{N}}$ such that the sequence $|x_n|$ is increasing and $\displaystyle \lim_{n \rightarrow \infty}|x_n| = \infty$ (such a choice is always possible according to Assumptions \eqref{H1} and \eqref{H2}). Since we have assumed that $\mathcal{G}$ satisfies \eqref{H2}, there exists $C>0$ such that for all $n \in \mathbb{N}$ : 
$$D(x_n, \mathcal{G}\setminus{\{x_n\}}) \geq C |x_n|.$$
Therefore, as a consequence of the definition of the Voronoi cells, the ball $B_{C|x_n|/2}(x_n)$ is included in $V_{x_n}$ and, by translation, the ball  $B_{C|x_n|/2}$ is included in $V_{x_n}-x_n$. Since $(x_n)_{n\in \mathbb{N}}$ is an increasing sequence such that $\displaystyle \lim_{n \rightarrow \infty}|x_n| = \infty$, we use \eqref{H1} and we obtain, up to an extraction, that $V_{x_n}$ is included in $B_{C|x_{n+1}|/2}(x_{n})$. Thus 
$$\forall n \in \mathbb{N}, \quad V_{x_n} - x_n \subset B_{C|x_{n+1}|/2} \subset V_{x_{n+1}} - x_{n+1}.$$
The sequence $(V_{x_n} - x_n)$ is therefore an increasing sequence of sets and, in addition, 
$$ \mathbb{R}^d = \bigcup_{n \in \mathbb{N}} B_{C|x_n|/2} \subset  \bigcup_{n \in \mathbb{N}} \left(V_{x_{n}} - x_n \right).$$
We directly deduce that $\displaystyle \mathbb{R}^d = \bigcup_{n \in \mathbb{N}} \left(V_{x_{n}} - x_n \right)$. 
\end{proof}

The next results ensure a certain distribution of the Voronoi cells in the space. In particular, we prove that the number of cells contained in a ball of radius $R>0$ increases at most as the logarithm of this radius. This property reflects the rarity of our points far from the origin and is essential in our approach. 

\begin{prop}
\label{numberpointAn}
There exists a constant $C(d)>0$ that depends only of the ambient dimension $d$ such that : 
\begin{equation}
    \sharp \left\{x \in \mathcal{G} \middle| x \in A_{2^n,2^{n+1}} \right\} \leq C(d).
\end{equation}
\end{prop}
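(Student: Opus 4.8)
The plan is to show that the annulus $A_{2^{n+1}, 2^n} = B_{2^{n+1}} \setminus B_{2^n}$ can contain only a bounded number of points of $\mathcal{G}$, uniformly in $n$, by a volume-packing argument based on Assumption \eqref{H2}. First I would observe that any point $x \in \mathcal{G}$ lying in $A_{2^{n+1}, 2^n}$ satisfies $2^n \leq |x| \leq 2^{n+1}$, so in particular $1 + |x|$ is comparable to $2^n$ (up to a factor $4$, say). By \eqref{H2}, this forces $D(x, \mathcal{G} \setminus \{x\}) \geq C_2^{-1}(1+|x|) \geq C_2^{-1} 2^n$, and consequently, as recalled just above (and used in the proof of Proposition \ref{tailledesVP}), the Voronoi cell $V_x$ contains the ball $B_{r_n}(x)$ with $r_n = \tfrac{1}{2}C_2^{-1} 2^n$. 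These balls are pairwise disjoint for distinct points $x$ of $\mathcal{G}$, since the interiors of distinct Voronoi cells are disjoint.

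Next I would localize: each such ball $B_{r_n}(x)$ is centered at a point with $|x| \leq 2^{n+1}$ and has radius $r_n \leq 2^n$, hence $B_{r_n}(x) \subset B_{2^{n+1} + 2^n} \subset B_{2^{n+2}}$. Summing the volumes of these disjoint balls and comparing with the volume of $B_{2^{n+2}}$ gives
\begin{equation}
\sharp\left\{ x \in \mathcal{G} \mid x \in A_{2^{n+1},2^n} \right\} \cdot \omega_d r_n^d \leq \omega_d \left(2^{n+2}\right)^d,
\end{equation}
where $\omega_d$ is the volume of the unit ball in $\mathbb{R}^d$. Substituting $r_n = \tfrac{1}{2}C_2^{-1}2^n$, the factors $2^{nd}$ cancel and one is left with
\begin{equation}
\sharp\left\{ x \in \mathcal{G} \mid x \in A_{2^{n+1},2^n} \right\} \leq \left(2^{n+2}\right)^d \left(\tfrac{1}{2}C_2^{-1} 2^n\right)^{-d} = \left(2^{3} C_2\right)^{d} =: C(d),
\end{equation}
a constant depending only on $d$ and on the structural constant $C_2$ from \eqref{H2} (which, in the framework of the paper, is itself absorbed into a constant depending only on $d$ for the specific set $\mathcal{G}_{C_0}$, or simply kept as part of $C(d)$ in the general case). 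This is the desired bound.

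The only genuinely delicate point is the disjointness and containment bookkeeping: one must be careful that the radius $r_n$ used is a true lower bound valid for \emph{every} point in the annulus simultaneously (this is exactly what \eqref{H2} provides, since the lower bound $C_1 \leq (1+|x|)/D(x,\mathcal{G}\setminus\{x\})$ together with $|x| \geq 2^n$ gives $D(x, \mathcal{G}\setminus\{x\}) \geq C_2^{-1}(1+|x|) \geq C_2^{-1} 2^n$), and that the enclosing ball $B_{2^{n+2}}$ genuinely contains all the small balls. Neither requires \eqref{H1} or \eqref{H3}; only \eqref{H2} is used, consistent with the remark in the text that \eqref{H2} is the assumption encoding rarity. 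I do not anticipate any serious obstacle beyond keeping the constants clean.
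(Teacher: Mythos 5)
Your argument is correct and is essentially the paper's own proof: use \eqref{H2} to put a ball of radius comparable to $2^n$ inside the Voronoi cell of each point of $\mathcal{G}$ in the annulus, note these balls are pairwise disjoint and all sit inside a ball of radius comparable to $2^n$, and finish by volume comparison. One small slip in the narration: the lower bound $D(x,\mathcal{G}\setminus\{x\}) \geq C_2^{-1}(1+|x|)$ comes from the \emph{upper} bound $(1+|x|)/D(x,\mathcal{G}\setminus\{x\}) \leq C_2$ in \eqref{H2}, not from the $C_1$ side as you wrote; your displayed inequality is nonetheless the right one, and the rest (including the harmless implicit assumption $r_n\leq 2^n$, which one can always arrange by enlarging $C_2$ or by enlarging the enclosing ball) is fine.
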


\begin{proof}
Let $x\in \mathcal{G}$ such that $x \in A_{2^n,2^{n+1}}$. The definition of the Voronoi cells ensures that the distance $D(x,\partial V_x)$ is equal to $\frac{D\left(x, \mathcal{G} \setminus \left\{x \right\}\right)}{2}$. Property \eqref{H2} gives the existence of a constant $C_1>0$ independent of $x$ such that : 
$$\frac{D\left(x, \mathcal{G} \setminus \left\{x \right\}\right)}{2} \geq C_1\frac{|x|}{2} \geq C_1 2^{n-1}.$$ 
Then, the ball $B_{C_12^{n-1}}(x)$ is contained in $V_x$, that is $x$ is the only element of $\mathcal{G}$ in this ball. In addition, since $|x| \leq 2^{n+1}$, we obtain the following inclusion using a triangle inequality~: 
\begin{equation*}
\label{ballinclusion}
B_{C_12^{n-1}}(x) \subset B_{(C_1 + 4)2^{n-1}}.  
\end{equation*}
Since this inclusion is valid for every $x \in \mathcal{G}\cap A_{2^n,2^{n+1}}$ we obtain : 
$$\bigcup_{x \in \mathcal{G}\cap A_{2^n,2^{n+1}}} B_{C_1 2^{n-1}}(x) \subset B_{(C_1 + 4)2^{n-1}}. $$
Therefore, there exists $C_2(d)>0$ such that : 
\begin{equation}
\label{volumeunion1}
\left|\bigcup_{x \in \mathcal{G}\cap A_{2^n,2^{n+1}}} B_{C_1 2^{n-1}}(x)\right| \leq \left|B_{(C_1 + 4)2^{n-1}}\right| \leq C_2(d) 2^{d(n-1)}.
\end{equation}
Next, we know that the Voronoi cells are disjoint and, therefore, the collection of balls $\left(B_{C_1 2^{n-1}}(x)\right)_{x \in \mathcal{G}\cap A_{2^n,2^{n+1}}}$ is also disjoint. Thus, there exists $C_3(d)>0$ such that :
\begin{align}
\left|\bigcup_{x \in \mathcal{G}\cap A_{2^n,2^{n+1}}} B_{C_1 2^{n-1}}(x)\right| & = \sharp \left\{x \in \mathcal{G} \middle| x \in A_{2^n,2^{n+1}} \right\} \left|B_{C_1 2^{n-1}}\right|\\
\label{volumeunion2}
&= \sharp \left\{x \in \mathcal{G} \middle| x \in A_{2^n,2^{n+1}} \right\} C_3(d) 2^{d(n-1)}.  
\end{align}
With \eqref{volumeunion1} and \eqref{volumeunion2}, we conclude that : 
$$\sharp \left\{x \in \mathcal{G} \middle| x \in A_{2^n,2^{n+1}} \right\} \leq \frac{C_2(d)}{C_3(d)}.$$
\end{proof}

\begin{corol}
\label{corollogboundVP}
There exists $C>0$ such that for every $R>0$ and $x_0 \in \mathbb{R}^d$ : 
\begin{equation}
\label{logboundVP}
    \sharp \left\{x \in \mathcal{G} \middle| V_x \cap B_R(x_0) \neq \emptyset \right\} < C \log(R).
\end{equation}
\end{corol}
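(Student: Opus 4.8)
The plan is to reduce the corollary to the counting estimate of Proposition~\ref{numberpointAn} by decomposing an appropriately enlarged ball into dyadic annuli and summing the (uniformly bounded) contribution of each annulus. The only subtlety is that we must count cells $V_x$ that \emph{intersect} $B_R(x_0)$, not just centers $x$ lying in $B_R(x_0)$; to handle this we use \eqref{H2} and \eqref{H3} to control how far the center $x$ of a cell can be from a point of that cell, namely $\mathrm{Diam}(V_x) \leq C_3 D(x,\mathcal{G}\setminus\{x\}) \leq (C_3/C_1)(1+|x|)$, so a cell meeting $B_R(x_0)$ has its center within distance $R + (C_3/C_1)(1+|x|)$ of $x_0$.

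First I would fix $R>0$ and $x_0\in\mathbb{R}^d$, and observe that if $V_x\cap B_R(x_0)\neq\emptyset$ then there is a point $y\in V_x$ with $|y-x_0|\leq R$, whence $|x-x_0|\leq R + \mathrm{Diam}(V_x) \leq R + (C_3/C_1)(1+|x|)$. If $|x|$ is large, say $|x| \geq 2(C_3/C_1)$, this forces $|x| \leq 2R + 2(C_3/C_1) + 1 =: R'$, and in any case such $x$ lies in $B_{R''}(x_0)$ with $R''$ a fixed affine function of $R$ (one may need to treat the finitely many "small" points near the origin separately, or simply enlarge the constant). Then I would center a ball at the origin: $B_{R''}(x_0) \subset B_{R'''}$ with $R''' = R'' + |x_0|$ — but since the bound must be uniform in $x_0$, I instead keep the count centered at $x_0$ and cover $B_{R''}(x_0)$ by the annuli $A_{2^{n+1},2^n}(x_0) = B_{2^{n+1}}(x_0)\setminus B_{2^n}(x_0)$ for $n$ ranging over the integers (including negative $n$ for the region near $x_0$, and a single innermost ball), noting the argument of Proposition~\ref{numberpointAn} applies verbatim with annuli centered at an arbitrary point since it only used the triangle inequality and disjointness of the balls $B_{C_12^{n-1}}(x)$.

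Thus the number of $x\in\mathcal{G}$ with $V_x$ meeting $B_R(x_0)$ is at most $\sum_{n \leq \log_2(R'')+1} C(d)$ over the relevant dyadic scales. The key point is that the enlargement from $R$ to $R''$ is affine, so $\log_2(R'')+1 \leq C\log(R)$ for $R$ bounded away from $0$ (and for small $R$ the count is trivially bounded since $B_R(x_0)$ can meet only $O(1)$ cells — here one uses \eqref{H2} again to see cells near any fixed location have a definite minimal size, or absorbs this into the constant); hence the total is $\leq C(d)\log(R'') \leq C\log(R)$, as claimed.

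The main obstacle I anticipate is bookkeeping rather than conceptual: making the passage from "centers in a ball" to "cells meeting a ball" rigorous requires the diameter bound $\mathrm{Diam}(V_x)\lesssim 1+|x|$ from \eqref{H2}--\eqref{H3}, and one must be slightly careful that the self-referential inequality $|x-x_0|\leq R + (C_3/C_1)(1+|x|)$ is only useful once $(C_3/C_1) < 1$ or, more robustly, by splitting $\mathcal{G}$ into points with $|x|\leq M$ (finitely many, since each carries a cell of volume $\geq c\,|x|^d$ bounded below only away from $0$, but in a fixed ball there are finitely many by \eqref{H1}) and points with $|x|> M$ for $M$ large enough that the factor can be absorbed. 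A second minor point: the statement as written has no absolute value or $\max(1,\cdot)$ around $\log(R)$, so it is implicitly for $R$ large (say $R\geq 2$); for small $R$ the left side is bounded by a constant and one interprets the inequality accordingly, or the constant $C$ is simply chosen large enough. None of this is deep, and the heart of the proof is the clean dyadic summation powered by Proposition~\ref{numberpointAn}.
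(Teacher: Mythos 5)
Your proof has a genuine gap at the step where you claim that the argument of Proposition~\ref{numberpointAn} ``applies verbatim with annuli centered at an arbitrary point.'' It does not: the proof of Proposition~\ref{numberpointAn} relies essentially on assumption~\eqref{H2}, which lower-bounds $D(x,\mathcal{G}\setminus\{x\})$ in terms of $|x|$, i.e.\ the distance from the \emph{origin}. A point $x$ lying in the annulus $A_{2^n,2^{n+1}}(x_0)$ centered at $x_0\neq 0$ can sit arbitrarily close to the origin (take $|x_0|\approx 2^{n+1}$), so nothing prevents $D(x,\mathcal{G}\setminus\{x\})$ from being small; the ball $B_{C_1 2^{n-1}}(x)$ is then \emph{not} contained in $V_x$, and both the inclusion $B_{C_1 2^{n-1}}(x)\subset V_x$ and the disjointness of these balls — the two ingredients of the volume count — fail. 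In fact the intermediate claim is simply false: for $\mathcal{G}=\mathcal{G}_{C_0}$ in dimension $d=1$, the annulus $A_{2^{m-1},2^m}(2^m)$ contains the interval $[0,2^{m-1}]$, hence the $m$ points $0,2,4,\dots,2^{m-1}$ of $\mathcal{G}$, not $O(1)$. Summing such non-uniform per-annulus counts over the $O(\log R)$ dyadic scales would at best give $O(\log^2 R)$, which is weaker than the corollary.

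The paper avoids this by always recentering the count at the origin, through a dichotomy on $|x_0|$: if $|x_0|\lesssim 2^n$, then $B_{2^{n+3}}(x_0)$ sits inside $B_{O(2^n)}(0)$ and the origin-centered dyadic decomposition of Proposition~\ref{numberpointAn} gives $O(n)=O(\log R)$ points; if $|x_0|\gg 2^n$, then $B_{2^{n+3}}(x_0)$ fits inside a \emph{bounded} number of origin-centered annuli $A_{2^{k},2^{k+1}}$, hence contains only $O(1)$ points. The paper also sidesteps your diameter-bound reduction entirely: it observes, using only the triangle inequality and the existence of some $\mathcal{G}$-point $x\in B_{2^n}(x_0)$, that for any $\tilde x\notin B_{2^{n+3}}(x_0)$ every $y\in B_{2^n}(x_0)$ is strictly closer to $x$ than to $\tilde x$, so $V_{\tilde x}\cap B_{2^n}(x_0)=\emptyset$. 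This converts ``cells meeting the ball'' into ``centers in an enlarged ball'' without \eqref{H3} and without the self-referential inequality you flagged as delicate. To repair your proof you would need to adopt this recentering at the origin; the $x_0$-centered annuli alone cannot work.
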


\begin{proof}
We start by proving the result if $R = 2^n$ for $n \in \mathbb{N}^*$.
Without loss of generality, we can assume that $n$ is sufficiently large to ensure there exists $x$ in $\mathcal{G}\cap B_{2^n}(x_0)$.
Using a triangle inequality, we remark that if $y\in B_{2^n}(x_0)$ we have : 
$$\left|x - y\right| \leq |x-x_0| + |y-x_0| \leq 2^{n+1} \leq D\left(y, \mathbb{R}^d\setminus{B_{2^{n+3}}(x_0)}\right).$$
That is, if $\Tilde{x} \in \mathcal{G}$ is such that $\Tilde{x} \notin B_{2^{n+3}}(x_0)$, every point $y \in B_{2^n}(x_0)$ is closer to $x$ than to $\Tilde{x}$, that is $V_{\Tilde{x}} \cap B_{2^n}(x_0) = \emptyset$. Therefore, we have $$\sharp \left\{x \in \mathcal{G} \middle| V_x \cap B_{2^n}(x_0)\neq \emptyset \right\}  \leq \sharp \left\{x \in \mathcal{G} \middle|  x \in B_{2^{n+3}}(x_0) \right\}.$$
Next, if $|x_0|\leq 2^{n+4}$, we have $B_{2^{n+3}(x_0)} \subset B_{2^{2n+7}}$
and we use Proposition \ref{numberpointAn} to obtain the existence of a constant $C>0$ independent of $n$ such that :
\begin{align*}
\sharp \left\{x \in \mathcal{G} \middle|  x \in B_{2^{n+3}}(x_0) \right\} & \leq \sharp \left\{x \in \mathcal{G} \middle|  x \in B_{2^{2n+7}} \right\} \\
& = \sum_{k = 0}^{2n+6} \sharp \left\{x \in \mathcal{G} \middle| x \in A_{2^k,2^{k+1}} \right\} + \sharp \left\{x \in \mathcal{G} \middle| x \in B_1\right\} \\
& \leq C n.
\end{align*}
If $|x_0|>2^{n+4}$, we denote by $m\geq n+4$ the unique integer such that $2^m~<~|x_0|$ and $|x_0|\leq~2^{m+1}$. In this case, we use a triangle inequality and we have
$$B_{2^{n+3}(x_0)} \subset A_{|x_0|+2^{n+3},|x_0|-2^{n+3}}\subset A_{2^{m+2},2^{m-1}}.$$
Proposition \ref{numberpointAn} gives the existence of $C>0$ independent of $x_0$ and $n$ such that :
\begin{align*}
\sharp \left\{x \in \mathcal{G} \middle|  x \in B_{2^{n+3}}(x_0) \right\} & \leq \sharp \left\{x \in \mathcal{G} \middle|  x \in A_{2^{m+2},2^{m-1}} \right\} \\
& = \sum_{k = -1}^{1} \sharp \left\{x \in \mathcal{G} \middle| x \in A_{2^{m+k},2^{m+k+1}} \right\} \leq C.
\end{align*}
Finally, we have estimate \eqref{logboundVP} in the particular case $R=2^n$.

Next, for any $R>0$, we have :
$$R = 2^{\log_2(R)} \leq 2^{[\log_2(R)]+1},$$
where $[.]$ denotes the integer part. Thus, we obtain the following upper bound : 
$$\sharp \left\{x \in \mathcal{G} \middle| V_x \cap B_R(x_0) \neq \emptyset \right\} \leq \sharp \left\{x \in \mathcal{G} \middle| V_x \cap B_{2^{[\log_2(R)]+1}}(x_0) \neq \emptyset \right\} \leq C\left([\log_2(R)]+1\right),$$
and we can conclude.
\end{proof}

To conclude this section, we now introduce a particular set (denoted by $W_x$ in the proposition below) containing a point $x\in \mathcal{G}$ which is both bigger than the cell $V_x$ and far from all the others points of $\mathcal{G}$. As we shall see in Lemmas \ref{lemmeexistenceperiodique1}  and \ref{existenceper}, this set is actually a technical tool that allows us to show the existence of the corrector stated in Theorem~\ref{theoreme1}. 

\begin{prop}
\label{openconstruction}
For every $x \in \mathcal{G}$, there exists a convex open set $W_x$ of $\mathbb{R}^d$ and $C_1$, $C_2$, $C_3$, $C_4$ and $C_5$ five positive constants independent of $x$ such that :
\begin{enumerate}[label=(\roman*)]
\item \label{Wi} $\displaystyle \textit{V}_x \subset{W}_x,$
\item \label{Wii}$Diam(W_x) \leq C_1 |x|$ and $D(\textit{V}_x,\partial W_x) \geq C_2 |x|,$
\item \label{Wiii} $\forall y \in \mathcal{G}\setminus{\{x\}}$, $D(y, W_x) \geq C_3 |x|,$ 
\item \label{Wiv} $\sharp \left \{ y \in \mathcal{G} \middle| V_y \cap W_x \neq \emptyset \right\} \leq C_4,$
\item \label{Wv}  $\forall y \in \mathcal{G}\setminus{\{x\}}$, $D(V_y \setminus{W_x}, V_x) \geq C_5|y|.$ 
\end{enumerate}
\end{prop}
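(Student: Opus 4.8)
The construction of $W_x$ should be entirely explicit: given $x \in \mathcal{G}$, set $\delta_x = D(x, \mathcal{G}\setminus\{x\})$ and simply take $W_x = B_{c\,\delta_x}(x)$ for a small universal constant $c \in (0,1)$ to be fixed. A ball is convex and open, so that structural requirement is free. By \eqref{H2}, $\delta_x$ is comparable to $1 + |x| \sim |x|$ (for $x \neq 0$; the finitely many small points can be handled separately or absorbed into constants), so $\mathrm{Diam}(W_x) = 2c\,\delta_x \leq C_1 |x|$, giving the first half of \ref{Wii}. For \ref{Wi} and the second half of \ref{Wii}: by definition of the Voronoi cell, $V_x \subset B_{\delta_x/2}(x)$ is false in general (the cell can be long in directions where no other point lies), so here is where \eqref{H3} is essential — it gives $\mathrm{Diam}(V_x) \leq C_3 \delta_x$, hence $V_x \subset B_{C_3 \delta_x}(x)$. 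Thus we must choose $c > 2C_3$ (say $c = 3C_3$, or $c = 2C_3 + 1$) so that $V_x \subset B_{C_3\delta_x}(x) \subsetneq B_{c\delta_x}(x) = W_x$ with room to spare, and $D(V_x, \partial W_x) \geq (c - C_3)\delta_x \geq C_2 |x|$ using \eqref{H2} again. So the one genuine constraint on $c$ is $c > 2C_3$.

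For \ref{Wiii}: if $y \in \mathcal{G}\setminus\{x\}$ then $|y - x| \geq \delta_x$ by definition of $\delta_x$, while $W_x \subset B_{c\delta_x}(x)$; this only helps if $c < 1$, which conflicts with $c > 2C_3 \geq$ (possibly large). The fix is to also use the distance from $y$'s side: $|y - x| \geq D(y, \mathcal{G}\setminus\{y\}) \geq C_1(1+|y|)$ is not quite it either. The cleaner route: note $|x-y| \geq \delta_x$ and $|x - y| \geq \delta_y \geq C_1|y|$, and moreover points far from the origin are far apart, so iterating \eqref{H2} one shows $|x - y| \geq c'(|x| + |y|)$ for a universal $c'$ — more carefully, $|x-y| \geq \delta_x \geq C_1|x|$ and $|x - y|\geq \delta_y \geq C_1 |y|$ gives $|x-y| \geq \tfrac{C_1}{2}(|x|+|y|)$. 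Then $D(y, W_x) \geq |y - x| - c\delta_x \geq \tfrac{C_1}{2}|x| + \tfrac{C_1}{2}|y| - c C_2|x|$, which is not manifestly positive when $c$ is large. This is the main obstacle: a single ball of radius $\sim C_3|x|$ around $x$ may actually swallow neighboring points $y$ when the geometric constants are unfavorable, so the ``worst case'' assumptions \eqref{H1}--\eqref{H3} with generic constants do not a priori keep $W_x$ away from $\mathcal{G}\setminus\{x\}$. The resolution — which I expect the paper to use — is that assumption \eqref{H3} bounding $\mathrm{Diam}(V_x)/\delta_x$ forces a compatible relation: if $\mathrm{Diam}(V_x) \leq C_3\delta_x$ for \emph{all} cells simultaneously and the cells tile $\mathbb{R}^d$, then in fact neighboring points satisfy $|x - y| \geq \delta_x$ but also $\delta_x$ and $\delta_y$ are comparable for neighbors (both $\sim 1+|x| \sim 1+|y|$), and the number of cells meeting $B_{C_3\delta_x}(x)$ is bounded (by Corollary \ref{corollogboundVP}, $\lesssim \log(C_3\delta_x) \lesssim \log|x|$) — not bounded by a constant. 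So even \ref{Wiv} as stated (bounded by a \emph{constant} $C_4$) requires more than the logarithmic bound.

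The way out for \ref{Wiv}–\ref{Wv}, and the reason the construction must be more delicate than a plain ball, is presumably to take $c$ \emph{small} — namely $c < 1/2$ — so that $W_x = B_{c\delta_x}(x)$ automatically lies strictly inside $B_{\delta_x/2}(x) \subset V_x$... but that contradicts \ref{Wi} which demands $V_x \subset W_x$. Reconciling \ref{Wi} ($W_x \supset V_x$, forcing radius $\gtrsim C_3|x|$) with \ref{Wiii} ($W_x$ avoids all other points, wanting radius $\lesssim |x|$) and \ref{Wiv} ($W_x$ meets $O(1)$ cells) is only possible because \textbf{Proposition \ref{numberpointAn}} says only $O(1)$ points lie in any dyadic annulus $A_{2^n, 2^{n+1}}$: so among the $O(1)$ points at scale $|x|$, their pairwise distances are all $\gtrsim |x|$ with a \emph{uniform} lower constant, and one can choose $c$ (depending only on $d$ through this constant) with $C_3 < c < (\text{separation constant})$, making $W_x = B_{c\delta_x}(x)$ contain $V_x$, avoid the $O(1)$ nearby points, and meet only their $O(1)$ cells. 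Then \ref{Wv}: for $y \neq x$, $V_y \setminus W_x$ consists of points at distance $\geq c\delta_x$ from $x$; one bounds $D(V_y\setminus W_x, V_x)$ below by $c\delta_x - \mathrm{Diam}(V_x) \geq (c - C_3)\delta_x \gtrsim |x|$, and then compares $|x|$ to $|y|$: since $y$ is a neighbor reached within $W_x$, $|y| \lesssim |x|$, so $(c-C_3)\delta_x \gtrsim |y|$, giving $C_5|y|$. \textbf{The main obstacle} is exactly this tension: one must extract from \eqref{H1}--\eqref{H3} together with Proposition \ref{numberpointAn} a \emph{uniform} (dimension-only) lower bound on the separation of the $O(1)$ points at each scale, and verify the window $(C_3, \text{sep})$ for choosing $c$ is nonempty — plausibly by noting that if two points $x,y$ at comparable scale had $|x-y| < c\delta_x$ with $c$ barely above $C_3$, their cells (each of diameter $\leq C_3\delta$) would nonetheless be forced apart by the tiling constraint, quantifying how large $\mathrm{sep}$ must be relative to $C_3$. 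I would carry this out by first fixing notation $\delta_x = D(x,\mathcal{G}\setminus\{x\})$, recording $\delta_x \sim 1+|x|$ and $\mathrm{Diam}(V_x) \lesssim \delta_x$; second, proving the scale-uniform separation of neighbors; third, defining $W_x = B_{c\delta_x}(x)$ with $c$ in the resulting window; and fourth, checking \ref{Wi}–\ref{Wv} in turn, each reducing to an elementary triangle-inequality estimate once the constants are in place.
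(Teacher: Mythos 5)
Your plan stalls precisely at the obstacle you identify, and that obstacle is not actually surmountable for a ball: the construction $W_x = B_{c\,\delta_x}(x)$ needs $c \geq C_3$ (from \eqref{H3}) to contain $V_x$, but $c < 1$ to avoid every $y \neq x$, since you only know $|x-y| \geq \delta_x$. Nothing in \eqref{H1}--\eqref{H3} forces $C_3 < 1$ — indeed for the paper's own set $\mathcal{G}_{C_0}$ the constant $C_3$ computed in Section 2 is $\gtrsim 2\sqrt{2}$ — so the window is generically empty, and the appeal to Proposition~\ref{numberpointAn} for a ``scale-uniform separation constant'' does not change this: the separation is still only $\delta_x$, which is already the radius scale you cannot exceed. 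The speculative fourth paragraph never resolves the tension; it restates it.

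The paper sidesteps this entirely by taking $W_x$ to be the image of $V_x$ under the homothety $\varphi_x$ of center $x$ and ratio $\tfrac32$, rather than a ball. Because $V_x = \bigcap_{y\neq x} I_{x,y}$ with $I_{x,y}$ the half-space of points closer to $x$ than $y$, one has $W_x = \bigcap_{y\neq x} H_{x,y}$ with $H_{x,y} = I_{x,y} + \tfrac14\overrightarrow{xy}$ — each bounding hyperplane is translated toward $y$ by exactly $\tfrac14|x-y|$. This gives (i) for free (ratio $>1$), (ii) because $\mathrm{Diam}(W_x)=\tfrac32\mathrm{Diam}(V_x)$ and $D(V_x,\partial W_x)=\tfrac14\delta_x$, and crucially (iii) because $D(y,W_x)\geq D(y,H_{x,y})=\tfrac12|x-y|-\tfrac14|x-y|=\tfrac14|x-y|\geq\tfrac14\delta_x$, \emph{independently} of how elongated $V_x$ is. That is the idea your proposal is missing: a dilation of the cell itself keeps every other point at distance proportional to $|x-y|$ no matter the shape of $V_x$, whereas a ball cannot. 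Properties (iv) and (v) are then obtained by a dyadic-scale argument using Proposition~\ref{numberpointAn}, along the lines you anticipated, but they rest on having a working $W_x$ first.
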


\begin{proof}
Let $x$ be in $\mathcal{G}$. In the sequel, we denote $I_{x,y} = \left\{z \in \mathbb{R}^d \ \middle| \ |z-x| \leq |z - y| \right\}$ and  $\varphi_x$ the homothety of center $x$ and ratio $\frac{3}{2}$.  For $y \in \mathcal{G}\setminus{\{x\}}$, we denote by $H_{x,y}$ the set defined by : 
$$ H_{x,y} = \varphi_x\left( I_{x,y}\right).$$
The set $H_{x,y}$ can be easily determined, it is the half-space defined by :
$$H_{x,y} = \left\{z \in \mathbb{R}^d \ \middle| \ |z-x| \leq |z - y| \right\} + \frac{1}{4}\overrightarrow{xy} = I_{x,y} + \frac{1}{4}\overrightarrow{xy}. $$ 
We finally consider : 
$$W_x = \bigcap_{y \in \mathcal{G}\setminus{\{x\}}} H_{x,y},$$ 
which is actually the image of the cell $V_x$ by the homothety $\varphi_x$ (see figure \ref{ouvertvoronoi}). 

\begin{figure}[h!]
\centering
\includegraphics[width=0.55\linewidth]{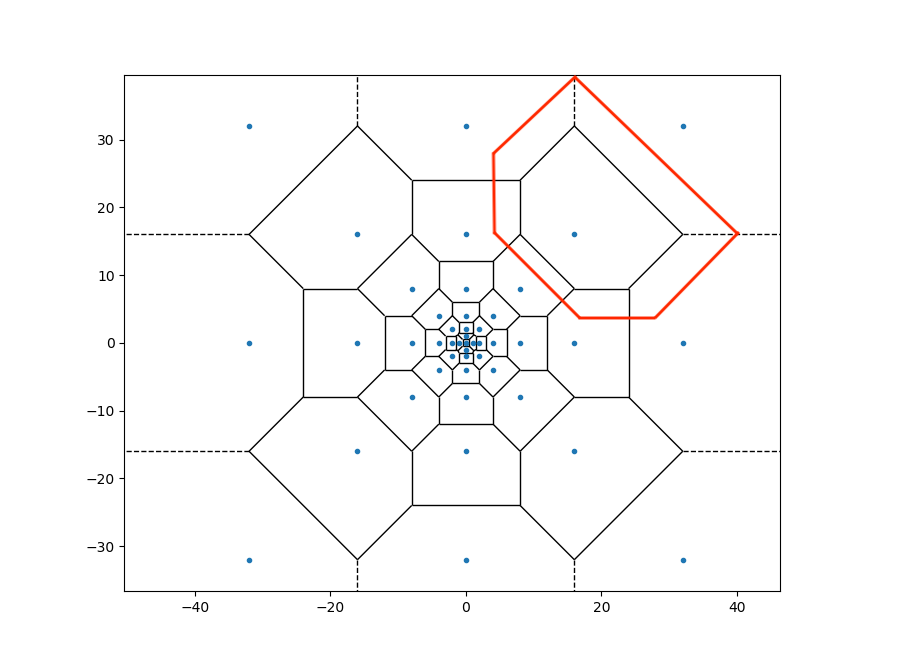}
\caption{Example for the choice of the open subset $W_x$ (in red) when $d=2$.}
\label{ouvertvoronoi}
\end{figure}

We next prove that $W_x$ satisfies \ref{Wi}, \ref{Wii}, \ref{Wiii}, \ref{Wiv} and \ref{Wv}. 
\begin{itemize}
    \item[\ref{Wi} :] For every $y \in \mathcal{G}\setminus{\{x\}}$ we have $I_{x,y} \subset H_{x,y}$  and therefore, we obtain using definition \eqref{VoronoiDEF} of $V_x$  : 
    $$V_x = \bigcap_{y \in \mathcal{G}\setminus{\{x\}}} I_{x,y} \subset \bigcap_{y \in \mathcal{G}\setminus{\{x\}}} H_{x,y} = W_x,$$
    and we have the first inclusion. 
    \item[\ref{Wii} :] $W_x$ is a $\frac{3}{2}$-dilation of $V_x$, thus we have $Diam(W_x) = \frac{3}{2}Diam(V_x)$. We use \eqref{H2} and \eqref{H3} to obtain the first estimate. Next, the definitions of the sets $H_{x,y}$ and $W_x$ give : 
    $$D(V_x, \partial W_x) = \frac{1}{4}\inf_{y \in \mathcal{G}\setminus{\{x\}}} |x-y| = \dfrac{1}{4} D\left(x,\mathcal{G}\setminus{\{x\}}\right).$$   We conclude using \eqref{H2}.
    \item[\ref{Wiii} :] Let $y$ be in $\mathcal{G}\setminus{\{x\}}$. By definition, for every $v\in W_x$, there exists $u \in I_{x,y}$ such that $v = u + \frac{1}{4}\overrightarrow{xy}$. Therefore, we use the triangle inequality and we have : 
    $$|v-y| \geq D\left( y, I_{x,y}\right) - \frac{1}{4}|x-y| = \frac{1}{2}|x-y| - \frac{1}{4}|x-y| = \frac{1}{4}|x-y|.$$
    Taking the infimum over all $v \in W_x$ in the above inequality and using \eqref{H2}, we finally obtain :
    $$D(y, W_x) \geq \frac{1}{4}|x-y| \geq \frac{1}{4} D \left(x, \mathcal{G}\setminus{\{x\}}\right) \geq C \frac{1}{4} |x|,$$
    where $C>0$ is independent of $x$ and $y$. 
     \item[\ref{Wiv} :] First, we have proved there exists a constant $C_1\geq1$ independent of $x$ such that $Diam\left(W_x\right)\leq C_1 |x|$. Second, using Assumption \eqref{H2}, we know there exists a constant $C_2>0$ such that for every $y \in \mathcal{G}$ we have $D\left(y, \mathcal{G}\setminus{\{y\}}\right)\geq C_2 |y|$. Let $k>2$ be an integer such that : 
     \begin{equation}
         \label{defk}
         C_22^{k-2} -1 > 4 C_1.
     \end{equation}
     We denote $n \in \mathbb{N}$, the unique integer such that $x \in A_{2^n,2^{n+1}}$. Here, it is sufficient to establish a bound for $x$ sufficiently large, thus without loss of generality, we can assume that $n>k$. We next show that if $y \in \mathcal{G}$ satisfies $|y|\leq 2^{-k-1}|x| \leq 2^{n-k}$ or $|y|\geq 2^k |x|  \geq 2^{n+k}$, then $W_x \cap V_y = \emptyset$. 
     
    We start by assuming that $y \in \mathcal{G}\cap \left(\mathbb{R}^d \setminus{B_{2^{n+k}}}\right)$. Since 
    $$Diam\left(W_x\right)\leq C_1 |x| \leq~C_1 2^{n+1},$$ we have $W_x \subset B_{C_12^{n+1}}(x)$. Therefore, using a triangle inequality we obtain $W_x \subset B_{C_12^{n+2}}$. Our aim here is to prove that $I_{y,x} \cap B_{C_12^{n+2}} = \emptyset $ in order to deduce $I_{y,x} \cap W_x = \emptyset $.  For every $z \in I_{y,x}$  : 
    \begin{align*}
     |z| & \geq |z-x| - |x| \geq D \left(x,I_{x,y} \right) - |x|.
     \end{align*}
     In addition, for every $y \in \mathcal{G} \setminus{\{y\}}$, we have $ \displaystyle D \left(x,I_{x,y} \right) = \frac{1}{2}|x-y|\geq \frac{1}{2} D\left(y,\mathcal{G}\setminus{\{y\}}\right)$ and we deduce that :  
     \begin{align*}
      |z| & \geq D\left(y,\mathcal{G}\setminus{\{y\}}\right) - |x| \\
      &\geq \frac{C_2}{2} |y| - |x| \\
      & \geq C_2 2^{n+k-1} - 2^{n+1} \\
     & \geq 2^{n+1}\left(C_2 2^{k-2}-1 \right) \geq C_12^{n+3}.
    \end{align*}
     Therefore, $I_{x,y} \subset \left(\mathbb{R}^d \setminus{ B_{C_1 2^{n+3}}}\right)$ and we obtain $W_x \cap I_{x,y} = \emptyset
     $. Since, $ \displaystyle V_y = \bigcap_{z \in \mathcal{G}\setminus{\{y\}}} I_{z,y}$, we deduce that $V_y \cap W_x = \emptyset$.
     
     Next we assume that $y \in B_{2^{n-k}}$ and we want to prove that $V_y \cap H_{x,y} = \emptyset$. As above, we can show that $V_y \subset B_{C_1 2^{n-k+1}}$ and for every $z \in H_{x,y}$ : 
     \begin{align*}
         |z| & \geq \frac{1}{4}|x-y| - |y| \\
         & \geq \frac{1}{4}C_2 |x| - |y|\\
         & \geq 2^{n-k} \left(C_2 2^{k-2} - 1\right) \geq C_1 2^{n-k +2}.
     \end{align*}
Therefore $H_{x,y} \subset{B_{C_1 2^{n-k+2}}}$ and we have $V_y \cap H_{x,y} = \emptyset$. We deduce that $V_y \cap W_{x} = \emptyset$. 

To conclude, we use Proposition \ref{numberpointAn} and we obtain the existence of a constant $C_3>0$ independent of $n$ such that : 
$$\sharp \left\{x \in \mathcal{G} \middle| x \in A_{2^n,2^{n+1}} \right\} \leq C_3,$$
and therefore : 
\begin{align*}
\sharp \left \{ y \in \mathcal{G} \middle| V_y \cap W_x \neq \emptyset \right\} & \leq \sum_{m = -k}^{k} \left\{x \in \mathcal{G} \middle| x \in A_{2^m,2^{m+1}} \right\} \\ 
& \leq \sum_{m = n-k}^{n+k} C_3 = \left(2k +1\right)C_3.
\end{align*}
We have finally proved \ref{Wiv}. 
\item[\ref{Wv} :] Let $y$ be in $\mathcal{G}\setminus{\{x\}}$. We first assume that $2^{-k-1}|y|> |x|$, where $k$ is defined as in \eqref{defk} and is independent of $x$. In the proof of \ref{Wiv} above, we have shown that $W_y\cap V_x = \emptyset$. Therefore, using Property \ref{Wi} and \ref{Wii} of $W_y$ we easily obtain that there exists a constant $M_1>0$ independent of $x$ and $y$ such that $D(V_x,V_y)>M_1|y|$ and we can conclude. Next, we assume that $2^{-k-1}|y| \leq |x|$. Using again Property \ref{Wi} and \ref{Wii} of $W_x$, we obtain the existence of $M_2>0$ independent of $x$ and $y$ such that $D\left(V_x, V_y\setminus{W_x}\right)\geq M_2 |x| \geq M_2 2
^{-k-1} |y|$. Finally, we have proved \ref{Wv} with $C_5 =\min(M_1, M_22
^{-k-1})$.
\end{itemize}
\end{proof}

%-------------------------------------------------%
\subsection{The particular case of the "$2^p$"}
%-------------------------------------------------%

We next prove that the set $\mathcal{G}_{C_0}$ defined by \eqref{defG} satisfies Assumptions \eqref{H1}, \eqref{H2} and \eqref{H3}. In order to avoid many unnecessary technical details, we study here the Voronoï diagram only for $d=2$ (the case $d=1$ being obvious) and, in the sequel, we admit that these properties still hold in higher dimension. We also consider the cell $V_p$ only for $p=(p_1,p_2) \in \left(\mathbb{R}^{+*}\right)^2$. Since the distribution of the points $2^q$ is symmetric with respect to the origin, the other cases are similar and we omit them.

\begin{proof}[Proof of \eqref{H1}]

 Let $p=(p_1,p_2)$ be in $\mathcal{P}_{C_0}\cap \left(\mathbb{R}^{+*}\right)^2 $.  We first prove the following inclusion : 
  \begin{equation}
 \label{inclusionVP}
    V_{p} \subset{\left\{(x,y) \in \mathbb{R}^2    \middle| 2^{p_1-1}\leq x \leq 2^{|p|+1} \ \text{and } 2^{p_2-1}\leq y \leq 2^{|p|+1}\right\}}.
    \end{equation}
To this aim, we want to show that if $(x,y)\notin [2^{p_1-1},2^{|p|+1}]\times [2^{p_2-1}, 2^{|p|+1}]$, then there exists $x_q\in \mathcal{P}_{C_0}\setminus{\{x_p\}}$ such that the point $(x,y)$ is closer to $x_q$ than to $x_p$ and therefore $(x,y) \notin V_p$. We consider $(x,y)\in \left(\mathbb{R}^+\right)^2$ and we start by assuming that $x<2^{p_1-1}$. We have : 
    $$D((x,y) , x_p)^2 = |x - 2^{p_1}|^2 + |y - 2^{p_2}|^2, $$
    and : 
    $$D((x,y), (0,2^{p_2}))^2 = |x|^2 + |y - 2^{p_2}|^2. $$
    In addition, since $x<2^{p_1-1}$, we use a triangle inequality and : 
    $$|x - 2^{p_1}| > 2^{p_1} - 2^{p_1-1} = 2^{p_1-1} > |x|,$$
    we obtain that $D((x,y) , x_p)^2 > D((x,y), (0,2^{p_2}))^2$. That is, $(x,y)$ is closer to $(0,2^{p_2})\in \mathcal{G}_{C_0}$ than to $x_p$ and we deduce that $(x,y) \notin V_{p}$. Thus, we can first conclude that $V_p$ is included in $\left\{ (x,y) \in \mathbb{R}^2 \ \middle| \ 2^{p_1 - 1} \leq x \right\}$.
    
    Next, if $x>2^{|p|+1}$, we consider two cases : \\
    1. If $y\leq 2^{p_2}$, we have : 
        $$ D\left((x,y), \left(2^{|p|+1},0\right)\right)^2 =  \left|x - 2^{|p|+1}\right|^2 + |y|^2 \leq \left|x - 2^{|p|+1}\right|^2 + 2^{2p_2}.$$
        Since $|p|=\max(p_1,p_2)$, we have $|p|\geq p_1$ and : 
        \begin{align*}
           D((x,y) , x_p)^2 & \geq  |x-2^{p_1}|^2  = \left(x-2^{|p|+1} + 2^{|p|+1} - 2^{p_1}\right)^2\\
            & \geq  \left(x-2^{|p|+1} + 2^{|p|+1} - 2^{|p|} \right)^2 = \left(x-2^{|p|+1} + 2^{|p|} \right)^2.
        \end{align*}
        The inequalities above are valid since $x - 2^{|p|+1}>0$. Again, since $|p|\geq p_2$, we have :
        \begin{align*}
           \left(x-2^{|p|+1} + 2^{|p|} \right)^2 & \geq \left(x-2^{|p|+1} + 2^{p_2}\right)^2 \\ 
            & > \left|x-2^{|p|+1}\right|^2 + 2^{2p_2}.
        \end{align*}
        Finally, 
        $$D((x,y) , x_p)^2  > \left|x-2^{|p|+1}\right|^2 + 2^{2p_2}\geq D\left((x,y), \left(2^{|p|+1},0\right)\right).$$
        We conclude that $ D((x,y) , x_p)> D((x,y), (2^{p_1+1},0)) $ and finally $(x,y) \notin V_{p}$.
        \\
        
       2. If $y > 2^{p_2}$ : 
        $$ D((x,y), (2^{|p|+1},2^{p_2 +1}))^2 =  |x - 2^{|p|+1}|^2 + |y- 2^{p_2 +1}|^2.$$
        We have proved above that $|x-2^{p_1}|^2>\left|x-2^{|p|+1}\right|^2 + 2^{2p_2}$. In order to obtain a lower bound on $D((x,y) , x_p)$, we have to establish a similar inequality for the term $|y-2^{p_2}|^2$. We have : 
        \begin{align*}
            |y-2^{p_2}|^2 & = \left(y-2^{p_2+1} + 2^{p_2+1} - 2^{p_2}\right)^2\\
            & = \left(y-2^{p_2+1} + 2^{p_2}\right)^2\\
            & = \left|y-2^{p_2+1}\right|^2 + 2^{2p_2} + 2\left(y-2^{p_2+1}\right) 2^{p_2}\\
            & \geq \left|y-2^{p_2+1}\right|^2 + 2^{2p_2} - 2^{2p_2+1} = \left|y-2^{p_2+1}\right|^2 - 2^{2p_2}.
        \end{align*}
        Finally, we have : 
        $$|x-2^{p_1}|^2+|y-2^{p_2}|^2>\left|x-2^{|p|+1}\right|^2 + 2^{2p_2} + \left|y-2^{p_2+1}\right|^2 - 2^{2p_2} = \left|x-2^{|p|+1}\right|^2  + \left|y-2^{p_2+1}\right|^2.$$
        As above, we obtain that $D((x,y) , x_p)> D((x,y), (2^{|p|+1},2^{p_2+1}))$.  
        We just have to check that $\left(2^{|p| +1}, 2^{p_2+1}\right) \in \mathcal{G}_{C_0}$. First if $|p|= p_1$, since $\left(p_1, p_2 \right)\in \mathcal{P}_{C_0}$, we have :
        $$\max\left\{p_1, p_2\right\}\leq \min\left\{p_1, p_2\right\}+ C_0,$$
        and therefore : 
        $$\max\left\{p_1+1, p_2+1\right\}\leq \min\left\{p_1+1, p_2+1\right\}+ C_0.$$
        That is, $\left(|p|+1, p_2+1 \right) = \left(p_1+1, p_2+1 \right)\in \mathcal{P}_{C_0}$. Secondly, if $|p|=p_2$, we clearly have $\left(|p|+1, p_2+1 \right)= (p_2+1, p_2+1)\in \mathcal{P}_{C_0}$. In every cases, we can conclude that $(x,y) \notin V_{p}$ and finally we obtain that $V_p$ is included in $\left\{ (x,y) \in \mathbb{R}^2 \ \middle| \ x  \leq 2^{|p|} \right\}$..

    Using the symmetry of the distribution, we can use exactly the same argumentation to treat the cases $y<2^{p_2-1}$ and $y>2^{|p|+1}$. We finally have established the inclusion \eqref{inclusionVP}. We note the volume of the cube $\left[2^{p_1-1},2^{|p|+1}\right]\times \left[2^{p_2-1},2^{|p|+1}\right]$ is bounded by $4.2^{2|p|}$, and we can deduce that : 
    $$\left|V_{x_p}\right| \leq 4 .2^{2|p|}.$$ 
  \eqref{H1} is proved. 

\end{proof}

\begin{proof}[Proof of \eqref{H2}]
Let $p$ be in $\mathcal{P}_{C_0}\cap \left(\mathbb{R}^{+*}\right)^2$. We have : 
    $$D\left(x_p, \mathcal{G}_{C_0}\setminus{\{x_p\}}\right) \leq D(x_p, 0) = |x_p|,$$
    and therefore : 
    $$ 1 \leq \frac{1 + \left|x_p \right|}{D\left(x_p, \mathcal{G}_{C_0} \setminus \left\{x_p \right\}\right)}.$$
    To show the upper bound, we consider $x_q \in \mathcal{G}_{C_0}\setminus{\{x_p\}}$. Without loss of generality, we can assume $|p_1|= |p|$ and there are three cases :  
    \begin{itemize}
        \item If $|q_1|\neq |p_1|$, then : 
        \begin{align*}
    D(x_p, x_q) & \geq \left|\operatorname{sign}(p_1)2^{|p_1|} - \operatorname{sign}(q_1)2^{|q_1|}\right|\\
    & \geq \left|2^{|p_1|} - 2^{|q_1|}\right| = 2^{|p_1|}\left|1 - 2^{|q_1| - |p_1|}\right|\\
    & \geq  2^{|p_1|}\frac{1}{2} =  2^{|p|-1}.
    \end{align*}
    \item If $p_1 = q_1$, since $p\in \mathcal{P}_{C_0}$, we have $|p_2|\geq |p| - C_0$ and as above : 
    $$D(x_p, x_q) \geq 2^{|p_2|-1} \geq 2^{|p|-C_0-1}.$$
    \item Finally, if $p_1 = - q_1$, we have : 
    $$D(x_p, x_q) \geq \left|\operatorname{sign}(p_1)2^{|p_1|} - \operatorname{sign}(q_1)2^{|q_1|}\right|  = 2^{|p|+1}.$$
    \end{itemize}
    In the three cases we conclude there exists $C>0$ independent of $q$ such that $D(x_p - x_q) \geq C 2^{|p|}$. Finally, since $|x_p|= \left(2^{2p_1} + 2^{2p_2}\right)^{1/2}\leq \sqrt{2}.2^{|p|}$, we obtain the existence of a constant  $C_1>0$ independent of $p$ such that :  
    $$\frac{1 + \left|x_p \right|}{D\left(x_p, \mathcal{G}_{C_0} \setminus \left\{x_p \right\}\right)} \leq C_1.$$

\end{proof}

\begin{proof}[Proof of \eqref{H3}]
Let $p = (p_1,p_2)$ be in $\mathcal{P}_{C_0}$. We use \eqref{inclusionVP} to bound the diameter of $V_{p}$ by the diameter of the cube $[0, 2^{|p|+1}] \times [0, 2^{|p|+1}]$, that is : 
     $$Diam\left(V_{p}\right) \leq \sqrt{2}. 2^{|p|+1}.$$
     In addition, we have proved there exists $C>0$ such that for every $x_p \in \mathcal{G}$, we have : 
     $$D\left(x_p, \mathcal{G}_{C_0} \setminus \left\{x_p \right\}\right) \geq C |x_p| \geq C 2^{|p|}.$$
     We directly obtain \eqref{H3}.

\end{proof}

We finally conclude this section establishing an estimate regarding the norm of each element $x_p$ of $\mathcal{G}_{C_0}$. Using Proposition \ref{volumeVx}, the next property shall be useful to estimate the volume of the Voronoi cells in our particular case. 

\begin{prop}
\label{norm2p}
There exists $C_1>0$ and $C_2>0$ such that for every $p$ in $\mathcal{P}_{C_0}$, we have :
\begin{equation}
\label{normestimate}
    C_1 2^{|p|} \leq |x_p| \leq C_2 2^{|p|}.
\end{equation}
\end{prop}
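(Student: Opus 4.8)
The plan is to argue directly from the explicit description of the set $\mathcal{G}_{C_0}$, which is why the definitions \eqref{defPC0}-\eqref{defG} were stated coordinate-wise. Recall that for $p \in \mathcal{P}_{C_0}$ the point $x_p$ has coordinates $(x_p)_i = \operatorname{sign}(p_i)2^{|p_i|}$ when $p_i \neq 0$ and $(x_p)_i = 0$ when $p_i = 0$, so that
$$|x_p|^2 = \sum_{i=1}^d (x_p)_i^2 = \sum_{p_i \neq 0} 2^{2|p_i|}.$$
I would first handle the trivial case $p = 0$ (if it is allowed in $\mathcal{P}_{C_0}$, then $x_0 = 0$ and $|p| = 0$, so $2^{|p|} = 1$ and the bounds hold with, say, $C_1 \le 1 \le C_2$); otherwise assume $p \neq 0$ and let $j$ be an index achieving $|p_j| = |p| = \max_i |p_i|$, noting $|p_j| \geq 1$ so this coordinate is nonzero.

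For the upper bound, I would simply note that every nonzero term satisfies $2^{2|p_i|} \leq 2^{2|p|}$ and there are at most $d$ of them, so $|x_p|^2 \leq d\, 2^{2|p|}$, giving $|x_p| \leq \sqrt{d}\, 2^{|p|}$; thus $C_2 = \sqrt{d}$ works. For the lower bound, I would keep only the term $i = j$: since $2^{2|p_j|} = 2^{2|p|}$ appears in the sum and all terms are nonnegative, $|x_p|^2 \geq 2^{2|p|}$, hence $|x_p| \geq 2^{|p|}$, so $C_1 = 1$ works. In fact both constants can be taken independent of $C_0$ and $p$, as claimed.

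There is really no substantial obstacle here; the only mild point to be careful about is the convention $\operatorname{sign}(0) = 0$ and the definition of the norm $|p| = \max_i |p_i|$ as opposed to the Euclidean norm, so that one must not confuse $|p|$ (an integer exponent) with $|x_p|$ (a Euclidean length). Once the coordinate formula for $x_p$ is written down, both inequalities follow from the elementary observation that in a sum of nonnegative terms the largest term bounds the sum from below and the sum is bounded above by the number of terms times the largest one. This estimate, combined with Proposition \ref{volumeVx}, then yields the analogue $C_1' 2^{d|p|} \leq |V_p| \leq C_2' 2^{d|p|}$ for the volumes of the Voronoi cells in the specific set $\mathcal{G}_{C_0}$, which is what will be used in the sequel.
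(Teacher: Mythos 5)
Your proof is correct and essentially identical to the paper's: both write $|x_p|$ via its coordinate formula, bound the sum above by $d$ copies of the largest term to get $C_2=\sqrt d$, and bound it below by the single maximal term (at an index achieving $|p|$) to get $C_1=1$. The only difference is that you are marginally more careful with the $\operatorname{sign}(0)=0$ convention, writing the sum over $p_i\neq 0$ and treating $p=0$ separately, whereas the paper writes $|x_p|=(\sum_{i}2^{2|p_i|})^{1/2}$ without that caveat; this does not change the estimate, and your argument otherwise coincides with the paper's.
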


\begin{proof}
For $p \in \mathcal{P}_{C_0}$, we have: 
$$|x_p| = \left(\sum_{i \in {1,...,d}} 2^{2|p_i|}\right)^{1/2}.$$
We first use the inequality $|p_i|\leq |p| $ to obtain the upper bound. That is : 
$$|x_p| \leq \left(\sum_{i \in {1,...,d}} 2^{2|p|}\right)^{1/2} \leq \sqrt{d}2^{|p|}.$$
For the lower bound, we denote $\displaystyle j = \operatorname{argmax}_{i \in \{1,...,d\}} |p_i|$ and we have :
$$|x_p| \geq 2^{|p_j|} = 2^{|p|}.$$
We have established the norm estimate \eqref{normestimate}.
\end{proof}

In the sequel of this work, we only consider the specific set $\mathcal{G}_{C_0}$, defined by \eqref{defG}, for a fixed arbitrary constant $C_0>1$. Therefore, for the sake of clarity and without loss of generality, we will denote $\mathcal{G}$ and $\mathcal{P}$ instead of $\mathcal{G}_{C_0}$ and $\mathcal{P}_{C_0}$.

%-------------------------------------------------%
\section{Properties of the functional space $\mathcal{B}^2(\mathbb{R}^d)$}
\label{Section3}
%-------------------------------------------------%

In this section we prove some properties satisfied by the functional space $\mathcal{B}^2(\mathbb{R}^d)$. The following results are heavily based upon the geometric distribution of the $x_p$. They are key for the understanding of the structure of $\mathcal{B}^2$ and to establish the homogenization of problem \eqref{equationepsilon}. 

To start with, we show the uniqueness of a limit $L^2$-function $f_{\infty}$ in $L^2(\mathbb{R}^d)$ defined in \eqref{spacedef} and characterizing each element of $\mathcal{B}^2(\mathbb{R}^d)$. This result ensures that the definition of the function space $\mathcal{B}^2(\mathbb{R}^d)$ is consistent.  

\begin{prop}
Let $f$ be a function of $\mathcal{B}^2(\mathbb{R}^d)$. Then, the limit function $f_{\infty} \in L^2(\mathbb{R}^d)$ defined in \eqref{spacedef} is unique. 
\end{prop}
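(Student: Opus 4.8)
The plan is to prove uniqueness by contradiction. Suppose $f \in \mathcal{B}^2(\mathbb{R}^d)$ admits two limit functions $f_\infty$ and $g_\infty$ in $L^2(\mathbb{R}^d)$, both satisfying the defining property in \eqref{spacedef}. The goal is to show $f_\infty = g_\infty$ almost everywhere, equivalently $h := f_\infty - g_\infty = 0$ in $L^2(\mathbb{R}^d)$. The key observation is a triangle inequality in $L^2(V_p)$: for each $p \in \mathcal{P}$,
\begin{equation}
\|\tau_{-p} f_\infty - \tau_{-p} g_\infty\|_{L^2(V_p)} \leq \|f - \tau_{-p} f_\infty\|_{L^2(V_p)} + \|f - \tau_{-p} g_\infty\|_{L^2(V_p)},
\end{equation}
and both terms on the right tend to $0$ as $|p| \to \infty$ by hypothesis. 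Since translation is an isometry, the left-hand side equals $\|h\|_{L^2(V_p - 2^p)}$. Hence $\|h\|_{L^2(V_p - 2^p)} \to 0$ as $|p| \to \infty$.

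Now I invoke Proposition \ref{tailledesVP}: there is a sequence $(x_n) \subset \mathcal{G}$ with $|x_n| \to \infty$ such that the translated cells $V_{x_n} - x_n$ form an increasing sequence of sets whose union is $\mathbb{R}^d$. Applying the convergence just established along this sequence (each $x_n$ corresponds to an index $p_n \in \mathcal{P}$ with $|p_n| \to \infty$), we get $\|h\|_{L^2(V_{x_n} - x_n)} \to 0$. On the other hand, by monotone convergence (the cells $V_{x_n} - x_n$ increase to $\mathbb{R}^d$ and $|h|^2 \in L^1$), we have $\|h\|_{L^2(V_{x_n}-x_n)} \to \|h\|_{L^2(\mathbb{R}^d)}$. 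Comparing the two limits forces $\|h\|_{L^2(\mathbb{R}^d)} = 0$, i.e. $f_\infty = g_\infty$.

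I expect the argument to be essentially routine; the only point requiring a little care is matching the index sets. Proposition \ref{tailledesVP} is stated for a general sequence in $\mathcal{G}$, and one must note that for the specific set $\mathcal{G}$ every point is of the form $2^{p}$ for a unique $p \in \mathcal{P}$, and that $|x_n| \to \infty$ is equivalent to $|p_n| \to \infty$ (by Proposition \ref{norm2p}), so that the hypothesis $\lim_{|p| \to \infty} \int_{V_p} |f - \tau_{-p} f_\infty|^2 = 0$ genuinely applies along the chosen sequence. Once that identification is made, the combination of the isometry of translation, the triangle inequality, and the monotone convergence theorem closes the proof with no real obstacle.
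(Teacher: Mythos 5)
Your proof is correct and follows essentially the same route as the paper: triangle inequality on $V_p$, translation isometry to move to $V_p - 2^p$, and Proposition \ref{tailledesVP} to exhaust $\mathbb{R}^d$. The only additions (explicitly invoking monotone convergence and checking the index correspondence via Proposition \ref{norm2p}) are helpful clarifications of steps the paper leaves implicit, not a different argument.
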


\begin{proof}
We assume there exist two functions $f_{\infty}$ and $g_{\infty}$ in $L^2(\mathbb{R}^d)$ such that $$\displaystyle\lim_{|p| \rightarrow \infty}  \|f - \tau_{-p} f_{\infty}\|_{L^2(V_p)} =\displaystyle\lim_{|p| \rightarrow \infty}  \|f - \tau_{-p} g_{\infty}\|_{L^2(V_p)}= 0.$$
By a triangle inequality, we obtain for every $p \in \mathcal{P}$ : 
$$\|\tau_{-p}f_{\infty} - \tau_{-p}g_{\infty}\|_{L^2(V_p)} \leq \|f - \tau_{-p} f_{\infty}\|_{L^2(V_p)} + \|f - \tau_{-p} g_{\infty}\|_{L^2(V_p)} \underset{|p|\to+\infty}{\longrightarrow} 0.$$
In addition, we have $\|\tau_{-p}f_{\infty} - \tau_{-p}g_{\infty}\|_{L^2(V_p)} = \|f_{\infty} - g_{\infty}\|_{L^2(V_p-2^p)}$. According to Proposition \ref{tailledesVP}, we can find a sequence $(p_n)_{n \in \mathbb{N}} \in \mathcal{P}$ such that $\displaystyle \lim_{n \rightarrow \infty}|p_n| = \infty $ and : 
$$\bigcup_{n \in \mathbb{N}}\left(V_{p_n}-2^{p_n}\right) = \mathbb{R}^d.$$
We can finally conclude that $\|f_{\infty} - g_{\infty}\|_{L^2(\mathbb{R}^d)} = 0$, that is $f_{\infty} = g_{\infty}$ in $L^2(\mathbb{R}^d)$.
\end{proof}

We next study the structure of the space $\mathcal{B}^2(\mathbb{R}^d)$ showing two essential properties that shall allow us to establish the existence of the corrector in Section \ref{Section4}. In particular, we prove in Proposition \ref{Banach} that $\mathcal{B}^2(\mathbb{R}^d)$ is a Banach space. 

\begin{prop}
\label{Banach}
The space $\mathcal{B}^2(\mathbb{R}^d)$ equipped with the norm defined by \eqref{normdef}, is a Banach space.
\end{prop}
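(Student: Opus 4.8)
The plan is to prove completeness by the standard argument: take a Cauchy sequence in $\mathcal{B}^2(\mathbb{R}^d)$, extract candidate limits in the ambient spaces where we already know completeness, and then verify that the limit lies in $\mathcal{B}^2(\mathbb{R}^d)$ and that convergence holds in the $\mathcal{B}^2$-norm. Concretely, let $(f_n)_{n\in\mathbb{N}}$ be a Cauchy sequence in $\mathcal{B}^2(\mathbb{R}^d)$, with associated limit $L^2$-functions $(f_{n,\infty})$. The three terms in the norm \eqref{normdef} show immediately that $(f_{n,\infty})$ is Cauchy in $L^2(\mathbb{R}^d)$ and that $(f_n)$ is Cauchy in $L^2_{unif}(\mathbb{R}^d)$. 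Since both $L^2(\mathbb{R}^d)$ and $L^2_{unif}(\mathbb{R}^d)$ are Banach spaces, there exist $f_\infty \in L^2(\mathbb{R}^d)$ and $f \in L^2_{unif}(\mathbb{R}^d)$ with $f_{n,\infty} \to f_\infty$ in $L^2$ and $f_n \to f$ in $L^2_{unif}$. It remains to check that $f \in \mathcal{B}^2(\mathbb{R}^d)$ with limit function $f_\infty$, and that $\|f_n - f\|_{\mathcal{B}^2} \to 0$.

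The key quantity to control is $\sup_{p\in\mathcal{P}} \|f - \tau_{-p}f_\infty\|_{L^2(V_p)}$, and more precisely that it is \emph{small} for $n$ large when $f$ is replaced by $f - f_n$ and $f_\infty$ by $f_\infty - f_{n,\infty}$. For this, fix $\varepsilon > 0$ and choose $N$ so that $\|f_n - f_m\|_{\mathcal{B}^2} < \varepsilon$ for all $n,m \ge N$. Then for every $p \in \mathcal{P}$ and all $m \ge N$,
\begin{equation}
\|f_m - \tau_{-p} f_{m,\infty} - (f_n - \tau_{-p}f_{n,\infty})\|_{L^2(V_p)} \le \|f_n - f_m\|_{\mathcal{B}^2} < \varepsilon .
\end{equation}
Now I would let $m \to \infty$: since $f_m \to f$ in $L^2_{unif}$ and each $V_p$ has finite volume by \eqref{H1} and diameter controlled by \eqref{H2}--\eqref{H3} (Proposition \ref{volumeVx}), $f_m \to f$ in $L^2(V_p)$; likewise $\tau_{-p} f_{m,\infty} \to \tau_{-p} f_\infty$ in $L^2(V_p)$ because $f_{m,\infty} \to f_\infty$ in $L^2(\mathbb{R}^d)$. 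Hence the left-hand side converges to $\|f - \tau_{-p} f_\infty - (f_n - \tau_{-p} f_{n,\infty})\|_{L^2(V_p)}$, giving for all $n \ge N$ and all $p \in \mathcal{P}$ the uniform bound $\|(f - f_n) - \tau_{-p}(f_\infty - f_{n,\infty})\|_{L^2(V_p)} \le \varepsilon$.

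From this uniform bound I can finish both remaining points. First, $f \in \mathcal{B}^2(\mathbb{R}^d)$ with limit function $f_\infty$: writing $f - \tau_{-p} f_\infty = (f_n - \tau_{-p} f_{n,\infty}) + \big[(f-f_n) - \tau_{-p}(f_\infty - f_{n,\infty})\big]$ and using a triangle inequality, we get, for a fixed $n \ge N$,
\begin{equation}
\|f - \tau_{-p} f_\infty\|_{L^2(V_p)} \le \|f_n - \tau_{-p} f_{n,\infty}\|_{L^2(V_p)} + \varepsilon ,
\end{equation}
and since $f_n \in \mathcal{B}^2(\mathbb{R}^d)$ the first term tends to $0$ as $|p| \to \infty$; hence $\limsup_{|p|\to\infty} \|f - \tau_{-p}f_\infty\|_{L^2(V_p)} \le \varepsilon$ for every $\varepsilon>0$, so the limit is $0$ and the defining condition of \eqref{spacedef} holds. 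Second, taking the supremum over $p \in \mathcal{P}$ in the uniform bound controls the third term of $\|f - f_n\|_{\mathcal{B}^2}$ by $\varepsilon$, while the first two terms are controlled by $\|f_\infty - f_{n,\infty}\|_{L^2(\mathbb{R}^d)}$ and $\|f - f_n\|_{L^2_{unif}(\mathbb{R}^d)}$, both of which tend to $0$; hence $f_n \to f$ in $\mathcal{B}^2(\mathbb{R}^d)$.

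I expect the only genuinely delicate point to be the passage $m \to \infty$ inside the $L^2(V_p)$-norm for a fixed but arbitrary cell $V_p$: one must justify that $L^2_{unif}$-convergence upgrades to $L^2(V_p)$-convergence, which is where finiteness of $|V_p|$ (Assumption \eqref{H1}, or Proposition \ref{volumeVx} in the concrete setting) is used — one covers $V_p$ by finitely many unit balls, the number depending on $p$ but that is harmless since $p$ is fixed during this limit. Everything else is a routine assembly of triangle inequalities, and the uniformity in $p$ is inherited directly from the uniformity built into the norm \eqref{normdef}, so it survives the limit.
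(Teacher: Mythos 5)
Your proof is correct and follows essentially the same route as the paper's: identify Cauchy sequences in $L^2(\mathbb{R}^d)$ and $L^2_{unif}(\mathbb{R}^d)$, pass to the limit in the uniform bound on $\sup_p\|\cdot\|_{L^2(V_p)}$, and close with a triangle inequality to show the limit lies in $\mathcal{B}^2(\mathbb{R}^d)$. The only difference is that you spell out more carefully why $L^2_{unif}$-convergence upgrades to $L^2(V_p)$-convergence for a fixed cell (finite volume of $V_p$), a step the paper treats as implicit.
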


\begin{proof}
Let $(f_n)_{n \in \mathbb{N}}$ be a Cauchy sequence in $\mathcal{B}^2(\mathbb{R}^d)$. Definitions \eqref{spacedef} and \eqref{normdef} ensure the existence of a Cauchy sequence $f_{n, \infty}$ in $L^2(\mathbb{R}^d)$ such that for every $n \in \mathbb{N}$,  
$$\displaystyle\lim_{|p| \rightarrow \infty}  \|f_n - \tau_{-p} f_{n,\infty} \|_{L^2(V_p)} = 0.$$ Then, for any $\varepsilon>0$, there exists $N\in \mathbb{N}$ such that for all $n>N$, $k>0$ : 
\begin{align}
    & \|f_{n+k} - f_{n}\|_{L^2_{unif}} \leq \varepsilon, \\
    & \|f_{n+k, \infty} - f_{n, \infty}\|_{L^2(\mathbb{R}^d)} \leq \varepsilon, \\
    \label{CauchyVP}
    &\sup_{p \in \mathcal{P}} \|\left(f_{n+k} - \tau_{-p}f_{n+k, \infty}\right) - \left(f_{n} - \tau_{-p}f_{n, \infty}\right)\|_{L^2(V_p)} \leq \frac{\varepsilon}{2}.
\end{align}
Since $L^2$ and $L^2_{unif}$ are Banach spaces, there exist $f \in L^2_{unif}(\mathbb{R}^d)$ and $f_{\infty} \in L^2(\mathbb{R}^d)$ such that $f_n \underset{n\to+\infty}{\longrightarrow} f $ in $L^2_{unif}(\mathbb{R}^d)$ and $f_{n, \infty} \underset{n\to+\infty}{\longrightarrow} f_{\infty} $ in $L^2(\mathbb{R}^d)$. We consider the limit in \eqref{CauchyVP} when $k \rightarrow \infty$ and we obtain : 
$$\sup_{p \in \mathcal{P}}  \|\left(f - \tau_{-p}f_{\infty}\right) - \left(f_{n} - \tau_{-p}f_{n, \infty}\right)\|_{L^2(V_p)} \leq \frac{\varepsilon}{2}.$$
Since $\varepsilon$ can be chosen arbitrary small, we deduce : 
$$\lim_{n \rightarrow \infty} \sup_{p \in \mathcal{P}}  \|\left(f - \tau_{-p}f_{\infty}\right) - \left(f_{n} - \tau_{-p}f_{n, \infty}\right)\|_{L^2(V_p)} = 0.$$
The function $f$ is therefore the limit of $f_n$ for the norm \eqref{normdef}. We just have to show that $f \in \mathcal{B}^2(\mathbb{R}^d)$ to conclude.
Indeed, for a fixed $n>N$ and for $p$ sufficiently large, we have : 
$$\|f_n - \tau_{-p} f_{n,\infty} \|_{L^2(V_p)} \leq \frac{\varepsilon}{2}. $$
Using a triangle inequality, it follows : 
\begin{align*}
\|f - \tau_{-p} f_{\infty} \|_{L^2(V_p)} & \leq \|f_n - \tau_{-p} f_{n,\infty} \|_{L^2(V_p)} + \sup_{p \in \mathcal{P}} \|\left(f - \tau_{-p}f_{\infty}\right) - \left(f_{n} - \tau_{-p}f_{n, \infty}\right)\|_{L^2(V_p)} \\
& \leq \varepsilon.
\end{align*}
Finally, we obtain $\displaystyle\lim_{|p| \rightarrow \infty}  \|f - \tau_{-p} f_{\infty} \|_{L^2(V_p)} = 0$. 
\end{proof}

\begin{prop}
\label{density}
Let $\alpha \in ]0,1[$, then $\mathcal{C}^{0,\alpha}(\mathbb{R}^d)\cap \mathcal{B}^2(\mathbb{R}^d)$ is dense in $\left(\mathcal{B}^2(\mathbb{R}^d), \|.\|_{\mathcal{B}^2(\mathbb{R}^d)}\right).$
\end{prop}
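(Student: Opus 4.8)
**The plan is to prove the density by a two-step approximation argument: first reduce to the case where the limit function $f_\infty$ is Hölder continuous, then handle the "remainder part" of $f$ locally.**

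The plan is to prove the density result by exhibiting, for an arbitrary $f\in\mathcal{B}^2(\mathbb{R}^d)$ and arbitrary $\eta>0$, an element $g\in\mathcal{C}^{0,\alpha}(\mathbb{R}^d)\cap\mathcal{B}^2(\mathbb{R}^d)$ with $\|f-g\|_{\mathcal{B}^2(\mathbb{R}^d)}<\eta$. The natural first move is to approximate the limit function $f_\infty\in L^2(\mathbb{R}^d)$ itself. Since $\mathcal{D}(\mathbb{R}^d)$ is dense in $L^2(\mathbb{R}^d)$, pick $\varphi\in\mathcal{D}(\mathbb{R}^d)\subset\mathcal{C}^{0,\alpha}(\mathbb{R}^d)$ with $\|f_\infty-\varphi\|_{L^2(\mathbb{R}^d)}$ as small as we like. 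Then write $f=\big(f-\text{"}f_\infty\text{-part"}\big)+\text{"}f_\infty\text{-part"}$. The key structural point, which I would make precise, is that replacing $f_\infty$ by $\varphi$ in the bookkeeping changes $f$ in the $\mathcal{B}^2$-norm only by a controlled multiple of $\|f_\infty-\varphi\|_{L^2(\mathbb{R}^d)}$: indeed the terms $\|f_\infty-\varphi\|_{L^2}$, and $\sup_p\|\tau_{-p}(f_\infty-\varphi)\|_{L^2(V_p)}\le\|f_\infty-\varphi\|_{L^2(\mathbb{R}^d)}$ are both small, while the $L^2_{unif}$ part of $f$ is untouched. So it suffices to approximate, in $\mathcal{B}^2$-norm, a function $h\in\mathcal{B}^2(\mathbb{R}^d)$ whose associated limit function $h_\infty$ is \emph{already} in $\mathcal{C}^{0,\alpha}$ (in fact compactly supported), by elements of $\mathcal{C}^{0,\alpha}\cap\mathcal{B}^2$.

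For the second step, the decomposition $h = (h - \sum_{p} \tau_{-p}h_\infty\,\mathbf{1}_{V_p}) + \sum_p \tau_{-p}h_\infty\,\mathbf{1}_{V_p}$ is not directly usable since the indicator-truncations are not Hölder. Instead I would argue as follows. By definition of $\mathcal{B}^2$, for a radius $R$ large enough the quantity $\|h-\tau_{-p}h_\infty\|_{L^2(V_p)}$ is as small as we wish for all $p$ with $V_p\not\subset B_R$; so on the "far" region $h$ is $L^2$-close to a translate of the smooth $h_\infty$. On the bounded region $B_R$, $h$ is merely an $L^2_{loc}$ (indeed $L^2_{unif}$) function, and here I would mollify: the convolution $h*\rho_\delta$ of $h$ with a standard mollifier is smooth, hence in $\mathcal{C}^{0,\alpha}$, and $\|h-h*\rho_\delta\|_{L^2(B_{R+1})}\to0$ as $\delta\to0$. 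Gluing these two local approximations by a smooth cutoff $\chi$ equal to $1$ on $B_R$ and supported in $B_{R+1}$, one builds $g=\chi(h*\rho_\delta)+(1-\chi)(\text{translated }h_\infty\text{-pieces})$ — but to keep $g$ genuinely in $\mathcal{C}^{0,\alpha}$ and to keep its own limit function equal to $h_\infty$, the cleanest choice is simply $g = \chi\,(h*\rho_\delta) + (1-\chi)\,h$ and then observe $h$ itself is Hölder away from $B_R$ only up to the small $L^2$ error — so rather I would take $g=\chi\,(h*\rho_\delta)+(1-\chi)\,\psi$ where $\psi\in\mathcal{C}^{0,\alpha}\cap\mathcal{B}^2$ is the function obtained by "periodically pasting" translates of $h_\infty$ into the cells $V_p$ and interpolating in a Hölder fashion across cell boundaries. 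One must check that this $\psi$ indeed lies in $\mathcal{B}^2$ with limit function $h_\infty$: this uses Corollary \ref{corollogboundVP} (only $O(\log R)$ cells meet any ball, so the Hölder seminorm is controlled uniformly) together with Proposition \ref{openconstruction} (each cell is well-separated, so the interpolation regions do not overlap and $D(V_y\setminus W_x,V_x)\gtrsim|y|$ keeps everything local).

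The main obstacle I anticipate is precisely the construction of the Hölder "pasting" function $\psi$ and the verification that it belongs to $\mathcal{B}^2(\mathbb{R}^d)$ with the correct limit function — i.e., reconciling the pointwise Hölder regularity (which wants smooth transitions across the boundaries $\partial V_p$) with the requirement that $\psi$ look like a clean translate $\tau_{-p}h_\infty$ inside each $V_p$. The saving grace is the geometry established in Section \ref{Section2}: the cells are large (radius $\gtrsim|x_p|$ by Proposition \ref{volumeVx} and \eqref{H2}), so there is ample room near $\partial V_p$ to interpolate between $\tau_{-p}h_\infty$ and $0$ (or between neighboring translates) on a layer of width comparable to $|x_p|$, at a Hölder cost that is uniformly bounded because $h_\infty$ is bounded and the layer width grows; and the logarithmic sparsity from Corollary \ref{corollogboundVP} prevents any accumulation of these transition layers. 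Once $\psi$ is in hand, the final estimate $\|f-g\|_{\mathcal{B}^2}\lesssim\|f_\infty-\varphi\|_{L^2}+\sup_{V_p\not\subset B_R}\|h-\tau_{-p}h_\infty\|_{L^2(V_p)}+\|h-h*\rho_\delta\|_{L^2(B_{R+1})}$ follows by splitting the $L^2_{unif}$ norm and the cell-supremum into the region $B_{R+1}$ (where mollification controls the error) and its complement (where $g=\psi$ and the $\mathcal{B}^2$ structure of both $f$ and $\psi$ is used), and letting $R\to\infty$ then $\delta\to0$ then $\varphi\to f_\infty$.
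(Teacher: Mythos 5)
Your overall two-step plan — approximate $f_\infty$ by $\varphi\in\mathcal{D}(\mathbb{R}^d)$, then fix up the near-field part — is the right strategy and matches the paper's. But you have talked yourself into an obstacle that does not exist, and the workaround you propose (a ``Hölder pasting'' function $\psi$ with interpolation layers across $\partial V_p$, glued to a mollification of $h$ by a cutoff) is both more complicated than necessary and left incomplete; you acknowledge yourself that the construction of $\psi$ and the verification that it lies in $\mathcal{B}^2(\mathbb{R}^d)$ with the correct limit function is ``the main obstacle,'' and you never actually carry it out.

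The point you miss is that compact support of $\varphi$ makes the whole pasting problem disappear. Once $\varphi\in\mathcal{D}(\mathbb{R}^d)$, say with support in $B_M$, Assumption \eqref{H2} guarantees that for $|p|>P$ large enough the ball $B_M(2^p)$ sits strictly inside $V_p$, so the translates $\tau_{-p}\varphi$ have pairwise disjoint supports, each well separated from $\partial V_p$. Hence the series $\displaystyle\sum_{|p|>P}\tau_{-p}\varphi$ is locally a finite sum, is globally smooth with all derivatives bounded, and restricts to exactly $\tau_{-p}\varphi$ on $V_p$ (for $|p|>P$) and to $0$ on $V_p$ (for $|p|\le P$): no interpolation layers, no Hölder matching across cell boundaries, no invocation of Corollary \ref{corollogboundVP} to bound accumulating transitions. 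The paper then simply takes a second test function $\psi\in\mathcal{D}(\mathbb{R}^d)$ that $L^2$-approximates the compactly supported remainder $\sum_{|q|\le P}1_{V_q}f$, and sets $g=\psi+\sum_{|p|>P}\tau_{-p}\varphi$. Checking $\|f-g\|_{L^2(V_p)}<\varepsilon$ uniformly in $p$ is then a direct triangle inequality in the two cases $|p|\le P$, $|p|>P$. You should replace your mollify-and-glue construction by this one; in its current form, your proof has a genuine gap precisely at the step you flagged as the main difficulty.
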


\begin{proof}
We consider $f \in \mathcal{B}^2(\mathbb{R}^d) $ and $f_{\infty} \in L^2(\mathbb{R}^d)$ the associated limit function defined by~\eqref{spacedef}. 
First, for any $\varepsilon >0$, there exists $\phi \in \mathcal{D}(\mathbb{R}^d)$ such that $\| \phi - f_{\infty} \|_{L^2(\mathbb{R}^d)} < \dfrac{\varepsilon}{3}$, thus $\| \tau_{-p} \phi - \tau_{-p} f_{\infty}\|_{L^2(V_p)} \leq \dfrac{\varepsilon}{3}$ for all $p \in \mathcal{P}$.
Second, since $f \in \mathcal{B}^2(\mathbb{R}^d)$ :
$$\exists P^* \in \mathbb{N}, \ \forall p \in \mathcal{P}, \ |p|>P^* \ \Rightarrow 
\| f - \tau_{-p}f_{\infty} \|_{L^2(\textit{V}_p)} < \dfrac{\varepsilon}{3}.$$
Since $\phi$ is compactly supported there also exists $P$, which we can always assume larger than $P^*$, such that for every $|p|>P$ and for all $q\neq p$, we have $(\tau_{-q} \phi)_{|\textit{V}_p} = 0$.

The finite sum $\displaystyle \sum_{|q|\leq P} 1_{\textit{V}_q} f$ (where $1_A$ denotes the indicator function of $A$) is compactly supported and then belongs to $L^2(\mathbb{R}^d)$. Again, we can find $\psi \in \mathcal{D}(\mathbb{R}^d)$ such that $\left\|\psi -~\displaystyle \sum_{|q|\leq P} 1_{\textit{V}_q} f\right\|_{L^2(\mathbb{R}^d)}\leq \dfrac{\varepsilon}{3}$. 
We fix $\displaystyle g = \psi + \sum_{|p|>P} \tau_{-p}\phi$ and we want to show that $g$ is a good approximation of $f$ in $\mathcal{B}^2(\mathbb{R}^d) \cap \mathcal{C}^{0, \alpha}(\mathbb{R}^d)$, that is $g$ is close to $f$ on each $V_p$, uniformly in $p$. First, we have  : 
\begin{equation*}
g_{|\textit{V}_p} = \left\{
    \begin{array}{cc}
        \psi  & \text{if} \ \left|p\right| \leq P, \\
        \psi + \tau_{-p}\phi & \text{else}.
    \end{array}
\right.
\end{equation*}
Therefore $g$ is bounded and we can easily prove that $g \in \mathcal{B}^2(\mathbb{R}^d)\cap \mathcal{C}^{\infty}(\mathbb{R}^d)$ where the associated limit function in $L^2(\mathbb{R}^d)$ is given by $g_{\infty} = \phi$. Furthermore, $g$ is in $\mathcal{C}^{0,\alpha}(\mathbb{R}^d)$ since it is a $\mathcal{C}^{\infty}$ function and all of its derivatives are bounded. Indeed, for every $k$ in $\mathbb{N}^d$, we denote $\partial_k = \partial_{x_1}^{k_1}\partial_{x_2}^{k_2}...\partial_{x_d}^{k_d}$ and we have : 
\begin{equation*}
\left(\partial_k g\right)_{|\textit{V}_p} = \left\{
    \begin{array}{cc}
        \partial_k \psi & \text{if} \ \left|p\right| \leq P, \\
        \partial_k \psi + \tau_{-p}\partial_k \phi & \text{else}.
    \end{array}
\right.
\end{equation*}
and $\partial_k g$ is clearly bounded.

Let $p \in \mathcal{P}$, we consider two cases.  If $|p|\leq P$, then :
$$\| g - f \|_{L^2(\textit{V}_p)} = \left\| \psi - \sum_{|q|\leq P} 1_{\textit{V}_q} f \right\|_{L^2(\textit{V}_p)} \leq \varepsilon.$$
Else, if $|p|>P$, we have : 
\begin{align*}
\| g - f \|_{L^2(\textit{V}_p)} & = \| \psi +\tau_{-p}\phi - f \|_{L^2(\textit{V}_p)} \\
& \leq \|  \psi \|_{L^2(\textit{V}_p)}+ \|  \tau_{-p}\phi - \tau_{-p} f_{\infty} \|_{L^2(\textit{V}_p)} + \| \tau_{-p} f_{\infty} -  f\|_{L^2(\textit{V}_p)}\\
& \leq \varepsilon.
\end{align*}
And we can conclude. 
\end{proof}
We now establish a property regarding multiplication of elements of $\mathcal{B}^2(\mathbb{R}^d)$.

\begin{prop}
\label{stability}
Let $g$ and $h$ be in $\mathcal{B}^2(\mathbb{R}^d) \cap L^{\infty}(\mathbb{R}^d)$. We assume the associated $L^2$ function of $g$, denoted by $g_{\infty}$, is in $L^{\infty}(\mathbb{R}^d)$, then $hg \in \mathcal{B}^2(\mathbb{R}^d)$.
\end{prop}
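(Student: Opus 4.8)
The plan is to verify directly that $hg$ lies in $L^2_{unif}(\mathbb{R}^d)$ and that it admits $h_\infty g_\infty$ as its associated limit $L^2$-function, where $h_\infty$ and $g_\infty$ are the limit functions of $h$ and $g$. First I would check that $hg \in L^2_{unif}(\mathbb{R}^d)$: this is immediate since $\|hg\|_{L^2(B_1(x))} \leq \|h\|_{L^\infty(\mathbb{R}^d)} \|g\|_{L^2(B_1(x))}$, uniformly in $x$. Next I would observe that the candidate limit function $h_\infty g_\infty$ does belong to $L^2(\mathbb{R}^d)$: indeed $h_\infty \in L^\infty(\mathbb{R}^d)$ by hypothesis and $g_\infty \in L^2(\mathbb{R}^d)$, so $\|h_\infty g_\infty\|_{L^2(\mathbb{R}^d)} \leq \|h_\infty\|_{L^\infty(\mathbb{R}^d)} \|g_\infty\|_{L^2(\mathbb{R}^d)} < \infty$. (Here one should note that $h_\infty \in L^\infty$ can be obtained from $h \in L^\infty$ together with the defining convergence on the cells $V_p$ via Proposition \ref{tailledesVP}; alternatively this can be taken as part of the setup.)

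The core of the argument is the estimate, uniform in $p \in \mathcal{P}$,
\begin{equation*}
\|hg - \tau_{-p}(h_\infty g_\infty)\|_{L^2(V_p)} \longrightarrow 0 \quad \text{as } |p| \to \infty.
\end{equation*}
I would split the integrand by adding and subtracting an intermediate term, writing
\begin{equation*}
hg - \tau_{-p}(h_\infty g_\infty) = h\big(g - \tau_{-p} g_\infty\big) + \tau_{-p} g_\infty \big(h - \tau_{-p} h_\infty\big),
\end{equation*}
using that $\tau_{-p}(h_\infty g_\infty) = (\tau_{-p} h_\infty)(\tau_{-p} g_\infty)$. The first term is bounded in $L^2(V_p)$ by $\|h\|_{L^\infty} \|g - \tau_{-p} g_\infty\|_{L^2(V_p)}$, which tends to $0$ because $g \in \mathcal{B}^2(\mathbb{R}^d)$. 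For the second term I would bound it by $\|g_\infty\|_{L^\infty} \|h - \tau_{-p} h_\infty\|_{L^2(V_p)}$, which tends to $0$ because $h \in \mathcal{B}^2(\mathbb{R}^d)$; here the hypothesis $g_\infty \in L^\infty(\mathbb{R}^d)$ is exactly what is needed, since $\|\tau_{-p} g_\infty\|_{L^\infty(V_p)} = \|g_\infty\|_{L^\infty} $. Summing the two bounds and passing to the limit $|p| \to \infty$ gives the claim, and hence $hg \in \mathcal{B}^2(\mathbb{R}^d)$ with $(hg)_\infty = h_\infty g_\infty$.

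The argument is essentially routine once the decomposition is chosen correctly; the only subtle point — and the place where the hypotheses are genuinely used — is that one must route the $L^\infty$ bound onto the factor multiplying the "error" term in each of the two pieces, which forces the asymmetry between assuming $g_\infty \in L^\infty$ (but not necessarily $h_\infty \in L^\infty$ for that half of the estimate — though $h_\infty \in L^\infty$ is needed for $h_\infty g_\infty \in L^2$). I expect no real obstacle beyond bookkeeping, though one should be careful that the intermediate function $\tau_{-p} g_\infty$ is the one carrying the $L^\infty$ norm in the second term, rather than $g$ itself, so that the bound is genuinely uniform in $p$.
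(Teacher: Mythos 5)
Your decomposition $hg - \tau_{-p}(h_\infty g_\infty) = h(g - \tau_{-p}g_\infty) + \tau_{-p}g_\infty(h - \tau_{-p}h_\infty)$ and the resulting uniform bound on $\|hg - \tau_{-p}(h_\infty g_\infty)\|_{L^2(V_p)}$ are exactly the paper's argument, so the proposal is correct and follows the same route. One minor remark: to see that $h_\infty g_\infty \in L^2(\mathbb{R}^d)$ you do not need the side discussion of whether $h_\infty \in L^\infty$ — the paper simply pairs $g_\infty \in L^\infty$ (the stated hypothesis) with $h_\infty \in L^2$ (which holds by the definition of $\mathcal{B}^2$), which is cleaner than what you wrote.
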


\begin{proof}
Since $g_{\infty} \in L^{\infty}(\mathbb{R}^d)$, we clearly have $g_{\infty} h_{\infty} \in L^2(\mathbb{R}^d)$. Using that for all $p \in \mathcal{P}$ : 
$$g h - \tau_{-p}(g_{\infty} h_{\infty}) = (h - \tau_{-p}h_{\infty})\tau_{-p}g_{\infty} + (g - \tau_{-p}g_{\infty})h.$$
We have by the triangle inequality : 
\begin{align*}
\|g h - \tau_{-p}(g_{\infty}h_{\infty})\|_{L^2(\textit{V}_p)}& \leq  \|h - \tau_{-p}h_{\infty}\|_{L^2(\textit{V}_p)} \| g_{\infty}\|_{L^{\infty}(\mathbb{R}^d)}\\
& + \|g - \tau_{-p}g_{\infty}\|_{L^2(\textit{V}_p)} \|h\|_{L^{\infty}(\mathbb{R}^d)}.
\end{align*}
It follows, taking the limit for $|p| \rightarrow \infty$, that $gh \in \mathcal{B}^2(\mathbb{R}^d)$ and that $\left(gh\right)_{\infty} = g_{\infty}h_{\infty}$. 
\end{proof}

Our next result is one of the most important properties for the sequel. As we shall see in section \ref{Section5}, it first implies that the homogenized coefficient in our setting is the same as the homogenized coefficient in the periodic case, that is, without perturbation.  In addition, it gives some information about the growth of the corrector defined in Theorem \ref{theoreme1} (in particular, we give a proof in proposition \ref{propsouslinearite} of the strict sublinearity of the corrector). We will use all of these properties to prove the convergence stated in Theorem \ref{theoreme3} in our case. 
\begin{prop}
\label{moyenne}
Let $u \in \mathcal{B}^2(\mathbb{R}^d)$. Then, for every $x_0 \in \mathbb{R}^d$  :  
\begin{equation}
\label{convergencemeanB2}
\lim_{R\rightarrow\infty}\dfrac{1}{|B_R|}\int_{B_R(x_0)}|u(x)|dx = 0,
\end{equation}
with the following convergence rate : 
\begin{equation}
\label{convergencerate}
\dfrac{1}{|B_R|}\int_{B_R(x_0)}|u(x)|dx \leq C \left(\dfrac{\log R}{R^d}\right)^{\frac{1}{2}},
\end{equation}
where $C>0$ is independent of $R$ and $x_0$. 
\end{prop}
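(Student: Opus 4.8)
The plan is to split the ball $B_R(x_0)$ according to the Voronoi cells $V_p$ it meets, and to estimate the contribution of each cell separately, using that $u$ is close to a translate of the fixed $L^2$ function $u_\infty$ on the far cells, while the number of relevant cells is only logarithmic in $R$ by Corollary \ref{corollogboundVP}.

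First I would write, by the triangle inequality, $\int_{B_R(x_0)}|u| \leq \int_{B_R(x_0)}|u - \sum_p \tau_{-p}u_\infty\,\mathbf{1}_{V_p}| + \int_{B_R(x_0)}|\sum_p \tau_{-p}u_\infty\,\mathbf{1}_{V_p}|$. For the first term, I split over the (finitely many) cells $V_p$ meeting $B_R(x_0)$: on each such cell the integrand is $|u - \tau_{-p}u_\infty|$, so by Cauchy--Schwarz $\int_{V_p\cap B_R(x_0)}|u - \tau_{-p}u_\infty| \leq |V_p\cap B_R(x_0)|^{1/2}\|u-\tau_{-p}u_\infty\|_{L^2(V_p)} \leq |B_R|^{1/2}\|u\|_{\mathcal{B}^2(\mathbb{R}^d)}$, since $\|u-\tau_{-p}u_\infty\|_{L^2(V_p)}$ is bounded uniformly in $p$ by the supremum term in the norm \eqref{normdef}. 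Summing over the at most $C\log R$ cells (Corollary \ref{corollogboundVP}) gives a bound $C\|u\|_{\mathcal{B}^2}(\log R)\,|B_R|^{1/2}$, hence after dividing by $|B_R| \sim R^d$ we get $C(\log R)\,R^{-d/2}$; absorbing the $\log$ into $(\log R/R^d)^{1/2}$ costs at most a factor $(\log R)^{1/2}$, which is still bounded by $C(\log R/R^d)^{1/2}$ up to adjusting constants — actually one gets exactly the right form by being slightly more careful: each $|V_p \cap B_R(x_0)|$ is at most $\min(|B_R|, C|x_p|^d)$, but for the clean statement it suffices that the sum of $|V_p \cap B_R(x_0)|^{1/2}$ over the $O(\log R)$ cells is $O((\log R)^{1/2} |B_R|^{1/2})$ by Cauchy--Schwarz on the index sum, which yields precisely $C(\log R/R^d)^{1/2}$ after division by $|B_R|$.

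For the second term, $\sum_p \tau_{-p}u_\infty\,\mathbf{1}_{V_p}$ is, up to a translation on each cell, a single $L^2$ function; more precisely $\int_{B_R(x_0)}|\sum_p \tau_{-p}u_\infty\,\mathbf{1}_{V_p}| \leq \sum_p \int_{V_p\cap B_R(x_0)}|\tau_{-p}u_\infty| = \sum_p \int_{(V_p-2^p)\cap(B_R(x_0)-2^p)}|u_\infty|$. Again by Cauchy--Schwarz each summand is $\leq |B_R|^{1/2}\|u_\infty\|_{L^2(\text{that region})} \leq |B_R|^{1/2}\|u_\infty\|_{L^2(\mathbb{R}^d)}$, and there are $O(\log R)$ nonzero summands, giving the same $C(\log R/R^d)^{1/2}$ bound after dividing by $|B_R|$; the sharper route, as above, is Cauchy--Schwarz on the index set together with the fact that the translated regions $(V_p - 2^p)$ are essentially disjoint up to controlled overlap, so $\sum_p \|u_\infty\|^2_{L^2((V_p-2^p)\cap\cdots)} \leq C\|u_\infty\|^2_{L^2(\mathbb{R}^d)}$, which removes one power of $\log R$ and leaves exactly \eqref{convergencerate}.

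The main obstacle is getting the clean rate $(\log R/R^d)^{1/2}$ rather than the cruder $(\log R)\,R^{-d/2}$: the naive union bound double-counts the $L^2$ mass, and one has to exploit either the near-disjointness of the translated cells $V_p - 2^p$ (so the squared $L^2$ norms sum to $\|u_\infty\|^2_{L^2}$) or a Cauchy--Schwarz over the logarithmically-many indices, trading a factor $\log R$ for a factor $(\log R)^{1/2}$. Either way the heavy lifting is done by Corollary \ref{corollogboundVP} (logarithmic count of cells) and the uniform $\mathcal{B}^2$ bound on $\|u-\tau_{-p}u_\infty\|_{L^2(V_p)}$; once \eqref{convergencerate} is established, \eqref{convergencemeanB2} follows immediately by letting $R\to\infty$. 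Finally, independence of the constant from $x_0$ is automatic because all the estimates used — the cell count in Corollary \ref{corollogboundVP} and the norm bounds — are uniform in the center.
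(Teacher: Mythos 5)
Your proof is correct and reaches the stated rate, and it relies on the same two ingredients as the paper — Cauchy--Schwarz to pass from an $L^1$ to an $L^2$ average, and the logarithmic cell count of Corollary \ref{corollogboundVP} — but your route is more circuitous than necessary. The paper applies Cauchy--Schwarz once, globally, on $B_R(x_0)$, then splits $\int_{B_R(x_0)}|u|^2 = \sum_p \int_{V_p\cap B_R(x_0)}|u|^2$ and bounds each summand by $\sup_p\|u\|_{L^2(V_p)}^2$, which is finite and controlled by $\|u\|_{\mathcal{B}^2}$ via the triangle inequality $\|u\|_{L^2(V_p)}\leq\|u-\tau_{-p}u_\infty\|_{L^2(V_p)}+\|u_\infty\|_{L^2(\mathbb{R}^d)}$. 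This gives $(\log R)^{1/2}/\sqrt{|B_R|}$ directly, with no need to decompose $u$ into $u - \sum_p \tau_{-p}u_\infty\mathbf{1}_{V_p}$ and $\sum_p \tau_{-p}u_\infty\mathbf{1}_{V_p}$ and treat the pieces separately. Your decomposition works, but it forces you into the cell-by-cell Cauchy--Schwarz plus a second Cauchy--Schwarz over indices (or an essential-disjointness argument for the $V_p-2^p$) precisely to recover the $(\log R)^{1/2}$ that the global Cauchy--Schwarz yields for free: by applying Cauchy--Schwarz before summing over cells, you temporarily lose a half-power of $\log R$ and then have to win it back. So the proposal is not wrong, but the splitting of $u$ is an unnecessary detour, and the sharper handling of the second term (near-disjointness of translated cells) is machinery the problem does not actually require.
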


\begin{proof}
We fix $R>0$. Using the Cauchy-Schwarz inequality, we have :
\begin{align*}
\dfrac{1}{|B_R|}\int_{B_R(x_0)}|u(x)|dx & \leq \dfrac{1}{\sqrt{|B_R|}} \left( \int_{B_R(x_0)}|u(x)|^2dx \right) ^{\frac{1}{2}}= \dfrac{1}{\sqrt{|B_R|}} \left( \sum_{p \in \mathcal{P}}   \int_{\textit{V}_p \cap B_R(x_0)}|u(x)|^2dx \right) ^{\frac{1}{2}}.
\end{align*}
Since the number of $V_p$ such that $B_R(x_0)\cap V_p \neq \emptyset$ is bounded by $\log(R)$ according to Corollary \ref{corollogboundVP}, we obtain : 
\begin{align*}
\dfrac{1}{|B_R|}\int_{B_R(x_0)}|u(x)|dx&\leq \dfrac{\left(\log R\right)^{\frac{1}{2}}}{\sqrt{|B_R|}}\sup_p\|u\|_{ L^2(\textit{V}_p)} \leq C(d)   \left(\dfrac{\log(R)}{R^d}\right)^{\frac{1}{2}}\sup_p \|u\|_{ L^2(\textit{V}_p)}.  
\end{align*}
Here, $C(d)$ depends only on the ambient dimension $d$. The last inequality yields \eqref{convergencerate} and conclude the proof. 
\end{proof}

\begin{corol}
\label{convergenceLinfinistar}
Let $u \in \mathcal{B}^2(\mathbb{R}^d) \cap L^{\infty}(\mathbb{R}^d)$, then $|u(./\varepsilon)|$ is convergent to 0 in the weak*-$L^{\infty}$ topology when $\varepsilon \rightarrow 0$. 
\end{corol}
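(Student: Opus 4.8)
The plan is to deduce the corollary directly from Proposition \ref{moyenne}. Recall that weak*-$L^{\infty}$ convergence of a bounded sequence $(g_\varepsilon)$ to $0$ means that $\int_{\mathbb{R}^d} g_\varepsilon(x)\,\varphi(x)\,dx \to 0$ for every $\varphi \in L^1(\mathbb{R}^d)$. Here $g_\varepsilon = |u(\cdot/\varepsilon)|$, which is indeed bounded in $L^{\infty}(\mathbb{R}^d)$ by $\|u\|_{L^{\infty}(\mathbb{R}^d)}$, so the sequence is bounded and it suffices to test against a dense subset of $L^1(\mathbb{R}^d)$ — for instance indicator functions of balls, or finite linear combinations thereof, or simply $C^{\infty}_c$ functions.

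First I would reduce to test functions of the form $\varphi = \mathbf{1}_{B_R(x_0)}$. For such a $\varphi$, the change of variables $y = x/\varepsilon$ gives
\begin{equation*}
\int_{B_R(x_0)} |u(x/\varepsilon)|\,dx = \varepsilon^d \int_{B_{R/\varepsilon}(x_0/\varepsilon)} |u(y)|\,dy = \varepsilon^d |B_{R/\varepsilon}| \cdot \frac{1}{|B_{R/\varepsilon}|}\int_{B_{R/\varepsilon}(x_0/\varepsilon)} |u(y)|\,dy.
\end{equation*}
Since $\varepsilon^d |B_{R/\varepsilon}| = |B_R|$ is a fixed constant, Proposition \ref{moyenne} applied with radius $R/\varepsilon \to \infty$ and center $x_0/\varepsilon$ (the bound in \eqref{convergencerate} being uniform in the center) shows that this quantity tends to $0$ as $\varepsilon \to 0$. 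Hence $\int g_\varepsilon \varphi \to 0$ for every indicator of a ball, and by linearity for every finite linear combination of such indicators.

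Then I would conclude by a standard density and uniform-boundedness argument: the linear span of $\{\mathbf{1}_{B_R(x_0)} : R>0,\ x_0 \in \mathbb{R}^d\}$ is dense in $L^1(\mathbb{R}^d)$, and since $\|g_\varepsilon\|_{L^{\infty}} \le \|u\|_{L^{\infty}}$ uniformly in $\varepsilon$, for arbitrary $\varphi \in L^1(\mathbb{R}^d)$ one picks a simple function $\psi$ with $\|\varphi - \psi\|_{L^1} $ small, writes $\left| \int g_\varepsilon \varphi \right| \le \left| \int g_\varepsilon \psi \right| + \|u\|_{L^{\infty}} \|\varphi - \psi\|_{L^1}$, and lets $\varepsilon \to 0$ then refines $\psi$. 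This yields $\int g_\varepsilon \varphi \to 0$ for all $\varphi \in L^1(\mathbb{R}^d)$, which is exactly weak*-$L^{\infty}$ convergence of $|u(\cdot/\varepsilon)|$ to $0$. There is no real obstacle here; the only mild point to be careful about is that \eqref{convergencerate} must be applied with a moving center $x_0/\varepsilon$, which is fine precisely because the constant $C$ in Proposition \ref{moyenne} is independent of $x_0$.
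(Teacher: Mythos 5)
Your proof is correct and follows essentially the same route as the paper's: reduce to indicator functions, change variables $y=x/\varepsilon$, invoke the quantitative average bound \eqref{convergencerate} of Proposition~\ref{moyenne}, and conclude by density in $L^1(\mathbb{R}^d)$ together with the uniform $L^\infty$ bound on $|u(\cdot/\varepsilon)|$. Your version is slightly more explicit in allowing arbitrary centers $x_0$ and noting that the constant in \eqref{convergencerate} is uniform in the center (a point the paper leaves implicit, treating only $\varphi=\mathbf{1}_{B_R}$ before appealing to density), but the argument is the same.
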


\begin{proof}
We fix $R>0$ and we first consider $\varphi = 1_{B_R}$. For any $\varepsilon>0$, we have :
\begin{align*}
\left|\int_{\mathbb{R}^d} |u(x/\varepsilon)|\varphi(x) dx\right| & \leq \int_{B_R} \left| u(x/\varepsilon)  \right| dx \\
& \stackrel{y=x/\varepsilon}{=} \varepsilon^d   \int_{B_{R/\varepsilon}} \left| u(y) \right|dy \\
&= \left|B_R\right| \frac{\varepsilon^d}{\left|B_R\right|}  \int_{B_{R/\varepsilon}} \left| u(y) \right|dy \\
&= \|\varphi\|_{L^{1}(\mathbb{R}^d)}\frac{\varepsilon^d}{\left|B_R\right|}  \int_{B_{R/\varepsilon}} \left| u(y) \right|dy.
\end{align*}
We next use \eqref{convergencerate} in the right-hand term and we obtain the existence of $C>0$ independent of $\varepsilon$ and $\varphi$ such that : 
$$ \left|\int_{\mathbb{R}^d} u(x/\varepsilon)\varphi(x) dx\right| \leq C \|\varphi\|_{L^{1}(\mathbb{R}^d)} \left(\varepsilon^d \log(1/\varepsilon)\right)^{\frac{1}{2}} \underset{\varepsilon\to 0}{\longrightarrow} 0.$$
We conclude using the density of simple functions in $L^1(\mathbb{R}^d)$.
\end{proof}

We next introduce the notion of sub-linearity which is actually a fundamental property in homogenization. Indeed, in order to precise the convergence of the approximated sequence of solutions \eqref{approximatesequence}, we have to study the behavior of the sequences $\varepsilon w_{e_i}(./\varepsilon)$ when $\varepsilon \rightarrow 0$. The convergence to zero of these sequences and the understanding of the rate of convergence are key for establishing estimates \eqref{estimate1} and \eqref{estimate2} stated in Theorem \ref{theoreme3}. In the sequel, we therefore study this phenomenon for the functions with a gradient in $\mathcal{B}
^2(\mathbb{R}^d)$. 

\begin{dftn}
A function $u$ is strictly sub-linear at infinity if : 
\begin{equation}
\label{defsublinearity}
\lim_{|x| \rightarrow \infty} \dfrac{|u(x)|}{1 + |x|} = 0.
\end{equation}
\end{dftn}

In the next proposition we prove the sub-linearity of all the functions $u$ such that $\nabla u \in \left(\mathcal{B}^2(\mathbb{R}^d) \cap L^{\infty}(\mathbb{R}^d)\right)^d$. We assume, for this general property only, that $d \geq 2$.

\begin{prop}
\label{propsouslinearite}
Assume $d\geq 2$. Let $u \in H^1_{loc}(\mathbb{R}^d)$ with $\nabla u \in \left(\mathcal{B}^2(\mathbb{R}^d) \cap L^{\infty}(\mathbb{R}^d)\right)^d$. Then $u$ is strictly sub-linear at infinity and for all $s >d $, there exists $C>0$ such that for every $x,y \in \mathbb{R}^d$ with $x \neq y$ : 
\begin{equation}
\label{souslinearite}
\left|u(x) - u(y)\right| \leq C \left|\log(\left|x-y\right|)\right|^{\frac{1}{s}} \left|x-y \right|^{1-\frac{d}{s}}.
\end{equation}
\end{prop}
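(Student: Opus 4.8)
The plan is to derive the pointwise bound \eqref{souslinearite} from a representation of $u(x)-u(y)$ as an integral of $\nabla u$ along a path, combined with the mean-value estimate \eqref{convergencerate} from Proposition \ref{moyenne}, and then to deduce strict sub-linearity as an immediate consequence. First I would fix $x \neq y$, set $r = |x-y|$, and work on the ball $B = B_{2r}(x)$, which contains both points. On such a ball one has the classical Morrey-type representation: for $z_1, z_2 \in B$,
\begin{equation*}
|u(z_1) - u(z_2)| \leq C \int_{B} \frac{|\nabla u(z)|}{|z - z_1|^{d-1}} \, dz + C \int_{B} \frac{|\nabla u(z)|}{|z - z_2|^{d-1}} \, dz,
\end{equation*}
valid for $u \in H^1_{loc}$ (this is just the fundamental theorem of calculus along rays from $z_1$, respectively $z_2$, averaged over directions). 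So the task reduces to estimating $\displaystyle \int_{B_{2r}(x)} \frac{|\nabla u(z)|}{|z-x|^{d-1}}\,dz$ and the analogous term centered at $y$.

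The key step is to estimate this singular integral using H\"older's inequality with exponents $s$ and $s'$ ($1/s + 1/s' = 1$, $s > d$). Writing the integral as $\int |\nabla u| \cdot |z-x|^{-(d-1)}$, H\"older gives
\begin{equation*}
\int_{B_{2r}(x)} \frac{|\nabla u(z)|}{|z-x|^{d-1}}\,dz \leq \left( \int_{B_{2r}(x)} |\nabla u(z)|^s \, dz \right)^{1/s} \left( \int_{B_{2r}(x)} |z-x|^{-(d-1)s'} \, dz \right)^{1/s'}.
\end{equation*}
The second factor is finite precisely because $(d-1)s' < d$ when $s > d$, and it evaluates (in polar coordinates) to a constant times $r^{d/s' - (d-1)} = r^{1 - d + d/s'} = r^{1 - d/s}$, using $d/s' = d - d/s$. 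For the first factor, I interpolate: $|\nabla u|^s = |\nabla u|^{s-1}\cdot|\nabla u| \leq \|\nabla u\|_{L^\infty}^{s-1} |\nabla u|$, so
\begin{equation*}
\left( \int_{B_{2r}(x)} |\nabla u|^s \right)^{1/s} \leq \|\nabla u\|_{L^\infty(\mathbb{R}^d)}^{(s-1)/s} \left( \int_{B_{2r}(x)} |\nabla u| \right)^{1/s}.
\end{equation*}
Now I invoke Proposition \ref{moyenne}: since $\nabla u \in \mathcal{B}^2(\mathbb{R}^d)$ componentwise, estimate \eqref{convergencerate} gives $\int_{B_{2r}(x)} |\nabla u| \leq C |B_{2r}| (\log(2r)/(2r)^d)^{1/2} \leq C' r^{d/2} (\log r)^{1/2}$ for $r$ large; hence $\left(\int_{B_{2r}(x)}|\nabla u|\right)^{1/s} \leq C'' r^{d/(2s)} (\log r)^{1/(2s)}$. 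Combining the two factors yields a bound of the form $C\, r^{1 - d/s + d/(2s)}(\log r)^{1/(2s)} = C\, r^{1 - d/(2s)}(\log r)^{1/(2s)}$ for $r$ large — which is in fact \emph{stronger} than the claimed \eqref{souslinearite} (the exponent $1 - d/(2s) < 1 - d/s \cdot \tfrac12$, wait: one should instead only use the crude bound $\int_{B_{2r}}|\nabla u| \leq C r^d \|\nabla u\|_{L^\infty}$ near $x,y$ for small $r$, and the mean-value decay for large $r$). The cleanest route giving exactly \eqref{souslinearite} is to use the $L^\infty$ bound on all of $|\nabla u|^s$ directly, $\left(\int_{B_{2r}(x)}|\nabla u|^s\right)^{1/s} \leq \|\nabla u\|_{L^\infty} |B_{2r}|^{1/s} \leq C r^{d/s}$, which combined with the $r^{1-d/s}$ from the kernel gives $C r^{1-d/s+d/s}$... that loses the gain; so the logarithmic factor in \eqref{souslinearite} signals that one must split the kernel integral dyadically and apply \eqref{convergencerate} on annuli. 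I would therefore decompose $B_{2r}(x) = \bigcup_{j \geq 0} A_j$ with $A_j = B_{2^{1-j} r}(x) \setminus B_{2^{-j}r}(x)$, bound $\int_{A_j} |\nabla u| |z-x|^{-(d-1)} \leq (2^{-j}r)^{-(d-1)} \int_{B_{2^{1-j}r}(x)}|\nabla u|$, apply \eqref{convergencerate} to each inner integral to get $\lesssim (2^{-j}r)^{d/2}(\log(2^{-j}r))^{1/2}$, and sum the resulting geometric-type series in $j$; the logarithm is largest at $j = 0$, producing the factor $|\log r|^{1/s}$ after optimizing — more precisely one runs H\"older annulus-by-annulus and sums.

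The main obstacle is bookkeeping the dyadic sum so that the logarithm appears with the exponent $1/s$ rather than $1/2$ or $1$, and handling the regime where $r = |x-y|$ is small (near the diagonal), where Proposition \ref{moyenne} gives nothing useful and one must fall back on the plain bound $\int_{B_{2r}}|\nabla u| \leq \|\nabla u\|_{L^\infty}|B_{2r}|$, for which the H\"older argument above directly yields $|u(x)-u(y)| \leq C \|\nabla u\|_{L^\infty} r^{1-d/s}\cdot r^{d/s} = C\|\nabla u\|_{L^\infty} r$, comfortably dominated by the right-hand side of \eqref{souslinearite} when $r \leq 1$ since $|\log r|^{1/s} r^{1-d/s} \geq r$ there fails... so in fact for $r$ small one should use a larger intermediate exponent, or simply note \eqref{souslinearite} is only asserted as an inequality with a constant and one can always enlarge $C$ to absorb the bounded-diameter range $r \in [\delta, 1]$ while using the direct Lipschitz-type bound for $r$ very small and the $\mathcal{B}^2$ decay for $r$ large. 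Once \eqref{souslinearite} holds for all large $r$, strict sub-linearity follows by taking $y = 0$: $|u(x)|/(1+|x|) \leq (|u(0)| + C|\log|x||^{1/s}|x|^{1-d/s})/(1+|x|) \to 0$ since $d/s > 0$ forces $|x|^{1-d/s}/|x| = |x|^{-d/s} \to 0$, dominating the logarithm.
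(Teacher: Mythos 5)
Your starting point --- a Riesz-potential representation of $u(x)-u(y)$ followed by H\"older with exponents $s,s'$ --- is a legitimate variant of the averaged Morrey bound the paper uses (the paper invokes the remark following Morrey's inequality in \cite{evans10}, which gives $|u(x)-u(y)|\leq Cr\bigl(r^{-d}\int_{B_r(x)}|\nabla u|^s\bigr)^{1/s}$ directly; your version works as well). The substance of the argument, however, is how one estimates the $L^s$ factor on $\nabla u$, and there your proposal does not yield the claimed exponents and your attempted fix does not close the gap.

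Concretely, interpolating $|\nabla u|^s\leq\|\nabla u\|_{L^\infty}^{s-1}|\nabla u|$ and then applying \eqref{convergencerate} to $\int_{B_{2r}}|\nabla u|$ gives, as you compute, $r^{1-d/(2s)}(\log r)^{1/(2s)}$. This is \emph{weaker} than \eqref{souslinearite} for large $r$, not stronger: the power $1-d/(2s)$ exceeds $1-d/s$, so after relabeling $\sigma=2s$ you only obtain \eqref{souslinearite} for $\sigma>2d$, not for all $\sigma>d$ as claimed. (Your parenthetical ``which is in fact stronger'' is an error that you seem to notice mid-sentence without resolving.) The dyadic fix you then sketch does not repair this. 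Bounding $\int_{A_j}|\nabla u|\,|z-x|^{-(d-1)}$ by $(2^{-j}r)^{-(d-1)}\int_{B_{2^{1-j}r}}|\nabla u|$ and then applying \eqref{convergencerate} produces summands of order $(2^{-j}r)^{1-d/2}(\log(2^{-j}r))^{1/2}$; since $d\geq2$, the exponent $1-d/2\leq 0$ makes the inner annuli blow up and the sum over $j$ diverges. One could split into an inner range treated with $\|\nabla u\|_{L^\infty}$ and an outer range treated with \eqref{convergencerate}, but the outer range still only returns the $1/(2s)$ log power --- the loss comes from the Cauchy--Schwarz hidden inside the proof of Proposition \ref{moyenne}, and no dyadic rearrangement undoes it.

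The missing idea is a one-line change to your H\"older step: interpolate onto $L^2$, not $L^1$. Write $|\nabla u|^s\leq\|\nabla u\|_{L^\infty}^{s-2}|\nabla u|^2$ (valid since $s>d\geq2$), so that $\bigl(\int_{B_r(x)}|\nabla u|^s\bigr)^{1/s}\leq\|\nabla u\|_{L^\infty}^{(s-2)/s}\bigl(\int_{B_r(x)}|\nabla u|^2\bigr)^{1/s}$. Then bypass Proposition \ref{moyenne} entirely: decompose $\int_{B_r(x)}|\nabla u|^2=\sum_{p}\int_{V_p\cap B_r(x)}|\nabla u|^2$ and apply Corollary \ref{corollogboundVP}, which bounds the number of nonempty terms by $C\log r$, giving $\int_{B_r(x)}|\nabla u|^2\leq C\log(r)\sup_p\|\nabla u\|^2_{L^2(V_p)}$. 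Combined with the kernel factor $r^{1-d/s}$ (or, equivalently, with the prefactor $r\cdot r^{-d/s}$ in the averaged Morrey bound), this produces precisely $r^{1-d/s}(\log r)^{1/s}$. This is the paper's argument; your framework works equally well once the interpolation keeps the $L^2$ power of $\nabla u$ in play.
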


\begin{proof}
Let $x,y \in \mathbb{R}^d$ with $x \neq y$ and fix $r = |x-y|$. Since $\nabla u \in \left(L^{\infty}(\mathbb{R}^d)\right)^d$, we have $\nabla u \in \left(L^s_{loc}(\mathbb{R}^d)\right)^d$ for every $s \geq 1$. We next fix $s>d$. We know there exists a constant $C>0$, depending only on $d$, such that :
\begin{equation}
\label{morreys inequality}
|u(x) - u(y)| \leq C r \left( \frac{1}{r^d}\int_{B_r(x)} \left| \nabla u (z)\right|^s dz \right)^{\frac{1}{s}}.
\end{equation}
This estimate is established for instance in \cite[Remark p.268]{evans10} as corollary of the Morrey's inequality (\cite[Theorem 4 p.266]{evans10}).
Since $s>d\geq 2$, we use the boundedness of $\nabla u$ to obtain : 
\begin{equation}
\label{integralbound}
|u(x) - u(y)| \leq C \displaystyle \|\nabla u\|^{(s-2)/s}_{L^{\infty}(\mathbb{R}^d)} r \left( \frac{1}{r^d}\int_{B_r(x)} \left| \nabla u (z)\right|^2 dz \right)^{\frac{1}{s}}.
\end{equation}
We next split the integral of \eqref{integralbound} on each $V_p$ such that $V_p\cap B_r(x)\neq \emptyset$ and we have : 
\begin{equation*}
\label{integralbound2}
|u(x) - u(y)| \leq C \displaystyle \|\nabla u\|^{(s-2)/s}_{L^{\infty}(\mathbb{R}^d)} r \left( \frac{1}{r^d} \sum_{p\in \mathcal{P}} \int_{B_r(x)\cap V_p} \left| \nabla u (z)\right|^2 dz \right)^{\frac{1}{s}}.
\end{equation*}
We finally use Corollary \ref{corollogboundVP} and we obtain the existence of a constant $C_1 >0$ such that : $$|u(x) - u(y)| \leq C_1 \|\nabla u\|^{(s-2)/s}_{L^{\infty}(\mathbb{R}^d)} \|\nabla u\|^{2/s}_{\mathcal{B}^2(\mathbb{R}^d)} \left|\log(r)\right|^{\frac{1}{s}}r^{1-\frac{d}{s}}.$$
This inequality is true for all $s>d$, which allows us to conclude. In addition, the sub-linearity of $u$ is obtained fixing $y=0$ and letting $|x|$ go to the infinity in estimate \eqref{souslinearite}.  
\end{proof}

\begin{remark}
In the case $d=1$, since $s\geq 2$, the above proof gives  : 
\begin{equation}
|u(x) - u(y)| \leq C \left|\log\left|x-y\right|\right|^{\frac{1}{2}}\left|x-y\right|^{\frac{1}{2}}. 
\end{equation}
\end{remark}

The last proposition of this section gives an uniform estimate of the integral remainders of the functions of $\mathcal{B}^2(\mathbb{R}^d)$. The idea here is that the functions of $\mathcal{B}^2(\mathbb{R}^d)$ behave like a fixed $L^2$-functions at the vicinity of the points of $\mathcal{G}$ and therefore, have to be small in a $L^2$ sens far from these points. This property will be used in the proof of Lemma \ref{lemmeexistenceperiodique1} in next section to establish an estimate in $\mathcal{B}^2(\mathbb{R}^d)$ satisfied by the solutions to diffusion equation~\eqref{eqper1}. 

\begin{prop}
\label{uniformmajoration}
Let $f$ be in $\mathcal{B}^2(\mathbb{R}^d)$ and $f_{\infty}$ the associated limit function in $L^2(\mathbb{R}^d)$. For any $\varepsilon >0$, there exists $R^*>0$ such that for every $R>R^*$ and every $p,q \in \mathcal{P}$ : 
\begin{equation}
    \left( \int_{V_q\cap B_R(2^q)^c} \left|f - \tau_{-p}f_{\infty}\right|^2 \right)^{1/2} < \varepsilon,
\end{equation}
where $B_R(2^q)^c$ denotes the set $\mathbb{R}^d \setminus{B_R(2^q)}$. Therefore, we have the following limit : 
\begin{equation}
    \lim_{R \rightarrow \infty} \sup_{\substack{(p,q) \in \mathcal{P}^2 \\ p \neq q}} \left( \int_{V_q\cap B_R(2^q)^c} \left|f - \tau_{-p}f_{\infty}\right|^2 \right)^{1/2} = 0.
\end{equation}
\end{prop}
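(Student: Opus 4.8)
The plan is to reduce the claimed uniform estimate to two facts already at our disposal: first, the $L^2$-smallness of the \emph{single} function $f_\infty$ outside a large ball (which is immediate from $f_\infty \in L^2(\mathbb{R}^d)$ by dominated convergence), and second, the defining convergence $\|f - \tau_{-p}f_\infty\|_{L^2(V_p)} \to 0$ as $|p|\to\infty$ built into the space $\mathcal{B}^2(\mathbb{R}^d)$. The subtlety is that we need the estimate to hold \emph{simultaneously} over all pairs $(p,q)$ with $p\neq q$, including the ``diagonal'' region where $p$ and $q$ are both bounded and $\tau_{-p}f_\infty$ restricted to $V_q$ need not be small at all — this is why we integrate over $V_q \cap B_R(2^q)^c$ rather than all of $V_q$.

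First I would split the region of integration as $V_q \cap B_R(2^q)^c$ and write, by the triangle inequality in $L^2(V_q\cap B_R(2^q)^c)$,
\begin{equation}
\left\| f - \tau_{-p}f_\infty \right\|_{L^2(V_q\cap B_R(2^q)^c)} \leq \left\| f - \tau_{-q}f_\infty \right\|_{L^2(V_q\cap B_R(2^q)^c)} + \left\| \tau_{-q}f_\infty - \tau_{-p}f_\infty \right\|_{L^2(V_q\cap B_R(2^q)^c)}.
\end{equation}
For the first term: if $|q|$ is large, then $\|f - \tau_{-q}f_\infty\|_{L^2(V_q)} < \varepsilon/3$ directly by membership in $\mathcal{B}^2(\mathbb{R}^d)$, so that term is controlled irrespective of $R$; if $|q|$ is bounded (finitely many such $q$), then $f \in L^2(V_q)$ and $\tau_{-q}f_\infty \in L^2$, so for each such $q$ the quantity $\|f - \tau_{-q}f_\infty\|_{L^2(V_q \cap B_R(2^q)^c)} \to 0$ as $R\to\infty$ by dominated convergence, and a finite maximum over those $q$ gives a uniform threshold $R_1^*$. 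The second term is bounded by $\|f_\infty - \tau_{q-p}f_\infty\|_{L^2(V_q - 2^q)}$, and here I would use Proposition~\ref{volumeVx} together with the growth $|2^q|\sim 2^{|q|}$ (Proposition~\ref{norm2p}): the translated cell $V_q - 2^q$ contains a ball of radius comparable to $|2^q|$, but that does \emph{not} by itself help — what helps is that $V_q \cap B_R(2^q)^c$, once translated by $-2^q$, lies in $B_R^c$, and on $B_R^c$ we have $\|f_\infty\|_{L^2(B_R^c)} < \varepsilon/3$ for $R > R_2^*$ with $R_2^*$ independent of everything, while $\|\tau_{q-p}f_\infty\|_{L^2(B_R^c \cap (V_q - 2^q))}$ is where the genuine difficulty sits.

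The hard part is precisely controlling $\|\tau_{q-p}f_\infty\|_{L^2(V_q \cap B_R(2^q)^c - 2^q)}$ uniformly in $p \neq q$: the translate $\tau_{q-p}f_\infty = f_\infty(\cdot + 2^q - 2^p)$ has its ``mass'' concentrated near $-(2^q - 2^p) = 2^p - 2^q$, and we need to know this point is far from the region $(V_q - 2^q) \cap B_R^c$. I would argue as follows: by Proposition~\ref{openconstruction}\ref{Wv} (or more directly from \eqref{H2} and the containment of a ball of radius $\sim |2^q|$ in $V_q$), for $p\neq q$ the translated cell $V_q - 2^q$ is contained in a ball $B_{C|2^q|}(0)$ while the point $2^p - 2^q$ satisfies $|2^p - 2^q| \geq c\,\max(|2^p|,|2^q|)$; combining these, either $2^p - 2^q$ lies well outside $B_{C|2^q|}$ — in which case $f_\infty(\cdot + 2^q - 2^p)$ restricted to $V_q - 2^q$ equals $f_\infty$ evaluated on a region contained in $B_{|2^p-2^q| - C|2^q|}^c$, a complement of a ball whose radius tends to $\infty$ as $|p|\to\infty$, giving smallness for $|p|$ large — or $|p|$ is bounded, leaving again only finitely many pairs $(p,q)$ with both indices bounded, each of which is handled by the dominated-convergence argument in $R$ as above. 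Assembling the three $\varepsilon/3$ contributions and taking $R^* = \max(R_1^*, R_2^*, R_3^*)$ with $R_3^*$ the finite threshold from the bounded pairs yields the first assertion; the stated limit follows immediately by taking the supremum.
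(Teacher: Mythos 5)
Your overall decomposition matches the paper's: you split $f - \tau_{-p}f_\infty$ by the triangle inequality into $f - \tau_{-q}f_\infty$ (controlled by $\mathcal{B}^2$-membership for large $|q|$ and, for the finitely many small $|q|$, by emptying $V_q \cap B_R(2^q)^c$ once $R$ exceeds the sizes of those bounded cells), $\tau_{-q}f_\infty$ (controlled by the $L^2$-tail of $f_\infty$), and $\tau_{-p}f_\infty$ as the delicate term. The first two pieces you handle correctly. The geometric argument you give for the last piece, however, has a genuine gap.

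Your dichotomy for the third term — either $2^p-2^q$ lies well outside $B_{C|2^q|}$, or $|p|$ is bounded — is not exhaustive. The containment $V_q - 2^q \subset B_{C|2^q|}$ has $C$ coming from $\operatorname{Diam}(V_q)\lesssim |2^q|$ (Assumptions \eqref{H2}--\eqref{H3}), while \eqref{H2} gives $|2^p-2^q| \geq c\max(|2^p|,|2^q|)$ with an \emph{a priori} unrelated constant $c$. If $c<C$ (and nothing forces $c\geq C$), the negation of ``$|2^p-2^q|>C|2^q|$'' only yields $|2^p|\leq (C/c)|2^q|$, which allows both $|p|$ and $|q|$ to be arbitrarily large — for instance, $p$ and $q$ nearest-neighbor indices, both large, with $|2^p-2^q|\sim |2^q|$. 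Your case~(a) then fails (your would-be radius $|2^p-2^q|-C|2^q|$ can even be negative) and your case~(b) does not apply, so those pairs are simply not covered. The way out is to drop the diameter-ball containment and use the Voronoi inequality directly: for $y\in V_q$ and any $p\neq q$, $|y-2^q|\leq|y-2^p|$ and the triangle inequality give $|2^p-2^q|\leq 2|y-2^p|$, hence
\[
D(V_q,2^p)\;\geq\;\tfrac{1}{2}|2^p-2^q|\;\geq\;\tfrac{c}{2}\max(|2^p|,|2^q|),
\]
with no case split on the relative sizes of $|2^p|$ and $|2^q|$. Thus $V_q-2^p\subset B_{c\max(|2^p|,|2^q|)/2}^c$, so $\|\tau_{-p}f_\infty\|_{L^2(V_q)}$ is dominated by the tail $\|f_\infty\|_{L^2(B_r^c)}$ with $r$ large whenever $\max(|p|,|q|)$ is large; only finitely many pairs with both indices small remain, and those are handled exactly as you say by taking $R^*$ large enough to empty $V_q\cap B_{R^*}(2^q)^c$. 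This is in substance what the paper does, phrased via the observation that $D(V_q,\mathcal{G}\setminus\{2^q\})\geq D(2^q,\mathcal{G}\setminus\{2^q\})/2\to\infty$ so that all but finitely many $q$ have $D(V_q,2^p)>R_2$ for every $p\neq q$.
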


\begin{proof}
Let $\varepsilon>0$. First, for every $R>0$, $p, q \in \mathcal{P}$ we use a triangle inequality and we obtain the following upper bound : 
\begin{align*}
\left( \int_{V_q\cap B_R(2^q)^c} \left|f - \tau_{-p}f_{\infty}\right|^2  \right)^{1/2}  & \leq  \left( \int_{V_q\cap B_R(2^q)^c} \left|f - \tau_{-q}f_{\infty}\right|^2 \right)^{1/2} + \left( \int_{V_q\cap B_R(2^q)^c} \left| \tau_{-q}f_{\infty}\right|^2 \right)^{1/2} \\
& + \left( \int_{V_q\cap B_R(2^q)^c} \left| \tau_{-p}f_{\infty}\right|^2 \right)^{1/2} \\
& =  I^{p,q}_{1}(R) + I^{p,q}_{2}(R) + I^{p,q}_{3}(R).
\end{align*}
We want to bound the three terms $\displaystyle I^{p,q}_{1}(R)$, $I^{p,q}_{2}(R)$ and $I^{p,q}_{3}(R)$ by $\varepsilon$ uniformly in $p,q$. 

We start by considering $I^{p,q}_{1}$. We have assumed that $f \in \mathcal{B}^2(\mathbb{R}^d)$, then, by definition, there exists $P>0$ such that for every $q \in \mathcal{P}$ satisfying $|q|>P$, we have : 
$$\left( \int_{V_q}\left|f - \tau_{-q}f_{\infty}\right|^2 \right)^{1/2} < \frac{\varepsilon}{3}.$$
In addition, since the volume of each $V_q$ is finite according to assumption \eqref{H1}, there exists $R_1>0$ such that for every $|q|\leq P$, $B_{R_1}(2^q)^c \cap V_q = \emptyset$. Therefore, as soon as $|q|\leq P$ and $R\geq R_1$, we have $I_{p,q}(R)=0$. Finally, considering successively the case $|q|\leq P$ and the case $|q|>P$, we obtain for every $q,p\in \mathcal{P}$ and $R\geq R_1$ : 

\begin{equation}
    I^{p,q}_{1}(R) < \frac{\varepsilon}{3}.
\end{equation}
We next study the second term $I^{p,q}_{2}$. Since $f_{\infty}$ is in $L^2(\mathbb{R}^d)$, there exists $R_2>0$, which we can always assume larger than $R_1$, such that for every $q \in \mathcal{P}$ :
\begin{equation}
\label{majorationqueueL2}
\left( \int_{B_{R_2}(2^q)^c} \left| \tau_{-q}f_{\infty}(y)\right|^2 dy\right)^{1/2} \stackrel{x=y-2^q}{=} \left( \int_{B_{R_2}^c} \left| f_{\infty}(x)\right|^2 dx \right)^{1/2} < \frac{\varepsilon}{3}.
\end{equation}
And we directly obtain, for every $R \geq R_2$ : 
\begin{equation}
    I^{p,q}_{2}(R) < \frac{\varepsilon}{3}.
\end{equation}
Finally, in order to bound the last term, we know that $\displaystyle \lim_{|l| \rightarrow \infty} D\left(2^l, \mathcal{G}\setminus{\{2^l\}}\right) = +\infty$ as a consequence of Assumption \eqref{H2}. Therefore, there exists a finite number of indices $l$ such that : 
\begin{equation}
\label{distance}
  D\left(V_l , \mathcal{G}\right) \leq R_2.
\end{equation}
Thus, we deduce the existence of a positive radius $R_3$ independent of $q,p$ such that for every $l$ satisfying \eqref{distance} we have $B_{R_3}(2^l)^c \cap V_l = \emptyset$. Again we can always assume $R_3$ larger than $R_2$. There are two cases depending on the value of q: 
\begin{itemize}
    \item[1)] If $q$ satisfies \eqref{distance}, we have $B_{R_3}(2^q)^c \cap V_q = \emptyset$ and we obtain $I^{p,q}_{3}(R_3) = 0$
    \item[2)] Else, for every $y \in V_q$, we have $|y - 2^p| > R_2$. Therefore : 
    \begin{align*}
        I^{p,q}_{3}(R_3) & \leq \left( \int_{V_q} \left| \tau_{-p}f_{\infty}\right|^2 \right)^{1/2} \stackrel{x=y-2^p}{=} \left( \int_{V_q-2^p} \left| f_{\infty}\right|^2 \right)^{1/2} \leq \left( \int_{B_{R_2}^c} \left| f_{\infty}\right|^2 \right)^{1/2}.
    \end{align*}
Using \eqref{majorationqueueL2}, we have for every $R \geq R_3$, $I^{p,q}_{3}(R) \leq \frac{\varepsilon}{3}$.
\end{itemize}
In the two cases , we obtain for $R\geq R_3$ : 
\begin{equation}
    I^{p,q}_{3}(R) \leq \frac{\varepsilon}{3}.
\end{equation}
Since the values of $R_3$ is independent of $p$ and $q$ we can conclude the proof for $R^* =R_3$.

\end{proof}

%-------------------------------------------------%
\section{Existence result for the corrector equation}
\label{Section4}
%-------------------------------------------------%

This section is devoted to the proof of Theorem \ref{theoreme1}. Equation \eqref{correcteur} being posed on the whole space $\mathbb{R}^d$, we need to use here the geometric distribution of the $2^p$ and introduce some constructive techniques involving the fundamental solution of the operator $-\operatorname{div}(a \nabla.)$ to solve it.
To start with, we establish some general results on equations  
\begin{equation}
\label{equationref}
    -\operatorname{div}(a \nabla u) = \operatorname{div}(f) \quad \text{in } \mathbb{R}^d,
\end{equation}
for coercive coefficients $a$ of the form \eqref{coefficientform} and right hand side $f$ in $\left(\mathcal{B}^2(\mathbb{R}^d)\right)^d$ in order to deduce the existence of the corrector stated in Theorem \ref{theoreme1}. For this purpose, we consider the following strategy adapted from \cite{blanc2018correctors}  : We first study diffusion problem \eqref{eqper1} in the periodic context, that is, when the diffusion coefficient $a=a_{per}$ is periodic. Secondly we show in lemma \ref{lemme2} the continuity of the associated reciprocal linear operator $\nabla\left(-\operatorname{div}a\nabla\right)^{-1}\operatorname{div}$ from $\mathcal{B}^2(\mathbb{R}^d)$ to $\mathcal{B}^2(\mathbb{R}^d)$. Finally, we use this continuity in order to generalize the existence results of the periodic context to the general context when $a$ is a perturbed coefficient of the form \eqref{coefficientform}. To this end, we apply a method based on the connexity of the set $\mathcal{I} = [0,1]$ as we shall see in the proof of Lemma \ref{lemmexistence}.

%-------------------------------------------------%
\subsection{Preliminary uniqueness results}
%-------------------------------------------------%

We begin by establishing the uniqueness of a solution $u$ to \eqref{equationref} such that $\nabla u \in \mathcal{B}^2(\mathbb{R}^d)^d$. This result is actually essential in the proof of Theorem \ref{theoreme1} since it both ensures the uniqueness of the corrector solution \eqref{correcteur} and also allows us to establish the continuity estimate of Lemma \ref{lemme2} which is key in our approach to show the existence of a solution to \eqref{equationref}. 

\begin{lemme}
\label{lemme1}
Let $a$ be an elliptic and bounded coefficient, and $u \in H^1_{loc}(\mathbb{R}^d)$, such that $\displaystyle \sup_{p \in \mathcal{P}}  \int_{\textit{V}_p}|\nabla u|^2 < \infty$, be a solution to : 
\begin{equation}
\label{equationhomog}
 -\operatorname{div}(a\nabla u) = 0 \quad\text{in } \mathbb{R}^d,  
\end{equation}
in the sense of distribution. Then $\nabla u$ = 0.
\end{lemme}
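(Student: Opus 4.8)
\textbf{Plan for the proof of Lemma \ref{lemme1}.}

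The plan is to derive a Caccioppoli-type energy estimate on the annuli $A_{2R,R}$ and combine it with the fact that, thanks to Corollary \ref{corollogboundVP}, the hypothesis $\sup_p \int_{V_p} |\nabla u|^2 < \infty$ controls $\int_{B_R} |\nabla u|^2$ by $C \log R$ rather than $C R^d$. Concretely, I would first fix a cutoff $\chi_R \in \mathcal{C}^\infty_c(\mathbb{R}^d)$ with $\chi_R \equiv 1$ on $B_R$, $\mathrm{supp}\,\chi_R \subset B_{2R}$, and $|\nabla \chi_R| \leq C/R$. Since $u$ is only defined up to a constant one may replace $u$ by $u - \fint_{A_{2R,R}} u$; testing the equation \eqref{equationhomog} against $\chi_R^2 (u - c_R)$ and using the ellipticity and boundedness of $a$ gives the classical bound
\begin{equation}
\lambda \int_{B_R} |\nabla u|^2 \leq C \int_{A_{2R,R}} |\nabla \chi_R|^2 |u - c_R|^2 \leq \frac{C}{R^2}\int_{A_{2R,R}} |u - c_R|^2 .
\end{equation}

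The next step is to estimate the right-hand side by a Poincaré-Wirtinger inequality on the annulus $A_{2R,R}$, whose constant scales like $R^2$: one gets $\frac{1}{R^2}\int_{A_{2R,R}} |u - c_R|^2 \leq C \int_{A_{2R,R}} |\nabla u|^2 \leq C \int_{B_{2R}} |\nabla u|^2$. Now I invoke the logarithmic bound: splitting $\int_{B_{2R}} |\nabla u|^2 = \sum_{p} \int_{V_p \cap B_{2R}} |\nabla u|^2$ and using Corollary \ref{corollogboundVP} together with the hypothesis $\sup_p \int_{V_p}|\nabla u|^2 =: M < \infty$, we obtain $\int_{B_{2R}} |\nabla u|^2 \leq C M \log(2R)$. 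Hence $\int_{B_R} |\nabla u|^2 \leq C M \log R$, which already shows $\nabla u$ is ``almost'' globally $L^2$, but this alone is not yet enough to conclude.

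To close the argument I would use the sharper, sorites-style iteration: combine the Caccioppoli inequality with the Poincaré inequality \emph{on the annulus only} so that $\int_{B_R}|\nabla u|^2 \leq C \int_{A_{2R,R}} |\nabla u|^2$, and then let $R \to \infty$. Because $R \mapsto \int_{B_R} |\nabla u|^2$ is nondecreasing and bounded above by $C \log R$, along a suitable dyadic sequence $R_k = 2^k$ the increments $\int_{A_{2R_k, R_k}}|\nabla u|^2 = \int_{B_{2R_k}}|\nabla u|^2 - \int_{B_{R_k}}|\nabla u|^2$ cannot stay bounded below; more precisely $\int_{A_{2R_k,R_k}}|\nabla u|^2 \to 0$ along a subsequence (since a nonnegative series whose partial sums grow like $\log$ has a subsequence of terms tending to $0$, indeed $\liminf$ of the terms is $0$). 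Plugging such $R_k$ back into $\int_{B_{R_k}} |\nabla u|^2 \leq C\int_{A_{2R_k,R_k}}|\nabla u|^2 \to 0$ forces $\int_{\mathbb{R}^d}|\nabla u|^2 = 0$, hence $\nabla u = 0$.

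The main obstacle is the Poincaré-Wirtinger step on the annulus with the \emph{correct} $R^2$ scaling of the constant and making sure the annulus $A_{2R,R}$ (which is a connected Lipschitz domain with uniformly controlled geometry under dilation) yields a constant independent of $R$; this is standard by scaling but must be stated carefully. A secondary technical point is justifying that $\chi_R^2(u-c_R)$ is an admissible test function, which follows from $u \in H^1_{loc}$ and the compact support of $\chi_R$. Everything else — the ellipticity estimate and the extraction of a subsequence of annuli along which the energy vanishes, using the logarithmic growth from Corollary \ref{corollogboundVP} — is routine.
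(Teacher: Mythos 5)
Your strategy up through the Caccioppoli inequality, the Poincaré--Wirtinger step on the annulus, and the logarithmic bound $\int_{B_{2R}}|\nabla u|^2\leq CM\log(2R)$ coming from Corollary \ref{corollogboundVP} is exactly what the paper does. But your final step — the ``sorites-style iteration'' — contains a genuine gap.

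You claim that because $R\mapsto F(R):=\int_{B_R}|\nabla u|^2$ is nondecreasing and $F(R)\leq CM\log R$, the dyadic annular increments $a_k:=F(2^{k+1})-F(2^k)$ satisfy $\liminf_k a_k=0$. This is false. Along $R_k=2^k$ one has $\log R_k=k\log 2$, so the partial sums $\sum_{j\leq k}a_j=F(2^{k+1})-F(2)$ are bounded by a quantity that grows \emph{linearly} in $k$, not sublinearly; a nonnegative sequence with linearly bounded partial sums need not have $\liminf=0$ (take $a_k\equiv\log 2$). So you cannot extract a subsequence of annuli along which the annular energy vanishes just from the growth estimate. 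What you \emph{should} extract from Caccioppoli $+$ Poincaré is the self-improving inequality $F(R)\leq C\bigl(F(2R)-F(R)\bigr)$, i.e.\ $F(R)\leq \frac{C}{1+C}F(2R)$. Iterating gives $F(R)\leq\bigl(\tfrac{C}{1+C}\bigr)^{n}F(2^{n}R)$, and then inserting the bound $F(2^{n}R)\leq C_1 M\log(2^{n}R)=C_1M(n\log 2+\log R)$ shows that the geometric factor $\bigl(\tfrac{C}{1+C}\bigr)^{n}$ kills the linear growth in $n$, so $F(R)=0$ for every $R$. That is precisely the paper's argument; your version is missing this iteration and replaces it by a statement that does not hold.
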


\begin{proof}
we consider $u\in H^1_{loc}(\mathbb{R}^d)$ solution of \eqref{equationhomog}.  Since $u$ is a solution to \eqref{equationhomog}, there exists $C>0$ such that for every $R >0$, we have the following estimate (for details see for instance \cite[Proposition 2.1 p.76]{giaquinta1983multiple} and \cite[ Remark 2.1 p.77]{giaquinta1983multiple} )  :
\begin{equation}
\int_{B_R} |\nabla u | ^2 \leq \frac{C}{R^2} \int_{A_{R,2R}} |u- \langle u \rangle_{A_{R,2R}}| ^2,
\end{equation}
where : 
$$\langle u \rangle_{A_{R,2R}} = \frac{1}{\left|A_{R,2R}\right|}\int_{A_{R,2R}} u(x) dx.$$
We use the Poincar\'e-Wirtinger inequality on the right-hand side and we obtain : 
\begin{equation}
\label{3}
\int_{B_R} |\nabla u| ^2 \leq C \int_{A_{R,2R}} |\nabla u| ^2.
\end{equation}
Furthermore, we can write this inequality in the following form : 
\begin{equation}
\label{3bis}
\int_{B_R} |\nabla u| ^2 \leq \dfrac{C}{1+C} \int_{B_{2R}} |\nabla u| ^2. 
\end{equation}
In addition, using Proposition \eqref{logboundVP}, we know there exists a constant $C_1>0$ independent of $R$ such that :  
\begin{equation}
\label{4}
\int_{B_{2R}} |\nabla u| ^2 = \sum_{\textit{V}_p \cap B_{2R} \ne \emptyset } \int_{\textit{V}_p \cap B_{2R}} |\nabla u|^2 \leq C_1 \log(2R) \sup_p \int_{\textit{V}_p} |\nabla u|^2.
\end{equation}

Next, we define $F(R)= \displaystyle \int_{ B_{R}} |\nabla u|^2$. The inequalities \eqref{3bis} and \eqref{4} yield
 for all $R > 0$ and for every $n \in \mathbb{N}^{*}$, we obtain  
 
$$F(R) \leq \big(\dfrac{C}{1+C}\big)^n F(2^nR) \leq C_1 \big(\dfrac{C}{1+C}\big)^n \log(2^n R)\sup_p \int_{\textit{V}_p} |\nabla u|^2.$$
Since $ \dfrac{C}{1+C}<1$, we have :  
$$\lim_{n \rightarrow \infty} \big(\dfrac{C}{1+C}\big)^n \log(2^n R) =0,$$
and it therefore follows, letting $n$ go to infinity, that $F(R)=0$ for all $R>0$, thus $\nabla u = 0$. 
\end{proof}

\begin{corol}
\label{remarkuniqueness}
Let $f \in \mathcal{B}^2(\mathbb{R}^d)^d$, then a solution $u$ of \eqref{equationref} 
with $\nabla u \in \mathcal{B}^2(\mathbb{R}^d)^d$ is unique up to an additive constant. 
\end{corol}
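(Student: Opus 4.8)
The plan is to reduce uniqueness for \eqref{equationref} to the homogeneous statement of Lemma \ref{lemme1}. Suppose $u_1$ and $u_2$ are two solutions of \eqref{equationref} with $\nabla u_1, \nabla u_2 \in \mathcal{B}^2(\mathbb{R}^d)^d$. Set $v = u_1 - u_2 \in H^1_{loc}(\mathbb{R}^d)$. By linearity of the equation, $v$ solves $-\operatorname{div}(a\nabla v) = 0$ in $\mathbb{R}^d$ in the sense of distributions, since the two right-hand sides $\operatorname{div}(f)$ cancel.

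Next I would check that $v$ satisfies the hypothesis of Lemma \ref{lemme1}, namely that $\displaystyle\sup_{p\in\mathcal{P}}\int_{V_p}|\nabla v|^2 < \infty$. This follows because $\nabla v = \nabla u_1 - \nabla u_2$ and each $\nabla u_i$ lies in $\mathcal{B}^2(\mathbb{R}^d)^d$; by the very definition \eqref{normdef} of the $\mathcal{B}^2$ norm, membership in $\mathcal{B}^2(\mathbb{R}^d)$ implies $\displaystyle\sup_{p}\|g\|_{L^2(V_p)} < \infty$ (it is controlled, e.g., by $\|g_\infty\|_{L^2(\mathbb{R}^d)} + \sup_p\|g - \tau_{-p}g_\infty\|_{L^2(V_p)}$, or directly by $\|g\|_{L^2_{unif}}$ together with the finiteness of the cells from \eqref{H1}). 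A triangle inequality in $L^2(V_p)$, uniform in $p$, then gives the required uniform bound for $\nabla v$.

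Having verified both hypotheses, I would simply invoke Lemma \ref{lemme1} to conclude $\nabla v = 0$, i.e. $\nabla u_1 = \nabla u_2$ almost everywhere, so $u_1 - u_2$ is (locally, hence globally on the connected set $\mathbb{R}^d$) constant. This is exactly the assertion that the solution is unique up to an additive constant.

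There is essentially no obstacle here: the corollary is a direct linear-difference argument on top of Lemma \ref{lemme1}. The only point requiring a word of care is the bookkeeping that $\nabla u \in \mathcal{B}^2(\mathbb{R}^d)^d$ does entail the uniform $L^2(V_p)$ control demanded by Lemma \ref{lemme1} — but this is immediate from the definition of the norm \eqref{normdef} and does not need any of the deeper geometric properties of Section \ref{Section2}.
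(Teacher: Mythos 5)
Your argument is correct and is exactly how the paper treats this: the corollary is stated immediately after Lemma~\ref{lemme1} with no separate proof, because subtracting two solutions gives a homogeneous equation with $\nabla v \in \mathcal{B}^2(\mathbb{R}^d)^d$, which in turn gives $\sup_p\|\nabla v\|_{L^2(V_p)}<\infty$ via the triangle inequality $\|g\|_{L^2(V_p)}\le\|g-\tau_{-p}g_\infty\|_{L^2(V_p)}+\|g_\infty\|_{L^2(\mathbb{R}^d)}$, so Lemma~\ref{lemme1} applies. One caution on your parenthetical alternative: $\|g\|_{L^2_{unif}}$ together with \eqref{H1} does \emph{not} by itself give $\sup_p\|g\|_{L^2(V_p)}<\infty$, since \eqref{H1} only says each $|V_p|$ is finite while Proposition~\ref{volumeVx} shows $|V_p|\sim|x_p|^d$ is unbounded, so covering $V_p$ by unit balls produces a bound growing like $|x_p|^{d/2}$; stick with the $g_\infty$ route.
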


\begin{remark}
Here the restriction made on the dimension is actually not necessary. The result and the proof of Lemma \ref{lemme1} of uniqueness still hold if we assume $ d = 1 $ or $ d = 2$.
\end{remark}

\begin{remark}
We remark that Assumptions \eqref{coefficientform} and \eqref{hypothèses2} regarding the structure and the regularity of the coefficient $a$ are not required to establish the uniqueness result of Lemma \ref{lemme1}. In the proof, we only use the "Hilbert" structure of $L^2$, induced by the assumptions satisfied by $u$, and the fact that $a$ is elliptic and bounded. 
\end{remark}

%-------------------------------------------------%
\subsection{Existence results in the periodic problem}
%-------------------------------------------------%

Now that uniqueness has been dealt with, we turn to the existence of the solution to \eqref{equationref}. We need to first establish it for a periodic coefficient considering the equation : 
\begin{equation}
\label{eqper1}
    - \operatorname{div}(a_{per} \nabla u) = \operatorname{div}(f) \quad \text{in } \mathcal{D}'(\mathbb{R}^d). 
\end{equation}

We start by introducing the Green function $G_{per}: \mathbb{R}^d \times \mathbb{R}^d \rightarrow \mathbb{R}$ associated with the operator $-\operatorname{div}\left(a_{per}\nabla. \right)$ on $\mathbb{R}^d$. That is, the unique solution to 
\begin{equation}
    \label{defGreen}
    \left\{ 
\begin{array}{cc}
   - \operatorname{div}_x \left(a_{per}(x) \nabla_x G_{per}(x,y) \right) = \delta_y (x)  &  \text{in } \mathcal{D}'(\mathbb{R}^d), \\
    \displaystyle \lim_{|x-y| \rightarrow \infty} G_{per}(x,y) = 0. &
\end{array}  
\right.
\end{equation}

According to the results established in \cite[Section 2]{avellaneda1991lp} about the asymptotic growth of the Green function (see also \cite[theorem 13, proof of lemma 17]{avellaneda1987compactness} and \cite{gruter1982green} for bounded domain or \cite[proposition 8]{ anantharaman2011asymptotic} for additional details), there exists $C_1>0$, $C_2>0$ and $C_3>0$ such that for every $x,y \in \mathbb{R}^d$ with $x\neq y$ :
\begin{align}
\label{estimgreen1}
|\nabla_y G_{per}(x,y)| & \leq C_1 \dfrac{1}{|x-y|^{d-1}},\\
\label{estimgreen3}
|\nabla_x G_{per}(x,y)| & \leq C_2 \dfrac{1}{|x-y|^{d-1}},\\
\label{estimgreen2}
|\nabla_x \nabla_y G_{per}(x,y)| & \leq C_3 \dfrac{1}{|x-y|^{d}}.
\end{align}

We first introduce a result of existence in the $L^2(\mathbb{R}^d)$ case. The following lemma allows us to define a solution to \eqref{eqper1} using the Green function when $f$ belongs to $\left(L^2(\mathbb{R}^d)\right)^d$. The proof of this result is established in \cite{avellaneda1991lp}.

\begin{lemme}
\label{reultatexistenceL2}
Let $f$ be in $\left(L^2(\mathbb{R}^d)\right)^d$, then the function : 
\begin{equation}
    \label{defuintegralL2}
    u = \int_{\mathbb{R}^d} \nabla_y G_{per}(.,y) .f(y) dy,
\end{equation}
is a solution in $H^1_{loc}(\mathbb{R}^d)$ to \eqref{eqper1}
such that $\nabla u \in \left(L^2(\mathbb{R}^d)\right)^d$. 
\end{lemme}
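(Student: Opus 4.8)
The plan is to verify directly that the function $u$ defined by the convolution-type formula \eqref{defuintegralL2} solves \eqref{eqper1} in the distributional sense and has a gradient in $L^2$, exploiting the Calderón--Zygmund-type estimate for the second derivative kernel $\nabla_x\nabla_y G_{per}$. First I would reduce to the case where $f$ is smooth and compactly supported, which is dense in $(L^2(\mathbb{R}^d))^d$; the general case will follow by a density/closure argument once the key a priori bound $\|\nabla u\|_{L^2(\mathbb{R}^d)} \leq C\|f\|_{L^2(\mathbb{R}^d)}$ is established, since the map $f \mapsto \nabla u$ will then extend continuously and the equation \eqref{eqper1}, being linear and closed under $L^2$-limits of the data together with $H^1_{loc}$-weak limits of the solution, passes to the limit.

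The heart of the matter is the estimate on $\nabla u$. Formally, $\partial_{x_i} u(x) = \int_{\mathbb{R}^d} \partial_{x_i}\nabla_y G_{per}(x,y)\cdot f(y)\,dy$, and by \eqref{estimgreen2} the kernel $K(x,y) = \partial_{x_i}\nabla_y G_{per}(x,y)$ satisfies $|K(x,y)| \leq C|x-y|^{-d}$. This is a singular integral operator of Calderón--Zygmund type: one needs the standard cancellation/Hörmander conditions on $K$, which follow from the pointwise gradient bounds \eqref{estimgreen2} together with the analogous bound on one more derivative (or, alternatively, from the $L^2$ boundedness that is already part of the known theory in \cite{avellaneda1991lp}). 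Since the lemma explicitly says the proof is in \cite{avellaneda1991lp}, I would cite that reference for the $L^2 \to L^2$ boundedness of the operator $f \mapsto \nabla u$ and for the fact that $u \in H^1_{loc}$, treating this as a black box; the role of the lemma here is merely to record the statement in the form needed for the later $\mathcal{B}^2$ theory.

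Next I would check that $u$ indeed solves $-\operatorname{div}(a_{per}\nabla u) = \operatorname{div}(f)$ in $\mathcal{D}'(\mathbb{R}^d)$. For $f \in \mathcal{D}(\mathbb{R}^d)^d$ and a test function $\varphi \in \mathcal{D}(\mathbb{R}^d)$, I would write $\int a_{per}\nabla u \cdot \nabla\varphi$, substitute the integral representation of $\nabla u$, and use Fubini (justified by the local integrability of the kernels via \eqref{estimgreen1}) to exchange the order of integration; the inner integral $\int a_{per}(x)\nabla_x\big(\nabla_y G_{per}(x,y)\big)\cdot\nabla\varphi(x)\,dx$ is handled by the defining property \eqref{defGreen} of $G_{per}$ (differentiated in $y$), yielding $-\partial_{y_j}\varphi(y)$ in the appropriate component, so that the whole expression collapses to $-\int f(y)\cdot\nabla\varphi(y)\,dy = \langle \operatorname{div} f, \varphi\rangle$.

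The main obstacle is genuinely the singular-integral estimate $\|\nabla u\|_{L^2} \leq C\|f\|_{L^2}$: the kernel bound \eqref{estimgreen2} alone gives a kernel of homogeneity $-d$, which is exactly critical and is \emph{not} $L^2$-bounded without a cancellation condition, so one cannot argue by brute-force Schur testing or Young's inequality. The resolution — and the reason this is quoted rather than proved — is the full Avellaneda--Lin machinery: the oscillation estimates on $\nabla_x\nabla_y G_{per}$ that upgrade the pointwise decay to a Calderón--Zygmund kernel, after which $L^2$ boundedness is automatic. Accordingly, in the write-up I would state the reduction to smooth data, invoke \cite{avellaneda1991lp} for the boundedness and the $H^1_{loc}$ regularity, verify the distributional equation by the Fubini computation above on smooth data, and then pass to the $L^2$ limit to conclude.
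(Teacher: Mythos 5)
Your proposal is correct and takes essentially the same approach as the paper, which simply cites \cite{avellaneda1991lp} for this result: reduce to smooth compactly supported data, invoke the Avellaneda--Lin $L^2$ bound on the singular integral operator $f \mapsto \nabla u$, verify the distributional equation via Fubini and the defining property of $G_{per}$, then pass to the limit by density. Your remark that the crude $|x-y|^{-d}$ kernel bound from \eqref{estimgreen2} is critical and insufficient on its own, so that the full Calder\'on--Zygmund machinery of \cite{avellaneda1991lp} is genuinely needed, is an accurate and worthwhile clarification of why the reference is cited as a black box.
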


Our aim is now to generalize the above result to our case and, in particular, to give a sense to the function $u$ define by \eqref{defuintegralL2} when $f\in \left(\mathcal{B}^2(\mathbb{R}^d)\right)^d$. The idea here is to split the function $f$ into a sum of $L^2$-functions $f_p$ compactly supported in each $V_p$ for $p\in \mathcal{P}$. Using Lemma \ref{reultatexistenceL2}, we shall obtain the existence of a collection $u_p$ of solution to \eqref{eqper1} when $f = f_p$. The main difficulty here is to show that the function $u$ defined as the sum of the $u_p$ is bounded. 
 
\begin{lemme}
\label{lemmeexistenceperiodique1}
Let $f\in \left(L^2_{loc}(\mathbb{R}^d)\right)^d$ such that $ \displaystyle \sup_{p \in \mathcal{P}} \|f\|_{L^2(V_p)} < \infty$, then the function $u$ defined by
\begin{equation}
\label{defuintegrale}
 u = \int_{\mathbb{R}^d}  \nabla_y G_{per}(.,y) f(y) dy
\end{equation}
is a solution in $H^1_{loc}(\mathbb{R}^d)$ to \eqref{eqper1}.
In addition, $u$ is the unique solution to \eqref{eqper1} which satisfies $\displaystyle \sup_{p \in \mathcal{P}} \|\nabla u \|_{L^2(V_p)} < \infty$ and there exists $C>0$ independent of $f$ and $u$ such that we have the following estimate : 
\begin{equation}
\label{estimper}
\sup_{p\in \mathcal{P}} \|\nabla u\|_{L^2(\textit{V}_p)} \leq C\sup_{p\in \mathcal{P}} \|f\|_{L^2(\textit{V}_p)}.
\end{equation}
\end{lemme}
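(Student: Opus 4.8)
The plan is to build $u$ as a superposition $\sum_{p\in\mathcal P}u_p$ of solutions of the $L^2$‑problem, one per Voronoi cell, and then to control this series cell by cell, using the Green‑function estimates~\eqref{estimgreen1}--\eqref{estimgreen2} together with the geometric separation of the cells established in Section~\ref{Section2}. Set $f_p:=\mathbf{1}_{\textit{V}_p}f$; since $\sup_p\|f\|_{L^2(\textit{V}_p)}<\infty$ and $|\textit{V}_p|<\infty$ by~\eqref{H1}, each $f_p$ lies in $L^2(\mathbb R^d)^d$, so Lemma~\ref{reultatexistenceL2} provides $u_p:=\int_{\mathbb R^d}\nabla_y G_{per}(\cdot,y)f_p(y)\,dy=\int_{\textit{V}_p}\nabla_y G_{per}(\cdot,y)f(y)\,dy$ solving $-\operatorname{div}(a_{per}\nabla u_p)=\operatorname{div}(f_p)$ with $\nabla u_p\in L^2(\mathbb R^d)^d$.

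\textbf{Geometric input and basic cell estimate.} Fix $q$ and let $W_q$ be the convex open set attached to $2^q$ in Proposition~\ref{openconstruction}. I claim that for every $p\neq q$ with $\textit{V}_p\cap W_q=\emptyset$ there is $c>0$, independent of $p,q$, with $D(\textit{V}_p,\textit{V}_q)\ge c\,(2^{|p|}+2^{|q|})$: the bound $D(\textit{V}_p,\textit{V}_q)\ge c\,2^{|p|}$ is item~\ref{Wv} of Proposition~\ref{openconstruction} (in which $\textit{V}_p\setminus W_q=\textit{V}_p$) together with Proposition~\ref{norm2p}, while $D(\textit{V}_p,\textit{V}_q)\ge c\,2^{|q|}$ follows from the convexity of $W_q$ — any segment joining a point of $\textit{V}_p$ (which lies outside $W_q$) to a point of $\textit{V}_q\subset W_q$ must meet $\partial W_q$, hence has length at least $D(\textit{V}_q,\partial W_q)\ge c\,2^{|q|}$ by item~\ref{Wii} of Proposition~\ref{openconstruction} and Proposition~\ref{norm2p}. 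For $x\in\textit{V}_q$, which is at positive distance from $\textit{V}_p$, one may differentiate under the integral sign (with~\eqref{estimgreen2} as a dominating bound), so that $\nabla u_p(x)=\int_{\textit{V}_p}\nabla_x\nabla_y G_{per}(x,y)f(y)\,dy$ and, by the Cauchy--Schwarz inequality,
\[ |\nabla u_p(x)|\ \le\ C_3\int_{\textit{V}_p}\frac{|f(y)|}{|x-y|^{d}}\,dy\ \le\ \frac{C_3\,|\textit{V}_p|^{1/2}}{D(\textit{V}_p,\textit{V}_q)^{d}}\,\|f\|_{L^2(\textit{V}_p)}. \]
Inserting $|\textit{V}_p|\le C\,2^{d|p|}$, $|\textit{V}_q|\le C\,2^{d|q|}$ (Propositions~\ref{volumeVx} and~\ref{norm2p}) and the separation estimate, integrating over $\textit{V}_q$, and using $\max(|p|,|q|)=\tfrac12(|p|+|q|)+\tfrac12|\,|p|-|q|\,|$, one obtains
\[ \|\nabla u_p\|_{L^2(\textit{V}_q)}\ \le\ C\,2^{-\frac{d}{2}\,|\,|p|-|q|\,|}\,\|f\|_{L^2(\textit{V}_p)}\qquad\text{whenever }\textit{V}_p\cap W_q=\emptyset, \]
and the same computation with~\eqref{estimgreen1} in place of~\eqref{estimgreen2} gives a comparable bound for $\|u_p\|_{L^2(\textit{V}_q)}$.

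\textbf{Construction of $u$ and the equation.} Since $\sharp\{p\in\mathcal P:|p|=n\}$ is bounded uniformly in $n$ (Propositions~\ref{norm2p} and~\ref{numberpointAn}) and only $O(\log R)$ cells meet $B_R$ (Corollary~\ref{corollogboundVP}), the two displays above show — this is where $d\ge3$ enters, ensuring that the tail $\sum_{|p|\ \mathrm{large}}\|u_p\|_{L^2(B_R)}$ converges — that $\sum_p u_p$ and $\sum_p\nabla u_p$ converge in $L^2_{loc}(\mathbb R^d)$; the finitely many $p$ whose cell is not separated from a given ball contribute $L^2_{loc}$ functions and cause no trouble. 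Hence $u:=\sum_p u_p\in H^1_{loc}(\mathbb R^d)$; it coincides a.e.\ with~\eqref{defuintegrale} by Fubini (the integral there converging absolutely a.e.\ by the same estimates), and summing $-\operatorname{div}(a_{per}\nabla u_p)=\operatorname{div}(f_p)$ — legitimate because $\sum_p\nabla u_p$ converges in $L^2_{loc}$ — gives $-\operatorname{div}(a_{per}\nabla u)=\operatorname{div}(f)$ in $\mathcal D'(\mathbb R^d)$. Uniqueness of such a solution up to an additive constant is Corollary~\ref{remarkuniqueness} (apply Lemma~\ref{lemme1} to the difference of two solutions with finite $\sup_p\|\nabla\cdot\|_{L^2(\textit{V}_p)}$).

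\textbf{Proof of~\eqref{estimper}, and the main difficulty.} Fix $q$ and split $\mathcal P=\mathcal N_q\cup(\mathcal P\setminus\mathcal N_q)$, where $\mathcal N_q:=\{p:\textit{V}_p\cap W_q\neq\emptyset\}$. By item~\ref{Wiv} of Proposition~\ref{openconstruction}, $\sharp\mathcal N_q\le C_4$ uniformly in $q$, so $g:=\sum_{p\in\mathcal N_q}f_p$ satisfies $\|g\|_{L^2(\mathbb R^d)}^2=\sum_{p\in\mathcal N_q}\|f\|_{L^2(\textit{V}_p)}^2\le C_4\,\sup_r\|f\|_{L^2(\textit{V}_r)}^2$, and $v:=\sum_{p\in\mathcal N_q}u_p=\int_{\mathbb R^d}\nabla_y G_{per}(\cdot,y)g(y)\,dy$ obeys, by the $L^2(\mathbb R^d)$ continuity estimate underlying Lemma~\ref{reultatexistenceL2} (see~\cite{avellaneda1991lp}; alternatively the closed graph theorem with Lemma~\ref{lemme1}), $\|\nabla v\|_{L^2(\textit{V}_q)}\le\|\nabla v\|_{L^2(\mathbb R^d)}\le C\|g\|_{L^2(\mathbb R^d)}\le C\sup_r\|f\|_{L^2(\textit{V}_r)}$. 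For the complementary part $u-v=\sum_{p\in\mathcal P\setminus\mathcal N_q}u_p$ the basic cell estimate applies and yields
\[ \|\nabla(u-v)\|_{L^2(\textit{V}_q)}\ \le\ \sum_{p\in\mathcal P\setminus\mathcal N_q}\|\nabla u_p\|_{L^2(\textit{V}_q)}\ \le\ C\Bigl(\sup_r\|f\|_{L^2(\textit{V}_r)}\Bigr)\sum_{n\ge0}\sharp\{p:|p|=n\}\,2^{-\frac{d}{2}|\,n-|q|\,|}, \]
and the last series is bounded uniformly in $q$ because $\sharp\{p:|p|=n\}\le C$ and $\sum_{m\ge0}2^{-dm/2}<\infty$. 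Adding the two contributions gives $\sup_q\|\nabla u\|_{L^2(\textit{V}_q)}\le C\,\sup_r\|f\|_{L^2(\textit{V}_r)}$, which is~\eqref{estimper}. The crux of the whole argument is this last uniform‑in‑$q$ summability: the one‑sided separation $D(\textit{V}_p,\textit{V}_q)\ge c\,2^{|p|}$ alone does not suffice — it does not register the size of $q$ — and it is the two‑sided bound $D(\textit{V}_p,\textit{V}_q)\ge c\,(2^{|p|}+2^{|q|})$, produced by the enlarged convex cells $W_q$ of Proposition~\ref{openconstruction}, that turns the sum into a $q$‑independent geometric series.
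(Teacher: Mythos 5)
Your argument is correct, and it follows the paper's overall strategy (superposition $u=\sum_p u_p$, a near/far split governed by the enlarged sets $W_q$ of Proposition~\ref{openconstruction}, pointwise decay of $\nabla_x\nabla_y G_{per}$ for the far cells, and summation of a $q$‑independent geometric series using $\sharp\{p:|p|=n\}\le C$). Two genuine differences deserve noting, both to your credit. First, for the ``near'' contribution the paper integrates over the set $W_p$ directly, bounds $\|I_{1,p}\|_{L^2(W_p)}$ by hand, and then applies a Caccioppoli-type interior estimate on the pair $(V_p,W_p)$ to recover $\|\nabla I_{1,p}\|_{L^2(V_p)}$; you instead lump the indices $\mathcal N_q=\{p:V_p\cap W_q\neq\emptyset\}$ into a single $L^2(\mathbb R^d)$ datum $g$ and invoke the Avellaneda--Lin $L^2\!\to\!L^2$ continuity of $\nabla(-\operatorname{div}a_{per}\nabla)^{-1}\operatorname{div}$, which avoids the interior-regularity step entirely. (Your parenthetical ``closed graph theorem with Lemma~\ref{lemme1}'' is shakier — Lemma~\ref{lemme1} concerns the $\sup_p\|\cdot\|_{L^2(V_p)}$ norm, not $L^2(\mathbb R^d)$ — so you should lean on the Avellaneda--Lin reference here.) Second, you package the geometric separation into the symmetric bound $D(V_p,V_q)\ge c(2^{|p|}+2^{|q|})$ derived from convexity of $W_q$ and items~\ref{Wii},~\ref{Wv}, which turns the far sum into a single series in $||p|-|q||$; the paper instead treats $|q|<|p|$ and $|q|\ge|p|$ separately with one-sided bounds and also records a stronger pointwise decay $\|\nabla I_{2,p}\|_{L^{\infty}(V_p)}\lesssim 2^{-d|p|/2}$, which it reuses later (in Lemma~\ref{existenceper}) — your weaker $L^2(V_q)$ estimate suffices for the statement at hand but would need to be strengthened for that later use.
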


\begin{proof}
\textit{Step 1 :  $u$ is well defined}

We start by proving that definition \eqref{defuintegrale} makes sense and, in particular, that the above integral defines a function $u$ solution to \eqref{eqper1} in $H^1_{loc}(\mathbb{R}^d)$. In the sequel the letter $C$ denotes a generic constant that may change for one line to another. For every $q \in \mathcal{P}$, we first introduce a set $W_q$ and five constants $C_1$, $C_2$, $C_3$, $C_4$ and $C_5$ independent of $q$ and defined by Proposition \ref{openconstruction} such that :  
\begin{enumerate}[label=(\roman*)]
\item $\textit{V}_q \subset{W}_q$, \label{i)}  
\item $Diam(W_q) \leq C_1 2^{|q|}$, and $D(\textit{V}_q,\partial W_q) \geq C_2 2^{|q|},$ \label{ii)} 
\item  $\forall r \in \mathcal{P} \setminus{\{q\}}$, $Dist(2^r, W_q) \geq C_3 2^{|q|}$,  \label{iii)} 
\item $\sharp \left \{ r \in \mathcal{P} \middle| V_r \cap W_q \neq \emptyset \right\} \leq C_4$, \label{iv)} 
\item $\forall r \in \mathcal{P} \setminus{\{q\}}$, $D\left(V_q, V_r\setminus{W_q}\right) \geq C_5 2^{|r|}.$ 
\label{v)}
\end{enumerate}
To start with, we define for each $q \in \mathcal{P}$ the function : 
\begin{equation}
\label{defuk}
    u_q = \int_{\mathbb{R}^d} \nabla_y G_{per}(.,y) f(y)1_{V_q}(y) dy. 
\end{equation}
Lemma \ref{reultatexistenceL2} ensures this function is a solution in $H^1_{loc}(\mathbb{R}^d)$ to : 
\begin{equation}
\left\{
\begin{array}{cc}
    - \operatorname{div}(a_{per} \nabla u_q) = \operatorname{div}(f 1_{V_p})  &  \text{in } \mathbb{R}^d,\\
    \nabla u_q \in \left(L^2(\mathbb{R}^d)\right)^d. & 
\end{array}
\right.
\end{equation}
Considering the gradient of \eqref{defuk}, we have for every $x \in \mathbb{R}^d\setminus{V_q}$ : 
\begin{equation}
\nabla u_q(x) = \int_{\mathbb{R}^d} \nabla_x \nabla_y G_{per}(x,y) f(y) 1_{V_q}(y) dy.    
\end{equation}
Next, for every $N \in \mathbb{N^*}$, we define :
\begin{equation}
\label{SeriesUN}
 U_N = \sum_{q \in \mathcal{P}, \ |q|\leq N}  u_q,   
\end{equation}
and
\begin{equation}
\label{SeriesSN}
S_N = \nabla U_N = \sum_{q \in \mathcal{P}, \ |q|\leq N} \nabla u_q.    
\end{equation}

We next show that the two series $U_N$ ans $S_N$ are convergent in $L^2_{loc}(\mathbb{R}^d)$. To this aim, since the collection $\left(V_p\right)_{p\in \mathcal{P}}$ is a partition of $\mathbb{R}^d$, it is sufficient to prove that they normally converge in $L^2(V_p)$ for every $p \in \mathcal{P}$. 
We fix $p \in \mathcal{P}$ and for every $q \in \mathcal{P}$ such that $V_q \cap W_p = \emptyset $, we use the Cauchy-Schwarz inequality to obtain : 
\begin{align*}
    \|u_q\|_{L^2(V_p)} & = \left(\int_{V_p} \left| \int_{V_q} \nabla_y G_{per}(x,y) f(y) dy \right|^2 dx \right)^{1/2}\\
    & \leq \left(\int_{V_p} \int_{V_q} \left| \nabla_y G_{per}(x,y) \right|^2 dy \int_{V_q} \left| f(y) \right|^2 dy \ dx \right)^{1/2}.
\end{align*}
Next, estimate \eqref{estimgreen1} gives : 
\begin{align}
\label{formuleimportante1}
    \|u_q\|_{L^2(V_p)} & \leq C \sup_{r \in \mathcal{P}} \|f\|_{L^2(V_r)} \left(\int_{V_p} \int_{V_q}  \frac{1}{|x-y|^{2d-2}}dy \ dx \right)^{1/2}.
\end{align}
Since $V_q \cap W_p = \emptyset$, Property \ref{v)} gives the existence of $C>0$ such that for every $x\in V_p$ and $y\in V_q$, we have $|x-y|\geq C 2^{|q|}$. We next use Propositions \ref{volumeVx} and \ref{norm2p} to obtain the existence of a constant $C>0$ independent of $p$ and $q$ such that $\left|V_q\right| \leq C 2^{d|q|}$. Finally : 
\begin{align*}
    \|u_q\|_{L^2(V_p)} & \leq C \sup_{r \in \mathcal{P}} \|f\|_{L^2(V_r)} \left(\int_{V_p} \int_{V_q}  \frac{1}{2^{(2d-2)|q|}}dy \ dx \right)^{1/2} \\
    & \leq C \sup_{r \in \mathcal{P}} \|f\|_{L^2(V_r)} \left(\int_{V_p} \frac{\left|V_q\right|}{2^{(2d-2)|q|}} dx \right)^{1/2} \\
    & \leq C \sup_{r \in \mathcal{P}} \|f\|_{L^2(V_r)} \left|V_p\right|^{1/2}  \frac{1}{2^{|q|(d/2-1)}}.
\end{align*}
We thus obtain the following upper bound : 
\begin{align*}
 \sum_{q \in \mathcal{P}} \|u_q\|_{L^2(V_p)} & = \sum_{\substack{q \in \mathcal{P}, \\ V_q \cap W_p \neq \emptyset}} \|u_q\|_{L^2(V_p)} + \sum_{\substack{q \in \mathcal{P}, \\ V_q \cap W_p = \emptyset}} \|u_q\|_{L^2(V_p)} \\
 & \leq \sum_{\substack{q \in \mathcal{P}, \\ V_q \cap W_p \neq \emptyset}} \|u_q\|_{L^2(V_p)} + C \sum_{\substack{q \in \mathcal{P}}} \frac{1}{2^{|q|(d/2-1)}}.
\end{align*}
The first sum is finite according to Property \ref{iv)} and we only have to prove the convergence of the second one. We have assumed $d>2$ and consequently $d/2-1 >0$. In addition, since the number of $q\in \mathcal{P}$ such that $|q|=n\in \mathbb{N}$ is bounded independently of $n$ (as a consequence of Proposition \ref{numberpointAn}), we have : 
\begin{equation}
\label{serieimportante}
\sum_{q \in \mathcal{P}} \frac{1}{2^{|q|(d/2-1)}} \leq C \sum_{n \in \mathbb{N}} \frac{1}{2^{n(d/2-1)}} < \infty.
\end{equation}
Therefore, for every $p \in \mathcal{P}$, the absolute convergence of $U_N$ to $u$ in $L^2(V_p)$ is proved. That is, since the sequence of the sets $V_q$ defines a partition of $\mathbb{R}^d$, $U_N$ converges to $u$ in $L^2_{loc}(\mathbb{R}^d)$. Using asymptotic estimate \eqref{estimgreen2} for $\nabla_x \nabla_y G_{per}$ we can conclude with the same arguments to prove the convergence of $S_N$ in $L^2_{loc}(\mathbb{R}^d)$. In addition, the gradient operator being continuous in $\mathcal{D}'(\mathbb{R}^d)$, we have :
$$\sum_{q \in \mathcal{P}} \nabla u_q = \displaystyle \lim_{N \rightarrow \infty} S_N = \displaystyle \lim_{N \rightarrow \infty} \nabla U_N =  \nabla u.$$
To complete the proof, we have to show that $u$ is a solution to \eqref{eqper1}. Let $N$ be in $\mathbb{N}$. By linearity of the operator $\operatorname{div(a_{per} \nabla.)}$, $U_N$ is a solution in $H^1_{loc}(\mathbb{R}^d)$ to : 
\begin{equation}
\label{equationtronquee}
-\operatorname{div}\left(a_{per} \nabla U_N\right) = \operatorname{div}\left(\sum_{q \in \mathcal{P}, \ |q|\leq N} 1_{V_q} f \right) \quad \text{in } \mathbb{R}^d.
\end{equation}
We take the $L^2_{loc}$-limit when $N \rightarrow \infty$ in \eqref{equationtronquee} and we obtain : 
$$ -\operatorname{div}(a_{per} \nabla u) = \operatorname{div}(f) \quad \text{in } \mathbb{R}^d.$$  
Therefore, $u$ is a solution to \eqref{eqper1} in $\mathcal{D}'(\mathbb{R}^d)$. \\

\textit{Step 2 : Proof of Estimate \eqref{estimper}}

Let $p$ be in $\mathcal{P}$, we want to split $u$ in two parts. For every $x \in V_p$, we write : 
\begin{align*}
u(x) & = \displaystyle \int_{W_p} \nabla_y G_{per}(x,y)f(y)dy + \displaystyle \int_{\mathbb{R}^d \setminus{W_p}} \nabla_y G_{per}(x,y)f(y)dy \\
& = I_{1,p}(x) + I_{2,p}(x).
\end{align*}
$I_{1,p}$ and $I_{2,p}$ are two distributions (they are in $L^2_{loc}(\mathbb{R}^d$)), so we can consider their gradients in a distribution sense. In addition, $I_{2,p}$ is a differentiable function on $V_p$ and 
$$\nabla I_{2,p}(x) = \displaystyle \int_{\mathbb{R}^d \setminus{W_p}} \nabla_x \nabla_y G_{per}(x,y) f(y) dy.$$
We start by establishing a bound for $\|\nabla I_{1,p}\|_{L^2(V_p)}$.
First, we use estimate \eqref{estimgreen1} for  $\nabla_y G_{per}$ and we obtain :
\begin{align*}
  \|I_{1,p}\|^2_{L^2(W_p)} &  \leq C \int_{W_p} \left(  \int_{W_p} \frac{1}{|x-y|^{d-1}} |f(y)| dy\right)^2 dx.
\end{align*}

We next apply the Cauchy-Schwarz inequality : 
\begin{align*}
  \|I_{1,p}\|^2_{L^2(W_p)} 
  & \leq C \int_{W_p} \left(  \int_{W_p} \frac{1}{|x-y|^{d-1}}  dy\right) \left(  \int_{W_p} \frac{1}{|x-y|^{d-1}} |f(y)|^2 dy\right) dx.
\end{align*}
Property \ref{ii)} implies that $W_p \subset Q_{C_1 2^{|p|}}(2^p)$. Therefore, for every $x \in W_p$ and $y \in W_p$, we have by a triangle inequality that $x-y \in Q_{C 2^{|p|+1}}$ and then : 
\begin{equation}
\label{majorationintWP}
\displaystyle \int_{W_p} \frac{1}{|x-y|^{d-1}}  dy \leq
\int_{Q_{C 2^{|p|+1}}} \frac{1}{|y|^{d-1}}  dy \leq C 2^{|p|}.
\end{equation}
Using \eqref{majorationintWP} and the Fubini theorem, we finally obtain : 
\begin{align}
\label{inequalityI1p}
  \|I_{1,p}\|^2_{L^2(W_p)} 
  & \leq C 2^{|p|}   \int_{W_p} |f(y)|^2 \int_{Q_{C_1 2^{|p|+1}}(2^p)}  \frac{1}{|x-y|^{d-1}}  dx dy \leq C 2^{2|p|} \|f\|^2_{L^2(W_p)}.
\end{align}
Lemma \ref{reultatexistenceL2} ensures that $I_{1,p}$ is a solution in $\mathcal{D}'(\mathbb{R}^d)$ to : 
\begin{equation}
    \label{equationI1p}
    -\operatorname{div}(a_{per}\nabla I_{1,p}) = \operatorname{div}(f 1_{W_p}).
\end{equation}
Since Property \ref{ii)} ensures $D(\partial V_p, W_p) \geq C2^{|p|}$, we can apply a classical inequality of elliptic regularity (see for instance \cite[Theorem 4.4 p.63]{MR3099262})  to equation \eqref{equationI1p} in order to establish the following estimate :
\begin{align}
\label{inequalitynablaI1p}
\|\nabla I_{1,p}\|_{L^2(V_p)}^2  
& \leq C\left(\dfrac{1}{2^{2\left|p\right|}} \|I_{1,p}\|^2_{L^2(W_p)}  + \|f\|^2_{L^2(W_p)}\right),
\end{align}
and we deduce from previous inequalities \eqref{inequalityI1p} and \eqref{inequalitynablaI1p} that : 
\begin{equation}
\label{estimatecontinuityI1p}
\|\nabla I_{1,p}\|^2_{L^2(V_p)} \leq C \|f\|^2_{L^2(W_p)}.  
\end{equation}
In addition, we have : 
$$ \|f\|^2_{L^2(W_p)} \leq \sum_{\substack{q \in \mathcal{P}, \\ V_q \cap W_p \neq \emptyset}} \|f\|^2_{L^2(V_q)} \leq \sum_{\substack{q \in \mathcal{P}, \\ V_q \cap W_p \neq \emptyset}} \sup_{r \in \mathcal{P}}\|f\|^2_{L^2(V_r)}.$$
Next, we use a triangle inequality and Property \ref{iv)} of $W_p$ to obtain : 
$$ \|f\|^2_{L^2(W_p)} \leq C \sup_{r \in \mathcal{P}}\|f\|^2_{L^2(V_r)}.$$
We apply this inequality in \eqref{estimatecontinuityI1p} and we finally obtain : 
\begin{equation}
\label{boundL2VPnablaI1p}
\|\nabla I_{1,p}\|_{L^2(V_p)} \leq C  \sup_{r \in \mathcal{P}}\|f\|_{L^2(V_r)},  
\end{equation}
where $C>0$ is independent of $p$.

We next prove a similar bound for $\displaystyle \| \nabla I_{2,p}\|_{L^2(V_p)}$. To start with, we want to show there exists a constant $C>0$ such that : 
\begin{equation}
\label{estimunifnablaI2p}
\| \nabla I_{2,p}\|_{L^{\infty}(V_p)} \leq C \frac{1}{2^{d|p|/2}} \sup_{r \in \mathcal{P}}\|f\|_{L^2(V_r)}. 
\end{equation}
To this aim, we fix $x \in V_p$ and we use estimate \eqref{estimgreen2} for $\nabla_x \nabla_y G_{per}$ to obtain : 
\begin{align*}
    \left|\nabla I_{2,p}(x)\right| & \leq C \sum_{q\neq p} \int_{V_q\setminus{W_p}} \frac{1}{|x-y|^d} |f(y)| dy \\
    & \leq C \sum_{q\neq p} \left(\int_{V_q\setminus{W_p}} \frac{1}{|x-y|^{2d}}  dy\right)^{1/2}\left(\int_{V_q\setminus{W_p} } |f(y)|^2 dy\right)^{1/2}\\
    & \leq C \sum_{q\neq p} \left(\int_{V_q\setminus{W_p}} \frac{1}{|x-y|^{2d}}  dy\right)^{1/2} \sup_{r \in \mathcal{P}}\|f\|_{L^2(V_r)}  
\end{align*}
Next, using Property \ref{ii)} of $W_p$, there exists C>0 such that for every $q \neq p$,
$$D(V_q\setminus{W_p},V_p)>C2^{|p|},$$
and it follows : 
\begin{align*}
 \sum_{|q|<|p|} \left(\int_{V_q\setminus{W_p}} \frac{1}{|x-y|^{2d}}  dy\right)^{1/2} & \leq C  \sum_{|q|<|p|} \left(\int_{V_q\setminus{W_p}} \frac{1}{2^{|p|2d}}  dy\right)^{1/2} \\
 &\leq C \sum_{|q|< |p|} \left( \frac{\left|V_q\right|}{2^{|p|2d}}  \right)^{1/2} \\
 &\leq C \sum_{|q|< |p|} \frac{2^{|q|d/2}}{2^{|p|d}}.
\end{align*}
The last inequality is actually a direct consequence of Propositions \ref{volumeVx} and \ref{norm2p}. In addition, we have proved in proposition \ref{numberpointAn} there exists a constant $C>0$ such that for every $n\in \mathbb{N}$, the number of $q\in \mathcal{P}$ such that $|q|=n$ is bounded by $C$. Therefore we have : 
$$\sum_{|q|< |p|} \frac{2^{|q|d/2}}{2^{|p|d}} = \sum_{n=0}^{|p|} \sum_{q\in \mathcal{P}, |q|=n} \frac{2^{|q|d/2}}{2^{|p|d}} \leq C \sum_{n=0}^{|p|}\frac{2^{nd/2}}{2^{|p|d}} = C \frac{2^{|p|d/2}}{2^{|p|d}} = C \frac{1}{2^{|p|d/2}}.$$
And finally : 
\begin{equation}
    \label{majorationfirstsum1}
\sum_{|q|< |p|} \left(\int_{V_q\setminus{W_p}} \frac{1}{|x-y|^{2d}}  dy\right)^{1/2} \leq C \frac{1}{2^{|p|d/2}}.
\end{equation}

Furthermore, we have with similar arguments : 
\begin{align*}
 \sum_{|q|\geq |p|} \left(\int_{V_q\setminus{W_p}} \frac{1}{|x-y|^{2d}}  dy\right)^{1/2} & \leq C  \sum_{|q|\geq |p|} \left(\int_{V_q\setminus{W_p}} \frac{1}{2^{|q|2d}} dy\right)^{1/2} \\
 &\leq C \sum_{|q|\geq |p|} \left( \frac{\left|V_q\right|}{2^{|q|2d}}\right)^{1/2} \\
 &\leq C \sum_{|q|\geq |p|} \frac{1}{2^{|q|d/2}}. 
\end{align*}
And we obtain again : 
$$\sum_{|q|\geq |p|} \frac{1}{2^{|q|d/2}}  \leq C  \sum_{n\geq |p|} \frac{1}{2^{nd/2}} = C \frac{1}{2^{|p|d/2}}.$$ 
That is : 
\begin{equation}
    \label{estimsecondsum1}
\sum_{|q|\geq |p|} \left(\int_{V_q\setminus{W_p} } \frac{1}{|x-y|^{2d}}  dy\right)^{1/2} \leq C \frac{1}{2^{|p|d/2}}.
\end{equation}
Using estimates \eqref{majorationfirstsum1} and \eqref{estimsecondsum1}, we have finally proved \eqref{estimunifnablaI2p} and it follows :
\begin{align*}
\|\nabla I_{2,p}\|_{L^2(V_p)} & \leq \left|V_p\right|^{1/2}\|\nabla I_{2,p}\|_{L^{\infty}(V_p)}\\
& \leq C 2^{|p|d/2}\frac{1}{2^{|p|d/2}} \sup_{r \in \mathcal{P}}\|f\|_{L^2(V_r)}.  
\end{align*}
Therefore we have the existence of a constant $C>0$ independent of $p$ such that : 
\begin{equation}
    \label{estimnormL2nablaI2p}
    \|\nabla I_{2,p}\|_{L^2(V_p)} \leq C \sup_{r \in \mathcal{P}}\|f\|_{L^2(V_r)}. 
\end{equation}
For every $p \in \mathcal{P}$, using estimates \eqref{boundL2VPnablaI1p} and \eqref{estimnormL2nablaI2p} and a triangle inequality, we conclude that : 
\begin{equation*}
  \|\nabla u\|_{L^2(V_p)} \leq \|\nabla I_{1,p}\|_{L^2(V_p)} + \|\nabla I_{2,p}\|_{L^2(V_p)} \leq C \sup_{r \in \mathcal{P}}\|f\|_{L^2(V_r)}.
\end{equation*}
We finally obtain expected estimate \eqref{estimper} taking the supremum over all $p \in \mathcal{P}$ in the above inequality. 
\end{proof}

To conclude the study of problem \eqref{eqper1} with a periodic coefficient, we next show that the solution to \eqref{defuintegrale} given in Lemma \ref{lemmeexistenceperiodique1} has a gradient in $\mathcal{B}^2(\mathbb{R}^d)$. 

\begin{lemme}
\label{existenceper}
Let $f\in \left(\mathcal{B}^2(\mathbb{R}^d)\right)^d$, then the function $u$ defined by \eqref{defuintegrale} is the unique solution to \eqref{eqper1} such that $\nabla u \in \left(\mathcal{B}^2(\mathbb{R}^d)\right)^d$. 
\end{lemme}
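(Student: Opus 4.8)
Uniqueness is immediate from Corollary~\ref{remarkuniqueness}, so only the membership $\nabla u\in(\mathcal{B}^2(\mathbb{R}^d))^d$ requires work. The function $u$ of \eqref{defuintegrale} is well defined, solves \eqref{eqper1} and satisfies $\sup_{p}\|\nabla u\|_{L^2(V_p)}<\infty$ by Lemma~\ref{lemmeexistenceperiodique1} (hence $\nabla u\in L^2_{unif}(\mathbb{R}^d)$, since each unit ball meets a bounded number of cells $V_p$). Let $f_\infty\in(L^2(\mathbb{R}^d))^d$ be the limit function of $f$ and set $v_\infty=\int_{\mathbb{R}^d}\nabla_yG_{per}(\cdot,y)f_\infty(y)\,dy$, which by Lemma~\ref{reultatexistenceL2} solves $-\operatorname{div}(a_{per}\nabla v_\infty)=\operatorname{div}(f_\infty)$ with $\nabla v_\infty\in(L^2(\mathbb{R}^d))^d$. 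Since $2^p\in\mathbb{Z}^d$ and $a_{per}$, hence $G_{per}$, is $\mathbb{Z}^d$-periodic, a change of variables gives $\tau_{-p}v_\infty=\int_{\mathbb{R}^d}\nabla_yG_{per}(\cdot,y)\,\tau_{-p}f_\infty(y)\,dy$, so that $\tau_{-p}v_\infty$ solves $-\operatorname{div}(a_{per}\nabla(\tau_{-p}v_\infty))=\operatorname{div}(\tau_{-p}f_\infty)$. I claim that $\nabla u\in\mathcal{B}^2(\mathbb{R}^d)^d$ with limit function $(\nabla u)_\infty=\nabla v_\infty$; by definition~\eqref{spacedef} this amounts to proving $\|\nabla u-\tau_{-p}\nabla v_\infty\|_{L^2(V_p)}\to0$ as $|p|\to\infty$.

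Fix $p\in\mathcal{P}$ and, as in the proof of Lemma~\ref{lemmeexistenceperiodique1}, split both functions according to the set $W_p$ of Proposition~\ref{openconstruction}: write $u=I_{1,p}+I_{2,p}$ and $\tau_{-p}v_\infty=J_{1,p}+J_{2,p}$, where $I_{1,p}(x)=\int_{W_p}\nabla_yG_{per}(x,y)f(y)\,dy$, $I_{2,p}(x)=\int_{\mathbb{R}^d\setminus W_p}\nabla_yG_{per}(x,y)f(y)\,dy$, and $J_{1,p}$, $J_{2,p}$ are defined identically with $\tau_{-p}f_\infty$ in place of $f$. For the \emph{near} part, $I_{1,p}-J_{1,p}$ solves $-\operatorname{div}(a_{per}\nabla(I_{1,p}-J_{1,p}))=\operatorname{div}\big((f-\tau_{-p}f_\infty)1_{W_p}\big)$, and repeating verbatim the argument leading to \eqref{estimatecontinuityI1p} (the $L^2(W_p)$ bound on $I_{1,p}-J_{1,p}$ followed by interior elliptic regularity, using $D(\partial V_p,W_p)\geq C2^{|p|}$) yields $\|\nabla(I_{1,p}-J_{1,p})\|_{L^2(V_p)}\leq C\|f-\tau_{-p}f_\infty\|_{L^2(W_p)}$. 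This right-hand side tends to $0$: on $V_p$ it is the defining property of $\mathcal{B}^2$, and $W_p$ meets at most $C_4$ other cells $V_q$ (Proposition~\ref{openconstruction}\,\ref{Wiv}), each with $|q|\to\infty$ as $|p|\to\infty$; on such a $V_q$ one bounds $\|f-\tau_{-p}f_\infty\|_{L^2(V_q\cap W_p)}\leq\|f-\tau_{-q}f_\infty\|_{L^2(V_q)}+\|\tau_{-q}f_\infty\|_{L^2(V_q\cap W_p)}+\|\tau_{-p}f_\infty\|_{L^2(V_q\cap W_p)}$, the first term vanishing by definition of $\mathcal{B}^2$ and the last two being $\leq\|f_\infty\|_{L^2(B_{C2^{|p|}}^{\,c})}$ because every point of $V_q\cap W_p$ lies at distance $\geq C2^{|p|}$ from $2^q$ (Proposition~\ref{openconstruction}\,\ref{Wiii}) and from $2^p$ (the Voronoi inequality gives $|y-2^p|\geq\tfrac12|2^p-2^q|\geq C2^{|p|}$).

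The \emph{far} part is the crux. For $x\in V_p$ one has $\nabla(I_{2,p}-J_{2,p})(x)=\int_{\mathbb{R}^d\setminus W_p}\nabla_x\nabla_yG_{per}(x,y)\,g(y)\,dy$ with $g:=f-\tau_{-p}f_\infty$, hence by \eqref{estimgreen2} and the Cauchy--Schwarz inequality applied cell by cell,
\begin{equation*}
|\nabla(I_{2,p}-J_{2,p})(x)|\leq C\sum_{q\neq p}\Big(\int_{V_q\setminus W_p}\frac{dy}{|x-y|^{2d}}\Big)^{1/2}\|g\|_{L^2(V_q\setminus W_p)}.
\end{equation*}
For $y\in V_q\setminus W_p$ and $x\in V_p$ one has $|x-y|\geq C\max(2^{|p|},2^{|q|})$ (from $D(V_p,\mathbb{R}^d\setminus W_p)\geq C2^{|p|}$, Proposition~\ref{openconstruction}\,\ref{Wii}, and $D(V_q\setminus W_p,V_p)\geq C2^{|q|}$, Proposition~\ref{openconstruction}\,\ref{Wv}); together with $|V_q|\leq C2^{d|q|}$ (Propositions~\ref{volumeVx}, \ref{norm2p}) and the uniform bound $\sharp\{q:|q|=n\}\leq C$ (Proposition~\ref{numberpointAn}) this reproduces, exactly as in \eqref{majorationfirstsum1}--\eqref{estimsecondsum1}, the estimate $\sum_{q\neq p}\big(\int_{V_q\setminus W_p}|x-y|^{-2d}\,dy\big)^{1/2}\leq C\,2^{-d|p|/2}$. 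Now fix $\varepsilon>0$, let $R^*=R^*(\varepsilon)$ be given by Proposition~\ref{uniformmajoration}, and split $V_q\setminus W_p=\big(V_q\cap B_{R^*}(2^q)\setminus W_p\big)\cup\big(V_q\cap B_{R^*}(2^q)^c\setminus W_p\big)$. On the second piece, $\|g\|_{L^2(V_q\cap B_{R^*}(2^q)^c)}<\varepsilon$ uniformly in $p,q$ by Proposition~\ref{uniformmajoration}, so its total contribution to the sum is $\leq C\varepsilon\,2^{-d|p|/2}$. On the first piece, $\|g\|_{L^2}\leq\|f-\tau_{-q}f_\infty\|_{L^2(V_q)}+\|f_\infty\|_{L^2(B_{R^*})}+\|f_\infty\|_{L^2(\mathbb{R}^d)}\leq M$ is merely bounded (uniformly in $p,q$), but this set has volume $\leq|B_{R^*}|$, whence $\big(\int_{V_q\cap B_{R^*}(2^q)\setminus W_p}|x-y|^{-2d}\,dy\big)^{1/2}\leq |B_{R^*}|^{1/2}\max(2^{|p|},2^{|q|})^{-d}$, and summing over $q$ (there are at most $C|p|$ indices with $|q|\leq|p|$ and the remaining geometric tail is controlled) gives a total contribution $\leq C_{R^*}\,|p|\,2^{-d|p|}$. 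Therefore $\|\nabla(I_{2,p}-J_{2,p})\|_{L^2(V_p)}\leq|V_p|^{1/2}\|\nabla(I_{2,p}-J_{2,p})\|_{L^{\infty}(V_p)}\leq C\,2^{d|p|/2}\big(\varepsilon\,2^{-d|p|/2}+C_{R^*}|p|\,2^{-d|p|}\big)=C\varepsilon+C_{R^*}\,|p|\,2^{-d|p|/2}$, which for $|p|$ large is $\leq 2C\varepsilon$; as $\varepsilon$ is arbitrary, $\|\nabla(I_{2,p}-J_{2,p})\|_{L^2(V_p)}\to0$. Combining with the near estimate gives $\|\nabla u-\tau_{-p}\nabla v_\infty\|_{L^2(V_p)}\to0$, so $\nabla u\in\mathcal{B}^2(\mathbb{R}^d)^d$ with limit $\nabla v_\infty$, and uniqueness follows from Corollary~\ref{remarkuniqueness}. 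The main obstacle is precisely this far-field step: $\nabla I_{2,p}$ and $\nabla J_{2,p}$ are only individually bounded in $L^2(V_p)$ and $g$ fails to be small near the distant points $2^q$ ($q\neq p$); one circumvents this by observing that the region where $g$ is merely bounded has \emph{bounded} volume $|B_{R^*}|$, so that its Green-kernel contribution is summable against the $|V_p|^{1/2}$ prefactor, while Proposition~\ref{uniformmajoration} supplies uniform smallness on the complementary region.
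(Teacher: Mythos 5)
Your proof is correct and takes essentially the same route as the paper: it subtracts the translated $L^2$-solution $v_\infty$ (the paper's $u_\infty$), splits via the set $W_p$ of Proposition~\ref{openconstruction}, controls the near part by interior elliptic regularity, and controls the far part via the $|\nabla_x\nabla_y G_{per}|\lesssim|x-y|^{-d}$ decay, the $|V_p|^{1/2}$ prefactor, and the $R^*$-split provided by Proposition~\ref{uniformmajoration}. The only cosmetic deviation is in the near part, where you bound $\|f-\tau_{-p}f_\infty\|_{L^2(W_p\setminus V_p)}$ directly by a triangle inequality and tail estimates on $f_\infty$, whereas the paper notes $W_p\setminus V_p\subset\bigcup_q\left(V_q\cap B_R(2^q)^c\right)$ and invokes Proposition~\ref{uniformmajoration} a second time.
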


\begin{proof}

We want to prove there exists a function $g \in \left(L^2(\mathbb{R}^d)\right)^d$ such that
$$\displaystyle \lim_{|p| \rightarrow \infty} \|\nabla u - \tau_{-p} g\|_{L^2(V_p)} =0.$$ In this proof, the letter C  also denotes a generic constant independent of $p$, $u$ and $f$ that may change from one line to another.
Using the result of Lemma \ref{reultatexistenceL2}, we can define a function $u_{\infty} \in L^2_{loc}(\mathbb{R}^d)$ by : 
$$u_{\infty}(x) = \int_{\mathbb{R}^d} \nabla_y G_{per}(x,y)f_{\infty}(y) dy$$ 
solution in $\mathcal{D}'(\mathbb{R}^d)$ to : 
\begin{equation}
\label{eql2}
-\operatorname{div}(a_{per}\nabla u_{\infty}) = \operatorname{div}(f_{\infty}) \quad \text{in } \mathbb{R}^d, 
\end{equation}
such that $\nabla u_{\infty} \in \left(L^2(\mathbb{R}^d)\right)^d$.
For every $p\in \mathcal{P}$, by subtracting a $2^p$-translation of \eqref{eql2} from \eqref{eqper1}, the periodicity of $a_{per}$ implies : 
$$-\operatorname{div}\left(a_{per}\nabla\left(u-\tau_{-p}u_{\infty}\right)\right)=\operatorname{div}\left(f-\tau_{-p}f_{\infty}\right).$$
For every $p \in \mathcal{P}$, in the sequel we denote $u_p=u-\tau_{-p}u_{\infty}$ and $f_p=f-\tau_{-p}f_{\infty}$. In order to prove $\nabla u \in \left(\mathcal{B}^2(\mathbb{R}^d)\right)^d$, the idea is to show that $ \displaystyle \lim_{|p| \rightarrow \infty}  \int_{\textit{V}_p} |\nabla u_p|^2 dx = 0$. We start by fixing $\varepsilon>0$. Since $f\in \left(\mathcal{B}^2(\mathbb{R}^d)\right)^d$, Proposition \ref{uniformmajoration} gives the existence of a radius $R>0$, such that for every $p,q \in \mathcal{P}$, 
\begin{equation}
\label{majoration_uniform_B2}
    \left( \int_{V_q\cap B_R(2^q)^c} \left|f - \tau_{-p}f_{\infty}\right|^2 \right)^{1/2} < \varepsilon.
\end{equation}
In the sequel, the idea is to repeat step by step the method used in the proof of Lemma \ref{lemmeexistenceperiodique1}. For $p\in \mathcal{P}$, we thus introduce the set $W_p$ as in the previous proof and we split $u_p$ in two parts. For every $x \in V_p$, we can write : 
\begin{align*}
u_p(x) & = \displaystyle \int_{W_p} \nabla_y G_{per}(x,y)f_p(y)dy + \displaystyle \int_{\mathbb{R}^d \setminus{W_p}} \nabla_y G_{per}(x,y)f_p(y)dy \\
& = I_{1,p}(x) + I_{2,p}(x).
\end{align*}

In the sequel, we denote $A_p$ the set $W_p  \setminus V_p$. As in the previous proof (see the details of the proof of estimate \eqref{estimatecontinuityI1p}) we can show that  : 
$$\|\nabla I_{1,p}\|^2_{L^2(V_p)} \leq C \|f_p\|^2_{L^2(W_p)},$$
and we next prove that $\displaystyle \lim_{|p| \rightarrow \infty} \|f_p\|^2_{L^2(W_p)} =0$. First, since $f\in \left(\mathcal{B}(\mathbb{R}^d)\right)^d$, we already know that $\displaystyle \lim_{|p| \rightarrow \infty} \int_{\textit{V}_{p}} |f_p|^2 = 0$  and we only have to treat the integration term on $A_p$. Using Property \ref{iii)} of $W_p$, we know that the distance $D(2^q,W_p)$, for $q\neq p$, is bounded from below by $2^{|p|}$. Therefore, if $2^{|p|} > R$, we obtains  :
$$ A_p = \bigcup_{\substack{q \in \mathcal{P} \setminus \{p\}\\ V_q \cap W_p \neq \emptyset}} V_q \cap W_p \subset \bigcup_{\substack{q \in \mathcal{P} \setminus \{p\}\\ V_q \cap W_p \neq \emptyset}} \textit{V}_q\cap B_R(2^q)^c.$$
In addition, Property \ref{iv)} of $W_p$ gives the existence of a constant $C>0$ such that the cardinality of the set of $q$ satisfying $V_q\cap W_p \neq \emptyset$ is bounded by $C$. Estimate \eqref{majoration_uniform_B2} therefore implies that  
$$\int_{A_{p}} |f_p|^2 \leq \sum_{{\substack{q \in \mathcal{P} \setminus \{p\}\\ V_q \cap W_p \neq \emptyset}}} \int_{ \textit{V}_q \cap B_R(2^q)^c} |f_p|^2 \leq C \varepsilon.$$
Since $\varepsilon$ can be chosen arbitrarily small, we finally obtain $\displaystyle\lim_{|p| \rightarrow \infty} \int_{A_{p}} |f_p|^2 = 0$, that is
\begin{equation}
\displaystyle \lim_{|p| \rightarrow \infty} \|\nabla I_{1,p}\|^2_{L^2(V_p)} =0.
\end{equation}
\\ 
We next prove that $ \displaystyle \lim_{|p| \rightarrow \infty} \| \nabla I_{2,p}\|^2_{L^2(V_p)} = 0$. We split $\nabla I_{2,p}$ in two parts such that for every $x \in V_p$ : 
\begin{align*}
\nabla I_{2,p}(x) & = \sum_{\substack{q\in \mathcal{P} \\ q\neq p}} \int_{\left(V_q\setminus{W_p}\right) \cap B_R(2^q)^c} \nabla_x \nabla_y G_{per}(x,y) f_p(y) dy \\
& + \displaystyle \sum_{\substack{q\in \mathcal{P} \\ q\neq p}} \int_{\left(V_q\setminus{W_p}\right) \cap B_R(2^q)} \nabla_x \nabla_y G_{per}(x,y) f_p(y) dy \\
&= J_{1,p}(x) + J_{2,p}(x).
\end{align*}

We want to estimate $\| J_{1,p}\|_{L^2(V_p)}$ and $\| J_{1,p}\|_{L^2(V_p)}$. We proceed exactly in the same way as in the previous proof (see the details of estimate \eqref{estimunifnablaI2p}) and, using estimate \eqref{majoration_uniform_B2}, we obtain the following inequalities : 
\begin{equation}
\label{estimunifJ1}
    \| J_{1,p}\|_{L^{\infty}(V_p)} \leq C \frac{1}{2^{d|p|/2}} \sup_{q \in \mathcal{P}} \|f_p\|_{L^2\left(V_q \cap B_R(2^q)^c\right)}  \leq C \frac{1}{2^{d|p|/2}} \varepsilon,
\end{equation}
and
\begin{equation}
\label{estimunifJ2}
\|J_{2,p}\|_{L^{\infty}(V_p)} \leq C  R^d \frac{|p|}{2^{|p|d}} \sup_{q \in \mathcal{P}} \|f_p \|_{L^2(V_q)}.
\end{equation}
To conclude, we consider $P>0$ such that for every $p \in \mathcal{P}$ satisfying $|p|>P$, we have : 
$$R^d\frac{|p|}{2^{|p|d/2}}< \varepsilon.$$
Therefore, for every $|p|>P$, we use \eqref{estimunifJ1} and \eqref{estimunifJ2} and we obtain : 
\begin{align*}
\|\nabla I_{2,p}\|_{L^2(V_p)} &\leq \|J_{1,p}\|_{L^2(V_p)} + \|J_{2,p}\|_{L^2(V_p)} \\
& \leq \left|V_p\right|^{1/2}\left( \|J_{1,p}\|_{L^{\infty}(V_p)} + \|J_{2,p}\|_{L^{\infty}(V_p)} \right) \\
& \leq C 2^{|p|d/2}\left( \frac{1}{2^{|p|d/2}}\varepsilon +  R^d\frac{|p|}{2^{|p|d}} \right) \leq C \varepsilon.
\end{align*}
Since we can choose $\varepsilon$ arbitrarily small, we conclude that  $ \displaystyle \lim_{|p| \rightarrow \infty} \|\nabla I_{2,p}\|_{L^2(V_p)} = 0$. Finally, by a triangle inequality we have $ \displaystyle \lim_{|p| \rightarrow \infty} \|\nabla u_{p}\|_{L^2(V_p)} = 0$, that is $\nabla u \in \left(\mathcal{B}^2(\mathbb{R}^d)\right)^d$.
\end{proof}

\begin{remark}
It is important to note that the essential point of the two above proofs is the convergence of the series of the form $\displaystyle \sum_{q \in \mathcal{P}} \int_{V_q} \dfrac{1}{|x-y|^d} f(y) dy$ given in estimates \eqref{formuleimportante1}, \eqref{majorationfirstsum1} and \eqref{estimsecondsum1}. These convergence results are not specific to the set \eqref{defG} considered in this article and are actually ensured by Assumptions \eqref{H1}, \eqref{H2} and \eqref{H3}, particularly by the logarithmic bound given in Corollary \ref{corollogboundVP}. Therefore, the results of existence still hold if we consider a general set $\mathcal{G}$ satisfying these assumptions.  
\end{remark}

\begin{remark}
\label{remarkcorrectordim2}
In the two-dimensional context, the results of Lemmas \ref{reultatexistenceL2}, \ref{lemmeexistenceperiodique1} and \ref{existenceper} remain true since estimates \eqref{estimgreen1}, \eqref{estimgreen3} and \eqref{estimgreen1} still hold. However the proof requires some additional technicalities, in particular to prove that the function $u$ defined by \eqref{defuintegrale} makes sens. In this case the series \eqref{serieimportante} does not actually converges but it is still possible to prove that the series of the gradients \eqref{SeriesSN} converges. Here, the difficulty is to show that the limit of \eqref{SeriesSN}, denoted by $T$ here, is the gradient in a distribution sense of a solution to \eqref{eqper1}. To this end, it is actually sufficient to show that $\partial_i T_j = \partial_j T_i$ for every $i,j \in \{1,...,d\}$. This result is obtained considering the property of the limit of $\eqref{SeriesSN}$ in $\mathcal{D}(\mathbb{R}^d)$.
\end{remark}

%-------------------------------------------------%
\subsection{Existence results in the general problem}
%-------------------------------------------------%

Our aim is now to generalize the results established in the case of periodic coefficients to our original problem \eqref{equationref}. Here, our approach is to prove in Lemma \ref{lemme2} the continuity of the linear operator $\nabla\left(-\operatorname{div}a\nabla\right)^{-1}\operatorname{div}$ from $\left(\mathcal{B}^2(\mathbb{R}^d)\right)^d$ to $\left(\mathcal{B}^2(\mathbb{R}^d)\right)^d$ in order to apply a method adapted from \cite{blanc2018correctors} and based on the connexity of the set $[0,1]$. This method is used in the proof of existence of Lemma \ref{lemmexistence}. Finally, this result allows us to prove the existence of a corrector stated in Theorem \ref{theoreme1}.

Actually, we could have proved Lemmas \ref{lemme2} and \ref{lemmexistence} simultaneously but, in the interest of clarity, we first prove a priori estimate \eqref{aprioriestimate} and next, we  establish the existence result in the general case.

\begin{lemme}[A priori estimate]
\label{lemme2}
There exists a constant $C>0$ such that for every $f$ in $\left(\mathcal{B}^2(\mathbb{R}^d)\right)^d$ and $u$ solution in $\mathcal{D}'(\mathbb{R}^d)$ to \eqref{equationref} 
with $\nabla u$ in~$\left(\mathcal{B}^2(\mathbb{R}^d)\right)^d$, we have the following estimate :
\begin{equation}
\label{aprioriestimate}
 \|\nabla u\|_{\mathcal{B}^2(\mathbb{R}^d)}  \leq C  \|f\|_{\mathcal{B}^2(\mathbb{R}^d)}. 
\end{equation}
 
\end{lemme}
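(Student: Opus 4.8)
\emph{Proof strategy.}
The plan is to argue by contradiction, reducing the estimate — through the periodic problem \eqref{eqper1} — to the uniqueness statement of Lemma \ref{lemme1} and to the $L^2$-perturbation theory of \cite{blanc2015local,blanc2018correctors}, which applies here since $\Tilde{a}_{\infty}\in L^2(\mathbb{R}^d)\cap\mathcal{C}^{0,\alpha}(\mathbb{R}^d)$. Two preliminary facts are used. First, if $u$ solves \eqref{equationref} with $\nabla u\in(\mathcal{B}^2(\mathbb{R}^d))^d$ and $(\nabla u)_{\infty}$ denotes the associated limit $L^2$-function, then $(\nabla u)_{\infty}=\nabla u_{\infty}$, where $u_{\infty}$ solves the ``limit problem'' $-\operatorname{div}((a_{per}+\Tilde{a}_{\infty})\nabla u_{\infty})=\operatorname{div}(f_{\infty})$ with $\nabla u_{\infty}\in(L^2(\mathbb{R}^d))^d$: this follows by translating \eqref{equationref} by $2^p\in\mathbb{Z}^d$ — which leaves $a_{per}$ invariant — letting $|p|\to\infty$ along a sequence for which $V_p-2^p$ increases to $\mathbb{R}^d$ (Proposition \ref{tailledesVP}, $V_p-2^p$ containing $B_{C2^{|p|}}$), and using that $\tau_p\Tilde{a}\to\Tilde{a}_{\infty}$, $\tau_p f\to f_{\infty}$ and $\tau_p(\nabla u)\to(\nabla u)_{\infty}$ in $L^2_{loc}(\mathbb{R}^d)$ by definition of $\mathcal{B}^2(\mathbb{R}^d)$; the $L^2$-theory then yields uniqueness of $\nabla u_{\infty}$ and the bound $\|(\nabla u)_{\infty}\|_{L^2(\mathbb{R}^d)}\le C\|f_{\infty}\|_{L^2(\mathbb{R}^d)}\le C\|f\|_{\mathcal{B}^2(\mathbb{R}^d)}$. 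Second, since $\Tilde{a},\Tilde{a}_{\infty}\in\mathcal{C}^{0,\alpha}(\mathbb{R}^d)$, interpolating the uniform $L^2$-convergence on $V_p$ against the uniform Hölder bound gives $\|\Tilde{a}-\tau_{-p}\Tilde{a}_{\infty}\|_{L^{\infty}(V_p)}\to 0$ as $|p|\to\infty$, so arguing as in Proposition \ref{stability} one gets $\Tilde{a}\,\nabla u\in(\mathcal{B}^2(\mathbb{R}^d))^d$ with limit function $\Tilde{a}_{\infty}(\nabla u)_{\infty}$; consequently \eqref{equationref} reads $-\operatorname{div}(a_{per}\nabla u)=\operatorname{div}(f+\Tilde{a}\nabla u)$ with $f+\Tilde{a}\nabla u\in(\mathcal{B}^2(\mathbb{R}^d))^d$, and the uniqueness part of Lemma \ref{lemmeexistenceperiodique1} (or Corollary \ref{remarkuniqueness}) identifies $u$, up to a constant, with $\int_{\mathbb{R}^d}\nabla_y G_{per}(\cdot,y)(f+\Tilde{a}\nabla u)(y)\,dy$, so that the Green-function estimates \eqref{estimgreen1}, \eqref{estimgreen2} used in the proofs of Lemmas \ref{lemmeexistenceperiodique1} and \ref{existenceper} are available for $u$.

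Now suppose \eqref{aprioriestimate} fails: there exist $f_n\in(\mathcal{B}^2(\mathbb{R}^d))^d$ and solutions $u_n$ of \eqref{equationref}, normalised by $\frac{1}{|B_1|}\int_{B_1}u_n=0$, with $\nabla u_n\in(\mathcal{B}^2(\mathbb{R}^d))^d$, $\|\nabla u_n\|_{\mathcal{B}^2(\mathbb{R}^d)}=1$ and $\|f_n\|_{\mathcal{B}^2(\mathbb{R}^d)}\to 0$. By the first fact, $\|(\nabla u_n)_{\infty}\|_{L^2(\mathbb{R}^d)}\to 0$; moreover $\|\nabla u_n\|_{L^2_{unif}(\mathbb{R}^d)}\le 1$ and $\sup_p\|\nabla u_n\|_{L^2(V_p)}\le 1+\|(\nabla u_n)_{\infty}\|_{L^2(\mathbb{R}^d)}\le 2$, so $(u_n)$ is bounded in $H^1_{loc}(\mathbb{R}^d)$ by the Poincaré inequality. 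Up to extraction, $u_n\to u_*$ in $L^2_{loc}(\mathbb{R}^d)$ and $\nabla u_n\rightharpoonup\nabla u_*$ weakly in $L^2_{loc}(\mathbb{R}^d)$; testing the equation satisfied by $u_n-u_m$ against $(u_n-u_m)\chi^2$ with smooth cut-offs $\chi$ and using $f_n\to 0$ upgrades this to strong convergence $\nabla u_n\to\nabla u_*$ in $L^2_{loc}(\mathbb{R}^d)$. Passing to the limit in \eqref{equationref}, $u_*$ solves $-\operatorname{div}(a\nabla u_*)=0$ with $\sup_p\|\nabla u_*\|_{L^2(V_p)}\le 2<\infty$, hence $\nabla u_*=0$ by Lemma \ref{lemme1}.

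It remains to contradict $\|\nabla u_n\|_{\mathcal{B}^2(\mathbb{R}^d)}=1$. Since $\|(\nabla u_n)_{\infty}\|_{L^2(\mathbb{R}^d)}\to 0$, for large $n$ there is $p_n\in\mathcal{P}$ with $\|\nabla u_n-\tau_{-p_n}(\nabla u_n)_{\infty}\|_{L^2(V_{p_n})}\ge\frac12$ (otherwise $\|\nabla u_n\|_{\mathcal{B}^2(\mathbb{R}^d)}\to 0$). If $(p_n)$ is bounded, passing to a subsequence with $p_n\equiv p$ the strong $L^2_{loc}$-convergence gives $\|\nabla u_*\|_{L^2(V_p)}\ge\frac12-\|(\nabla u_n)_{\infty}\|_{L^2(\mathbb{R}^d)}\to\frac12$, contradicting $\nabla u_*=0$. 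If $|p_n|\to\infty$, the plan is to show $\|\nabla u_n-\tau_{-p_n}(\nabla u_n)_{\infty}\|_{L^2(V_{p_n})}\to 0$ by running the argument of Lemma \ref{existenceper} on $u_n=\int_{\mathbb{R}^d}\nabla_y G_{per}(\cdot,y)(f_n+\Tilde{a}\nabla u_n)(y)\,dy$: the decomposition into near- and far-field parts, the estimates \eqref{estimgreen1}, \eqref{estimgreen2}, the uniform tail estimate of Proposition \ref{uniformmajoration} and the logarithmic bound of Corollary \ref{corollogboundVP} yield $\|\nabla u_n-\tau_{-p}(\nabla u_n)_{\infty}\|_{L^2(V_p)}\le\omega(|p|)$ for a modulus $\omega$ \emph{independent of $n$} with $\omega(|p|)\to 0$ as $|p|\to\infty$ — the $n$-uniformity relying on $\|f_n\|_{\mathcal{B}^2(\mathbb{R}^d)}\to 0$, on the fact that the defect structure of $\Tilde{a}$ and the decay at infinity of $\Tilde{a}_{\infty}\in L^2(\mathbb{R}^d)\cap\mathcal{C}^{0,\alpha}(\mathbb{R}^d)$ are fixed, and on $\sup_n\sup_q\|\nabla u_n\|_{L^2(V_q)}<\infty$, $\sup_n\|(\nabla u_n)_{\infty}\|_{L^2(\mathbb{R}^d)}<\infty$. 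This contradicts the lower bound and completes the argument.

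\emph{Main obstacle.} The hard part is precisely this last step. The perturbation $\Tilde{a}$ is \emph{not} small — near each of the infinitely many defects it is of order one — so no contraction or absorption argument on $\tilde a\nabla u$ is available, and one must instead show that the compactness above loses no $L^2$-mass ``at infinity'', i.e. that the per-cell remainders $\|\nabla u_n-\tau_{-p}(\nabla u_n)_{\infty}\|_{L^2(V_p)}$ are small for $|p|$ large uniformly in $n$. Handling the contribution of the far part of the very large Voronoi cells $V_{p_n}$ may require an additional translation/rescaling there, exploiting that such points lie far from every defect, so that the recentred coefficient is close to $a_{per}$ and one is back in the periodic setting of Lemmas \ref{lemmeexistenceperiodique1}--\ref{existenceper}. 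This is where the geometric properties of Section \ref{Section2} and the uniform tail estimate of Proposition \ref{uniformmajoration} are essential.
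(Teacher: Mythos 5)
Your overall strategy matches the paper's: a contradiction argument with $\|f_n\|_{\mathcal{B}^2}\to 0$, $\|\nabla u_n\|_{\mathcal{B}^2}=1$, reduction to the periodic operator via $-\operatorname{div}(a_{per}\nabla u_n)=\operatorname{div}(f_n+\Tilde{a}\nabla u_n)$, and the uniqueness result of Lemma~\ref{lemme1} to kill the limit. You also correctly identify the right ingredients — Proposition~\ref{uniformmajoration}, the translated/limit equation governed by $a_{per}+\Tilde{a}_{\infty}$ and the $L^2$ theory of \cite{blanc2018correctors}, and the periodic estimate \eqref{estimper}. Your treatment of the bounded-$p_n$ case is fine.

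However, there is a genuine gap in your argument for unbounded $p_n$, and you name it yourself (``main obstacle''). The difficulty is that your plan is circular. You want an $n$-uniform modulus $\omega$ with $\|\nabla u_n-\tau_{-p}(\nabla u_n)_{\infty}\|_{L^2(V_p)}\le\omega(|p|)$, obtained by running the argument of Lemma~\ref{existenceper} on the right-hand side $g_n=f_n+\Tilde{a}\nabla u_n$. But the uniform tail estimate of Proposition~\ref{uniformmajoration} applied to $g_n$ — which is the entry point of that argument — involves bounding, among other things,
$\|\tau_{-q}\Tilde{a}_{\infty}\,(\nabla u_n-\tau_{-q}(\nabla u_n)_{\infty})\|_{L^2(V_q)}$,
and hence $\|\nabla u_n-\tau_{-q}(\nabla u_n)_{\infty}\|_{L^2(V_q)}$, uniformly in $n$ for $|q|$ large. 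That is precisely the conclusion you are trying to reach. Moreover, since you never translate $u_n$, you only obtain $\nabla u_n\to 0$ strongly in $L^2_{loc}(\mathbb{R}^d)$; this does \emph{not} yield $\|\nabla u_n\|_{L^2_{unif}(\mathbb{R}^d)}\to 0$, which is the estimate you would need to control the near-field contribution of $\Tilde{a}\nabla u_n$ on far cells.

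The paper's proof breaks this circularity with a concentration-compactness translation that your proposal omits. It picks $x_n$ with $\|\nabla u_n\|_{L^2(B_1(x_n))}\ge\|\nabla u_n\|_{L^2_{unif}}-1/n$, passes to the limit in $\bar u_n=\tau_{x_n}u_n$, analyses the possible limits of $\tau_{x_n}a$ (three cases, depending on whether $x_n$, or $x_n-2^{p_n}$, stays bounded), and concludes by Lemma~\ref{lemme1} or by the $L^2$-uniqueness of \cite{blanc2012possible} that $\nabla\bar u=0$, hence $\|\nabla u_n\|_{L^2_{unif}}\to 0$. With this in hand, the key quantity $\sup_p\|\Tilde{a}\nabla u_n\|_{L^2(V_p)}$ is shown to tend to $0$ by splitting each cell $V_p$ into the bounded near-field $V_p\cap B_R(2^p)$ (controlled by $\|\nabla u_n\|_{L^2_{unif}}\to 0$ and $|B_R|$) and the far-field $V_p\cap B_R(2^p)^c$ (controlled by $\sup_p\|\Tilde{a}\|_{L^\infty(V_p\cap B_R(2^p)^c)}$ being small, a consequence of Proposition~\ref{uniformmajoration} and the Hölder continuity of $\Tilde{a}$). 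Feeding this into \eqref{estimper} then gives $\sup_p\|\nabla u_n\|_{L^2(V_p)}\to 0$, which together with $\|(\nabla u_n)_{\infty}\|_{L^2}\to 0$ closes the contradiction — crucially, without ever estimating the remainders $\|\nabla u_n-\tau_{-p}(\nabla u_n)_{\infty}\|_{L^2(V_p)}$ directly. You should add the translation-by-$x_n$ step and replace your final Green-function argument by this split; otherwise the proof is incomplete.
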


\begin{proof}

We give here a proof by contradiction using a compactness-concentration method. We assume that there exists a sequence $f_n$ in $\left(\mathcal{B}^2(\mathbb{R}^d)\right)^d$ and an associated sequence of solutions $u_n$ such that $\nabla u_n$ is in $\left(\mathcal{B}^2(\mathbb{R}^d)\right)^d$ and : 
\begin{equation}
\label{9}
-\operatorname{div}((a_{\textit{per}}+\Tilde{a})\nabla u_n) = \operatorname{div}(f_n),
\end{equation}
\begin{equation}
\label{10}
\lim_{n \rightarrow \infty}\|f_n\|_{\mathcal{B}^2(\mathbb{R}^d)} =0,  
\end{equation}
\begin{equation}
\label{11}
\forall n \in \mathbb{N} \quad \|\nabla u_n\|_{\mathcal{B}^2(\mathbb{R}^d)} = 1.   
\end{equation}
 First of all, a property of the supremum bound ensures that for every $n\in \mathbb{N}$, there exists $x_n \in \mathbb{R}^d $ such that :
$$ \| \nabla u_n\|_{L^2_{\textit{unif}}} \geq \| \nabla u_n \|_{L^2(B_1(x_n))} \geq \| \nabla u_n\|_{L^2_{\textit{unif}}} - \dfrac{1}{n}.$$
Next, in the spirit of the method of concentration-compactness \cite{AIHPC_1984__1_4_223_0}, we denote $\Bar{u}_n = \tau_{x_n}u_n$, $\Bar{f}_n = \tau_{x_n}f_n$, $\Bar{a}_n = \tau_{x_n}a$ and $\Bar{\Tilde{a}}_n = \tau_{x_n}\Tilde{a}$ and we have for every $n \in \mathbb{N}$ : 
\begin{equation}
\label{limunif}
\| \nabla u_n\|_{L^2_{\textit{unif}}} \geq \| \nabla \Bar{u}_n \|_{L^2(B_1)} \geq \| \nabla u_n\|_{L^2_{\textit{unif}}} - \dfrac{1}{n}.
\end{equation}

Next, for every $n\in \mathbb{N}$,
$\Bar{u}_n$ is a solution to : 
\begin{equation*}
-\operatorname{div}(\Bar{a}_n\nabla \Bar{u}_n) = \operatorname{div}(\Bar{f}_n) \quad \text{in } \mathbb{R}^d.
\end{equation*}
Since the norm of $L^2_{\textit{unif}}$ is invariant by translation, \eqref{10} and \eqref{11} ensure that $\Bar{f}_n$ strongly converges to 0 in $L^2_{unif}(\mathbb{R}^d)$ and that the sequence $\displaystyle (\nabla \Bar{u}_n)_{n\in \mathbb{N}}$ is bounded in $L^2_{unif}(\mathbb{R}^d)$. Therefore, up to an extraction, $\nabla\Bar{u}_n$ weakly converges to a function $\nabla \Bar{u}$ in $L^2_{loc}(\mathbb{R}^d)$. \\
The idea is now to study the limit of $\Bar{a}_n$. To start with, we denote $\mathbf{x}_n = \left(x_{n,i} \operatorname{mod}(1)\right)_{i \in \{1,...,d\}}$. Since $a_{per}$ is periodic, we have $\tau_{x_n}a_{per} =~\tau_{\mathbf{x}_n}a_{per}$. In addition, the sequence $\mathbf{x}_n$ belongs to the unit cube of $\mathbb{R}^d$ and, therefore, it converges (up to an extraction) to $\mathbf{x} \in \mathbb{R}^d$. Since $a_{per}$ is Holder continuous,  $\tau_{\mathbf{x}_n}a_{per}$ converges uniformly to $\tau_{\mathbf{x}}a_{per}$, which also belongs to $\left(L^2_{per}(\mathbb{R}^d) \cap \mathcal{C}^{0,\alpha}(\mathbb{R}^d)\right)^{d\times d}$. \\
In order to study the convergence of $\Bar{\Tilde{a}}_n$, we consider several cases depending on $x_n$:
\begin{itemize}
\item[1.] If $x_n$ is bounded, it converges (up to an extraction) to $x_{lim}\in \mathbb{R}^d$. Then, since $\Tilde{a}$ is Holder-continuous, $\Bar{\Tilde{a}}_n$ strongly converges in $L^2_{loc}(\mathbb{R}^d)$ to $\tau_{x_{lim}}\Tilde{a} \in \left(\mathcal{B}^2(\mathbb{R}^d)\right)^{d\times d}$.
\item[2.] If $x_n$ is not bounded, since $\left(V_p\right)_{p\in \mathcal{P}}$ is a partition of $\mathbb{R}^d$, there exists an unbounded sequence $(p_n)_{n}$ in $\mathcal{P}$ such that $x_n = 2^{p_n} + t_n$ with $t_n \in \textit{V}_{p_n}-2^{p_n}$.
\begin{itemize}
    \item If $t_n$ is bounded, it converges (up to an extraction) to $t_{lim} \in \mathbb{R}^d$. In this case, for any compact subset $K$ of $\mathbb{R}^d$, we have
    \begin{align*}
 \|\Bar{\Tilde{a}}_n - \Tilde{a}_{\infty}(.+t_{lim})\|_{L^2(K)} & \leq    \|\Tilde{a}(.+2^{p_n} + t_n) - \Tilde{a}_{\infty}(.+t_n)\|_{L^2(K)} \\
 & + \|\Tilde{a}_{\infty}(.+t_n)- \Tilde{a}_{\infty}(.+t_{lim})\|_{L^2(K)} \\
 & =  \|\Tilde{a} - \tau_{-p_n}\Tilde{a}_{\infty}\|_{L^2(K+2^{p_n}+t_n)}\\
 & + \|\Tilde{a}_{\infty}(.+t_n)- \Tilde{a}_{\infty}(.+t_{lim})\|_{L^2(K)}.
\end{align*}
First, since $t_n$ is bounded and $p_n$ is unbounded, we have $K+2^{p_n}+t_n \subset V_{p_n}$ for $n$ sufficiently large. Therefore, $\displaystyle \|\Tilde{a} - \tau_{-p_n}\Tilde{a}_{\infty}\|_{L^2(K+2^{p_n}+t_n)}$ converges to 0 when $n \to \infty$. Second, $\Tilde{a}_{\infty}$ is Holder-continuous and $t_n$ converges to $t_{lim}$. Thus, $\Tilde{a}_{\infty}(.+t_n)$ converges uniformly to $\Tilde{a}_{\infty}(.+t_{lim})$ and $\|\Tilde{a}_{\infty}(.+t_n)- \Tilde{a}_{\infty}(.+t_{lim})\|_{L^2(K)}$ converges to 0. Finally, $\Bar{\Tilde{a}}_n$ converges to  $\Tilde{a}_{\infty}(.+t_{lim})$ in $L^2(K)$ for every compact subset $K$. \\

\item  If $t_n$ is unbounded, we can always assume that $|t_n| \to \infty$ up to an extraction. We have for every $K$ compact of $\mathbb{R}^d$, 
\begin{align*}
 \|\Bar{\Tilde{a}}_n\|_{L^2(K)} & \leq    \|\Tilde{a}(.+2^{p_n} + t_n) - \Tilde{a}_{\infty}(.+t_n)\|_{L^2(K)} + \|\Tilde{a}_{\infty}(.+t_n)\|_{L^2(K)} \\
 & =  \|\Tilde{a} - \tau_{-p_n}\Tilde{a}_{\infty}\|_{L^2(K+2^{p_n}+t_n)} + \|\Tilde{a}_{\infty}\|_{L^2(K+t_n)}.
\end{align*}
First, since $\Tilde{a}_{\infty} \in \left(L^2(\mathbb{R}^d)\right)^{d\times d}$ and $t_n$ is unbounded we have that $\|\Tilde{a}_{\infty}\|_{L^2(K+t_n)}$ converges to 0 when $n \to \infty$. Secondly, we introduce the set $W_{2^{p_n}}$ defined as in Proposition \ref{openconstruction}. For every $R>0$, the properties of $W_{2^{p_n}}$ allow to show that there exists $N\in \mathbb{N}$ such that for all $n>N$, we have $K + 2^{p_n} +t_n \subset W_{2^{p_n}}$ and : 
$$K+2^{p_n}+t_n \subset \displaystyle \bigcup_{\substack{q \in \mathcal{P} \\ V_q \cap W_{2^{p_n}} \neq \emptyset}}V_{q}\setminus{B_R(2^q)}.$$ 
Using Proposition \ref{openconstruction}, we know that the number of $q$ such that  $V_q \cap W_{2^{p_n}} \neq \emptyset$ is uniformly bounded with respect to $n$ and Proposition \ref{uniformmajoration} finally ensures that $\|\Tilde{a} - \tau_{-p_n}\Tilde{a}_{\infty}\|_{L^2(K+2^{p_n}+t_n)} \rightarrow 0$. Therefore, $\Bar{\Tilde{a}}_n$ strongly converges to 0 in $L^2_{loc}(\mathbb{R}^d)$. 
\end{itemize}
\end{itemize}
In any case, the sequence $a_{per}+\Bar{\Tilde{a}}_n$ therefore converges to a coefficient $A = \tau_{\mathbf{x}}a_{per} + \Tilde{A}$, where $\Tilde{A}$ is of the form
$$\Tilde{A} = \left\{ \begin{array}{cc}
    \tau_{x_{lim}} \Tilde{a} \in \left(\mathcal{B}^2(\mathbb{R}^d)\right)^{d\times d} & \text{ if } x_n \text{ is bounded}, \\
    \tau_{t_{lim}} \Tilde{a}_{\infty} \in \left(L^{2}(\mathbb{R}^d)\right)^{d \times d} & \text{ if } x_n=2^{p_n} + t_n \text{ where } p_n \text{ is not bounded and } t_n \text{ is bounded},\\
    0 & \text{ if } x_n=2^{p_n} + t_n \text{ where } p_n, t_n \text{ are not bounded}.
\end{array}
\right.$$
In the three cases, as a consequence of Assumptions \eqref{hypothèses1} and \eqref{hypothèses2}, the coefficient $A$ is clearly bounded, elliptic and belongs to $\left(\mathcal{C}^{0,\alpha}(\mathbb{R}^d)\right)^{d \times d}$. Moreover, as a consequence of the uniform Holder-continuity (with respect to $n$) of $\Bar{a}_n - A$, the convergence of $\Bar{a}_n$ to $A$ is also valid in $L^{\infty}_{loc}(\mathbb{R}^d)$.  

The next step of the proof is to study the limit $\nabla \Bar{u}$ of $\nabla \Bar{u}_n$ in these three cases. First, since $\Bar{a}_n$ strongly converges to $A$ in $L^{2}_{loc}(\mathbb{R}^d)$, considering the weak limit in \eqref{9} when $n\to \infty$, we obtain 
\begin{equation}
\label{12}
-\operatorname{div}(A\nabla \Bar{u}) =  0 \quad \text{in } \mathbb{R}^d.
\end{equation}
We now state that $\nabla \Bar{u} =0$. Indeed, 
\begin{itemize}
    \item[1.] if $x_n$ is bounded, assumption \eqref{11} ensures that there exists a constant $C>0$ such that for all $n \in \mathbb{N}$ and $p \in \mathcal{P}$, we have  : 
\begin{equation}
\label{bound_unbar_Vp}
  \|\nabla \Bar{u}_n\|_{L^2(\textit{V}_{p})} = \|\nabla u_n\|_{L^2(\textit{V}_{p} + x_n)} \leq C.  
\end{equation} 
Therefore, the property of lower semi-continuity satisfied by the norm $\|.\|_{L^2}$ implies  
$$\forall p \in \mathcal{P}, \quad \| \nabla \Bar{u} \|_{L^2(\textit{V}_p)} \leq \liminf_{n \rightarrow \infty} \| \nabla \Bar{u}_n \|_{L^2(\textit{V}_p)} < C.$$ 
And we obtain $\displaystyle \sup_p \| \nabla \Bar{u} \|_{L^2(\textit{V}_p)} < \infty$. Finally, since $A$ is elliptic and bounded and  $\Bar{u}$ is solution to \eqref{12}, the uniqueness results of Lemma \ref{lemme1} gives $\nabla \Bar{u} = 0$ on $\mathbb{R}^d$. \\
\item[2.] if $x_n$ is not bounded, we know that $x_n = 2^{p_n} + t_n$ where $|p_n| \to \infty$. For every $ n \in \mathbb{N}$  : 
$$\|\nabla \Bar{u}_n\|_{L^2(\textit{V}_{p_n} - x_n)} = \|\nabla u_n\|_{L^2(\textit{V}_{p_n})} \leq 1.$$
Up to an extraction, the sequence $\textit{V}_{p_n} - x_n$ is an increasing sequence of sets, and we can show that $\displaystyle \bigcup _{n\in \mathbb{N}} \left(\textit{V}_{p_n} - x_n\right) = \mathbb{R}^d$ (see the proof of Proposition \ref{tailledesVP}). Consequently,
for every $R>0$, there exists $N\in \mathbb{N}$ such that $B_R \subset \left(V_{p_N}-x_N\right)$ and  
$$\forall n>N, \quad \|\nabla \Bar{u}_n\|_{L^2(B_R)} \leq 1.$$ 
Using again lower semi-continuity, we have for every $R>0$ : 
$$\|\nabla \Bar{u}\|_{L^2(B_R)} \leq \liminf_{n \rightarrow \infty} \|\nabla \Bar{u}_n\|_{L^2(B_R)} \leq 1.$$ 
We obtain that $\nabla \Bar{u}\in L^2(\mathbb{R}^d)$. Since $A$ is bounded and elliptic, a result of uniqueness established in \cite[Lemma 1]{blanc2012possible} finally ensures that $\nabla \Bar{u} =0$.
\end{itemize}
We are now able to show that $\nabla \Bar{u}_n$ strongly converges to 0 in $L^2(B_1)$. To this aim, we note that, for every $n$, the addition of a constant to $\Bar{u}_n$ does not affect $\nabla \Bar{u}_n$. Then, without loss of generality, we can always assume that $\displaystyle \int_{B_2} \Bar{u}_n$ = 0 and the Poincar\'e-Wirtinger inequality gives the existence of a constant $C>0$ independent of $n$ such that : 
$$\|\Bar{u}_n\|_{L^2(B_2)} \leq C \|\nabla \Bar{u}_n\|_{L^2(B_2)}.$$
$\Bar{u}_n$ is therefore bounded in $H^1(B_2)$ according to Assumption \eqref{11}. The Rellich theorem ensures that, up to an extraction, $\Bar{u}_n$ strongly converges to $\Bar{u}$, that is to 0, in $L^2(B_2)$. Since $\Bar{u}_n$ is solution to \eqref{9}, a classical inequality of elliptic regularity gives the following estimate (see for instance \cite[Theorem 4.4 p.63]{MR3099262}) : 
$$\int_{B_1} |\nabla \Bar{u}_n|^2 \leq C\left(\int_{B_{2}} |\Bar{u}_n|^2 + \int_{B_{2}} |\Bar{f}_n|^2\right),$$
where $C$ depends only of $a$ and the ambient dimension $d$. We therefore consider the limit when $n\rightarrow \infty$ to conclude that $\nabla \Bar{u}_n$ strongly converges to $0$ in $L^2(B_1)$. We next use \eqref{limunif} and the strong convergence of $\nabla \Bar{u}_n$ to 0 in $L^2(B_1)$ to conclude that 
$$\lim_{n \rightarrow \infty}\| \nabla u_n\|_{L^2_{\textit{unif}}(\mathbb{R}^d)}=0.$$
That is, $\nabla u_n$ strongly converges to 0 in $L^2_{unif}(\mathbb{R}^d)$. \\

In order to conclude this proof, we will show that $\nabla u_n$ actually converges to 0 in $\mathcal{B}^2(\mathbb{R}^d)$ and obtain a contradiction. 

First of all, we study the behavior of the sequence $\nabla u_{n,\infty}$. For $p \in \mathcal{P}$, we consider the $2^p$-translation of \eqref{9} and we have 
$$- \operatorname{div}((a_{per} + \tau_p \Tilde{a}) \tau_p \nabla u_n) = \operatorname{div}(\tau_p f_n).$$
Letting $|p|$ go to the infinity, for every $n \in \mathbb{N}$, we obtain that $\nabla u_{n,\infty}$ is a solution to : 
$$ -\operatorname{div}((a_{\textit{per}}+\Tilde{a}_{\infty})\nabla u_{n,\infty}) = \operatorname{div}(f_{n,\infty}) \quad \text{in } \mathbb{R}^d.$$
An estimate established in \cite[Proposition 2.1]{blanc2018correctors}, gives the existence of a constant $C>0$ independent of $n$ such that : 
$$\|\nabla u_{n,\infty}\|_{L^2(\mathbb{R}^d)} \leq C \| f_{n,\infty}\|_{L^2(\mathbb{R}^d)}.$$
By assumption, we have $\displaystyle \lim_{n \rightarrow \infty}\| f_{n,\infty}\|_{L^2(\mathbb{R}^d)} = 0$ and we deduce that $\nabla u_{n,\infty}$ strongly converges to 0 in $L^2(\mathbb{R}^d)$, that is : 
$$ \lim_{n \rightarrow \infty} \|\nabla u_{n,\infty}\|_{L^2(\mathbb{R}^d)} = 0.$$

The last step is to establish that : 
$$\lim_{n \rightarrow \infty} \sup_p \| \nabla u_n\|_{L^2(\textit{V}_p)} = 0.$$
Let $\varepsilon>0$. Since $\Tilde{a}$ belongs to $\left(\mathcal{B}^2(\mathbb{R}^d)\right)^{d\times d}$ and is uniformly continuous, a direct consequence of Proposition \ref{uniformmajoration} gives the existence of $R>0$ such that : 
$$\forall q \in \mathcal{P}, \quad \|\Tilde{a}\|_{L^{\infty}(\textit{V}_q \cap B_R(2^q)^c)}< \dfrac{\varepsilon}{2}.$$
In addition, since $\nabla u_n$ strongly converges to 0 in $L^2_{unif}$, there exists $N\in \mathbb{N}$ such that :
$$\forall n>N, \quad \| \nabla u_n\|_{L^2_{\textit{unif}}(\mathbb{R}^d)} < \frac{\varepsilon}{2|B_R| \|\Tilde{a}\|_{L^{\infty}(\mathbb{R}^d)}}.$$
Using the last two inequalities, we obtain for every $q \in \mathcal{P}$ : 
\begin{align*}
\int_{\textit{V}_q} |\Tilde{a}(x)\nabla u_n(x)|^2 dx  & \leq \int_{\textit{V}_q \cap B_R(2^q)^c} |\Tilde{a}(x)\nabla u_n(x)|^2 dx + \int_{\textit{V}_q\cap B_R(2^q)} |\Tilde{a}(x)\nabla u_n(x)|^2 dx \\
& \leq \|\Tilde{a}\|_{L^{\infty}(\textit{V}_q \cap B_R(2^q)^c)}  \int_{\textit{V}_q \cap B_R(2^q)^c} |\nabla u_n(x)|^2 dx \\
& + \|\Tilde{a}\|_{L^{\infty}(\mathbb{R}^d)}  \int_{\textit{V}_p\cap B_R(2^q)} |\nabla u_n(x)|^2 dx \\
& \leq  \|\Tilde{a}\|_{L^{\infty}(\textit{V}_q \cap B_R(2^q)^c)} \sup_p (\| \nabla u_n \|_{L^2(\textit{V}_p)} ) + \|\Tilde{a}\|_{L^{\infty}(\mathbb{R}^d)}|B_R|\| \nabla u_n\|_{L^2_{\textit{unif}}(\mathbb{R}^d)}  \\
& \leq \dfrac{\varepsilon}{2} + \dfrac{\varepsilon}{2} = \varepsilon.
\end{align*}
Therefore : 
$$\lim_{n \rightarrow \infty} \sup_{p} \int_{\textit{V}_p} |\Tilde{a}(x)\nabla u_n(x)|^2 dx = 0.$$

We next consider equation \eqref{9} and we use Lemma \ref{lemme1} to ensure that, up to the addition of a constant, $u_n$ is the unique solution to  : 
$$- \operatorname{div}(a_{per} \nabla u_n) = \operatorname{div}(f_n + \Tilde{a}\nabla u_n) \quad \text{in } \mathbb{R}^d.$$
such that $\displaystyle \sup_p \|\nabla u_n\|_{ L^2(\textit{V}_p)} < \infty$. Then, Estimate \eqref{estimper} established in Lemma \ref{existenceper} gives the existence of a constant $C>0$ independent of $n$ such that  : 
$$\sup_p \|\nabla u_n\|_{ L^2(\textit{V}_p)}  \leq C\left(\sup_p \|f_n\| _{ L^2(\textit{V}_p)} + \sup_{p}\| \Tilde{a}\nabla u_n\|_{ L^2(\textit{V}_p)} \right).$$
Letting $n$ go to the infinity, we deduce that $ \displaystyle \lim_{n \rightarrow \infty} \sup_p \|\nabla u_n\|_{ L^2(\textit{V}_p)} = 0$. We can finally conclude that 
$$\displaystyle \lim_{n \rightarrow \infty}\|  \nabla u_n\|_{\mathcal{B}^2(\mathbb{R}^d)}=0,$$ 
and, since $\nabla u_n$  satisfies \eqref{11}, we have a contradiction.

\end{proof}

\begin{lemme}
\label{lemmexistence}
Let $f \in \left(\mathcal{B}^2(\mathbb{R}^d)\right)^d$, there exists $u\in H^1_{loc}(\mathbb{R}^d)$ solution to \eqref{equationref} such that $\nabla u \in~\left(\mathcal{B}^2(\mathbb{R}^d)\right)^d$. 
\end{lemme}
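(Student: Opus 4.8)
The plan is to prove existence by a continuation (homotopy) argument on the interpolating family of coefficients $a_t = a_{per} + t\,\widetilde a$ for $t\in[0,1]$, exactly in the spirit of \cite{blanc2018correctors}. Define
\[
\mathcal I = \left\{ t\in[0,1] \ \middle|\ \text{for every } f\in\left(\mathcal B^2(\mathbb R^d)\right)^d \text{ there is } u\in H^1_{loc}(\mathbb R^d) \text{ with } \nabla u\in\left(\mathcal B^2(\mathbb R^d)\right)^d \text{ solving } -\operatorname{div}(a_t\nabla u)=\operatorname{div}(f)\right\}.
\]
First I would observe that $\mathcal I\neq\emptyset$ since $0\in\mathcal I$: this is precisely the content of Lemma \ref{existenceper}. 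Note also that for every $t\in[0,1]$ the coefficient $a_t=a_{per}+t\widetilde a$ still satisfies the coercivity and regularity hypotheses \eqref{hypothèses1}--\eqref{hypothèses2} (coercivity of $a_t$ follows by writing $\langle a_t\xi,\xi\rangle = (1-t)\langle a_{per}\xi,\xi\rangle + t\langle a\xi,\xi\rangle \ge \lambda|\xi|^2$), and $t\widetilde a\in\left(\mathcal B^2(\mathbb R^d)\right)^{d\times d}$, so the a priori estimate of Lemma \ref{lemme2} applies uniformly: there is a constant $C$, \emph{independent of $t$} (a quick inspection of the proof of Lemma \ref{lemme2} shows the constant depends only on $\lambda$, $\|a\|_{L^\infty}$, the Hölder data and $d$, all of which are uniform in $t\in[0,1]$), such that any solution $u$ with $\nabla u\in\left(\mathcal B^2\right)^d$ of $-\operatorname{div}(a_t\nabla u)=\operatorname{div}(f)$ satisfies $\|\nabla u\|_{\mathcal B^2}\le C\|f\|_{\mathcal B^2}$.

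Next I would show $\mathcal I$ is open in $[0,1]$. Suppose $t_0\in\mathcal I$ and let $t$ be close to $t_0$. Given $f$, I set up a fixed-point / Neumann-series scheme: a function $u$ solves $-\operatorname{div}(a_t\nabla u)=\operatorname{div}(f)$ iff it solves $-\operatorname{div}(a_{t_0}\nabla u)=\operatorname{div}\left(f+(t-t_0)\widetilde a\,\nabla u\right)$. Since $t_0\in\mathcal I$ and $\widetilde a\,\nabla u\in\left(\mathcal B^2\right)^d$ whenever $\nabla u\in\left(\mathcal B^2\right)^d$ (by Proposition \ref{stability}, using that $\widetilde a$ and $\widetilde a_\infty$ are bounded by \eqref{hypothèses2}), the map $T_{t_0}:g\mapsto \nabla u_g$, where $u_g$ is the solution at $t_0$ with data $g$, is well-defined from $\left(\mathcal B^2\right)^d$ to $\left(\mathcal B^2\right)^d$ and bounded by $C$ thanks to Lemma \ref{lemme2}. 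Then I look for $\nabla u$ as a fixed point of $G\mapsto T_{t_0}\left(f+(t-t_0)\widetilde a\,G\right)$; the linear operator $G\mapsto T_{t_0}\left((t-t_0)\widetilde a\,G\right)$ has norm at most $C|t-t_0|\,(\|\widetilde a\|_{L^\infty}+\|\widetilde a_\infty\|_{L^\infty})$, which is a strict contraction on the Banach space $\left(\mathcal B^2\right)^d$ (Banach by Proposition \ref{Banach}) as soon as $|t-t_0|$ is small enough. The Banach fixed-point theorem then produces $\nabla u\in\left(\mathcal B^2\right)^d$, and one checks $u$ (defined up to a constant by integrating $\nabla u$, which is legitimate because the fixed-point equation forces $\nabla u$ to be a gradient) solves the equation at parameter $t$. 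Hence a whole neighborhood of $t_0$ lies in $\mathcal I$.

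Then I would show $\mathcal I$ is closed. Take $t_n\in\mathcal I$ with $t_n\to t_\infty$, and fix $f\in\left(\mathcal B^2\right)^d$. For each $n$ there is $u_n$ with $\nabla u_n\in\left(\mathcal B^2\right)^d$ solving $-\operatorname{div}(a_{t_n}\nabla u_n)=\operatorname{div}(f)$, and by the uniform estimate of Lemma \ref{lemme2}, $\|\nabla u_n\|_{\mathcal B^2}\le C\|f\|_{\mathcal B^2}$ with $C$ independent of $n$. In particular $(\nabla u_n)$ is bounded in $L^2_{unif}(\mathbb R^d)$, hence (normalizing $u_n$ to have zero average on a fixed ball and using Poincaré–Wirtinger locally) bounded in $H^1_{loc}$, so up to extraction $\nabla u_n\rightharpoonup \nabla u_\infty$ weakly in $L^2_{loc}$. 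Since $a_{t_n}\to a_{t_\infty}$ uniformly ($|t_n-t_\infty|\,\|\widetilde a\|_{L^\infty}\to0$), passing to the limit in the weak formulation gives $-\operatorname{div}(a_{t_\infty}\nabla u_\infty)=\operatorname{div}(f)$. It remains to check $\nabla u_\infty\in\left(\mathcal B^2\right)^d$: this is where the main care is needed. One way is to argue as in the proof of Lemma \ref{existenceper}: write $u_\infty$ (up to constant) as the unique solution with $\sup_p\|\nabla u_\infty\|_{L^2(V_p)}<\infty$ of $-\operatorname{div}(a_{per}\nabla u_\infty)=\operatorname{div}\left(f+t_\infty\widetilde a\,\nabla u_\infty\right)$ — legitimate by Lemma \ref{lemme1}, the bound $\sup_p\|\nabla u_\infty\|_{L^2(V_p)}<\infty$ following from lower semicontinuity of $\|\cdot\|_{L^2(V_p)}$ under weak convergence together with the uniform bound on $\|\nabla u_n\|_{\mathcal B^2}$ — and then invoke Lemma \ref{existenceper} with right-hand side $f+t_\infty\widetilde a\,\nabla u_\infty\in\left(\mathcal B^2\right)^d$ (again by Proposition \ref{stability}) to conclude $\nabla u_\infty\in\left(\mathcal B^2\right)^d$. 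This shows $t_\infty\in\mathcal I$. Since $[0,1]$ is connected and $\mathcal I$ is nonempty, open and closed, $\mathcal I=[0,1]$; in particular $1\in\mathcal I$, which is exactly the assertion of the lemma. I expect the closedness step — more precisely, upgrading the weak-$L^2_{loc}$ limit to membership in $\mathcal B^2(\mathbb R^d)$ — to be the main obstacle, and the clean fix is to bootstrap through the periodic problem via Lemmas \ref{lemme1} and \ref{existenceper} rather than trying to pass $\mathcal B^2$-convergence directly through the limit.
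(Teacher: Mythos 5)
Your overall strategy --- a connectedness/homotopy argument in $t$ for the family $a_t = a_{per} + t\,\Tilde a$, with the a priori estimate of Lemma \ref{lemme2} driving both openness and closedness --- is exactly the one used in the paper. However, there is a genuine gap at the point where you repeatedly invoke Proposition \ref{stability} to assert that $\Tilde a\,\nabla u \in \left(\mathcal B^2(\mathbb R^d)\right)^d$ whenever $\nabla u \in \left(\mathcal B^2(\mathbb R^d)\right)^d$. Proposition \ref{stability} requires \emph{both} factors to lie in $L^\infty(\mathbb R^d)$: its proof controls the cross term $(\Tilde a - \tau_{-p}\Tilde a_\infty)\,\nabla u$ on $V_p$ by putting one factor in $L^\infty$ and the other in $L^2(V_p)$. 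A function in $\mathcal B^2(\mathbb R^d)$ has no reason to be bounded, so with $\nabla u$ only in $\mathcal B^2$ you cannot conclude $\Tilde a\,\nabla u \in \mathcal B^2$. This breaks both your openness step (the contraction $G \mapsto T_{t_0}\bigl((t-t_0)\Tilde a\,G\bigr)$ may not even map $\left(\mathcal B^2\right)^d$ into itself) and your closedness step (you cannot legitimately apply Lemma \ref{existenceper} with the right-hand side $f + t_\infty \Tilde a\,\nabla u_\infty$ until you know $\Tilde a\,\nabla u_\infty \in \mathcal B^2$, which is essentially circular).

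The paper fixes this by an additional reduction you omitted: by the density result of Proposition \ref{density} and the continuity in Lemma \ref{lemme2}, it suffices to treat $f \in \left(\mathcal B^2(\mathbb R^d) \cap \mathcal C^{0,\alpha}(\mathbb R^d)\right)^d$. For such $f$, Schauder regularity (the estimate \eqref{estcontinuite2} drawn from \cite{MR3099262}) upgrades any solution with $\nabla u \in \left(\mathcal B^2\right)^d$ to $\nabla u \in \left(\mathcal B^2 \cap \mathcal C^{0,\alpha}\right)^d$, and in particular $\nabla u \in L^\infty$. The homotopy is then run in the Banach space $\left(\mathcal B^2 \cap \mathcal C^{0,\alpha}\right)^d$ with the norm $\|\cdot\|_{\mathcal B^2} + \|\cdot\|_{\mathcal C^{0,\alpha}}$; in that space $\Tilde a\,\nabla u$ is genuinely in $\mathcal B^2$ (now Proposition \ref{stability} does apply), the contraction argument for openness closes, and for closedness the paper shows $\nabla u_n$ is Cauchy in $\mathcal B^2$ (rather than passing to a weak limit and bootstrapping as you sketch), using the a priori bound and again the fact that $\Tilde a\,\nabla u_n \in \mathcal B^2$ because $\nabla u_n$ is bounded. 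To repair your proof you would need to insert this density/regularity reduction at the outset, or else prove directly (from the Hölder regularity of $\Tilde a$ and $\Tilde a_\infty$) the stronger fact $\|\Tilde a - \tau_{-p}\Tilde a_\infty\|_{L^\infty(V_p)} \to 0$, which is not contained in the statement of Proposition \ref{stability}.
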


\begin{proof}
First of all, we remark that it is sufficient to prove this existence result when $f \in \mathcal{B}^2(\mathbb{R}^d)\cap \mathcal{C}^{0,\alpha}(\mathbb{R}^d)$. Indeed, if we denote $\Phi = \nabla\left(-\operatorname{div}a\nabla\right)^{-1}\operatorname{div}$ the reciprocal linear operator from $\left(\mathcal{B}^2(\mathbb{R}^d)\cap \mathcal{C}^{0,\alpha}(\mathbb{R}^d)\right)^d$ to $\left(\mathcal{B}^2(\mathbb{R}^d)\right)^d$ associated with equation \eqref{equationref} and we assume that $\Phi$ is well defined, Lemma \ref{lemme2} ensures it is continuous with respect to the norm of $\mathcal{B}^2(\mathbb{R}^d)$. Then, we are able to conclude in the general case using the density result stated in Property \ref{density}. In the sequel of this proof, we therefore assume that $f$ belongs to $\left(\mathcal{C}^{0,\alpha}(\mathbb{R}^d)\right)^d$.

To start with, we show a preliminary result of regularity satisfied by the solutions to \eqref{equationref}.  Assuming $f\in \left(\mathcal{B}^2(\mathbb{R}^d)\cap \mathcal{C}^{0,\alpha}(\mathbb{R}^d)\right)^d$, we want to prove that a solution $u$ to \eqref{equationref} such that $\nabla u \in \left(\mathcal{B}^2(\mathbb{R}^d)\right)^d$ also satisfies $\nabla u \in  \left(\mathcal{C}^{0,\alpha}(\mathbb{R}^d)\right)^d$. Indeed, if $u$ is such a solution to \eqref{equationref}, a consequence of a regularity result established in \cite[Theorem 5.19 p.87]{MR3099262} (see also \cite[Theorem 3.2 p.88]{giaquinta1983multiple}) gives the existence of $C>0$ such that for all $x \in \mathbb{R}^d$ : 
\begin{equation}
\label{estcontinuite2}
\|\nabla u\|_{\mathcal{C}^{0,\alpha}(B_{1}(x))} \leq C \left(\| \nabla u\|_{L^2_{unif}(\mathbb{R}^d)} + \|f\|_{\mathcal{C}^{0,\alpha}(\mathbb{R}^d)}\right).
\end{equation} 
Therefore, $\nabla u $ belongs to $\left(\mathcal{C}^{0,\alpha}(\mathbb{R}^d)\cap \mathcal{B}^2(\mathbb{R}^d)\right)^d$.

%where $\displaystyle \{g\}_{\mathcal{C}^{0,\alpha}(\Omega_0)} = \sup_{(y,z)\in \Omega_0^2, \ y\neq z} \frac{|g(y) - g(z)|}{|y-z|^{\alpha}}$ for any function $g$ and any subset $\Omega_0$ of $\mathbb{R}^d$. Therefore, $\nabla u$ is continuous on $\mathbb{R}^d$. Next, we want to show that $\nabla u$ belongs to $L^{\infty}(\mathbb{R}^d)$. To this aim, we denote $\Tilde{C} = C \left(\| \nabla u\|_{L^2_{unif}(\mathbb{R}^d)} + \|f\|_{\mathcal{C}^{0,\alpha}(\mathbb{R}^d)}\right)$ and we assume that there exists $x_0 \in \mathbb{R}^d$ such that $\displaystyle |\nabla u (x_0)| >  \frac{\|\nabla u \|_{L^2_{unif}(\mathbb{R}^d)}}{|B_1|} + \Tilde{C}$. For every $y \in B_1(x_0)$, using \eqref{estcontinuite}) and a triangle inequality, we obtain : 
%$$ |\nabla u(y)| \geq |\nabla u(x_0)| - C > \frac{\|\nabla u \|_{L^2_{unif}(\mathbb{R}^d)}}{|B_1|}.$$
%We therefore have : 
%$$ \| \nabla u \|_{L^2(B_1(x_0))} > \|\nabla u \|_{L^2_{unif}(\mathbb{R}^d)}, $$
%and we obtain a contradiction. Finally, we deduce that $ \displaystyle \|\nabla u \|_{L^{\infty}} \leq \frac{\|\nabla u \|_{L^2_{unif}(\mathbb{R}^d)}}{|B_1|} + \Tilde{C}$ and that $\nabla u $ belongs to $\mathcal{C}^{0,\alpha}(\mathbb{R}^d)$. In addition, using \eqref{estcontinuite}) we have established the existence of a constant $C_1>0$ such that : 
%\begin{equation}
%\label{estcontinuite2}
%\|\nabla u\|_{\mathcal{C}^{0,\alpha}(B_{1}(x))} \leq C %\left(\| \nabla u\|_{L^2_{unif}(\mathbb{R}^d)} + %\|f\|_{\mathcal{C}^{0,\alpha}(\mathbb{R}^d)}\right).
%\end{equation}

In the sequel of the proof, we use an argument of connexity adapted from \cite{blanc2018correctors}. Let $\mathbf{P}(a)$ the following assertion : "There exists a solution $u\in \mathcal{D}'(\mathbb{R}^d)$ to :
\begin{equation}
\label{eqconnectedness}
    -\operatorname{div}\left(a\nabla u\right) = \operatorname{div}(f) \quad \text{in } \mathbb{R}^d
\end{equation}
such that $\nabla u \in \left(\mathcal{B}^2(\mathbb{R}^d)\cap \mathcal{C}^{0,\alpha}(\mathbb{R}^d)\right)^d$." \\
For $t\in  [0,1]$, we denote $a_t = a_{per} + t\Tilde{a}$ and we define the following set : 
\begin{equation}
\mathcal{I} = \left\{t\in [0,1] \ \middle| \ \forall s \in [0,t], \text{$\mathbf{P}(a_s)$ is true}\right\}.
\end{equation}
Our aim is to show that $\mathbf{P}(a_1) = \mathbf{P}(a)$ is true. To this end, we will prove that $\mathcal{I}$ is non empty, closed and open for the topology of $[0,1]$ and conclude that $\mathcal{I} = [0,1]$.

\textit{$\mathcal{I}$ is not empty}\\
For $t=0$, the existence of a solution $u$ such that $\nabla u \in \left(\mathcal{B}^2(\mathbb{R}^d)\right)^d$ is a direct consequence of Lemma \ref{existenceper}. We just have to use \eqref{estcontinuite2} to show the uniform Holder continuity of the gradient of the solution.

\textit{$\mathcal{I}$ is open}\\
We assume there exists $t\in \mathcal{I}$ and we will find $\varepsilon>0$ such that $[t,t+\varepsilon] \subset \mathcal{I}$. For $f \in \left(\mathcal{B}^2(\mathbb{R}^d) \cap \mathcal{C}^{0,\alpha}(\mathbb{R}^d)\right)^d$, we want to solve : 
\begin{equation}
    \label{eqconnectednessopen}
    -\operatorname{div}((a_t + \varepsilon \Tilde{a})\nabla u) = \operatorname{div}(f) \quad \text{in $\mathbb{R}^d$},
\end{equation}
where $\nabla u \in \left(\mathcal{B}^2(\mathbb{R}^d)\cap \mathcal{C}^{0, \alpha}(\mathbb{R}^d)\right)^d$. According to Proposition \ref{stability}, for such a solution, we have $\varepsilon \Tilde{a} \nabla u \in \left(\mathcal{B}^2(\mathbb{R}^d)\cap \mathcal{C}^{0,\alpha}(\mathbb{R}^d)\right)^d$. Next, we remark that equation \eqref{eqconnectednessopen} is equivalent to : 
\begin{equation}
\label{contraction}
\nabla u = \Phi_t (\varepsilon \Tilde{a}\nabla u + f),
\end{equation}
where $\Phi_t$ is the reciprocal linear operator associated with the equation when $a = a_t$. Lemma~\ref{lemme2} and Estimate \eqref{estcontinuite2} imply the continuity of $\Phi_t$ from $\left(\mathcal{B}^2(\mathbb{R}^d) \cap \mathcal{C}^{0,\alpha}(\mathbb{R}^d)\right)^d$ to $\left(\mathcal{B}^2(\mathbb{R}^d) \cap \mathcal{C}^{0,\alpha}(\mathbb{R}^d)\right)^d$ for the norm $\|.\|_{\mathcal{B}^2(\mathbb{R}^d)} + \|.\|_{\mathcal{C}^{0,\alpha}(\mathbb{R}^d)}$. We fix $\varepsilon$ such that : $$\varepsilon \left(\|\Tilde{a}\|_{L^{\infty}(\mathbb{R}^d)} + \|\Tilde{a}_{\infty}\|_{L^{\infty}(\mathbb{R}^d)}\right) \|\Phi_t\|_{\mathcal{L}\left(\left(\mathcal{B}^2(\mathbb{R}^d) \cap \mathcal{C}^{0,\alpha}(\mathbb{R}^d)\right)^d\right)}  <1.$$ 
Therefore $g \rightarrow \Phi_t(\varepsilon\Tilde{a}g + f) \in \mathcal{L}\left(\left(\mathcal{B}^2(\mathbb{R}^d) \cap \mathcal{C}^{0,\alpha}(\mathbb{R}^d)\right)^d \right)$ is a contraction in a Banach space. Finally, we can apply the Banach fixed-point theorem  to obtain the existence and the uniqueness of a solution to \eqref{contraction} and we deduce that $[t,t+ \varepsilon] \subset \mathcal{I}$. 

\textit{$\mathcal{I}$ is closed}\\
We assume there exist a sequence $(t_n) \in \mathcal{I}^{\mathbb{N}}$ and $t\in [0,1]$ such that $\lim_{n \rightarrow \infty} t_n =t$ and $t_n<t$. For every $t_n$, there exists $u_n$ solution to : 
\begin{equation}
\label{eqsuite}
-\operatorname{div}(a_{t_n}\nabla u_n)=f \quad \text{in $\mathbb{R}^d$},
\end{equation}
such that $\nabla u_n \in \left(\mathcal{B}^2(\mathbb{R}^d) \right)^d$.
For every $n \in \mathbb{N}$, Lemma \ref{lemme2} gives the existence of a constant $C_n$ such that : 
$$\|\nabla u_n\|_{B^2(\mathbb{R}^d)}  \leq C_n  \|f\|_{B^2(\mathbb{R}^d)}. $$

We first assume that $C_n$ is bounded independently of $n$ by a constant $C>0$. Therefore, up to an extraction, $\nabla u_n$ weakly converges to a gradient $\nabla u$ in $L^2_{loc}(\mathbb{R}^d)$ and, using the lower semi-continuity of the $L^2$-norm, we have  $$\|\nabla u\|_{L^2_{unif}(\mathbb{R}^d)} + \sup_p \|\nabla u \|_{L^2(\textit{V}_p)}  \leq \liminf_{n \to \infty} \|\nabla u_n\|_{L^2_{unif}(\mathbb{R}^d)} + \sup_p \|\nabla u_n \|_{L^2(\textit{V}_p)} \leq   C  \|f\|_{\mathcal{B}^2(\mathbb{R}^d)}.$$
In addition, for every $n\in \mathbb{N}$, $u_n$ is a solution to the equivalent equation :
\begin{equation}
\label{eqmodif}
-\operatorname{div}(a_t \nabla u_n) = \operatorname{div}(f + (a_{t_n} - a_{t})\nabla u_n)
\end{equation}
Next, since $t_n$ converges to $t$, we directly obtain that $a_{t_n}$ converges to $a_t$ in $\mathcal{B}^2(\mathbb{R}^d)$. In addition, since $\nabla u_n$ is bounded by a constant independent of $n$ in $L^2_{unif}(\mathbb{R}^d)$, the sequence $(a_{t_n} - a_{t})\nabla u_n$ strongly converges to $0$ in $L^2_{loc}(\mathbb{R}^d)$. We can therefore consider the limit in \eqref{eqmodif} when $n \to \infty$ and deduce that $u$ is a solution to : 
\begin{equation}
\label{eqlimite}
-\operatorname{div}(a_t \nabla u) = \operatorname{div}(f).    
\end{equation}

We have to prove that $\nabla u \in \mathcal{B}^2(\mathbb{R}^d)$. For every $m, \ n \in \mathbb{N}$, $u_n - u_m$ is a solution to :
$$-\operatorname{div}(a_t (\nabla u_n -\nabla u_m)) = \operatorname{div}((a_{t_n} - a_{t})\nabla u_n - (a_{t_m} - a_{t})\nabla u_m),$$ 
and we have the following estimate : 
$$\|\nabla u_n - \nabla u_m\|_{\mathcal{B}^2(\mathbb{R}^d)}  \leq C  \|(a_{t_n} - a_{t})\nabla u_n - (a_{t_m} - a_{t})\nabla u_m\|_{\mathcal{B}^2(\mathbb{R}^d)}.$$
Therefore, $u_n$ is a Cauchy-sequence in $\left(\mathcal{B}^2(\mathbb{R}^d)\right)^d$ and since this space is a Banach space, we directly obtain that $\nabla u$ belongs to $\left(\mathcal{B}^2(\mathbb{R}^d)\right)^d$ and we have the expected result. 

Now, we want to prove that $C_n$ is bounded independently of $n$ using a proof by contradiction. 
We assume there exist two sequences $f_n$ and $\nabla u_n$ in $\left(\mathcal{B}^2(\mathbb{R}^d)\right)^d$ such that : 
\begin{equation}
-\operatorname{div}(a_{t_n}\nabla u_n) = \operatorname{div}(f_n) \quad \text{in $\mathbb{R}^d$},
\end{equation}
\begin{equation}
\lim_{n \rightarrow \infty}\|f_n\|_{\mathcal{B}^2(\mathbb{R}^d)} =0,  
\end{equation}
\begin{equation}
\forall n \in \mathbb{N} \quad \|\nabla u_n\|_{\mathcal{B}^2(\mathbb{R}^d)} = 1.  
\end{equation}
For every $n \in \mathbb{N}$, the above equation is equivalent to : 
$$-\operatorname{div}(a_t \nabla u_n) = \operatorname{div}(f_n + (a_{t_n} - a_{t})\nabla u_n).$$
 We can next remark that the boundedness of $\nabla u_n$ in $\mathcal{B}^2(\mathbb{R}^d)$ ensures that the sequence $(a_{t_n} - a_{t})\nabla u_n$ is strongly convergent to 0 in $\mathcal{B}^2(\mathbb{R}^d)$. Finally, we can conclude exactly as in the proof of Lemma \ref{lemme2}.

Since $[0,1]$ is a connected space, we can finally conclude that $\mathcal{I}=[0,1]$. In addition, if $u \in \mathcal{D}'(\mathbb{R}^d)$ is such that $\nabla u \in \mathcal{B}^2(\mathbb{R}^d)\subset{L^2_{loc}(\mathbb{R}^d)}$, the result of \cite[corollary 2.1]{deny1954espaces} finally ensures that $u \in L^2_{loc}(\mathbb{R}^d)$.
\end{proof}

In the above proof, we have proved the following result :

\begin{corol}
\label{corol_regularity_corrector}
Let $f \in \mathcal{B}^2(\mathbb{R}^d) \cap \mathcal{C}^{0,\alpha}(\mathbb{R}^d)$ and $u \in H^1_{loc}(\mathbb{R}^d)$ solution to \eqref{equationref} such that $\nabla u \in \mathcal{B}^2(\mathbb{R}^d)$. Then $\nabla u \in  \mathcal{C}^{0,\alpha}(\mathbb{R}^d)$.
\end{corol}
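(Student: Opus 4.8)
The plan is to observe that this statement is precisely the local regularity assertion already extracted in the course of the proof of Lemma~\ref{lemmexistence}, and to make the argument self-contained. The starting point is that, under Assumptions~\eqref{hypothèses1} and~\eqref{hypothèses2}, the coefficient $a = a_{per} + \Tilde{a}$ is uniformly elliptic and uniformly H\"older continuous on $\mathbb{R}^d$, and the right-hand side $f$ is uniformly H\"older continuous. Moreover, by the very definition~\eqref{normdef} of the norm on $\mathcal{B}^2(\mathbb{R}^d)$, the hypothesis $\nabla u \in \mathcal{B}^2(\mathbb{R}^d)$ forces $\nabla u \in L^2_{unif}(\mathbb{R}^d)$ with $\|\nabla u\|_{L^2_{unif}(\mathbb{R}^d)} \leq \|\nabla u\|_{\mathcal{B}^2(\mathbb{R}^d)} < \infty$.

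First I would invoke the classical Schauder-type interior estimate for divergence-form equations with H\"older coefficients and H\"older right-hand side in divergence form, namely estimate~\eqref{estcontinuite2} (established for instance in \cite[Theorem 5.19 p.87]{MR3099262}, see also \cite[Theorem 3.2 p.88]{giaquinta1983multiple}): there exists a constant $C>0$, depending only on $d$, $\lambda$ and the H\"older norm of $a$, such that for every $x \in \mathbb{R}^d$
\begin{equation*}
\|\nabla u\|_{\mathcal{C}^{0,\alpha}(B_{1}(x))} \leq C\left(\|\nabla u\|_{L^2(B_2(x))} + \|f\|_{\mathcal{C}^{0,\alpha}(B_2(x))}\right) \leq C\left(\|\nabla u\|_{L^2_{unif}(\mathbb{R}^d)} + \|f\|_{\mathcal{C}^{0,\alpha}(\mathbb{R}^d)}\right).
\end{equation*}
The crucial point, which I would stress, is that $C$ does not depend on $x$: this follows from the translation invariance of the $L^2_{unif}$ and $\mathcal{C}^{0,\alpha}$ norms together with the fact that the ellipticity constant and the H\"older modulus of $a = a_{per}+\Tilde a$ are uniform in space.

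Finally I would patch these local estimates into a global one. Taking the supremum over $x \in \mathbb{R}^d$ of the $L^\infty(B_1(x))$ contribution immediately yields $\|\nabla u\|_{L^\infty(\mathbb{R}^d)} < \infty$. For the H\"older seminorm, given $x,y \in \mathbb{R}^d$ with $x \neq y$: if $|x-y| \leq 1$ then $x,y \in B_1(x)$ and the local estimate bounds $|\nabla u(x) - \nabla u(y)|\,/\,|x-y|^\alpha$ by $C(\|\nabla u\|_{L^2_{unif}(\mathbb{R}^d)} + \|f\|_{\mathcal{C}^{0,\alpha}(\mathbb{R}^d)})$; if $|x-y|>1$ then $|\nabla u(x) - \nabla u(y)| \leq 2\|\nabla u\|_{L^\infty(\mathbb{R}^d)} \leq 2\|\nabla u\|_{L^\infty(\mathbb{R}^d)}|x-y|^\alpha$. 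In both cases the difference quotient is uniformly bounded, hence $\nabla u \in \mathcal{C}^{0,\alpha}(\mathbb{R}^d)$. I do not expect any genuine obstacle here beyond correctly quoting the interior estimate and verifying that its constant is space-uniform; the corollary is essentially a direct consequence of the regularity argument already carried out for Lemma~\ref{lemmexistence}.
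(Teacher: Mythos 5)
Your proposal is correct and follows essentially the same route as the paper: the paper's proof of this corollary is precisely the regularity argument extracted from the proof of Lemma~\ref{lemmexistence}, relying on the uniform interior Schauder estimate \eqref{estcontinuite2} from \cite[Theorem 5.19 p.87]{MR3099262}, together with the observation that $\nabla u \in \mathcal{B}^2(\mathbb{R}^d)$ forces $\nabla u \in L^2_{unif}(\mathbb{R}^d)$. Your additional patching step (splitting into $|x-y|\le 1$ and $|x-y|>1$) makes explicit the passage from the uniform local estimate to the global $\mathcal{C}^{0,\alpha}$ bound, which the paper leaves implicit.
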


\begin{remark}
Again, we do not need the restriction that we did on the dimension to prove the existence results stated in this section and we can easily generalize the existence of a solution to \eqref{equationref} in a two-dimensional context.
\end{remark}

%-------------------------------------------------%
\subsection{Existence of the corrector}
%-------------------------------------------------%

To conclude this section, we finally give a proof of Theorem \ref{theoreme1} and, therefore, we obtain the existence of a unique corrector solution to \eqref{correcteur} such its gradient belongs to $L^2_{per}(\mathbb{R}^d) + \mathcal{B}^2(\mathbb{R}^d)$. To this end, we remark that corrector equation \eqref{correcteur} is equivalent to a an equation in form \eqref{equationref} and we use the preliminary results of uniqueness and existence proved in this section. 

\begin{proof}[Proof of theorem \ref{theoreme1}]
$\textit{Existence}$\\
Let $p$ be in $\mathbb{R}^d$. We want to find a solution to \eqref{correcteur} of the form $w_{per,p} + \Tilde{w}_p$ where $w_{per,p}$ is the unique periodic corrector (that is the unique periodic solution to the corrector equation \eqref{correcteur} when $\Tilde{a} = 0$) and such that $\nabla \Tilde{w}_p\in \left(\mathcal{B}^2(\mathbb{R}^d)\right)^d$. First of all, we remark that equation \eqref{correcteur} is equivalent to : 
\begin{equation}
\label{correcteur2}
-\operatorname{div}((a_{\textit{per}} + \Tilde{a})\nabla \Tilde{w}_p) = \operatorname{div}(\Tilde{a}(p + \nabla w_{per,p})) \quad \text{in $\mathbb{R}^d$}.      
\end{equation}
It is well known that $\nabla w_{per,p} \in \left(L^2_{per}(\mathbb{R}^d) \cap \mathcal{C}^{0,\alpha}(\mathbb{R}^d)\right)^d$ and therefore, using the periodicity of $\nabla w_{per,p}$, we can easily show that $\Tilde{a}(p + \nabla w_{per,p}) \in \left(\mathcal{B}^2(\mathbb{R}^d)\cap \mathcal{C}^{0,\alpha}(\mathbb{R}^d)\right)^d$. Then, the existence of $\Tilde{w}_p$ such that $\nabla \Tilde{w}_p \in \left(\mathcal{B}^2(\mathbb{R}^d)\cap \mathcal{C}^{0,\alpha}(\mathbb{R}^d)\right)^d$ is given by Lemma \ref{lemmexistence} and Corollary \ref{corol_regularity_corrector}. Since $\nabla \Tilde{w}_p \in \left(\mathcal{B}^2(\mathbb{R}^d)\cap \
L^{\infty}(\mathbb{R}^d)\right)^d $, the sub-linearity at infinity of $\Tilde{w}_p$ is a direct consequence of Proposition \ref{propsouslinearite}.

$\textit{Uniqueness}$\\
We assume there exist two solutions $u_1$ and $u_2$ to \eqref{correcteur} such that $\nabla u_1$ and $\nabla u_2$ belong to $\left(L^2_{per}(\mathbb{R}^d) + \mathcal{B}^2(\mathbb{R}^d)\right)^d$. We denote $v=u_1-u_2$ and we have $\nabla v = g_{per} + \Tilde{g}$ where $g_{per}  \in \left(L^2_{per}(\mathbb{R}^d)\right)^d$ and $\Tilde{g}\in\left(\mathcal{B}^2(\mathbb{R}^d)\right)^d$. For every $q\in \mathcal{P}$, we have $\tau_{q}\nabla v = g_{per} + \tau_{q}\Tilde{g}$ by periodicity of $g_{per}$. Since $\Tilde{g}$ belongs to $\left(\mathcal{B}
^2(\mathbb{R}^d)\right)^d$, there exists $\Tilde{g}_{\infty} \in \left(L^2(\mathbb{R}^d)\right)^d$ such that $\tau_q\nabla v$ converges in $\mathcal{D}'(\mathbb{R}^d)$ to $\nabla v_{\infty} = g_{per} + \Tilde{
g}_{\infty}$ when $|q| \rightarrow \infty$. In addition, considering the limit in equation \eqref{correcteur}, we obtain that $v_{\infty}$ is a solution to : 
$$-\operatorname{div}((a_{per}+ \Tilde{a}_{\infty})\nabla v_{\infty}) = 0 \quad \text{in } \mathbb{R}^d.$$
Since $a$ satisfies assumption \eqref{hypothèses1} and \eqref{hypothèses2}, the coefficient $a_{per} + \Tilde{a}_{\infty}$ is a bounded and elliptic matrix-valued coefficient. Therefore, the result established in  \cite[Lemma 1]{blanc2012possible} allows us to conclude that $g_{per} = 0$ and finally, that $\nabla v = \Tilde{g} \in \left(\mathcal{B}^2(\mathbb{R}^d)\right)^d$. Since $v$ is a solution to : 
$$-\operatorname{div}((a_{per} + \Tilde{a})\nabla v) = 0 \quad \text{in } \mathbb{R}^d,$$ 
we use Lemma \ref{lemme1} to obtain that $\nabla v =0$ and the uniqueness is proved.  
\end{proof}

%-------------------------------------------------%
\section{Homogenization results and convergence rates }
\label{Section5}
%-------------------------------------------------%

In this section we use the corrector, solution to \eqref{correcteur} and defined in Theorem \ref{theoreme1}, to establish an homogenization theory similar to that established in \cite{blanc2018precised} for the periodic case with local perturbations. In proposition \ref{proplimithomogenization} we first study the homogenized equation associated with~\eqref{equationepsilon} and we conclude showing estimates \eqref{estimate1} and \eqref{estimate2} stated in Theorem \ref{theoreme3}.

%-------------------------------------------------%
\subsection{Homogenization results}
%-------------------------------------------------%

To start with, we determine here the limit of the sequence $u^{\varepsilon}$ of solutions to \eqref{equationepsilon}. In Proposition \ref{proplimithomogenization} below we show the homogenized equation is actually the diffusion equation~\eqref{homog} where the diffusion coefficient $a^*$ is defined by \eqref{homogenizedcoeff}, that is the homogenized coefficient is the same as in the periodic case when $a = a_{per}$. This phenomenon is similar to the results established in \cite{blanc2018correctors} in the case of localized defects of $L^p(\mathbb{R}^d)$. It is a direct consequence of Proposition \ref{moyenne} regarding the average of the functions in $\mathcal{B}^2(\mathbb{R}^d)$ which is satisfied by our perturbations. The idea is that, on average, the perturbations belonging to $\mathcal{B}^2(\mathbb{R}^d)$ therefore do not impact the periodic background. 

 \begin{prop}
 \label{proplimithomogenization}
 Assume $\Omega$ is an open bounded set of $\mathbb{R}^d$, let $f \in L^2(\Omega)$ and consider the sequence $u^{\varepsilon}$ of solutions in $H^1_0(\Omega)$ to \eqref{equationepsilon}.
Then the homogenized (weak-$H^1(\Omega)$ and strong-$L^2(\Omega)$) limit $u^*$ obtained when $\varepsilon \rightarrow 0$ is the solution to \eqref{homog} where the homogenized coefficient is identical to the periodic homogenized coefficient \eqref{homogenizedcoeff}.
 \end{prop}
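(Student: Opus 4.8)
The plan is to identify the homogenized matrix by Tartar's method of oscillating test functions, the oscillations being generated by the corrector of the \emph{adjoint} coefficient, which is provided by Theorem~\ref{theoreme1}. By the uniform ellipticity and $L^\infty$ bound \eqref{hypothèses1}--\eqref{hypothèses2}, the family $(u^{\varepsilon})$ is bounded in $H^1_0(\Omega)$, so that, up to extraction, $u^{\varepsilon}\rightharpoonup u^*$ weakly in $H^1_0(\Omega)$ and strongly in $L^2(\Omega)$, and $a(\cdot/\varepsilon)\nabla u^{\varepsilon}\rightharpoonup\xi$ weakly in $L^2(\Omega)^d$ with $-\operatorname{div}\xi=f$ in $\Omega$ (this is the classical compactness recalled in the introduction). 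The whole point is thus to show $\xi=a^*\nabla u^*$, where $a^*$ is the \emph{periodic} homogenized matrix \eqref{homogenizedcoeff}.

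Fix $p\in\mathbb{R}^d$. Since $\langle a^T\xi,\xi\rangle=\langle a\xi,\xi\rangle$, the transposed coefficient $a^T=a_{per}^T+\Tilde{a}^T$ also satisfies \eqref{hypothèses1}--\eqref{hypothèses2}, so Theorem~\ref{theoreme1} yields a corrector $w^*_p$ with $\nabla w^*_p=\nabla w^*_{per,p}+\nabla\Tilde{w}^*_p\in\big(L^2_{per}(\mathbb{R}^d)+\mathcal{B}^2(\mathbb{R}^d)\big)^d\cap\mathcal{C}^{0,\alpha}(\mathbb{R}^d)^d$, where $w^*_{per,p}$ is the classical periodic corrector of $a_{per}^T$. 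I set $v_\varepsilon(x)=p\cdot x+\varepsilon w^*_p(x/\varepsilon)$, so that $\nabla v_\varepsilon(\cdot)=p+(\nabla w^*_p)(\cdot/\varepsilon)$ is bounded in $L^\infty(\Omega)$, and, by the corrector equation and scaling, $-\operatorname{div}\big(a^T(\cdot/\varepsilon)\nabla v_\varepsilon\big)=0$ in $\mathbb{R}^d$. Next I compute the weak-$L^2_{loc}$ limit of $a^T(\cdot/\varepsilon)\nabla v_\varepsilon$. The purely periodic part $a_{per}^T(\cdot/\varepsilon)\big(p+\nabla w^*_{per,p}(\cdot/\varepsilon)\big)$ converges weakly to $(a^*)^Tp$ by standard periodic homogenization (see \cite{bensoussan2011asymptotic,jikov2012homogenization}), the homogenized matrix of $a_{per}^T$ being $(a^*)^T$. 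All remaining contributions, namely $a_{per}^T(\cdot/\varepsilon)\nabla\Tilde{w}^*_p(\cdot/\varepsilon)$, $\Tilde{a}^T(\cdot/\varepsilon)\big(p+\nabla w^*_{per,p}(\cdot/\varepsilon)\big)$ and $\Tilde{a}^T(\cdot/\varepsilon)\nabla\Tilde{w}^*_p(\cdot/\varepsilon)$, are of the form $g(\cdot/\varepsilon)$ with $g\in\mathcal{B}^2(\mathbb{R}^d)\cap L^\infty(\mathbb{R}^d)$: for the first two one uses that the product of an element of $\mathcal{B}^2(\mathbb{R}^d)$ by a bounded $\mathbb{Z}^d$-periodic function is again in $\mathcal{B}^2(\mathbb{R}^d)$ (the points of $\mathcal{G}$ lie in $\mathbb{Z}^d$, so translating by $2^p$ leaves the periodic factor unchanged), and for the last one Proposition~\ref{stability} with $\Tilde{a}_\infty\in\mathcal{C}^{0,\alpha}(\mathbb{R}^d)\subset L^\infty(\mathbb{R}^d)$. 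By Corollary~\ref{convergenceLinfinistar} each such term converges to $0$ weak-$*$ in $L^\infty$. Hence $a^T(\cdot/\varepsilon)\nabla v_\varepsilon\rightharpoonup(a^*)^Tp$ and, similarly, $\nabla v_\varepsilon\rightharpoonup p$ weakly in $L^2_{loc}(\Omega)$.

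I then conclude with the div--curl (compensated compactness) lemma of Murat--Tartar (see, e.g., \cite{jikov2012homogenization,allaire1992homogenization}). Applied to $E_\varepsilon=\nabla u^{\varepsilon}$ ($\operatorname{curl}E_\varepsilon=0$) and $B_\varepsilon=a^T(\cdot/\varepsilon)\nabla v_\varepsilon$ ($\operatorname{div}B_\varepsilon=0$), it gives $a^T(\cdot/\varepsilon)\nabla v_\varepsilon\cdot\nabla u^{\varepsilon}\rightharpoonup\big((a^*)^Tp\big)\cdot\nabla u^*$ in $\mathcal{D}'(\Omega)$; applied to $\Tilde{B}_\varepsilon=a(\cdot/\varepsilon)\nabla u^{\varepsilon}$ ($\operatorname{div}\Tilde{B}_\varepsilon=-f$, a fixed element of $H^{-1}$) and $\Tilde{E}_\varepsilon=\nabla v_\varepsilon$ ($\operatorname{curl}\Tilde{E}_\varepsilon=0$), it gives $\nabla v_\varepsilon\cdot a(\cdot/\varepsilon)\nabla u^{\varepsilon}\rightharpoonup p\cdot\xi$. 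Since $a^T(\cdot/\varepsilon)\nabla v_\varepsilon\cdot\nabla u^{\varepsilon}=\nabla v_\varepsilon\cdot a(\cdot/\varepsilon)\nabla u^{\varepsilon}$ pointwise, identifying the two limits yields $p\cdot(a^*\nabla u^*)=p\cdot\xi$ for every $p\in\mathbb{R}^d$, hence $\xi=a^*\nabla u^*$. Thus $u^*\in H^1_0(\Omega)$ solves \eqref{homog} with the periodic homogenized coefficient, and since that problem has a unique solution, the whole family $(u^{\varepsilon})$ converges.

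The main obstacle, and essentially the only place where the specific setting of this paper enters rather than textbook periodic homogenization, is the control of the cross terms coupling $\Tilde{a}$ with the defect part of the corrector: one must check they genuinely belong to $\mathcal{B}^2(\mathbb{R}^d)\cap L^\infty(\mathbb{R}^d)$, and not merely to $L^2_{unif}(\mathbb{R}^d)$, so that Proposition~\ref{moyenne} and its Corollary~\ref{convergenceLinfinistar} can be invoked; this relies on Proposition~\ref{stability} and on the compatibility of the lattice of $a_{per}$ with the set $\mathcal{G}$.
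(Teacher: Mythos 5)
Your proof is correct, and it is in effect an unpacked version of the paper's argument: both hinge on Corollary~\ref{convergenceLinfinistar} (equivalently Proposition~\ref{moyenne}) to kill the defect contributions in the weak limit. The paper proves this proposition by invoking the general H-convergence framework (citing Tartar) to assert that the homogenized matrix is given by the weak limit of $a(\cdot/\varepsilon)\bigl(I_d+\nabla w(\cdot/\varepsilon)\bigr)$, decomposing $a=a_{per}+\Tilde{a}$, $\nabla w=\nabla w_{per}+\nabla\Tilde{w}$, and then applying Corollary~\ref{convergenceLinfinistar} directly to show that all cross terms vanish weakly, leaving $a^*=a^*_{per}$. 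You derive that identification from scratch via Tartar's oscillating test functions and two applications of the div--curl lemma, building the test function from the \emph{adjoint} corrector provided by Theorem~\ref{theoreme1}. What your route buys is self-containedness: the cited formula for $a^*$ really is a consequence of precisely the div--curl bookkeeping you carry out, so your proof makes transparent where the hypotheses (existence of a strictly sublinear corrector for $a^T$, and the zero-average property of $\mathcal{B}^2$) actually enter. What the paper's route buys is brevity, by leaning on the textbook H-convergence result and focusing only on the computation that is specific to this setting. Your supporting observations — that $a^T$ satisfies the same hypotheses, that multiplication of a $\mathcal{B}^2\cap L^\infty$ function by a bounded $\mathbb{Z}^d$-periodic factor stays in $\mathcal{B}^2$ because the defect centers $2^p$ have integer coordinates so $\tau_{-p}$ commutes with the periodic factor, and that Proposition~\ref{stability} handles the product of the two defect parts — are all correct and are exactly the details the paper leaves implicit.
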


 \begin{proof}
 We denote $w = (w_{e_i})_{i \in \{1,...,d\}}$, the correctors given by Theorem \ref{theoreme1} for $p=e_i$. The general homogenization theory of equations in divergence form (see for instance \cite[Chapter 6, Chapter 13]{tartar2009general}), gives the convergence, up to an extraction, of $u^{\varepsilon}$ to a function $u^*$ solution to an equation in the form \eqref{homog}. In addition, for all $1 \leq i,j \leq d$, the homogenized matrix $a^*$ associated with $a$ is given by : 
$$ \left[ a^* \right]_{i,j} = weak \lim_{\varepsilon \rightarrow 0} a(./\varepsilon)(I_d + \nabla w(./\varepsilon)),$$
where the weak limit is taken in $L^{2}(\Omega)^{d \times d}$. By assumption, we have $a = a_{per} + \Tilde{a}$ and we know that $\nabla w_{e_i} = \nabla w_{per,e_i} + \nabla \Tilde{w}_{e_i}$ where $\Tilde{a} \in \left(\mathcal{B}^2(\mathbb{R}^d) \cap L^{\infty}(\mathbb{R}^d)\right)^{d\times d}$ and $\nabla \Tilde{w}_{e_i}~\in \left(\mathcal{B}^2(\mathbb{R}^d) \cap L^{\infty}(\mathbb{R}^d)\right)^{d}$. Therefore, Corollary \ref{convergenceLinfinistar} ensures that $|\Tilde{a}(./\varepsilon)|$ and $|\nabla \Tilde{w}_{e_i}(./ \varepsilon)|$ converge to 0 in $weak^*-L^{\infty}$ and, since $a_{per}$ and $\nabla w_{per}$ are bounded, we can deduce that : 
$$weak \lim_{\varepsilon \rightarrow 0} a_{per}(./\varepsilon) \nabla \Tilde{w}(./\varepsilon) + \Tilde{a}(./\varepsilon)(I_d + \nabla w(./\varepsilon)) = 0.$$
Consequently, we have  
$$ \left[ a^* \right]_{i,j} = weak \lim_{\varepsilon \rightarrow 0} a_{per}(./\varepsilon)(I_d + \nabla w_{per}(./\varepsilon)) = \left[ a_{per}^* \right]_{i,j}.$$
This limit being independent of the extraction, all the sequence $u^{\varepsilon}$ converges to $u^*$ and we have the equality $a^* = a^*_{per}$.
 \end{proof}

%-------------------------------------------------%
\subsection{Approximation of the homogenized solution and quantitative estimates}
%------------------------------------------------%

The existence of the corrector established in Theorem \ref{theoreme1} allows to consider a sequence of approximated solutions defined by $u^{\varepsilon,1} = u^* + \varepsilon \sum_{i=1}^d \partial_{i}u^* w_{i}(./ \varepsilon)$ where for every $i$ in $\{1,...,d\}$, we have denoted $w_i=w_{e_i}$. Our aim here is to estimate the accuracy of this approximation for the topology of $H^1(\Omega)$. In particular, we want to prove the convergence to $0$ of the sequence $R^{\varepsilon}$ defined by : 
\begin{equation}
\label{defR}
R^{\varepsilon}(x) = u^{\varepsilon}(x) - u^*(x) - \varepsilon \sum_{j=1}^d w_j\left(\dfrac{x}{\varepsilon}\right)\partial_j u^*(x),
\end{equation}
and specify the convergence rate in $H^1(\Omega)$.

A classical method in homogenization used to obtain some expected quantitative estimates consists to define the divergence-free matrix $M_k^i(x) = a_{i,k}^* - \sum_{j=1}^d a_{i,j}(x)(\delta_{j,k} + \partial_j w_k(x))$ and to find a potential $B$ which formally solves $M = \operatorname{curl}(B)$. Knowing that both the coefficient $a$ and $\nabla w$ belong to $L^2_{per} + \mathcal{B}^2(\mathbb{R}^d)$, we can split $M$ in two terms and obtain $M = M_{per} + \Tilde{M} \in \left(L^2_{per}(\mathbb{R}^d) + \mathcal{B}^2(\mathbb{R}^d)\right)^{d\times d}$. Therefore, we expect to find a potential of the same form, that is $B = B_{per} + \Tilde{B}$. Rigorously, for every $i,j \in \{1,...,d\}$, we want to solve the equation : 
\begin{equation}
\label{equationdefB}
 - \Delta B^{i,j}_k = \partial_j M_k^i - \partial_i M_k^j  \quad \text{in } \mathbb{R}^d.
\end{equation}
The existence of a periodic potential $B_{per}$ solution to $M_{per} = \operatorname{curl}(B_{per})$ is well known since, component by component, $M_{per}$ is divergence-free. Here, the main difficulty is to show the existence of the potential $\Tilde{B}$ associated with the $\mathcal{B}^2$-perturbation. This result is given by the following lemma.

\begin{lemme}
\label{lemme_existence_B}
Let $\Tilde{M} = \big( \Tilde{M}^i_k\big)_{1 \leq i,k\leq d} \in \left(\mathcal{B}^2(\mathbb{R}^d)\right)^{d \times d}$ such that $\operatorname{div}(\Tilde{M}_k) = 0$ for every $k \in \{1, ...,d\}$. Then, the potential $\Tilde{B}^{i,j}_k$ defined by  : 
\begin{equation}
\label{defpotentiel}
 \Tilde{B}^{i,j}_k(x) = C(d) \int_{\mathbb{R}^d} \left( \dfrac{x_i - y_i}{|x-y|^d} \Tilde{M}^j_k(y) - \dfrac{x_j - y_j}{|x-y|^d}\Tilde{M}^i_k(y) \right) dy,   
\end{equation}
where $C(d)>0$ is a constant associated with the unit ball surface of $\mathbb{R}^d$, satisfies $\nabla \Tilde{B}^{i,j}_k \in \left(\mathcal{B}^2(\mathbb{R}^d)\right)^d$ and for all $i,j,k \in \{1,...,d\}$ :
\begin{align}
\label{potentielperturbe}
- \Delta &\Tilde{B}^{i,j}_k = \partial_j\Tilde{M}_k^i - \partial_i \Tilde{M}_k^j,\\
\label{antisymetrie}
& \Tilde{B}_k^{i,j} = - \Tilde{B}_k^{j,i}, \\
\label{divergencepot}
& \sum_{i=1}^{d} \partial_i \Tilde{B}_k^{i,j} = \Tilde{M}_k^j.
\end{align}
In addition, there exists a constant $C_1>0$ which only depends of the ambient dimension $d$ such that : 
\begin{equation}
\label{estimpotentiel}
\|\nabla \Tilde{B}\|_{\mathcal{B}^2(\mathbb{R}^d)} \leq C_1 \|\Tilde{M}\|_{\mathcal{B}^2(\mathbb{R}^d)}.   
\end{equation}
\end{lemme}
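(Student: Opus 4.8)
The strategy mirrors the one used to solve the periodic-plus-$L^2$ corrector equation \eqref{eqper1}: work directly with the explicit integral representation \eqref{defpotentiel}, recognizing that $C(d)\dfrac{x_i-y_i}{|x-y|^d}$ is (up to constant) $\partial_{y_i}$ of the Newtonian potential $E(x-y)$, i.e. the fundamental solution of $-\Delta$ on $\mathbb{R}^d$. So $\tilde B^{i,j}_k$ is formally $\int \big(\partial_{y_i}E(x-y)\tilde M^j_k(y) - \partial_{y_j}E(x-y)\tilde M^i_k(y)\big)dy$, which after integrating by parts reads $\int E(x-y)\big(\partial_j\tilde M^i_k - \partial_i\tilde M^j_k\big)(y)\,dy$; this immediately gives \eqref{potentielperturbe} since $-\Delta E = \delta_0$, and \eqref{antisymetrie} is visible from the antisymmetric form of the integrand. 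For \eqref{divergencepot}, differentiate under the integral sign in $x_i$ and sum over $i$: using $\sum_i \partial_{x_i}\partial_{x_i}E = \Delta E = -\delta_0$ and $\partial_{x_i}\partial_{x_j}E = \partial_{x_j}\partial_{x_i}E$, the sum collapses to $\tilde M^j_k(x) + \int \partial_{x_j}E(x-y)\operatorname{div}(\tilde M_k)(y)\,dy = \tilde M^j_k(x)$, where the divergence-free hypothesis kills the remaining term. (As usual, these formal manipulations are first justified for $\tilde M$ in a dense subclass — here $\mathcal{C}^{0,\alpha}\cap\mathcal{B}^2$, dense by Proposition \ref{density} — and then extended by the continuity estimate \eqref{estimpotentiel}.)

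The real content is the estimate \eqref{estimpotentiel}, namely that $\nabla_x\tilde B^{i,j}_k\in(\mathcal{B}^2(\mathbb{R}^d))^d$ with the stated bound. Observe that $\partial_{x_\ell}\tilde B^{i,j}_k(x) = C(d)\int \big(\partial_{x_\ell}\tfrac{x_i-y_i}{|x-y|^d}\big)\tilde M^j_k(y)\,dy - (i\leftrightarrow j)$, and the kernel $\partial_{x_\ell}\big(\tfrac{x_i-y_i}{|x-y|^d}\big)$ is a Calderón–Zygmund kernel of order $|x-y|^{-d}$ — exactly the same $|x-y|^{-d}$ singularity that appears for $\nabla_x\nabla_y G_{per}$ in \eqref{estimgreen2}. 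Therefore the entire machinery of the proof of Lemma \ref{lemmeexistenceperiodique1} and Lemma \ref{existenceper} applies verbatim: split $\tilde M = \sum_{q\in\mathcal{P}}\mathbf 1_{V_q}\tilde M$, estimate the near-field contribution (the cells $V_q$ meeting the enlarged cell $W_p$, finitely many by Proposition \ref{openconstruction}(iv)) by an elliptic-type / direct $L^2$ bound, and the far-field contribution by the summable geometric series $\sum_q 2^{-|q|(d/2-1)}$ of \eqref{serieimportante}, using the logarithmic bound of Corollary \ref{corollogboundVP} and the volume/distance estimates of Propositions \ref{volumeVx}, \ref{norm2p}, \ref{openconstruction}. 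This gives $\sup_p\|\nabla\tilde B\|_{L^2(V_p)}\le C\sup_p\|\tilde M\|_{L^2(V_p)}$; combined with the standard Calderón–Zygmund $L^2_{\mathrm{unif}}$ bound for $\|\nabla\tilde B\|_{L^2_{\mathrm{unif}}}$ and the treatment of the limit function $\tilde B_\infty$ via Proposition \ref{uniformmajoration} exactly as in Lemma \ref{existenceper}, one concludes $\nabla\tilde B\in\mathcal{B}^2(\mathbb{R}^d)$ with \eqref{estimpotentiel}. Here $\tilde B_\infty^{i,j}$ is built from $\tilde M_\infty$ by the same formula \eqref{defpotentiel}, and $\operatorname{div}(\tilde M_\infty)=0$ follows by passing to the limit $|p|\to\infty$ in $\operatorname{div}(\tilde M_k)=0$.

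The main obstacle I expect is not any single step but the verification, in the far-field estimate, that the antisymmetrized Calderón–Zygmund kernel $\partial_{x_\ell}\big(\tfrac{x_i-y_i}{|x-y|^d}\big)$ obeys pointwise the same $|x-y|^{-d}$ decay that was used for $\nabla_x\nabla_y G_{per}$, so that \eqref{formuleimportante1}, \eqref{majorationfirstsum1}, \eqref{estimsecondsum1} carry over unchanged; this is elementary but must be checked because the kernel is only a bound, not a Green function, and one must confirm that no cancellation is actually needed for these particular (absolutely convergent, away-from-diagonal) series — which is indeed the case since $V_q\cap W_p=\emptyset$ keeps $|x-y|$ bounded below. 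A secondary subtlety, as flagged in Remark \ref{remarkcorrectordim2}, is dimension $d=2$, where the series \eqref{serieimportante} fails to converge and one must instead argue at the level of $\nabla\tilde B$ (which is what we need anyway) and verify the curl-free / symmetry relations $\partial_i(\nabla\tilde B)_j = \partial_j(\nabla\tilde B)_i$ by testing against $\mathcal{D}(\mathbb{R}^d)$; but since we have assumed $d\ge 3$ throughout this section, this can be left to a remark.
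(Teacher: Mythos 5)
Your proof is correct and follows essentially the same route as the paper's: rewrite $\partial_j\Tilde{M}_k^i - \partial_i\Tilde{M}_k^j$ as a divergence $\operatorname{div}(\mathcal{M}^{i,j}_k)$ with $\mathcal{M}^{i,j}_k \in (\mathcal{B}^2(\mathbb{R}^d))^d$, recognize \eqref{defpotentiel} as the Green-function representation of Lemma~\ref{lemmeexistenceperiodique1} specialized to $a_{per}\equiv 1$ (Newtonian potential), and appeal to Lemmas~\ref{lemmeexistenceperiodique1}, \ref{existenceper} and \ref{lemme2} for the $\mathcal{B}^2$-bound. The paper simply cites those lemmas with $a_{per}\equiv 1$, whereas you re-derive the kernel estimates; the content is the same.
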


\begin{proof}
First, for every $i,j,k\in \{1,...,d\}$, equation \eqref{potentielperturbe} is equivalent to an equation of the following form : 
$$ - \Delta \Tilde{B}^{i,j}_k = \operatorname{div}\left(\mathcal{M}^{i,j}_k \right),$$
where $\mathcal{M}^{i,j}_k$ is a vector function defined by : 
$$ \left(\mathcal{M}^{i,j}_k \right)_l = \left\{
\begin{array}{cc}
   \Tilde{M}_k^i   & \text{if  } l = j, \\
    -\Tilde{M}_k^j  & \text{if  } l = i, \\ 
    0  & \text{else. } \\ 
\end{array}
\right.$$
Since $\mathcal{M}^{i,j}_k \in \left(\mathcal{B}^2(\mathbb{R}^d)\right)^d$, the existence of $\Tilde{B}$ and estimate \eqref{estimpotentiel} are given by Lemmas \ref{lemmeexistenceperiodique1}, \ref{existenceper} and \ref{lemme2} (here $a_{per} \equiv 1$). Equality \eqref{antisymetrie} is a direct consequence of the definition of $\Tilde{B}$. Property \eqref{divergencepot} can be easily obtained applying the divergence operator to \eqref{defpotentiel}. 
\end{proof}

\begin{corol}
The potential $B = B_{per} + \Tilde{B}$, where $\Tilde{B}$ is given by Lemma \ref{lemme_existence_B}, is the expected potential solution of \eqref{equationdefB}. In addition, the couple $(M,B)$ satisfies the following equalities :
\begin{align}
\label{antisymetrie2}
& B_k^{i,j} = - B_k^{j,i}, \\
\label{divergencepot2}
& \sum_{i=1}^{d} \partial_i B_k^{i,j} = M_k^j.
\end{align}
\end{corol}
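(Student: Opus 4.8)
The plan is to build $B$ as the sum of the classical periodic potential and the potential produced by Lemma~\ref{lemme_existence_B}, and to read off \eqref{antisymetrie2}--\eqref{divergencepot2} from the corresponding properties of the two summands by linearity; the only preliminary work is the decomposition $M = M_{per} + \Tilde{M}$ into a periodic, component-wise divergence-free part and a part in $\left(\mathcal{B}^2(\mathbb{R}^d)\right)^{d\times d}$ that is again component-wise divergence-free.

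First I would write out this decomposition. Using $a = a_{per}+\Tilde{a}$, $\nabla w_k = \nabla w_{per,e_k}+\nabla\Tilde{w}_{e_k}$ and the identity $a^* = a^*_{per}$ of Proposition~\ref{proplimithomogenization}, one gets $M_k = M_{per,k}+\Tilde{M}_k$ with
\[
M_{per,k}^i = (a^*_{per})_{i,k} - \big[a_{per}(e_k+\nabla w_{per,e_k})\big]_i,
\qquad
\Tilde{M}_k = -\Big(\Tilde{a}\,(e_k+\nabla w_{per,e_k}) + (a_{per}+\Tilde{a})\,\nabla\Tilde{w}_{e_k}\Big).
\]
The term $M_{per,k}$ is periodic, has zero average over the unit cube because of the definition \eqref{homogenizedcoeff} of $a^*_{per}$, and satisfies $\operatorname{div}(M_{per,k})=0$, which is exactly the periodic corrector equation \eqref{problemeperiodique} for $p=e_k$; the classical construction of the periodic potential (see e.g. \cite{jikov2012homogenization, allaire1992homogenization}) then furnishes a periodic $B_{per}$ with $-\Delta B_{per,k}^{i,j} = \partial_j M_{per,k}^i - \partial_i M_{per,k}^j$, $B_{per,k}^{i,j} = -B_{per,k}^{j,i}$ and $\sum_{i=1}^d \partial_i B_{per,k}^{i,j} = M_{per,k}^j$. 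For the perturbation, I would check that $\Tilde{M}_k \in \left(\mathcal{B}^2(\mathbb{R}^d)\right)^d$ — this is Proposition~\ref{stability} for the term $\Tilde{a}\,\nabla\Tilde{w}_{e_k}$ (using $\Tilde{a}_{\infty}\in L^{\infty}$ from \eqref{hypothèses2} and $\nabla\Tilde{w}_{e_k}\in\mathcal{B}^2(\mathbb{R}^d)\cap L^{\infty}(\mathbb{R}^d)$ from Theorem~\ref{theoreme1}), together with the elementary fact, proved exactly as Proposition~\ref{stability} since the translations $\tau_p$ fix periodic functions, that multiplying an element of $\mathcal{B}^2(\mathbb{R}^d)$ by a bounded periodic function keeps it in $\mathcal{B}^2(\mathbb{R}^d)$ — and that it is component-wise divergence-free, which is precisely the corrector equation \eqref{correcteur2} written for $p=e_k$. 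Lemma~\ref{lemme_existence_B} then applies to $\Tilde{M}$ and gives $\Tilde{B}$ with $\nabla\Tilde{B}\in\left(\mathcal{B}^2(\mathbb{R}^d)\right)^d$ and the relations \eqref{potentielperturbe}, \eqref{antisymetrie}, \eqref{divergencepot}.

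Finally I would add the two contributions: with $B = B_{per}+\Tilde{B}$, linearity of $-\Delta$ together with the equation for $B_{per}$ and \eqref{potentielperturbe} gives $-\Delta B_k^{i,j} = \partial_j M_k^i - \partial_i M_k^j$, that is \eqref{equationdefB}; \eqref{antisymetrie2} is the sum of the antisymmetry of $B_{per}$ and \eqref{antisymetrie}; and \eqref{divergencepot2} follows by adding $\sum_{i=1}^d \partial_i B_{per,k}^{i,j} = M_{per,k}^j$ to \eqref{divergencepot}. None of this is genuinely hard: the only step that needs more than linearity is verifying that $\Tilde{M}$ lies in $\left(\mathcal{B}^2(\mathbb{R}^d)\right)^{d\times d}$ and is component-wise divergence-free, which will be tied directly to \eqref{correcteur2} and to the stability of $\mathcal{B}^2(\mathbb{R}^d)$ under multiplication by bounded periodic functions (this membership has in fact already been recorded in the text preceding Lemma~\ref{lemme_existence_B}, so the write-up can simply refer to it).
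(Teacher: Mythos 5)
Your proposal is correct and follows the same route the paper takes (implicitly, since the corollary is stated without a written-out proof): decompose $M = M_{per} + \Tilde{M}$ via $a = a_{per}+\Tilde{a}$, $\nabla w_k = \nabla w_{per,e_k}+\nabla\Tilde{w}_{e_k}$ and $a^*=a^*_{per}$, invoke the classical periodic potential for $M_{per}$ and Lemma~\ref{lemme_existence_B} for $\Tilde{M}$, and add. The verifications you flag as the only nontrivial steps — that $\Tilde{M}\in\mathcal{B}^2(\mathbb{R}^d)^{d\times d}$ via Proposition~\ref{stability} and that $\operatorname{div}\Tilde{M}_k=0$ is equation \eqref{correcteur2} for $p=e_k$ — are exactly what the paper relies on when it writes $M = M_{per}+\Tilde{M}\in(L^2_{per}+\mathcal{B}^2)^{d\times d}$ in the paragraph preceding the lemma.
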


Now that existence of the potential $B$ has been deal with, we can remark that $R^{\varepsilon}$ is a solution to the following equation :
\begin{equation}
\label{equationR}
-\operatorname{div}\left(a\left(\dfrac{x}{\varepsilon}\right)\nabla R^{\varepsilon}\right) = \operatorname{div}(H^{\varepsilon}) \quad \text{in } \Omega,
\end{equation}
where : 
\begin{equation}
\label{defHeps}
H_i^{\varepsilon}(x) = \varepsilon \sum_{j,k = 1}^d a_{i,j}\left(\dfrac{x}{\varepsilon}\right)w_{k}\left(\dfrac{x}{\varepsilon}\right) \partial_j \partial_k u^*(x) - \varepsilon \sum_{j,k =1}^d B_k^{i,j}\left(\dfrac{x}{\varepsilon}\right)\partial_j \partial_k u^*(x).
\end{equation}
For a complete proof of equality \eqref{equationR}, we refer to \cite[Proposition 2.5]{blanc2018precised}.

To conclude, we have to study the properties of $H^{\varepsilon}$. In particular, we next prove that both the corrector $\Tilde{w}$ and the potential $\Tilde{B}$ are bounded. This property is essential for establishing the estimates of Theorem \ref{theoreme3}.

\begin{lemme}
\label{lemmeborne}
The corrector $w=\left(w_i\right)_{i \in \{1,...,d\}}$ defined by Theorem \ref{theoreme1} and the potential $B$ solution to \eqref{equationdefB} are in $L^{\infty}(\mathbb{R}^d)$. 
\end{lemme}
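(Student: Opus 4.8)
The plan is to reduce everything to the defect parts $\tilde w_p$ and $\tilde B$, to represent each of them as the convolution of a function lying in $\mathcal{B}^2(\mathbb{R}^d)\cap L^\infty(\mathbb{R}^d)$ against a kernel behaving like $|z|^{1-d}$, and then to prove a uniform bound for such singular integrals. It is in this last step that the hypothesis $d\geq 3$ is genuinely used.

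\textit{Step 1 (reduction to the defect parts).} Write $w_p=w_{\mathrm{per},p}+\tilde w_p$ and $B=B_{\mathrm{per}}+\tilde B$, with $\tilde B$ given by \eqref{defpotentiel}. The periodic corrector $w_{\mathrm{per},p}$ and the periodic potential $B_{\mathrm{per}}$ are continuous and periodic, hence bounded, so it suffices to prove $\tilde w_p\in L^\infty(\mathbb{R}^d)$ and $\tilde B\in L^\infty(\mathbb{R}^d)$.

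\textit{Step 2 (integral representations).} For the potential: by \eqref{defpotentiel} and $|x_i-y_i|\leq|x-y|$, one has, for all $i,j,k$, $|\tilde B^{i,j}_k(x)|\leq 2C(d)\int_{\mathbb{R}^d}\bigl(|\tilde M^j_k(y)|+|\tilde M^i_k(y)|\bigr)|x-y|^{1-d}\,dy$, and each entry of $\tilde M$ belongs to $\mathcal{B}^2(\mathbb{R}^d)\cap\mathcal{C}^{0,\alpha}(\mathbb{R}^d)\subset\mathcal{B}^2(\mathbb{R}^d)\cap L^\infty(\mathbb{R}^d)$ (it is a sum of products of $\tilde a$, $a_{\mathrm{per}}$, $\nabla w_{\mathrm{per},k}$ and $\nabla\tilde w_k$, which are controlled by \eqref{hypothèses2}, Theorem \ref{theoreme1}, Corollary \ref{corol_regularity_corrector} and Proposition \ref{stability}). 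For the corrector: put $g:=\tilde a\,(p+\nabla w_p)$, which lies in $\mathcal{B}^2(\mathbb{R}^d)\cap L^\infty(\mathbb{R}^d)$ by the same results; from \eqref{correcteur2}, $\tilde w_p$ solves $-\operatorname{div}(a_{\mathrm{per}}\nabla\tilde w_p)=\operatorname{div}(g)$ with $\nabla\tilde w_p\in\bigl(\mathcal{B}^2(\mathbb{R}^d)\bigr)^d$, so $\sup_p\|\nabla\tilde w_p\|_{L^2(V_p)}<\infty$. By the uniqueness statement of Lemma \ref{lemmeexistenceperiodique1} together with Lemma \ref{lemme1}, $\tilde w_p(x)=\int_{\mathbb{R}^d}\nabla_yG_{\mathrm{per}}(x,y)\,g(y)\,dy+c$ for some constant $c$, and \eqref{estimgreen1} yields $|\tilde w_p(x)-c|\leq C\int_{\mathbb{R}^d}|g(y)|\,|x-y|^{1-d}\,dy$.

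\textit{Step 3 (uniform estimate for the singular convolution).} It thus remains to prove that, for any $h\in\mathcal{B}^2(\mathbb{R}^d)\cap L^\infty(\mathbb{R}^d)$ and $d\geq 3$, $\Phi(x):=\int_{\mathbb{R}^d}|h(y)|\,|x-y|^{1-d}\,dy$ is bounded uniformly in $x$. Split $\mathbb{R}^d=B_1(x)\cup\bigcup_{k\geq1}\{\,2^{k-1}\leq|y-x|<2^k\,\}$. On $B_1(x)$ one has $\int_{B_1(x)}|h(y)|\,|x-y|^{1-d}\,dy\leq\|h\|_{L^\infty}\int_{B_1}|z|^{1-d}\,dz<\infty$. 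On the $k$-th annulus, $|x-y|^{1-d}\leq 2^{(d-1)(1-k)}$ while Proposition \ref{moyenne} gives $\int_{B_{2^k}(x)}|h|\leq C\|h\|_{\mathcal{B}^2}\,2^{kd/2}(\log 2^k)^{1/2}$, so the contribution is at most $C\|h\|_{\mathcal{B}^2}\,k^{1/2}2^{k(1-d/2)}$. Since $d\geq3$ gives $1-d/2\leq-1/2<0$, the series $\sum_{k\geq1}k^{1/2}2^{k(1-d/2)}$ converges, and summing produces a bound on $\Phi(x)$ that is uniform in $x$ and controlled by $\|h\|_{L^\infty}+\|h\|_{\mathcal{B}^2}$. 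Applying this with $h$ each entry of $\tilde M$ (resp. with $h=g$) completes the proof via Step 2.

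The main obstacle is Step 3: making the singular convolution converge \emph{uniformly} in $x$ is exactly what forces $d\geq 3$, and it relies crucially on the "rare at infinity" structure — it is only because $\mathcal{B}^2$-functions are $L^2$-bounded on the at most $C\log R$ Voronoi cells meeting a ball of radius $R$ (Proposition \ref{moyenne}) that $\int_{B_{2^k}(x)}|h|$ grows like $2^{kd/2}k^{1/2}$ rather than like the full volume $2^{kd}$, which is precisely what lets the geometric series beat the $2^{-k(d-1)}$ decay of the kernel. One should also take care in Step 2 to justify that $\tilde w_p$ actually coincides with its Green-function representation, not merely in a distributional sense; this is furnished by combining the uniqueness statements of Lemmas \ref{lemmeexistenceperiodique1} and \ref{lemme1}.
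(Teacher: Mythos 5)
Your proof is correct, and Steps~1--2 (reduction to the defect parts, plus the Green-function representation of $\tilde w_p$ and $\tilde B$) match the paper's approach; but Step~3 takes a genuinely different route to bounding the singular convolution. The paper splits $\int_{\mathbb{R}^d}\nabla_y G_{per}(x,y)f(y)\,dy$ according to the Voronoi geometry: a ball $B_1(x)$ near the singularity (handled by the $L^\infty$ bound on $f$), the dilated Voronoi cell $W_{p_x}\setminus B_1(x)$ (handled by Cauchy--Schwarz and the fact that $W_{p_x}$ meets at most $C$ cells), and the far region $\mathbb{R}^d\setminus W_{p_x}$ (reduced to the geometric series $\sum_q 2^{-|q|(d-2)/2}$ via Proposition~\ref{openconstruction}). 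Your argument instead cuts $\mathbb{R}^d\setminus B_1(x)$ into dyadic annuli and feeds in the averaging estimate of Proposition~\ref{moyenne} directly: the $O(\log R)$ bound on the number of cells in $B_R(x)$ gives $\int_{B_R(x)}|h|\lesssim \|h\|_{\mathcal{B}^2}\,R^{d/2}(\log R)^{1/2}$, whose sublinear growth beats the $R^{-(d-1)}$ decay of the kernel precisely when $d\geq 3$. Your version is cleaner as a standalone argument — it avoids the auxiliary sets $W_{p_x}$ entirely and reuses a result already established — while the paper's decomposition has the advantage of being strictly parallel to the proofs of Lemmas~\ref{lemmeexistenceperiodique1} and~\ref{existenceper}, and of transferring verbatim to the $\mathcal{B}^r$ setting of Appendix~\ref{AnnexeA} (see Lemma~\ref{lemmeborneBr}), where the H\"older exponents shift but the cell-by-cell structure stays. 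One small point of care: when you apply Step~3 to the vector-valued $g=\tilde a(p+\nabla w_p)$, either apply Proposition~\ref{moyenne} componentwise, or note that its proof only uses $\sup_{p}\|u\|_{L^2(V_p)}<\infty$, which makes sense for $|g|$ even though $|g|$ need not lie in $\mathcal{B}^2(\mathbb{R}^d)$ as stated.
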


\begin{proof}
First, it is well known that both $w_{per}$ and  $B_{per}$ belong to $L^{\infty}(\mathbb{R}^d)$. Next, for all $i \in \{1,...,d\}$, $\Tilde{w}_i$ solves : 
$$-\operatorname{div} \left(a_{per} \nabla \Tilde{w}_i \right) = \operatorname{div}\left(\Tilde{a}\left(e_i + \nabla w_{per,i} + \nabla \Tilde{w}_i \right)\right).$$
We know the gradient of the corrector defined in Theorem \ref{theoreme1} is in $\left(\mathcal{C}^{0,\alpha}(\mathbb{R}^d)\right)^d$. A direct consequence of Assumption \eqref{hypothèses2} and Proposition \ref{stability} ensures that $f = \Tilde{a}\left(e_i + \nabla w_{per,i} + \nabla \Tilde{w}_i \right)$ belongs to $\left(L^{\infty}(\mathbb{R}^d) \cap \mathcal{B}^2(\mathbb{R}^d)\right)^d$ and the results of uniqueness and existence established in Lemmas \ref{lemme1} and \ref{existenceper} imply we have the following representation : 
\begin{equation}
\Tilde{w}_i(x) = \int_{\mathbb{R}^d} \nabla_y G_{per}(x,y)f(y) dy.
\end{equation}
We want to prove that the integral is bounded independently of $x$. We take $x\in \mathbb{R}^d$ and denote $p_x$ the unique element of $\mathcal{P}$ such that $x \in \textit{V}_{p_x}$. We define $W_{p_{x}} = W_{2^{p_x}}$ such as in Proposition \ref{openconstruction} and we split the integral in three parts  : 
\begin{align*}
\int_{\mathbb{R}^d} \nabla_y G_{per}(x,y)f(y) dy & = \int_{B_1(x)} \nabla_y G_{per}(x,y)f(y) dy +  \int_{W_{p_x} \setminus B_1(x)} \nabla_y G_{per}(x,y)f(y) dy \\
& +  \int_{ \mathbb{R}^d \setminus W_{p_x}} \nabla_y G_{per}(x,y)f(y) dy  = I_1(x) + I_2(x) + I_3(x).
\end{align*}
\\
We start by finding a bound for $I_1(x)$. To this end, we use Estimate \eqref{estimgreen1} for the Green function and we obtain
\begin{align*}
\left| I_1(x) \right| & \leq  \|f\|_{
L^{\infty}(\mathbb{R}^d)} \int_{B_1(x)} \left| \nabla_y G_{per}(x,y)\right|dy   \\ 
& \leq C \|f\|_{
L^{\infty}(\mathbb{R}^d)} \int_{B_1(x)} \dfrac{1}{\left|x-y\right|^{d-1}}dy \leq C \|f\|_{
L^{\infty}(\mathbb{R}^d)}.
\end{align*}
Where $C$ denotes a positive constant independent of $x$.  Indeed, the value of the integral in the last inequality depends only of the dimension $d$.\\

Next, using Proposition \ref{openconstruction}, we know there exists $C_1>0$ and $C_2>0$ independent of $x$ such that $W_{p_x} \subset~B_{C_12^{p_x}(x)}$ and the number of $q \in \mathcal{P}$ such that $V_q \cap W_{p_x} \neq \emptyset$ is bounded by $C_2$. Therefore, using the Cauchy-Schwarz inequality, we have :

\begin{align*}
|I_2(x)| & \leq \int_{W_{p_x}\setminus{B_1(x)}} \dfrac{1}{|x-y|^{(d-1)}} |f(y)|dy \\
& \leq \left(\int_{W_{p_x}\setminus{B_1(x)}} \dfrac{1}{|x-y|^{2(d-1)}} dy\right)^{1/2}\left(\int_{W_{p_x}\setminus{B_1(x)}}  |f(y)|^2dy\right)^{1/2} \\
& \leq C_2 \left(\int_{ B_{C_1 2^{p_x}(x)} \setminus{B_1(x)}} \dfrac{1}{|x-y|^{2(d-1)}} dy\right)^{1/2} \sup_{p \in \mathcal{P}} \|f\|_{L^2(V_q)}.
\end{align*}

In addition since $d>2$, we have : 
$$\int_{ B_{C_1 2^{p_x}(x)} \setminus{B_1(x)}} \frac{1}{\left|x-y\right|^{2(d-1)}} dy = \int_{ B_{C_1 2^{p_x}(0)} \setminus{B_1(0)}} \dfrac{1}{\left|y\right|^{2(d-1)}} dy \leq C\left(1 - \dfrac{1}{2^{|p_x|(d-2)}}\right),$$
and therefore :
$$I_2(x) \leq C \left(1 - \dfrac{1}{2^{|p_x|(d-2)}}\right)^{1/2} \leq C.$$

Finally, to bound $I_3(x)$ we split the integral on each cell $V_q$ for $q \in \mathcal{P}$. Using the Cauchy-Schwarz inequality, we obtain : 
\begin{align*}
\left|I_3(x)\right| & \leq  \sum_{q \in \mathcal{P}} \int_{ V_q \setminus{W_{p_x}}} \left|\nabla_y G_{per}(x,y)f(y) \right| dy  \\
& \leq \sum_{q \in \mathcal{P}} \left( \int_{V_q \setminus{W_{p_x}}} \left|\nabla_y G_{per}(x,y)\right| ^2 dy \right)^{\frac{1}{2}} \left(\int_{V_q \setminus{W_{p_x}}}\left|f(y)\right| ^2  dy\right)^{\frac{1}{2}} \\ 
 & \leq \|f\|_{\mathcal{B}^2(\mathbb{R}^d)} \sum_{q \in \mathcal{P} } \left( \int_{V_q \setminus{W_{p_x}}} \left|\nabla_y G_{per}(x,y)\right| ^2 dy \right)^{\frac{1}{2}}.
\end{align*}

We proceed exactly as in the proof of Lemma \ref{lemmeexistenceperiodique1} (see the proof of estimate \eqref{estimunifnablaI2p} for details) to obtain : 
\begin{align*}
\sum_{q \in \mathcal{P}} \left( \int_{V_q \setminus{W_{p_x}}} \left| \nabla_y G_{per}(x,y) \right|^2 dy \right)^{\frac{1}{2}} & \leq C \sum_{q \in \mathcal{P}} \left( \int_{V_q \setminus{W_{p_x}}} \dfrac{1}{\left|x-y\right|^{2(d-1)}} dy \right)^{\frac{1}{2}} \\
& \leq C \sum_{q \in \mathcal{P}}  \frac{1}{2^{|q|(d-2)/2}} < \infty.
\end{align*}

Finally we have bounded the integral independently of $x$ and we deduce that $\Tilde{w}_i \in ~L^{\infty}(\mathbb{R}^d)$. With the same method we obtain the same result for $B = B_{per} + \Tilde{B}$ which allows us to conclude. 
\end{proof}

\begin{remark}
\label{remarkbornedimension2}
The assumption $d>2$ is essential in the above proof and the boundedness of $\Tilde{w}$ in $L^{\infty}(\mathbb{R}^d)$ may be false if $d=1$ or $d=2$. If $d=1$ we give a counter-example in Remark~\ref{dim1}. The case $d=2$ is a critical case and we are not able to conclude. This phenomenon is closely linked to the critical integrability of the function $|x|^{-1}$ in $L^2(\mathbb{R}^2)$.
\end{remark}

We are now able to give a complete proof of Theorem \ref{theoreme3}.
\begin{proof}[Proof of Theorem \ref{theoreme3}]
First, we use the explicit definition of $H^{\varepsilon}$ given by \eqref{defHeps} and a triangle inequality to obtain the following estimate : 
$$ \|H^{\varepsilon}\|_{L^2(\Omega)} \leq 
\left( 1 + \|a\|_{L^{\infty}(\mathbb{R}^d)}\right) \| D^2 u^*\|_{L^2(\Omega)} \left(\| \varepsilon w(./\varepsilon) \|_{L^{\infty}(\Omega)} + \| \varepsilon B(./\varepsilon) \|_{L^{\infty}(\Omega)} \right).$$
Applying Lemma \ref{lemmeborne}, we obtain the existence of $C>0$ independent of $\varepsilon$ such that : 
\begin{equation}
\label{estimationH}
\|H^{\varepsilon}\|_{L^2(\Omega)} \leq C \varepsilon \| D^2 u^*\|_{L^2(\Omega)}.
\end{equation}

Next, we use the two following estimates satisfied by $R^{\varepsilon}$ : 
\begin{equation}
\label{estimationR}
\| R^{\varepsilon}\|_{L^2(\Omega)} \leq C_1 \left(\varepsilon \left(\| w(./\varepsilon) \|_{L^{\infty}(\Omega)} + \| B(./\varepsilon) \|_{L^{\infty}(\Omega)} \right) \|f\|_{L^2(\Omega)}   + \|H^{\epsilon}\|_{L^2(\Omega)}\right),
\end{equation}
and for every $\Omega_1 \subset \subset \Omega$ : 
\begin{equation}
\label{estimationnablaR}
\|\nabla R^{\varepsilon} \|_{L^2(\Omega_1)} \leq C_2 \left( \| H^{\varepsilon} \|_{L^2(\Omega)} + \| R^{\varepsilon} \|_{L^2(\Omega)} \right),
\end{equation}
where $C_1>0$ and $C_2>0$ are independent of $\varepsilon$. These estimates are established for instance in
\cite{blanc2018precised} where the authors use the elliptic regularity associated with equation \eqref{equationR} and the properties of the Green function associated with the operator $-\operatorname{div}(a^*\nabla.)$ on $\Omega$ with homogeneous Dirichlet boundary condition.  The first estimate is established in the proof of \cite[Lemma 4.8]{blanc2018precised} and the second estimate is a classical inequality of elliptic regularity proved in \cite[Proposition 4.2]{blanc2018precised} and applied to equation \eqref{equationR}.

In addition, an application of elliptical regularity to equation \eqref{homog} provides the existence of $C_3>0$ such that : 
\begin{equation}
\label{D2u}
\|u^*\|_{H^2(\Omega)} \leq C_3 \|f\|_{L^2(\Omega)}.
\end{equation}

To conclude we use Lemma \ref{lemmeborne} to bound $w$ and $B$ and estimates \eqref{estimationH}, \eqref{estimationR}, \eqref{estimationnablaR} and \eqref{D2u}. We finally obtain : 
\begin{equation*}
\|R^{\varepsilon}\|_{L^2(\Omega)} \leq C \varepsilon \|f\|_{L^2(\Omega)},
\end{equation*}
and
\begin{equation*}
\|\nabla R^{\varepsilon}\|_{L^2(\Omega_1)} \leq \Tilde{C} \varepsilon  \|f\|_{L^2(\Omega)},
\end{equation*}
where $C$ and $\Tilde{C}$ are independent of $\varepsilon$. We have proved Theorem 2.
\end{proof}

\begin{remark}
\label{dim1}
In the one-dimensional case, that is when $d=1$, we are not able to conclude in the same way. With an explicit calculation, we obtain :
\begin{align}
&(u^{\varepsilon})'(x)  = \left(a_{per} + \Tilde{a}\right)^{-1} (x/\varepsilon)\left( F(x) + C^{\varepsilon}\right), \\
&(u^{*})'(x)  = \left(a^*\right)^{-1} \left( F(x) + C^{*}\right),\\
&w(x) = - x + a^* \int_{0}^{x} \dfrac{1}{a_{per}(y)}dy  - a^* \int_{0}^{x} \dfrac{\Tilde{a}}{a_{per} \left(a_{per} + \Tilde{a}\right)}(y) dy,
\end{align}
where : 
\begin{align}
F(x) & = \int_{0}^x f(y)dy, \\
C^{\varepsilon} & = - \left( \int_0^1 (a_{per} + \Tilde{a})^{-1}(y/\varepsilon) \right)^{-1} \int_0^1 (a_{per} + \Tilde{a})^{-1}(y / \varepsilon) F(y) dy, \\
C^* & = - \int_0^1 F(y) dy.
\end{align}
In this case, $ \displaystyle w_{per}(x) = - x + a^* \int_{0}^{x} \dfrac{1}{a_{per}(y)}dy$  and $ \displaystyle \Tilde{w}(x) = - a^* \int_{0}^{x} \dfrac{\Tilde{a}}{a_{per} \left(a_{per} + \Tilde{a}\right)}(y) dy$ and we can show the corrector $w$ is not necessarily bounded.  However, the results of Proposition \ref{moyenne}, allow us to obtain the following estimate :
\begin{equation}
\label{estimate_one_dimensionnal}
    \|\left(R^{\varepsilon}\right)'\|_{L^2(\Omega)} \leq C \varepsilon^{\frac{1}{2}}\left|\log(\varepsilon)\right|^{\frac{1}{2}}.
\end{equation}
As an illustration, we can consider $\Omega =]0,1[$, $a_{per} = 1$ and $\displaystyle \Tilde{a} = \sum_{p \in \mathbb{Z}} \tau_{-2^p}\varphi$, where $\varphi$ is a positive function of  $\mathcal{D}(\mathbb{R})$, $\|\varphi\|_{L^{\infty}}=1$, $\displaystyle \int_{\mathbb{R}} \varphi> 0 $ and $Supp(\varphi) \in [0,1/2]$. With these parameters, for every $x\in \Omega$, we have : 
$$|\Tilde{w}(x/\varepsilon)| = \int_{0}^{x/\varepsilon} \dfrac{\Tilde{a}}{1 + \Tilde{a}}(y) dy \geq \frac{1}{2} \sum_{0\leq p < [\log_2(x/\varepsilon)]} \int_{0}^{1/2} \varphi \stackrel{\varepsilon \rightarrow 0}{\longrightarrow} + \infty.$$ 
And therefore, the corrector is actually not bounded. 
\end{remark}

\begin{remark}
The result of Theorem \ref{theoreme3} ensures that the corrector introduced in Theorem \ref{theoreme1} allows to precisely describe the behavior of the sequence $u^{\varepsilon}$ in $H^1$ using the approximation defined by $u^{\varepsilon,1} = u^* + \varepsilon \sum_{i=1}^d \partial_i u^* w_i(./\varepsilon)$. However, since the perturbations of $\mathcal{B}^2(\mathbb{R}^d)$ are "small" at the macroscopic scale (in the sens of average given by \eqref{convergencemeanB2}), we can naturally expect that it is also possible to approximate $u^{\varepsilon}$ in $H^1$ considering the sequence $u^{\varepsilon,1}_{per} := u^* + \varepsilon \sum_{i=1}^d \partial_i u^* w_{per,i}(./\varepsilon)$ which only uses the periodic part $w_{per}$ of our corrector.  To this aim, we can show that $u^{\varepsilon} - u^{\varepsilon,1}_{per}$ is solution to
$$-\operatorname{div}\left(a\left(\frac{.}{\varepsilon}\right) \nabla (u^{\varepsilon} - u^{\varepsilon,1}_{per})\right) = \operatorname{div}\left(H^{\varepsilon}_{per}\right) \quad \text{on } \Omega,$$
where the right-hand side $$H^{\varepsilon}_{per} = - a\left(\dfrac{.}{\varepsilon}\right)\left( \nabla \left( u^{\varepsilon} - u^{\varepsilon,1}\right) + \varepsilon \sum_{i=1}^d  \nabla \partial_i  u^* \Tilde{w}_i(./\varepsilon) + \sum_{i=1}^d  \partial_i  u^* \nabla \Tilde{w}_i(./\varepsilon)\right),$$ strongly converges to 0 in $L^2$ when $\varepsilon \to 0$. A method similar to that used in the proof of Theorem \ref{theoreme3} therefore allows to show the convergence to 0 of $u^{\varepsilon} - u^{\varepsilon,1}_{per}$ in $H^1$. It follows, at the macroscopic scale, that the choice of our adapted corrector instead of the periodic corrector seems to be not necessarily relevant in order to calculate an approximation of $u^{\varepsilon}$ in $H^1$. 
However, the choice of the periodic corrector is not adapted if the idea is to approximate the behavior of $u^{\varepsilon}$ at the microscopic scale $\varepsilon$. Indeed, at this scale, the perturbations of the periodic background can be possibly large and it is necessary to consider a corrector that take these perturbations into account. Particularly, we can easily show that $H^{\varepsilon}_{per}(\varepsilon.)$ does not converge to 0 in  any ball $B_R$ such that $\varepsilon B_R \subset \Omega$, which formally reflects a poor quality of the approximation of $u^{\varepsilon}$ by $u^{\varepsilon,1}_{per}$ at the scale $\varepsilon$. This fact particularly motivates the choice of our adapted corrector in order to approximate $u^{\varepsilon}$. We refer to \cite{goudey} for a rigorous formalization of the above argument. 
\end{remark}

%\end{spacing}

\newpage

\appendix

%-------------------------------------------------%
\section{Appendix : The case of $\mathcal{B}^r(\mathbb{R}^d)$, $1<r<\infty$}

\label{AnnexeA}
%-------------------------------------------------%

The purpose of this section is to generalize the results established above in a context where the perturbation $\Tilde{a}$ locally behaves, at the vicinity of the points $x_p \in \mathcal{G}$, as a fixed function of $L^r(\mathbb{R}^d)$ truncated over the domain $V_p$, where $r$ can denote any Lebesgue exponent in $]1,+\infty[$. In this context, we can naturally generalize the definition \eqref{spacedef} of the space $\mathcal{B}
^2(\mathbb{R}^d)$ and, using the topology of $L^r$, introduce a collection of spaces $\mathcal{B}^r(\mathbb{R}^d)$ defined by \eqref{spacedefBr} in order to describe our perturbations of the periodic background. Exactly as in the case $r=2$, we consider here a perturbed coefficient $a$ of the form \eqref{coefficientform} which is the sum of a periodic coefficient $a_{per}$ and a defect $\Tilde{a}$ in $\mathcal{B}^r(\mathbb{R}^d)^{d\times d}$. Our aim is therefore to establish the homogenization of equation \eqref{equationepsilon} in this case, showing the existence of an adapted corrector solution to \eqref{correcteur} and establishing some convergence rates similar to that of Theorem \ref{theoreme3}. To this end, the idea is to follow the same strategy as in the case $r=2$. However, several difficulties appear when $r\neq 2$, some results established in our approach for the case $r = 2$ no longer hold and we have to adapt them. In particular, when $r\neq 2$, the space $\mathcal{B}^r$ defined below does not have a "Hilbert" structure induced by the topology of $L^2$, which prevents us from using some techniques (such as Caccioppoli-type inequalities for example) to prove the uniqueness results of Section \ref{Section4}. Moreover, when $r>d$, the decay far from the points $2^p$ of the functions of $\mathcal{B}^r$ is not sufficient to ensure the convergence of some series involving the Green function $G_{per}$ as in the proofs of Lemmas \ref{lemmeexistenceperiodique1}, \ref{existenceper} and \ref{lemmeborne}. Consequently, in this case we have to adapt our approach to prove the existence of a corrector $w$ given in Theorem \ref{theoreme4} and, in contrast to the case $r=2$ (see Lemma \ref{lemmeborne}), this corrector is not necessarily bounded in $L^{\infty}(\mathbb{R}^d)$. As we can see in Theorem \ref{theoreme5} below, this phenomenon implies in particular that the convergence rates of the sequence of rests $R^{\varepsilon}$ depend on the ratio $\dfrac{r}{d}$.

To start with, we fix $r\in ]1,+\infty[$ and we consider the following functional space : 
\begin{equation}
\label{spacedefBr}
 \mathcal{B}^r(\mathbb{R}^d) = \left\{f \in L^r_{unif}(\mathbb{R}^d) \ \middle| \ \exists f_{\infty} \in L^r(\mathbb{R}^d),  \lim_{|p| \rightarrow \infty}  \int_{\textit{V}_p} |f(x) - \tau_{-p} f_{\infty}(x)|^r dx = 0\right\},  
\end{equation}
equipped with the norm :
\begin{equation}
\label{normdefBr}
\|f \|_{\mathcal{B}^r(\mathbb{R}^d)} = \|f_{\infty}\| _{ L^r(\mathbb{R}^d)} + \|f\|_{L^r_{unif}(\mathbb{R}^d)}+ \sup_{p\in \mathcal{P}} \|f-\tau_{-p}f_{\infty}\| _{ L^r(\textit{V}_p)}.   
\end{equation}
In \eqref{spacedefBr}, \eqref{normdefBr} we have denoted by : 
\begin{equation}
L^r_{unif}(\mathbb{R}^d) = \left\{f \in L^r_{loc}(\mathbb{R}^d), \ \sup_{x \in \mathbb{R}^d}  \|f\|_{L^r(B_1(x))} < \infty\right\},
\end{equation}
and 
\begin{equation}
\|f \| _{ L^r_{unif}(\mathbb{R}^d)} = \sup_{x \in \mathbb{R}^d}  \|f\|_{L^r(B_1(x))}.
\end{equation}

In the sequel we assume again that the ambient dimension $d$ is equal or larger than~$3$. We consider here a matrix-valued coefficient of the form \eqref{coefficientform} where $a_{per}$ is periodic and $\Tilde{a}$ is in $\mathcal{B}^r(\mathbb{R}^d)^{d \times d}$. We also assume that $a_{per}$, $\Tilde{a}$ and the associated $L^r$-limit matrix-valued function $\Tilde{a}_{\infty}$ satisfy Assumptions \eqref{hypothèses1} and \eqref{hypothèses2} of coercivity, boundedness and Hölder continuity.

In this study we establish the two theorems below. In Theorem \ref{theoreme4}, we show the existence of an adapted corrector $w$, strictly sub-linear at the infinity, solution to \eqref{correcteur} and such that $\nabla w \in \left(L
^2_{per} + \mathcal{B}^r(\mathbb{R}^d)\right)^d$. In Theorem \ref{theoreme5}, we use this corrector in order to establish the homogenization of equation \eqref{equationepsilon}. Precisely, we show the convergence to zero in $H^1$ of the sequence of rests $R^{\varepsilon} = u^{\varepsilon} - u^{\varepsilon,1}$, where $u^{\varepsilon,1}$ is defined as in \eqref{approximatesequence} but using the adapted corrector of Theorem \ref{theoreme4}. We also precise the convergence rates for this topology. 

\begin{theorem}
\label{theoreme4}
For every $p \in \mathbb{R}^d$, there exists a unique (up to an additive constant)
function $\Tilde{w}_p \in W^{1,r}_{loc}(\mathbb{R}^d)$ such that $w_p = w_{per,p} + \Tilde{w}_p$ is solution to corrector equation \eqref{correcteur}, where $w_{per,p}$ is the unique periodic corrector solution to \eqref{problemeperiodique} and $\nabla \Tilde{w}_p \in \left(\mathcal{B}^r(\mathbb{R}^d)\cap \mathcal{C}^{0,\alpha}(\mathbb{R}^d)\right)^d$. 
\end{theorem}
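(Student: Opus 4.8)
The plan is to follow Section \ref{Section4} step by step, replacing the exponent $2$ by a general $r\in]1,\infty[$. First I would reduce: for $w_p=w_{per,p}+\tilde w_p$, equation \eqref{correcteur} is equivalent to $-\operatorname{div}((a_{per}+\tilde a)\nabla\tilde w_p)=\operatorname{div}\big(\tilde a(p+\nabla w_{per,p})\big)$, and since $\nabla w_{per,p}\in(L^2_{per}\cap\mathcal{C}^{0,\alpha})^d$, its periodicity together with the (immediate) $\mathcal{B}^r$-analogue of Proposition \ref{stability} shows that $g:=\tilde a(p+\nabla w_{per,p})$ lies in $(\mathcal{B}^r(\mathbb{R}^d)\cap\mathcal{C}^{0,\alpha}(\mathbb{R}^d))^d$. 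So it suffices to solve $-\operatorname{div}(a\nabla u)=\operatorname{div}(g)$ for such $g$ with $\nabla u\in(\mathcal{B}^r\cap\mathcal{C}^{0,\alpha})^d$. The facts about $\mathcal{B}^r$ used below (it is a Banach space, $\mathcal{C}^{0,\alpha}\cap\mathcal{B}^r$ is dense in it, stability under multiplication, the uniform-remainder estimate of Proposition \ref{uniformmajoration}, and strict sub-linearity of functions with gradient in $\mathcal{B}^r\cap L^\infty$ from Proposition \ref{propsouslinearite}) all carry over with the same proofs, which only invoke the geometry of Section \ref{Section2} and H\"older's inequality, never the $L^2$ inner product.

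Next I would treat the periodic operator $a=a_{per}$. Writing $g=\sum_{q\in\mathcal{P}}g\,\mathbf 1_{V_q}$ and $u_q=\int_{\mathbb{R}^d}\nabla_yG_{per}(\cdot,y)g(y)\mathbf 1_{V_q}(y)\,dy$, H\"older's inequality with exponents $(r,r')$, the Green-function bound \eqref{estimgreen2}, and Propositions \ref{volumeVx}, \ref{norm2p}, \ref{openconstruction} dominate the series $\sum_q\nabla u_q$ by $\sum_q 2^{-d|q|/r}$, which converges for every $r\in]1,\infty[$; hence $\nabla u:=\sum_q\nabla u_q$ is well defined in $L^r_{loc}$ and $u$ solves \eqref{eqper1}. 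In contrast, the series for $u$ itself is only controlled, via \eqref{estimgreen1}, by $\sum_q 2^{|q|(d/r'-(d-1))}$, which converges if and only if $r<d$; this is precisely why for $r\ge d$ one can assert only $\tilde w_p\in W^{1,r}_{loc}(\mathbb{R}^d)$ with no $L^\infty$ bound (for $r<d$ the corrector is bounded, as for $r=2$). The $\mathcal{B}^r$-version of $\sup_p\|\nabla u\|_{L^r(V_p)}\le C\sup_p\|g\|_{L^r(V_p)}$, and then $\nabla u\in(\mathcal{B}^r)^d$, are obtained by repeating the split $u=I_{1,p}+I_{2,p}$ of Lemmas \ref{lemmeexistenceperiodique1}--\ref{existenceper} (integration over $W_p$ and over its complement), now using an interior $W^{2,r}$ / Calder\'on--Zygmund estimate in place of the Caccioppoli inequality, together with Proposition \ref{uniformmajoration}.

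With the periodic case in hand I would pass to $a=a_{per}+\tilde a$ as in Section \ref{Section4}: the a priori estimate $\|\nabla u\|_{\mathcal{B}^r}\le C\|g\|_{\mathcal{B}^r}$ follows from the same compactness--concentration argument as Lemma \ref{lemme2}, with $L^r_{unif}$ (reflexive for $1<r<\infty$) in place of $L^2_{unif}$ and the $L^r$ continuity estimate from \cite{blanc2018correctors} in place of \cite[Proposition 2.1]{blanc2018correctors}; existence then comes from the connectedness of $[0,1]$ and the Banach fixed-point theorem exactly as in Lemma \ref{lemmexistence}, the $\mathcal{C}^{0,\alpha}$-regularity of $\nabla\tilde w_p$ from interior Schauder estimates (legitimate since $a$ is H\"older), and the general case from density. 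Strict sub-linearity of $\tilde w_p$ follows from the $\mathcal{B}^r$-version of Proposition \ref{propsouslinearite} when $r<d$ (so that $\nabla\tilde w_p\in\mathcal{B}^r\cap L^\infty$), and from the explicit $W^{1,r}_{loc}$ growth of the Green-function integral when $r\ge d$.

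The main obstacle is uniqueness, since Lemma \ref{lemme1} rests on a Caccioppoli iteration that uses the $L^2$ structure and is unavailable for $r\neq 2$. Given $v$ solving the homogeneous equation with $\nabla v\in(\mathcal{B}^r)^d$, I would first repeat the limit-at-infinity argument of the proof of Theorem \ref{theoreme1}: $\nabla v_\infty=g_{per}+\tilde g_\infty$ solves $-\operatorname{div}((a_{per}+\tilde a_\infty)\nabla v_\infty)=0$, and the $L^r$-extension of \cite[Lemma 1]{blanc2012possible} from \cite{blanc2018correctors} forces $g_{per}=0$ and then $\tilde g_\infty=0$, so $(\nabla v)_\infty=0$, i.e. $\int_{V_p}|\nabla v|^r\to0$ as $|p|\to\infty$; interior H\"older estimates upgrade this to $\nabla v(x)\to0$ as $|x|\to\infty$. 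For $r\le2$ this closes the argument: the interior bound $\|\nabla v\|_{L^\infty(B_1(x))}\le C\|\nabla v\|_{L^r(B_2(x))}$ gives $\nabla v\in L^\infty(\mathbb{R}^d)$, whence $\int_{V_p}|\nabla v|^2\le\|\nabla v\|_\infty^{2-r}\int_{V_p}|\nabla v|^r$ is uniformly bounded, so $\nabla v\in(\mathcal{B}^2)^d$ with $\sup_p\int_{V_p}|\nabla v|^2<\infty$ and Lemma \ref{lemme1} applies. For $r>2$ one must instead combine the uniform decay of $\nabla v$ at infinity with a Liouville-type theorem for uniformly elliptic equations whose coefficients are asymptotic to $a_{per}$ at infinity (large-scale regularity \`a la Avellaneda--Lin, or the decay of the Green function of the perturbed operator), and I expect this last closing to be the point of the proof requiring genuinely new work.
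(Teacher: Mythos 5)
Your roadmap correctly mirrors the paper's overall architecture (reduction to $-\operatorname{div}(a\nabla u)=\operatorname{div}(g)$ with $g\in(\mathcal{B}^r\cap\mathcal{C}^{0,\alpha})^d$, Green-function construction in the periodic case, a priori estimate via compactness--concentration, connectedness of $[0,1]$), and you correctly identify where the $L^2$ structure was used and cannot be borrowed. But you flag uniqueness for $r>2$ as a point needing ``genuinely new work'' and leave it open, and this is precisely where there is a concrete gap, because the paper closes it with a clean bootstrap that you have missed. Given $v$ solving $-\operatorname{div}(a\nabla v)=0$ with $\sup_p\|\nabla v\|_{L^r(V_p)}<\infty$, rewrite the equation as $-\operatorname{div}(a_{per}\nabla v)=\operatorname{div}(\tilde a\nabla v)$. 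Since $\tilde a\in\mathcal{B}^r\cap L^\infty$, one has $\sup_p\|\tilde a\|_{L^r(V_p)}<\infty$, and H\"older with $1/r+1/r=2/r$ gives $\sup_p\|\tilde a\nabla v\|_{L^{r/2}(V_p)}<\infty$. The periodic uniqueness (Lemma~\ref{uniquess_periodic_Br}, proved by the translation/Liouville argument, independent of $L^2$ structure) identifies $\nabla v$ with the Green-function representation, and Lemma~\ref{lemmeexistenceperiodiqueBP} applied at exponent $r/2$ returns $\sup_p\|\nabla v\|_{L^{r/2}(V_p)}<\infty$. Iterating H\"older ($1/r_n=1/r+1/r_{n-1}$, i.e.\ $r_n=r/(n+1)$) lowers the exponent until some $r_n\in[1,2]$; interpolating between $L^{r_n}(V_p)$ and $L^r(V_p)$ then yields $\sup_p\|\nabla v\|_{L^2(V_p)}<\infty$, and Lemma~\ref{lemme1} concludes $\nabla v=0$. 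No large-scale Avellaneda--Lin Liouville theorem is needed.

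Your argument for $r\le 2$ is different from the paper's and correct: you use the interior Schauder estimate $\|\nabla v\|_{L^\infty(B_1(x))}\le C\|\nabla v\|_{L^r(B_2(x))}$ to get $\nabla v\in L^\infty$, then $\int_{V_p}|\nabla v|^2\le\|\nabla v\|_\infty^{2-r}\int_{V_p}|\nabla v|^r$ to land in the hypotheses of Lemma~\ref{lemme1}. The paper instead goes through a Gagliardo--Nirenberg--Sobolev gain on $v$ itself (Lemma~\ref{GNS_periodic_Br}), Brezis' $W^{1,r}\to W^{1,s}$ regularity and De~Giorgi--Nash to show $v\in L^\infty$ and invoke Moser's Liouville theorem. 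Your route is shorter and, as you note, applies unchanged to any $1<r\le 2$; it just does not extend past $r=2$ since the interpolation inequality flips sign. One secondary point: the paper's general existence lemma (Lemma~\ref{lemmexistenceBr}) and regularity lemma (Lemma~\ref{regumarityLemmaBr}) are stated only for $r\ge 2$, because the regularity step needs $L^2_{unif}\subset L^r_{unif}$; for $r<2$ the paper instead notices $\tilde a\in\mathcal{B}^2$ (it is bounded and lies in $\mathcal{B}^r$), solves in $\mathcal{B}^2\cap\mathcal{C}^{0,\alpha}$, and only afterwards shows by a separate H\"older argument that the resulting $\nabla\tilde w_p$ in fact lies in $\mathcal{B}^r$. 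Your proposal runs the connectedness machinery directly in $\mathcal{B}^r$ for all $r$; this could be made to work (e.g.\ via a Meyers-type integrability gain before Schauder), but as written it silently uses the $r\ge 2$ inclusion, so you should either add the gain or route through $\mathcal{B}^2$ as the paper does.
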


\begin{remark}
We note that, in the case $r\neq 2$, we are actually not able to prove the uniqueness of a corrector $w_p$ such that $\nabla w_p \in \left(L^2_{per}(\mathbb{R}^d) + \mathcal{B}^r(\mathbb{R}^d)\right)^d$. We are only able to prove the uniqueness (up to an additive constant) of a solution of the form $w_p = w_{per,p} + \Tilde{w}_p$ where $\nabla \Tilde{w}_p \in \mathcal{B}^r(\mathbb{R}^d)^d$. This result is however sufficient to study the homogenization of problem \eqref{equationepsilon} stated in the next theorem and studied in Section \ref{section6Br}.
\end{remark}

\begin{theorem}
\label{theoreme5}
Assume that $\Omega$ is a ${C}^{2,1}$-bounded domain and that $r\neq d$. Let $\displaystyle\Omega_1 \subset \subset \Omega$. We define $ \displaystyle u^{\varepsilon,1} =  u^* + \varepsilon \sum_{i=1}^{d}\partial_i u^{*} w_{e_i}(./\varepsilon)$ where $w_{e_i}$ is the corrector given by Theorem \ref{theoreme4} for $p=e_i$ and $u^*\in H^1(\Omega)$ is the solution to \eqref{homog}. we define 
\begin{equation}
\label{def_nu_r}
    \nu_r = \min \left(1, \frac{d}{r}\right) \in ]0,1],
\end{equation}
and 
\begin{equation}
\label{def_mu_r}
    \mu_r = \left\{ 
   \begin{array}{cc}
      0  & \text{if } \ r<d, \\
      \frac{1}{r}  & \text{else. } 
   \end{array}
    \right.
\end{equation}
Then $R^{\varepsilon} = u^{\varepsilon} - u^{\varepsilon,1}$ satisfies the following estimates : 
\begin{equation}
\label{estimate1Bp}
\|R^{\varepsilon}\|_{L^2(\Omega)} \leq C_1 \left(\log |\varepsilon| \right)^{\mu_r} \varepsilon^{\nu_r} \|f\|_{L^2(\Omega)},
\end{equation}
\begin{equation}
\label{estimate2Bp}
\|\nabla R^{\varepsilon}\|_{L^2(\Omega_1)} \leq C_2 \left(\log |\varepsilon| \right)^{\mu_r} \varepsilon^{\nu_r}  \|f\|_{L^2(\Omega)},
\end{equation}
where $C_1$ and $C_2$ are two positive constants independent of $f$ and $\varepsilon$. 
\end{theorem}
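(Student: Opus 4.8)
The plan is to follow, step by step, the proof of Theorem~\ref{theoreme3}, isolating the two places where the switch from the $L^2$ to the $L^r$ topology produces a quantitative loss. First I would check that, with $w=(w_{e_i})_{i}$ the corrector given by Theorem~\ref{theoreme4} and $u^{\varepsilon,1}$ as in the statement, the remainder $R^{\varepsilon}=u^{\varepsilon}-u^{\varepsilon,1}$ still solves an equation of the form \eqref{equationR}, with right-hand side $H^{\varepsilon}$ given by the same expression \eqref{defHeps}, where now $B=B_{per}+\Tilde{B}$ and $\Tilde{B}$ is the $\mathcal{B}^r$-potential: its existence, the antisymmetry and divergence identities, and the bound $\|\nabla\Tilde{B}\|_{\mathcal{B}^r(\mathbb{R}^d)}\leq C\|\Tilde{M}\|_{\mathcal{B}^r(\mathbb{R}^d)}$ follow exactly as in Lemma~\ref{lemme_existence_B}, using the $\mathcal{B}^r$-analogues of Lemmas~\ref{lemmeexistenceperiodique1}, \ref{existenceper} and \ref{lemme2} proved in Section~\ref{section6Br}. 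The homogenization estimates \eqref{estimationR} and \eqref{estimationnablaR} of \cite{blanc2018precised}, and the elliptic regularity bound \eqref{D2u}, hold verbatim since their proofs only use that $a$ is bounded and coercive and properties of the Green function of $-\operatorname{div}(a^{*}\nabla\cdot)$ on $\Omega$. Hence the whole statement reduces to estimating $\|H^{\varepsilon}\|_{L^{2}(\Omega)}$, i.e. (by \eqref{defHeps} and $a\in L^{\infty}(\mathbb{R}^d)$) the quantity $\varepsilon\bigl(\|w(\cdot/\varepsilon)\|_{L^{\infty}(\Omega)}+\|B(\cdot/\varepsilon)\|_{L^{\infty}(\Omega)}\bigr)$ that also appears in \eqref{estimationR}.

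The core of the argument is therefore a growth estimate for $w$ and $B$. When $r<d$, I would establish, exactly as in Lemma~\ref{lemmeborne}, that $w,B\in L^{\infty}(\mathbb{R}^d)$: one splits the integral representation $\Tilde{w}_i(x)=\int_{\mathbb{R}^d}\nabla_{y}G_{per}(x,y)f(y)\,dy$, with $f=\Tilde{a}(e_i+\nabla w_{per,i}+\nabla\Tilde{w}_i)\in\bigl(\mathcal{B}^r(\mathbb{R}^d)\cap L^{\infty}(\mathbb{R}^d)\bigr)^{d}$, over $B_1(x)$, over the enlarged cell $W_{p_x}$ of Proposition~\ref{openconstruction} minus $B_1(x)$, and over $\mathbb{R}^d\setminus W_{p_x}$, replacing each use of Cauchy--Schwarz by Hölder's inequality with exponents $r$ and $r'=r/(r-1)$; the annular integral $\int_{1}^{C2^{|p_x|}}\rho^{(1-d)r'+d-1}\,d\rho$ and the cell-wise sums converge uniformly in $x$ precisely because $r<d$, so $\nu_r=1$, $\mu_r=0$ and the proof finishes as for Theorem~\ref{theoreme3}. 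When $r>d$ this representation no longer converges and $w,B$ are genuinely unbounded; instead I would use that $\nabla\Tilde{w},\nabla\Tilde{B}\in\bigl(\mathcal{B}^r(\mathbb{R}^d)\cap\mathcal{C}^{0,\alpha}(\mathbb{R}^d)\bigr)^{d}$ together with the Morrey-type inequality used in the proof of Proposition~\ref{propsouslinearite} (now with exponent $s=r>d$) and the logarithmic bound of Corollary~\ref{corollogboundVP} on the number of cells meeting $B_{|x|}$: this gives $\int_{B_{|x|}}|\nabla\Tilde{w}|^{r}\leq C\log(2+|x|)\,\|\nabla\Tilde{w}\|_{\mathcal{B}^r(\mathbb{R}^d)}^{r}$, hence $|\Tilde{w}(x)-\Tilde{w}(0)|\leq C|x|^{1-d/r}\bigl(\log(2+|x|)\bigr)^{1/r}$, and likewise for $\Tilde{B}$. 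Since $w_{per},B_{per}\in L^{\infty}(\mathbb{R}^d)$, in all cases $r\neq d$ this yields
\begin{equation}
\label{croissancew}
|w(x)|+|B(x)|\leq C\,(1+|x|)^{1-\nu_r}\,(\log(2+|x|))^{\mu_r},
\end{equation}
with $\nu_r$, $\mu_r$ as in \eqref{def_nu_r}, \eqref{def_mu_r}; the borderline exponent $r=d$, where only logarithmic growth (respectively a logarithmic divergence of the representation) is available, is precisely the case excluded from the statement.

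With \eqref{croissancew} the proof concludes as in Theorem~\ref{theoreme3}. Since $\Omega$ is bounded, fixing $R$ with $\Omega\subset B_R$ gives $\|w(\cdot/\varepsilon)\|_{L^{\infty}(\Omega)}+\|B(\cdot/\varepsilon)\|_{L^{\infty}(\Omega)}\leq C\,\varepsilon^{-(1-\nu_r)}(\log(1/\varepsilon))^{\mu_r}$, so that $\varepsilon\bigl(\|w(\cdot/\varepsilon)\|_{L^{\infty}(\Omega)}+\|B(\cdot/\varepsilon)\|_{L^{\infty}(\Omega)}\bigr)\leq C\,\varepsilon^{\nu_r}(\log|\varepsilon|)^{\mu_r}$ and, by \eqref{defHeps}, $\|H^{\varepsilon}\|_{L^{2}(\Omega)}\leq C\,\varepsilon^{\nu_r}(\log|\varepsilon|)^{\mu_r}\|D^{2}u^{*}\|_{L^{2}(\Omega)}$. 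Plugging this into \eqref{estimationR} together with the regularity bound \eqref{D2u} gives \eqref{estimate1Bp}, and then \eqref{estimationnablaR} gives \eqref{estimate2Bp}.

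The main obstacle is the growth estimate \eqref{croissancew} in the regime $r>d$. Unlike in Lemma~\ref{lemmeborne}, where $d>2=r$ forces every Green-function integral to stay bounded, for $r>d$ one genuinely loses boundedness and must carry out the Morrey estimate and the cell-by-cell summations carefully enough to pin down the exact exponent $1-d/r$ on $|x|$ and the exact power $1/r$ of $\log|x|$ coming from Corollary~\ref{corollogboundVP}, and to confirm that $r=d$ is the only exponent lost. A second, logically prior, difficulty is that the entire machinery of Section~\ref{Section4} --- uniqueness (Lemma~\ref{lemme1}), the a priori estimate (Lemma~\ref{lemme2}), the connectedness argument (Lemma~\ref{lemmexistence}) and hence Theorem~\ref{theoreme4} --- must first be re-established for $\mathcal{B}^r$ with $r\neq2$, which no longer carries the Hilbert structure exploited in the Caccioppoli-type argument of Lemma~\ref{lemme1}; this is where most of the new work in Section~\ref{section6Br} lies.
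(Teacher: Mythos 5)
Your proposal is correct and follows essentially the same route as the paper: reduce to bounding $\varepsilon\|w(\cdot/\varepsilon)\|_{L^\infty}$ and $\varepsilon\|B(\cdot/\varepsilon)\|_{L^\infty}$, then split into $r<d$ (where Lemma \ref{lemmeborneBr} gives $w,B\in L^\infty$) and $r>d$ (where Proposition \ref{propsouslineariteBr}, applied with $s=r$, gives the growth $|x|^{1-d/r}(\log|x|)^{1/r}$). The paper simply packages the Morrey-plus-log-cell-count argument you rederive inline as the standalone Proposition \ref{propsouslineariteBr} and cites it, so there is no material difference.
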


Here, it is important to note that the convergence rate in $H^1$ depends on the values of the ambient dimension $d$ and the exponent $r$. One way to explain this phenomenon, is to remark that the behavior of $R^{\varepsilon}$ in $H^1(\Omega)$ is controlled by the sub-linearity of $w$, that is by the rate of convergence to 0 of the sequence $\varepsilon w(./\varepsilon)$ when $\varepsilon$ tends to 0 (see the proof of Theorem \ref{theoreme5}). Moreover, in Proposition \ref{propsouslineariteBr} and Lemma \ref{lemmeborneBr} established in the sequel we show that the behavior of $w$ at the infinity actually depends on the integrability of $\nabla w$ (and, in a certain sens, on its decrease) far from the points $x_p$, that is, on the value of $r$. 

\begin{remark}
The case $r=d$ is a critical case for the study of the corrector. Indeed as we shall see in Sections \ref{section2Br} and \ref{section6Br}, this phenomenon is closely linked to the critical integrability of the function $|x|^{-1}$ in $L^d(\mathbb{R}^d)$ and, in this case, we do not know if the corrector $w_p$ is necessarily bounded. Since the data are bounded in $L
^{\infty}(\mathbb{R}^d)$, we can however apply the result of the case $r>d$ in order to have the above estimates, in which $\left(\log |\varepsilon| \right)^{\mu_r} \varepsilon^{\nu_r}$ is replaced by $\left(\log |\varepsilon| \right)^{\mu_s} \varepsilon^{\nu_s}$ for every $s>d$. 
\end{remark}

In the sequel of this work, our approach is similar to that of the case $r=2$ and we apply here the following strategy :

\begin{itemize}
    \item[1)] We first study the structure of the space $\mathcal{B}^r(\mathbb{R}^d)$ in Section \ref{section1Br} and we show several properties of its elements. In particular, we claim that the elements of $\mathcal{B}^r(\mathbb{R}^d)$ have an average value equal to zero and we study a property of strict sub-linearity satisfied by the functions with a gradient in this space.  
    \item[2)] In section \ref{section2Br}, we next study the diffusion problem $-\operatorname{div}(a\nabla u) = \operatorname{div}(f)$ when $f \in \mathcal{B}^r(\mathbb{R}^d)^d$ and the coefficient $a$ is periodic. Precisely, we establish the existence of a solution $u$ to \eqref{eqper1} such that $\nabla u$ belongs to $\mathcal{B}^r(\mathbb{R}^d)^d$. To this end, we use the results of M. Avellaneda and F.H. Lin established in \cite{avellaneda1987compactness, avellaneda1991lp} about the asymptotic behavior of the Green function associated with the operator $-\operatorname{div}(a \nabla.)$ on $\mathbb{R}^d$, in order to explicit the gradient of the solution $u$. 
    \item[3)] In section \ref{section4Br}, we generalize these results in the perturbed periodic context when $a$ is a coefficient of the form \eqref{coefficientform}. Here, we first establish the continuity of the operator $\nabla \left(-\operatorname{div}(a \nabla .)\right)^{-1}\operatorname{div}$ from $\mathcal{B}^r(\mathbb{R}^d)^d$ to $\mathcal{B}^r(\mathbb{R}^d)^d$ stated in Lemma \ref{lemme2Br} and, again, we adapt a method presented in \cite{blanc2018correctors} using an argument of connexity as in the proof of Lemma \ref{lemmexistence}.
    \item[4)] We finally apply this result in section \ref{section5Br} in order to prove the existence of the corrector stated in Theorem \ref{theoreme4} and we use its properties in section \ref{section6Br} to establish the convergence rates stated in Theorem \ref{theoreme5}. 
\end{itemize}

In order not to repeat what we have done in the case $r = 2$, we choose here to refer the reader to the corresponding propositions as soon as the proofs are similar and, here,  we only detail the proofs for which the arguments are specific to the case $ r \neq 2 $. In particular, we mainly detail the existence results in the periodic case established in Section \ref{section2Br}, the uniqueness results of Section \ref{section3Br} and the uniform bound satisfied by the corrector in the specific case $r<d$ stated in Proposition \ref{lemmeborneBr}. 

\subsection{Preliminary results}
\label{section1Br}

To start with, we establish here several properties regarding the elements of $\mathcal{B}^r(\mathbb{R}^d)$. As in the case $r=2$, we need to study the behavior of these functions, their average value and the property of sub-linearity, in order to obtain some information satisfied by the corrector $w_p$ (in particular we want to study its decrease at infinity) and to apply it to establish the homogenization of diffusion problem \eqref{equationepsilon} and the convergence rates stated in Theorem~\ref{theoreme5}. 

First of all, in the three following propositions, we naturally generalize the results of the case $r=2$ regarding the structure of the space $\mathcal{B}^r(\mathbb{R}^d)$ : in Proposition \ref{BanachBr} we claim that $\mathcal{B}^r(\mathbb{R}^d)$ is a Banach space, in Proposition \ref{densityBr} we introduce a result of density and, finally, in Proposition \ref{stabilityBr}, we give a result regarding the multiplication between two elements of $\mathcal{B}^r(\mathbb{R}^d)$. The proofs of these propositions can be easily adapted from the proofs of the similar propositions established in Section \ref{Section3} (see Propositions \ref{Banach}, \ref{density} and \ref{stability}). 

\begin{prop}
\label{BanachBr}
The space $\mathcal{B}^r(\mathbb{R}^d)$ equipped with the norm defined by \eqref{normdefBr}, is a Banach space.
\end{prop}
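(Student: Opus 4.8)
The plan is to reproduce, essentially verbatim, the argument used for Proposition~\ref{Banach}, substituting the Lebesgue exponent $r$ for $2$ throughout. Let $(f_n)_{n\in\mathbb{N}}$ be a Cauchy sequence in $\mathcal{B}^r(\mathbb{R}^d)$. Since the map $f\mapsto f_\infty$ assigning to an element of $\mathcal{B}^r(\mathbb{R}^d)$ its associated limit function is linear --- a consequence of the uniqueness of $f_\infty$, established exactly as for $\mathcal{B}^2(\mathbb{R}^d)$ --- and since the norm~\eqref{normdefBr} dominates both $\|f_\infty\|_{L^r(\mathbb{R}^d)}$ and $\|f\|_{L^r_{unif}(\mathbb{R}^d)}$, the sequences $(f_{n,\infty})$ and $(f_n)$ are Cauchy in $L^r(\mathbb{R}^d)$ and in $L^r_{unif}(\mathbb{R}^d)$ respectively. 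First I would record that these two spaces are complete: for $L^r(\mathbb{R}^d)$ this is classical, while for $L^r_{unif}(\mathbb{R}^d)$ one notes that a Cauchy sequence there is Cauchy in $L^r(B_1(x))$ for every $x$, hence converges in $L^r_{loc}(\mathbb{R}^d)$ to some $f$, the uniform bound $\sup_x\|f\|_{L^r(B_1(x))}<\infty$ passing to the limit. This produces $f\in L^r_{unif}(\mathbb{R}^d)$ with $f_n\to f$ in $L^r_{unif}$, and $f_\infty\in L^r(\mathbb{R}^d)$ with $f_{n,\infty}\to f_\infty$ in $L^r$.

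Next I would transcribe the three $\varepsilon$-estimates of the proof of Proposition~\ref{Banach}: given $\varepsilon>0$, there is $N$ such that for all $n>N$ and $k>0$ one has $\|f_{n+k}-f_n\|_{L^r_{unif}}\le\varepsilon$, $\|f_{n+k,\infty}-f_{n,\infty}\|_{L^r(\mathbb{R}^d)}\le\varepsilon$, and $\sup_{p\in\mathcal{P}}\|(f_{n+k}-\tau_{-p}f_{n+k,\infty})-(f_n-\tau_{-p}f_{n,\infty})\|_{L^r(\textit{V}_p)}\le\varepsilon/2$. Fixing $p$ and letting $k\to\infty$ in the last quantity is legitimate because each cell $\textit{V}_p$ is bounded (it has finite diameter by~\eqref{H3}), so the $L^r_{unif}$- and $L^r$-convergences established above restrict to $L^r(\textit{V}_p)$-convergence; taking afterwards the supremum over $p$ yields $\sup_{p\in\mathcal{P}}\|(f-\tau_{-p}f_\infty)-(f_n-\tau_{-p}f_{n,\infty})\|_{L^r(\textit{V}_p)}\le\varepsilon/2$ for all $n>N$, that is, $f_n\to f$ for the norm~\eqref{normdefBr}.

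It remains to check that the limit $f$ belongs to $\mathcal{B}^r(\mathbb{R}^d)$, i.e. that $\lim_{|p|\to\infty}\|f-\tau_{-p}f_\infty\|_{L^r(\textit{V}_p)}=0$. Fixing $n>N$, for $|p|$ large one has $\|f_n-\tau_{-p}f_{n,\infty}\|_{L^r(\textit{V}_p)}\le\varepsilon/2$ since $f_n\in\mathcal{B}^r(\mathbb{R}^d)$, and a triangle inequality together with the uniform-in-$p$ bound of the previous step gives $\|f-\tau_{-p}f_\infty\|_{L^r(\textit{V}_p)}\le\varepsilon$, as desired. I do not anticipate any genuine obstacle here: the whole argument is functional-analytic and uses only completeness of $L^r$ and $L^r_{unif}$, linearity of $f\mapsto f_\infty$, Minkowski's inequality, and the boundedness of the Voronoi cells --- none of which depends on the value of $r\in ]1,\infty[$, so Proposition~\ref{BanachBr} follows along exactly the same lines as Proposition~\ref{Banach}.
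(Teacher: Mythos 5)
Your proof is correct and follows exactly the approach the paper indicates: transcribe the argument of Proposition~\ref{Banach} with the exponent $r$ in place of $2$, using completeness of $L^r$ and $L^r_{unif}$ and the fact that each $V_p$ is bounded to pass to the limit cell by cell. The paper does not write out a separate proof for Proposition~\ref{BanachBr}, stating only that it is an easy adaptation of Proposition~\ref{Banach}, and your version supplies precisely that adaptation (including the useful but routine remark that $L^r_{unif}$ is itself complete).
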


\begin{prop}
\label{densityBr}
Let $\alpha \in ]0,1[$, then $\mathcal{C}^{0,\alpha}(\mathbb{R}^d)\cap \mathcal{B}^r(\mathbb{R}^d)$ is dense in $\left(\mathcal{B}^r(\mathbb{R}^d), \|.\|_{\mathcal{B}^r(\mathbb{R}^d)}\right).$
\end{prop}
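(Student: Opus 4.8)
The plan is to mirror, essentially verbatim, the proof of Proposition \ref{density} given for $r=2$, replacing the $L^2$-topology by the $L^r$-topology throughout. Fix $f \in \mathcal{B}^r(\mathbb{R}^d)$ and let $f_\infty \in L^r(\mathbb{R}^d)$ be the associated limit function. Since $\mathcal{D}(\mathbb{R}^d)$ is dense in $L^r(\mathbb{R}^d)$ for $1<r<\infty$, for any $\varepsilon>0$ I would first pick $\phi \in \mathcal{D}(\mathbb{R}^d)$ with $\|\phi - f_\infty\|_{L^r(\mathbb{R}^d)} < \varepsilon/3$, so that $\|\tau_{-p}\phi - \tau_{-p}f_\infty\|_{L^r(V_p)} \leq \varepsilon/3$ for every $p$. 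The defining property of $\mathcal{B}^r(\mathbb{R}^d)$ provides $P^*\in\mathbb{N}$ with $\|f - \tau_{-p}f_\infty\|_{L^r(V_p)} < \varepsilon/3$ for $|p|>P^*$. Using that $\phi$ has fixed compact support together with the facts that the cells $V_p$ grow without bound and that $D(2^p,\mathcal{G}\setminus\{2^p\})\to\infty$ as $|p|\to\infty$ (a consequence of Assumption \eqref{H2}, and of Proposition \ref{tailledesVP}), I would fix $P \geq P^*$ large enough that, for $|p|>P$, $\operatorname{supp}(\phi)+2^p \subset V_p$ and $(\tau_{-q}\phi)_{|V_p}=0$ for every $q \neq p$. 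Finally, since the finite sum $\sum_{|q|\leq P}1_{V_q}f$ is compactly supported, hence in $L^r(\mathbb{R}^d)$, I pick $\psi \in \mathcal{D}(\mathbb{R}^d)$ with $\big\|\psi - \sum_{|q|\leq P}1_{V_q}f\big\|_{L^r(\mathbb{R}^d)} \leq \varepsilon/3$, and set $g = \psi + \sum_{|p|>P}\tau_{-p}\phi$.

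The verification that $g$ is an admissible approximant is then identical to the case $r=2$: the sum $\sum_{|p|>P}\tau_{-p}\phi$ is locally finite with pairwise disjoint supports, so $g\in\mathcal{C}^\infty(\mathbb{R}^d)$ with all derivatives bounded (by those of $\psi$ and $\phi$), whence $g\in\mathcal{C}^{0,\alpha}(\mathbb{R}^d)$; and since $g$ is bounded and equals $\psi+\tau_{-p}\phi$ on $V_p$ for $|p|>P$ with $\psi\to 0$ on $V_p$, one gets $g\in\mathcal{B}^r(\mathbb{R}^d)$ with $g_\infty=\phi$. For the norm estimate, the limit-function term is $\|g_\infty-f_\infty\|_{L^r}=\|\phi-f_\infty\|_{L^r}<\varepsilon/3$. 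For the supremum term I split cases: for $|p|\leq P$, $\|g-f\|_{L^r(V_p)}=\big\|\psi-\sum_{|q|\leq P}1_{V_q}f\big\|_{L^r(V_p)}\leq\varepsilon/3$; for $|p|>P$, a triangle inequality gives $\|g-f\|_{L^r(V_p)}\leq\|\psi\|_{L^r(V_p)}+\|\tau_{-p}\phi-\tau_{-p}f_\infty\|_{L^r(V_p)}+\|\tau_{-p}f_\infty-f\|_{L^r(V_p)}$, where $\|\psi\|_{L^r(V_p)}\leq\big\|\psi-\sum_{|q|\leq P}1_{V_q}f\big\|_{L^r(\mathbb{R}^d)}\leq\varepsilon/3$ because $\sum_{|q|\leq P}1_{V_q}f$ vanishes on $V_p$. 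Hence $\sup_p\|g-f\|_{L^r(V_p)}\leq\varepsilon$.

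The only point not already contained in the $r=2$ argument is the $L^r_{unif}$ term of the norm \eqref{normdefBr}. I would handle it using the elementary geometric fact (a consequence of Assumptions \eqref{H1}, \eqref{H2}, \eqref{H3}, in the spirit of Proposition \ref{volumeVx} and Corollary \ref{corollogboundVP}) that there is a universal constant $N_0$ such that every unit ball $B_1(x)$ meets at most $N_0$ Voronoi cells. Writing $g - f = \big(\psi - \sum_{|q|\leq P}1_{V_q}f\big) + \sum_{|p|>P}1_{V_p}(\tau_{-p}\phi - \tau_{-p}f_\infty) + \sum_{|p|>P}1_{V_p}(\tau_{-p}f_\infty - f)$, using $\operatorname{supp}(\tau_{-p}\phi)\subset V_p$ for $|p|>P$, and estimating each of the last two cellwise-diagonal sums on $B_1(x)$ by at most $N_0$ of the corresponding $L^r(V_p)$-norms while controlling the first term in $L^r(\mathbb{R}^d)\subset L^r_{unif}(\mathbb{R}^d)$, yields $\|g-f\|_{L^r_{unif}(\mathbb{R}^d)}\leq C(N_0)\,\varepsilon$. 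Altogether $\|g-f\|_{\mathcal{B}^r(\mathbb{R}^d)}\leq C\varepsilon$ with $g\in\mathcal{C}^{0,\alpha}(\mathbb{R}^d)\cap\mathcal{B}^r(\mathbb{R}^d)$, and letting $\varepsilon\to 0$ gives the claim. I do not expect any genuine obstacle: the proof is a routine transcription of the $r=2$ case, the only steps requiring a little care being the choice of $P$ (which rests on the exponential growth of the cells, Assumption \eqref{H2}) and the bookkeeping for the $L^r_{unif}$ term described above.
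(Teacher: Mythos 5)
Your proposal is correct and follows the same construction $g = \psi + \sum_{|p|>P}\tau_{-p}\phi$ as the paper's proof of Proposition~\ref{density}; the paper itself indicates that Proposition~\ref{densityBr} is obtained by replacing $L^2$ with $L^r$ in that argument. Your extra bookkeeping for the $L^r_{unif}$ part of the norm is a welcome addition that the paper's $r=2$ proof leaves implicit, though it already follows from the cell-wise bound $\sup_p\|g-f\|_{L^r(V_p)}\leq\varepsilon$ once one observes, as you do, that any unit ball meets a bounded number of Voronoi cells.
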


\begin{prop}
\label{stabilityBr}
Let $g$ and $h$ be in $\mathcal{B}^r(\mathbb{R}^d) \cap L^{\infty}(\mathbb{R}^d)$. We assume the associated limit $L^r$ function of $g$, denoted by $g_{\infty}$, is in $L^{\infty}(\mathbb{R}^d)$, then $hg \in \mathcal{B}^r(\mathbb{R}^d)$.
\end{prop}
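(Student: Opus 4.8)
The plan is to mirror, almost verbatim, the proof of Proposition \ref{stability} for the case $r=2$, replacing the $L^2$ topology by the $L^r$ topology throughout. The first point is that $hg$ is automatically an admissible candidate: since $h,g \in L^\infty(\mathbb{R}^d)$, the product $hg$ lies in $L^\infty(\mathbb{R}^d) \subset L^r_{unif}(\mathbb{R}^d)$ (indeed $\|hg\|_{L^r(B_1(x))} \le |B_1|^{1/r}\|h\|_{L^\infty(\mathbb{R}^d)}\|g\|_{L^\infty(\mathbb{R}^d)}$ uniformly in $x$), and the natural candidate for its limit function, $g_\infty h_\infty$, belongs to $L^r(\mathbb{R}^d)$ because $g_\infty \in L^\infty(\mathbb{R}^d)$ and $h_\infty \in L^r(\mathbb{R}^d)$.

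The key algebraic step is the pointwise identity, valid for every $p \in \mathcal{P}$,
\[
gh - \tau_{-p}(g_\infty h_\infty) = (h - \tau_{-p}h_\infty)\,\tau_{-p}g_\infty + (g - \tau_{-p}g_\infty)\,h,
\]
which decomposes the defect of $gh$ relative to its candidate limit into two terms, each controlled by the $\mathcal{B}^r$-defect of one factor and the $L^\infty$-bound of the other. Applying the triangle inequality in $L^r(V_p)$ and using $\|\tau_{-p}g_\infty\|_{L^\infty(\mathbb{R}^d)} = \|g_\infty\|_{L^\infty(\mathbb{R}^d)}$ yields
\[
\|gh - \tau_{-p}(g_\infty h_\infty)\|_{L^r(V_p)} \le \|g_\infty\|_{L^\infty(\mathbb{R}^d)}\,\|h - \tau_{-p}h_\infty\|_{L^r(V_p)} + \|h\|_{L^\infty(\mathbb{R}^d)}\,\|g - \tau_{-p}g_\infty\|_{L^r(V_p)}.
\]
Taking the supremum over $p$ shows the third term of the norm \eqref{normdefBr} is finite, and letting $|p|\to\infty$, using that $g,h\in\mathcal{B}^r(\mathbb{R}^d)$ so that each right-hand term tends to $0$, shows that $gh$ satisfies the defining limit in \eqref{spacedefBr}. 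Hence $gh \in \mathcal{B}^r(\mathbb{R}^d)$, with $(gh)_\infty = g_\infty h_\infty$.

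There is essentially no obstacle here: the only things to verify are the membership of $hg$ in $L^r_{unif}(\mathbb{R}^d)$ and of $g_\infty h_\infty$ in $L^r(\mathbb{R}^d)$, both immediate from the $L^\infty$ hypotheses, together with the displayed estimate, which is a one-line consequence of the triangle inequality. The role of the hypothesis $g_\infty \in L^\infty(\mathbb{R}^d)$ is precisely to make the first term of the splitting identity $L^r$-summable against the $\mathcal{B}^r$-defect of $h$; without some $L^\infty$ control on $g_\infty$, the product of two $L^r$ functions need not be $L^r$, so this assumption cannot be dropped. The argument is insensitive to the value of $r\in]1,\infty[$ and in particular does not use any Hilbert structure, so it applies uniformly in $r$.
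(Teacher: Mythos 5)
Your proof is correct and is exactly the adaptation of the proof of Proposition~\ref{stability} to the $L^r$ topology that the paper itself indicates (before Proposition~\ref{BanachBr}, where it says the proofs "can be easily adapted" from Section~\ref{Section3}). The splitting identity, the triangle inequality in $L^r(V_p)$, and the verification that $gh\in L^r_{unif}$ and $g_\infty h_\infty\in L^r$ match the intended argument.
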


We next claim that the functions of  $\mathcal{B}^r(\mathbb{R}^d)$ have an average value equal to zero in sens \eqref{zeroaverageBr}. As in the case $r=2$, this property is essential and allows us to prove that the homogenized equation \eqref{homog} is actually the same as in the periodic background, that is, when $\Tilde{a} = 0$. 

\begin{prop}
\label{moyenneBr}
Let $u \in \mathcal{B}^r(\mathbb{R}^d)$. Then, for every $x_0 \in \mathbb{R}^d$  :  
\begin{equation}
\label{zeroaverageBr}
\lim_{R\rightarrow\infty}\dfrac{1}{|B_R|}\int_{B_R(x_0)}|u(x)|dx = 0,
\end{equation}
with the following convergence rate : 
\begin{equation}
\label{convergencerateBr}
\dfrac{1}{|B_R|}\int_{B_R(x_0)}|u(x)|dx \leq C \left(\dfrac{\log R}{R^d}\right)^{\frac{1}{r}},   
\end{equation}
where $C>0$ is independent of $R$ and $x_0$. 
\end{prop}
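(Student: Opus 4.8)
The plan is to mimic the proof of Proposition \ref{moyenne}, replacing the Cauchy--Schwarz inequality by Hölder's inequality with the conjugate exponents $r$ and $r' = r/(r-1)$. Fix $R>0$ and $x_0\in\mathbb{R}^d$. By Hölder's inequality,
\[
\frac{1}{|B_R|}\int_{B_R(x_0)}|u(x)|\,dx \leq \frac{|B_R|^{1/r'}}{|B_R|}\left(\int_{B_R(x_0)}|u(x)|^r\,dx\right)^{1/r} = |B_R|^{-1/r}\left(\int_{B_R(x_0)}|u(x)|^r\,dx\right)^{1/r}.
\]

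First I would record that $\sup_{p\in\mathcal{P}}\|u\|_{L^r(V_p)}$ is finite: indeed, by a triangle inequality and the translation invariance of the $L^r$-norm, $\|u\|_{L^r(V_p)}\leq \|u-\tau_{-p}u_\infty\|_{L^r(V_p)} + \|u_\infty\|_{L^r(\mathbb{R}^d)}$, and both terms are controlled by $\|u\|_{\mathcal{B}^r(\mathbb{R}^d)}$ through \eqref{normdefBr}. Since $(V_p)_{p\in\mathcal{P}}$ is a partition of $\mathbb{R}^d$, I then split
\[
\int_{B_R(x_0)}|u(x)|^r\,dx = \sum_{p\in\mathcal{P}}\int_{V_p\cap B_R(x_0)}|u(x)|^r\,dx \leq \sharp\bigl\{p\in\mathcal{P}\ \big|\ V_p\cap B_R(x_0)\neq\emptyset\bigr\}\ \sup_{p\in\mathcal{P}}\|u\|_{L^r(V_p)}^r,
\]
and Corollary \ref{corollogboundVP} bounds the cardinality on the right-hand side by $C\log R$.

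Combining these two displays, and using that $|B_R|$ is proportional to $R^d$, yields
\[
\frac{1}{|B_R|}\int_{B_R(x_0)}|u(x)|\,dx \leq C(d)\left(\frac{\log R}{R^d}\right)^{1/r}\sup_{p\in\mathcal{P}}\|u\|_{L^r(V_p)},
\]
where $C(d)$ depends only on the ambient dimension. This is precisely the rate \eqref{convergencerateBr}, and the vanishing limit \eqref{zeroaverageBr} follows immediately since $\log R / R^d\to 0$ as $R\to\infty$. I do not expect any genuine obstacle: the argument is a direct transcription of the case $r=2$, the only substantive change being the use of Hölder instead of Cauchy--Schwarz, while the finiteness of $\sup_{p}\|u\|_{L^r(V_p)}$ and the logarithmic cell count of Corollary \ref{corollogboundVP} — which is purely geometric and independent of $r$ — carry the proof.
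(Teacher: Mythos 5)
Your proposal is correct and follows exactly the same route as the paper: Hölder's inequality with exponents $r,r'$ (generalizing Cauchy--Schwarz from the $r=2$ case), decomposition of $B_R(x_0)$ over the Voronoi cells $V_p$, and the logarithmic cell count from Corollary \ref{corollogboundVP}. The only small addition is your explicit triangle-inequality justification that $\sup_p\|u\|_{L^r(V_p)}$ is controlled by $\|u\|_{\mathcal{B}^r(\mathbb{R}^d)}$, which the paper uses implicitly; this is a useful remark but does not change the argument.
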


\begin{proof}
We fix $R>0$. Using the Hölder inequality, we have :
\begin{align*}
\dfrac{1}{|B_R|}\int_{B_R(x_0)}|u(x)|dx & \leq \dfrac{1}{|B_R|^{\frac{1}{r}}} \left( \int_{B_R(x_0)}|u(x)|^rdx \right) ^{\frac{1}{r}} \\
& = \dfrac{1}{|B_R|^{\frac{1}{r}}} \left( \sum_{p \in \mathcal{P}}   \int_{\textit{V}_p \cap B_R(x_0)}|u(x)|^rdx \right) ^{\frac{1}{r}}.
\end{align*}
Since the number of $V_p$ such that $B_R(x_0)\cap V_p \neq \emptyset$ is bounded by $\log(R)$ according to Corollary \ref{corollogboundVP}, we obtain : 
\begin{align*}
\dfrac{1}{|B_R|}\int_{B_R(x_0)}|u(x)|dx&\leq \dfrac{\left(\log R\right)^{\frac{1}{r}}}{|B_R|^{\frac{1}{r}}}\sup_p\|u\|_{ L^r(\textit{V}_p)} \leq C(d)   \left(\dfrac{\log(R)}{R^d}\right)^{\frac{1}{r}}\sup_p \|u\|_{ L^r(\textit{V}_p)}.  
\end{align*}
Here, $C(d)$ depends only on the ambient dimension $d$. The last inequality yields \eqref{convergencerateBr} and conclude the proof. 
\end{proof}

\begin{corol}
\label{convergenceLinfinistarBr}
Let $u \in \mathcal{B}^r(\mathbb{R}^d) \cap L^{\infty}(\mathbb{R}^d)$, then $|u(./\varepsilon)|$ is convergent to 0 in the weak*-$L^{\infty}$ topology when $\varepsilon \rightarrow 0$. 
\end{corol}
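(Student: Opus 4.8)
The plan is to mimic, almost verbatim, the proof of Corollary \ref{convergenceLinfinistar}, since that argument never used $r=2$ in any essential way: it relied only on the sharp average decay estimate, which in the present setting is supplied by \eqref{convergencerateBr} of Proposition \ref{moyenneBr}. To establish weak*-$L^{\infty}$ convergence of $|u(\cdot/\varepsilon)|$ to $0$ it suffices to test against a dense family in $L^{1}(\mathbb{R}^{d})$, and I would reduce to indicator functions of balls (the building blocks of simple functions, up to the standard approximation of measurable sets by finite unions of balls).

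First I would fix $R>0$ and take $\varphi = 1_{B_{R}(x_{0})}$ for an arbitrary centre $x_{0}$. The change of variables $y=x/\varepsilon$ gives
\begin{equation*}
\left| \int_{\mathbb{R}^{d}} |u(x/\varepsilon)|\,\varphi(x)\,dx \right| \le \int_{B_{R}(x_{0})} |u(x/\varepsilon)|\,dx = \varepsilon^{d}\int_{B_{R/\varepsilon}(x_{0}/\varepsilon)} |u(y)|\,dy = \|\varphi\|_{L^{1}(\mathbb{R}^{d})}\,\frac{\varepsilon^{d}}{|B_{R}|}\int_{B_{R/\varepsilon}(x_{0}/\varepsilon)} |u(y)|\,dy.
\end{equation*}
Next I would invoke \eqref{convergencerateBr} with radius $R/\varepsilon$ and centre $x_{0}/\varepsilon$; since the constant there is independent of the centre, this bounds the last average by $C\,(\log(R/\varepsilon)/(R/\varepsilon)^{d})^{1/r}$, whence
\begin{equation*}
\left| \int_{\mathbb{R}^{d}} u(x/\varepsilon)\,\varphi(x)\,dx \right| \le C\,\|\varphi\|_{L^{1}(\mathbb{R}^{d})}\,\bigl( \varepsilon^{d}\log(1/\varepsilon) \bigr)^{1/r} \underset{\varepsilon\to 0}{\longrightarrow} 0,
\end{equation*}
the harmless factor $(\log R)^{1/r}$ being absorbed for $\varepsilon$ small. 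The constant $C$ depends only on $d$ and $\sup_{p}\|u\|_{L^{r}(\textit{V}_{p})}$, hence not on $\varepsilon$ nor on $\varphi$.

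Finally I would upgrade this to arbitrary $\varphi\in L^{1}(\mathbb{R}^{d})$ by density: simple functions are dense in $L^{1}$, the left-hand side above is linear in $\varphi$, and $\||u(\cdot/\varepsilon)|\|_{L^{\infty}(\mathbb{R}^{d})}=\|u\|_{L^{\infty}(\mathbb{R}^{d})}$ is bounded uniformly in $\varepsilon$, so a standard density argument yields $\int_{\mathbb{R}^{d}}|u(x/\varepsilon)|\varphi(x)\,dx\to 0$ for every test function, which is precisely the asserted weak* convergence. I do not expect any genuine obstacle: the only subtlety is that general simple functions involve indicators centred away from the origin, so the proof really does need the uniformity in $x_{0}$ of estimate \eqref{convergencerateBr} — but that uniformity is exactly what Proposition \ref{moyenneBr} provides.
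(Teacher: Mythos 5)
Your proof is correct and follows the same route the paper takes for Corollary~\ref{convergenceLinfinistar} in the case $r=2$: change variables, apply the average-decay estimate of Proposition~\ref{moyenneBr}, then conclude by density in $L^{1}$ using the uniform $L^{\infty}$ bound on $u(\cdot/\varepsilon)$. Your explicit use of $B_{R}(x_{0})$ with arbitrary centre $x_{0}$ is a small but welcome clarification, since the uniformity in $x_{0}$ of \eqref{convergencerateBr} is genuinely what makes the density step legitimate.
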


We also have to study an other fundamental result regarding the strict sublinearity at infinity of a function $u$ such that $\nabla u \in \mathcal{B}^r(\mathbb{R}^d)^d$. Exactly as in the case $r=2$, this result is key for establishing estimates \eqref{estimate1Bp} and \eqref{estimate2Bp}. Indeed, as we shall see in Section \ref{section6Br}, the error $u^{\varepsilon} - u^{\varepsilon,1}$ is actually controlled by the $L
^{\infty}$-norm of the sequence $\varepsilon w_{e_i}(./\varepsilon)$ when $\varepsilon \rightarrow 0$. 

\begin{prop}
\label{propsouslineariteBr}
Let $u \in W^{1,r}_{loc}(\mathbb{R}^d)$ such that $\nabla u \in \left(\mathcal{B}^r(\mathbb{R}^d) \cap L^{\infty}(\mathbb{R}^d)\right)^d$. Then $u$ is strictly sub-linear at infinity and for every $s \in \mathbb{R}$ such that $s >d$ and $s\geq r$, there exists $C>0$ such that for every $x,y \in \mathbb{R}^d$ with $x \neq y$ : 
\begin{equation}
\label{souslineariteBr}
\left|u(x) - u(y)\right| \leq C \left|\log(\left|x-y\right|)\right|^{\frac{1}{s}} \left|x-y \right|^{1-\frac{d}{s}}.
\end{equation}
\end{prop}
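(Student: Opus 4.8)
The plan is to mimic very closely the proof of Proposition \ref{propsouslinearite} (the case $r=2$), the only genuine difference being that one works directly at the exponent $s$ rather than passing through an $L^2$ estimate. First I would fix $x,y\in\mathbb{R}^d$ with $x\neq y$ and set $r_0=|x-y|$; I write $r_0$ to avoid clashing with the Lebesgue exponent $r$. Since $\nabla u\in(L^\infty(\mathbb{R}^d))^d$, we have $\nabla u\in(L^s_{loc}(\mathbb{R}^d))^d$ for every $s\geq 1$, so Morrey's inequality (in the oscillation form stated as \cite[Remark p.268]{evans10}) applies and gives a constant $C$ depending only on $d$ with
\begin{equation}
|u(x)-u(y)|\leq C\,r_0\left(\frac{1}{r_0^d}\int_{B_{r_0}(x)}|\nabla u(z)|^s\,dz\right)^{1/s}.
\end{equation}

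Next I would absorb the excess integrability: choosing $s$ with $s\geq r$ (and $s>d$), one has $|\nabla u|^s=|\nabla u|^{s-r}|\nabla u|^r\leq\|\nabla u\|_{L^\infty}^{s-r}|\nabla u|^r$, so that
\begin{equation}
|u(x)-u(y)|\leq C\,\|\nabla u\|_{L^\infty(\mathbb{R}^d)}^{(s-r)/s}\,r_0\left(\frac{1}{r_0^d}\int_{B_{r_0}(x)}|\nabla u(z)|^r\,dz\right)^{1/s}.
\end{equation}
Then I would split the integral over the cells $V_p$ meeting $B_{r_0}(x)$, bound each piece by $\sup_p\|\nabla u\|_{L^r(V_p)}^r\leq\|\nabla u\|_{\mathcal{B}^r(\mathbb{R}^d)}^r$, and invoke Corollary \ref{corollogboundVP}: the number of such cells is at most $C\log r_0$. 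This yields
\begin{equation}
|u(x)-u(y)|\leq C_1\,\|\nabla u\|_{L^\infty(\mathbb{R}^d)}^{(s-r)/s}\,\|\nabla u\|_{\mathcal{B}^r(\mathbb{R}^d)}^{r/s}\,\left|\log(r_0)\right|^{1/s}\,r_0^{1-d/s},
\end{equation}
which is exactly \eqref{souslineariteBr}. Finally, strict sub-linearity follows by setting $y=0$ and letting $|x|\to\infty$: since $1-d/s>0$ the power of $|x|$ beats the logarithm, so $|u(x)|/(1+|x|)\to 0$.

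I do not anticipate a serious obstacle here; the estimate is a routine transcription of the $r=2$ argument, and the only points requiring a word of care are (i) the requirement $s\geq r$, which is needed precisely so that the interpolation $|\nabla u|^s\leq\|\nabla u\|_{L^\infty}^{s-r}|\nabla u|^r$ makes sense with a nonnegative exponent on the $L^\infty$ factor, and (ii) the logarithmic cell-count bound, which is the one place the geometry of $\mathcal{G}$ enters and which is already available as Corollary \ref{corollogboundVP}. One small subtlety worth flagging in the write-up: the Morrey estimate is only informative for $r_0$ bounded away from $0$ (indeed for $r_0$ small the right-hand side of \eqref{souslineariteBr} is not even finite because of the logarithm); for small $r_0$ one simply uses $\nabla u\in L^\infty$ directly, $|u(x)-u(y)|\leq\|\nabla u\|_{L^\infty}|x-y|$, which is stronger, so the stated inequality holds after adjusting the constant. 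This is the same remark as in the $r=2$ case and need not be belabored.
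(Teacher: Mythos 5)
Your proof is correct and follows exactly the path the paper indicates (the paper only says Proposition \ref{propsouslineariteBr} is an "easy adaptation" of the proof of Proposition \ref{propsouslinearite}, which is precisely what you carry out: Morrey's oscillation estimate, the interpolation $|\nabla u|^s\leq\|\nabla u\|_{L^\infty}^{s-r}|\nabla u|^r$ requiring $s\geq r$, then the logarithmic cell-count bound of Corollary \ref{corollogboundVP}). One small slip in your final caveat: for $r_0=|x-y|\to 0$ the right-hand side of \eqref{souslineariteBr} is in fact finite and even tends to $0$; the real issue is that Corollary \ref{corollogboundVP} gives a useless (nonpositive) bound when $r_0\leq 1$, and the right-hand side vanishes at $r_0=1$, which is indeed repaired as you say by invoking the Lipschitz bound $|u(x)-u(y)|\leq\|\nabla u\|_{L^\infty}|x-y|$ on that range and absorbing into the constant.
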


The proof of estimate \eqref{souslineariteBr} is an easy adaptation to that of Proposition \ref{propsouslinearite} and the reader can see it for details.  

To conclude this section, we finally introduce a generalization of Proposition \ref{uniformmajoration} in order to obtain an uniform estimate of the integral remainders of the functions in $\mathcal{B}^r(\mathbb{R}^d)$.

\begin{prop}
\label{uniformmajorationBr}
Let $f$ be in $\mathcal{B}^r(\mathbb{R}^d)$ and $f_{\infty}$ the associated limit function in $L^r(\mathbb{R}^d)$. For any $\varepsilon >0$, there exists $R^*>0$ such that for every $R>R^*$ and every $p,q \in \mathcal{P}$ : 
\begin{equation}
    \left( \int_{V_q\cap B_R(2^q)^c} \left|f - \tau_{-p}f_{\infty}\right|^r \right)^{1/r} < \varepsilon,
\end{equation}
where $B_R(2^q)^c$ denotes the set $\mathbb{R}^d \setminus{B_R(2^q)}$. Therefore, we have the following limit : 
\begin{equation}
    \lim_{R \rightarrow \infty} \sup_{\substack{(p,q) \in \mathcal{P} \\ p \neq q}} \left( \int_{V_q\cap B_R(2^q)^c} \left|f - \tau_{-p}f_{\infty}\right|^r \right)^{1/r} = 0.
\end{equation}
\end{prop}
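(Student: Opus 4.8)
The plan is to reproduce, almost verbatim, the proof of Proposition~\ref{uniformmajoration}, replacing the exponent $2$ by $r$ and every Hilbert-space estimate by its $L^r$ analogue; no geometric ingredient beyond Assumptions~\eqref{H1} and \eqref{H2} and the definition~\eqref{spacedefBr} of $\mathcal{B}^r(\mathbb{R}^d)$ is needed. Fix $\varepsilon>0$. For $R>0$ and $p,q\in\mathcal{P}$, a triangle inequality in $L^r\big(V_q\cap B_R(2^q)^c\big)$ gives
\[
\Big(\int_{V_q\cap B_R(2^q)^c}|f-\tau_{-p}f_\infty|^r\Big)^{1/r}\le I_1^{p,q}(R)+I_2^{p,q}(R)+I_3^{p,q}(R),
\]
where $I_1^{p,q}(R)$, $I_2^{p,q}(R)$, $I_3^{p,q}(R)$ are the $L^r\big(V_q\cap B_R(2^q)^c\big)$-norms of $f-\tau_{-q}f_\infty$, of $\tau_{-q}f_\infty$, and of $\tau_{-p}f_\infty$ respectively. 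The aim is to bound each of these by $\varepsilon/3$, uniformly in $p$ and $q$, once $R$ is large enough, and then to set $R^*$ equal to the largest of the three resulting thresholds.

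For $I_1^{p,q}$: by definition of $\mathcal{B}^r(\mathbb{R}^d)$ there is $P$ such that $\|f-\tau_{-q}f_\infty\|_{L^r(V_q)}<\varepsilon/3$ whenever $|q|>P$; for the finitely many $q$ with $|q|\le P$, Assumption~\eqref{H1} gives $|V_q|<\infty$, hence a radius $R_1$ with $V_q\cap B_{R_1}(2^q)^c=\emptyset$, so that $I_1^{p,q}(R)=0$ for those $q$ when $R\ge R_1$. Thus $I_1^{p,q}(R)<\varepsilon/3$ for all $p,q$ and $R\ge R_1$. For $I_2^{p,q}$: since $f_\infty\in L^r(\mathbb{R}^d)$, choose $R_2\ge R_1$ with $\|f_\infty\|_{L^r(B_{R_2}^c)}<\varepsilon/3$; the substitution $x=y-2^q$ turns $\int_{V_q\cap B_{R_2}(2^q)^c}|\tau_{-q}f_\infty|^r$ into $\int_{(V_q-2^q)\cap B_{R_2}^c}|f_\infty|^r$, whence $I_2^{p,q}(R)<\varepsilon/3$ for $R\ge R_2$, uniformly in $p,q$.

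For $I_3^{p,q}$ one exploits that $D\big(2^l,\mathcal{G}\setminus\{2^l\}\big)\to\infty$ as $|l|\to\infty$, a consequence of \eqref{H2}: only finitely many indices $l$ satisfy $D\big(V_l,\mathcal{G}\setminus\{2^l\}\big)\le R_2$, so one may enlarge $R_2$ to a radius $R_3$ with $V_l\cap B_{R_3}(2^l)^c=\emptyset$ for each such $l$. If $q$ is one of these indices, then $I_3^{p,q}(R_3)=0$; otherwise $D\big(V_q,\mathcal{G}\setminus\{2^q\}\big)>R_2$, so every $y\in V_q$ satisfies $|y-2^p|>R_2$ for all $p\ne q$, and the change of variables $x=y-2^p$ yields $I_3^{p,q}(R_3)\le\|f_\infty\|_{L^r(B_{R_2}^c)}<\varepsilon/3$. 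Taking $R^*=R_3$ proves the first assertion, and the stated limit follows by letting $\varepsilon\to0$. There is no real obstacle here: the only point to watch is that $R_1$, $R_2$, $R_3$ can be chosen independently of the pair $(p,q)$, which is precisely what \eqref{H1} (finitely many cells near the origin, each of finite volume) and \eqref{H2} (the generators, and hence the cells, escape to infinity) guarantee, exactly as in the case $r=2$.
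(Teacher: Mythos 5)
Your argument is a faithful translation of the paper's proof of Proposition \ref{uniformmajoration} from $L^2$ to $L^r$, using exactly the same three-term triangle-inequality split and the same geometric facts (\eqref{H1} for finite cell volume, \eqref{H2} for the escape to infinity of the generators), which is precisely the adaptation the paper indicates is intended for the $\mathcal{B}^r$ case. The proof is correct.
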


\subsection{Existence results in the periodic problem}
\label{section2Br}

In this section, we next turn to the study of the general equation \eqref{equationref} in a periodic background, that is for $a = a_{per}$ periodic, when $f \in \left(\mathcal{B}^r(\mathbb{R}^d)\right)^d$. In this case, our aim is to use asymptotic behaviors \eqref{estimgreen1}, \eqref{estimgreen3} and \eqref{estimgreen2} of the Green function associated with the operator $-\operatorname{div}(a_{per} \nabla .)$ on $\mathbb{R}^d$. Exactly as in the case $r=2$, we first use the asymptotic behavior of the Green function to explicit a solution such that $\displaystyle \sup_{p \in \mathcal{P}} \|\nabla u \|_{L^r(V_p)} < \infty$ and we next prove that this function satisfies $\nabla u \in \left(\mathcal{B}^r(\mathbb{R}^d)\right)^d$ as soon as $f \in \left(\mathcal{B}^r(\mathbb{R}^d)\right)^d$.

\begin{lemme}
\label{lemmeexistenceperiodiqueBP}
Let $f$ such that $\displaystyle \sup_{p \in \mathcal{P}} \|f\|_{L^r(V_p)} < \infty$, then there exists a function $u$ in $W^{1,r}_{loc}(\mathbb{R}^d)$ solution to \eqref{eqper1} such that $\displaystyle \sup_{p \in \mathcal{P}} \|\nabla u \|_{L^r(V_p)} < \infty$ and we have the following estimate : 
\begin{equation}
\label{estimperBP}
\sup_{p\in \mathcal{P}} \|\nabla u\|_{L^r(\textit{V}_p)} \leq C\sup_{p\in \mathcal{P}} \|f\|_{L^r(\textit{V}_p)},
\end{equation}
where $C>0$ is a constant independent of $f$ and $u$.
In addition, if $r<d$, this solution $u$ can be defined by : 
\begin{equation}
    \label{defuintegralBp}
    u = \int_{\mathbb{R}^d} \nabla_y G_{per}(.,y).f(y) dy.
\end{equation}
\end{lemme}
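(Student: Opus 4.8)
The plan is to mimic, almost verbatim, the proof of Lemma \ref{lemmeexistenceperiodique1}, making two systematic substitutions: the Cauchy--Schwarz inequality is replaced everywhere by the H\"older inequality with conjugate exponent $r' = \frac{r}{r-1}$, and the $L^2$-existence result of Lemma \ref{reultatexistenceL2} is replaced by its $L^r$-analogue, which is exactly the $L^p$-theory of M. Avellaneda and F.H. Lin in \cite{avellaneda1991lp}: for $g \in (L^r(\mathbb{R}^d))^d$ the function $\int_{\mathbb{R}^d}\nabla_y G_{per}(\cdot,y).g(y)\,dy$ solves \eqref{eqper1} (with $f$ replaced by $g$) in $W^{1,r}_{loc}(\mathbb{R}^d)$ with gradient in $(L^r(\mathbb{R}^d))^d$ and norm controlled by $\|g\|_{L^r(\mathbb{R}^d)}$. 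The decomposition $f = \sum_{q\in\mathcal{P}}f\,1_{V_q}$ and the truncated sums $U_N = \sum_{|q|\le N}u_q$, $S_N = \nabla U_N$, with $u_q = \int_{\mathbb{R}^d}\nabla_y G_{per}(\cdot,y)f(y)1_{V_q}(y)\,dy$, will again be the backbone of the argument, together with the enlarged cells $W_p = W_{2^p}$ of Proposition \ref{openconstruction}, the volume bounds of Propositions \ref{volumeVx} and \ref{norm2p}, the uniform count of points per dyadic annulus (Proposition \ref{numberpointAn}), and the Green function estimates \eqref{estimgreen1}, \eqref{estimgreen3}, \eqref{estimgreen2}.

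First I would establish the convergence of the gradient series. Fixing $p\in\mathcal{P}$ and a cell $V_q$ disjoint from $W_p$, the property $|x-y|\ge C2^{|q|}$ for $x\in V_p,\ y\in V_q$ (Proposition \ref{openconstruction}), combined with \eqref{estimgreen2}, H\"older in $y$ and $|V_q|\le C2^{d|q|}$, yields $\|\nabla u_q\|_{L^r(V_p)}\le C|V_p|^{1/r}2^{-d|q|/r}\sup_{s}\|f\|_{L^r(V_s)}$, and $\sum_q 2^{-d|q|/r}<\infty$ by the annulus count; the finitely many cells meeting $W_p$ are treated directly. Hence $S_N$ converges in $(L^r_{loc}(\mathbb{R}^d))^d$, say to $T$. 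The identical computation with \eqref{estimgreen1} in place of \eqref{estimgreen2} gives $\|u_q\|_{L^r(V_p)}\le C|V_p|^{1/r}2^{(1-d/r)|q|}\sup_s\|f\|_{L^r(V_s)}$, whose series converges \emph{precisely when $r<d$}. So when $r<d$ the function series $U_N$ converges as well, its limit is the integral of \eqref{defuintegralBp}, and it solves \eqref{eqper1} by passing to the limit in $-\operatorname{div}(a_{per}\nabla U_N)=\operatorname{div}(\sum_{|q|\le N}1_{V_q}f)$. When $r\ge d$ the function series need not converge, and I would argue as in Remark \ref{remarkcorrectordim2}: each $S_N$ is a gradient, so $\partial_iT_j=\partial_jT_i$ in $\mathcal{D}'(\mathbb{R}^d)$; hence $T=\nabla u$ for some $u$, which belongs to $W^{1,r}_{loc}(\mathbb{R}^d)$ by \cite{deny1954espaces} and solves \eqref{eqper1} by the same limiting procedure.

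To obtain the estimate \eqref{estimperBP} I would reproduce Step 2 of the proof of Lemma \ref{lemmeexistenceperiodique1}. Fix $p$. Since $V_p\subset W_p$ and $D(V_p,\partial W_p)\ge C2^{|p|}$ (Proposition \ref{openconstruction}), every point of $\mathbb{R}^d\setminus W_p$ is at distance $\ge C2^{|p|}$ from $V_p$, so $I_{2,p}(x)=\int_{\mathbb{R}^d\setminus W_p}\nabla_y G_{per}(x,y)f(y)\,dy$ converges for $x\in V_p$ and $\nabla I_{2,p}(x)=\int_{\mathbb{R}^d\setminus W_p}\nabla_x\nabla_y G_{per}(x,y)f(y)\,dy$; meanwhile $I_{1,p}=\int_{W_p}\nabla_y G_{per}(\cdot,y)f(y)\,dy$ is well defined because $f1_{W_p}\in(L^r(\mathbb{R}^d))^d$ is compactly supported, and one checks (a small bookkeeping with the cells cut by $\partial W_p$) that $\nabla u=\nabla I_{1,p}+\nabla I_{2,p}$ on $V_p$. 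For the first term: $I_{1,p}$ solves $-\operatorname{div}(a_{per}\nabla I_{1,p})=\operatorname{div}(f1_{W_p})$; since $W_p\subset Q_{C2^{|p|}}(2^p)$, a Young convolution bound gives $\|I_{1,p}\|_{L^r(W_p)}\le C2^{|p|}\|f\|_{L^r(W_p)}$, and covering $V_p$ by a bounded number of balls of radius $\sim 2^{|p|}$ whose doubles lie in $W_p$, the interior $L^r$ elliptic estimate (e.g. \cite{MR3099262}) turns this into $\|\nabla I_{1,p}\|_{L^r(V_p)}\le C\|f\|_{L^r(W_p)}\le C\sup_s\|f\|_{L^r(V_s)}$, the last step using that $W_p$ meets a bounded number of cells. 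For the second term: splitting over the cells $V_q\setminus W_p$, applying \eqref{estimgreen2}, H\"older in $y$ and the volume bounds exactly as in the proof of \eqref{estimunifnablaI2p}, and summing separately over $|q|<|p|$ and $|q|\ge|p|$ (each geometric series bounded by $C2^{-d|p|/r}$) gives $\|\nabla I_{2,p}\|_{L^\infty(V_p)}\le C2^{-d|p|/r}\sup_s\|f\|_{L^r(V_s)}$, hence $\|\nabla I_{2,p}\|_{L^r(V_p)}\le|V_p|^{1/r}\|\nabla I_{2,p}\|_{L^\infty(V_p)}\le C\sup_s\|f\|_{L^r(V_s)}$. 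Adding the two bounds and taking the supremum over $p\in\mathcal{P}$ gives \eqref{estimperBP}.

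The step I expect to be the main obstacle is the case $r\ge d$: there the Riesz-potential representation of $u$ fails to converge (the exponent arithmetic gives $1-d/r\ge 0$), so Lemma \ref{lemmeexistenceperiodique1} cannot simply be copied — one has to work with the gradient series alone, argue that its $(L^r_{loc})^d$-limit is a genuine gradient, and then verify that the $I_{1,p}$/$I_{2,p}$ decomposition of $\nabla u$ still makes sense (the point being that $I_{1,p}$ only involves the compactly supported piece $f1_{W_p}$, for which the Avellaneda--Lin result applies directly, whereas $\nabla I_{2,p}$ is an absolutely convergent integral for every $r$). Everything else is a routine adaptation of the $r=2$ computations, with powers of $2^{|p|}$ now carrying the exponent $1/r$ instead of $1/2$.
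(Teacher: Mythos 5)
Your overall strategy is the same as the paper's: decompose $f$ over the cells $V_q$, form $u_q$, $U_N$ and $S_N$; use the $|x-y|^{-d}$ decay of $\nabla_x\nabla_y G_{per}$, H\"older, and the dyadic annulus count to show $S_N$ converges in $(L^r_{\mathrm{loc}})^d$; observe that the function series converges only when $r<d$ (yielding \eqref{defuintegralBp}), and for $r\ge d$ verify $T$ is a gradient via $\partial_i T_j = \partial_j T_i$; then obtain \eqref{estimperBP} by a near/far split relative to $W_p$. Your exponent bookkeeping ($2^{-d|q|/r}$ for the gradient series, $2^{|q|(1-d/r)}$ for the function series) is exactly the paper's.

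Where you diverge is in the estimate step, on two counts. First, you use the $r=2$-style \emph{spatial} split $I_{1,p}=\int_{W_p}$, $I_{2,p}=\int_{\mathbb{R}^d\setminus W_p}$, which forces the boundary-cell bookkeeping you flag, whereas the paper for general $r$ splits the \emph{series} over indices $q\in\mathcal{P}_p=\{q: V_q\cap W_p\neq\emptyset\}$ versus $q\notin\mathcal{P}_p$; the latter is just a rearrangement of an absolutely convergent series, so there is nothing extra to check. Second, for $\nabla I_{1,p}$ you transplant the $r=2$ argument: a Young convolution bound $\|I_{1,p}\|_{L^r(W_p)}\lesssim 2^{|p|}\|f\|_{L^r(W_p)}$ followed by an interior $L^r$ elliptic estimate at scale $2^{|p|}$. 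The paper instead notes that $f\sum_{q\in\mathcal{P}_p}1_{V_q}$ is a finite sum, hence lies in $L^r(\mathbb{R}^d)$ with norm $\lesssim\sup_s\|f\|_{L^r(V_s)}$, and invokes the global $L^r$ continuity $\|\nabla I_{1,p}\|_{L^r(\mathbb{R}^d)}\le C\,\|f\sum_{q\in\mathcal{P}_p}1_{V_q}\|_{L^r(\mathbb{R}^d)}$ of \cite[Theorem A]{avellaneda1991lp} directly. Your route gives the same conclusion, but be careful with the citation: for $r\neq 2$ the interior estimate $\|\nabla v\|_{L^r(B_R)}\lesssim R^{-1}\|v\|_{L^r(B_{2R})}+\|g\|_{L^r(B_{2R})}$ with a constant uniform over all scales $R\sim 2^{|p|}\to\infty$ is \emph{not} the elementary Caccioppoli inequality of \cite{MR3099262} (the na\"ive rescaling blows up the H\"older seminorm of $a_{per}$); for a periodic H\"older coefficient this scale-uniformity is itself a product of the Avellaneda--Lin compactness theory. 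So you are in effect calling the same deep input as the paper, only after an extra Young step that the paper's index-based split makes unnecessary. Everything else in your proposal coincides with the paper's proof.
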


\begin{proof}
The method used here is similar to that employed for the proof of Lemma \ref{reultatexistenceL2}. We first introduce the Green function $G_{per}$ in order to find a solution to \eqref{eqper1} in $W^{1,r}_{loc}(\mathbb{R}^d)$ and we show this solution satisfies \eqref{estimperBP}.

\textit{Step 1 :  Definition of a solution $u$.}

 In the sequel the letter $C$ denotes a generic constant that may change for one line to another. To start with, for each $q \in \mathcal{P}$, we define the function : 
\begin{equation}
\label{defukBP}
    u_q = \int_{\mathbb{R}^d} \nabla_y G_{per}(.,y) f(y)1_{V_q}(y) dy. 
\end{equation}
The results of \cite{avellaneda1991lp} ensure this function is a solution in $H^1_{loc}(\mathbb{R}^d)$ to
\begin{equation}
    - \operatorname{div}(a_{per}\nabla u_q) = \operatorname{div}(f1_{V_q}) \quad \text{in } \mathbb{R}^d,
\end{equation}
such that $\nabla u_q \in \left(L^r(\mathbb{R}^d)\right)^d$. In particular, \cite[Theorem A]{avellaneda1991lp} gives the existence of a constant $C>0$ independent of $q$ such that 
\begin{equation}
    \|\nabla u_q\|_{L^r(\mathbb{R}^d)} \leq C \|\nabla f 1_{V_q}\|_{L^r(\mathbb{R}^d)}.
\end{equation}
For every $N \in \mathbb{N^*}$, we next define the two following sequences :
\begin{equation}
\label{SeriesUNBP}
 U_N = \sum_{q \in \mathcal{P}, \ |q|\leq N}  u_q,   
\end{equation}
and
\begin{equation}
\label{SeriesSNBP}
S_N = \nabla U_N = \sum_{q \in \mathcal{P}, \ |q|\leq N} \nabla u_q.    
\end{equation}
By linearity, we have that, for every $N \in \mathbb{N}^*$, $U_N$ is a solution to 
\begin{equation}
\label{equation_U_N_Br}
    - \operatorname{div}(a_{per} \nabla U_N) = \operatorname{div}\left(f \sum_{q \in \mathcal{P}, \ |q|\leq N} 1_{V_q}\right) \quad \text{in } \mathbb{R}^d.
\end{equation}
Here, our aim is to show that $S_N$ admits a limit in $\left(L^r_{loc}(\mathbb{R}^d)\right)^d$ and its limit is the gradient of a solution $u$ to \eqref{eqper1}. For every $q \in \mathcal{P}$, we introduce a set $W_q$ and five constants $C_1$, $C_2$, $C_3$, $C_4$ and $C_5$ independent of $q$ and defined by Proposition \ref{openconstruction} such that :  
\begin{enumerate}[label=(\roman*)]
\item $\textit{V}_q \subset{W}_q$, \label{i)Br}  
\item $Diam(W_q) \leq C_1 2^{|q|}$, and $D(\textit{V}_q,\partial W_q) \geq C_2 2^{|q|},$ \label{ii)Br} 
\item  $\forall s \in \mathcal{P} \setminus{\{q\}}$, $Dist(2^s, W_q) \geq C_3 2^{|q|}$,  \label{iii)Br} 
\item $\sharp \left \{ s \in \mathcal{P} \middle| V_s \cap W_q \neq \emptyset \right\} \leq C_4$, \label{iv)Br} 
\item $\forall s \in \mathcal{P} \setminus{\{q\}}$, $D\left(V_q, V_s\setminus{W_q}\right) \geq C_5 2^{|s|}.$ 
\label{v)Br}
\end{enumerate}

For every $q \in \mathcal{P}$ such that $W_p \cap~V_q =~\emptyset$, we use the asymptotic behavior estimate \eqref{estimgreen2} of the Green function $G_{per}$ and the Holder inequality to obtain : 
\begin{align*}
     \|\nabla u_q\|_{L^r(V_p)} & \leq C \sup_{q \in \mathcal{P}} \|f \|_{L^r(V_q)}  \left(\int_{V_p} \left(\int_{V_q}  \frac{1}{|x-y|^{r'd}}dy\right)^{r/r'} \ dx \right)^{1/r},
\end{align*}
where $\displaystyle r' = r/(r-1)$. We next use Property \ref{v)Br} of $W_p$ and the fact that $\left|V_q\right|\leq C 2^{|q|}$ (consequence of Propositions \ref{volumeVx} and \ref{norm2p}). We therefore have :
\begin{align*}
   \|\nabla u_q\|_{L^r(V_p)}  & \leq C \sup_{q \in \mathcal{P}} \|f \|_{L^r(V_q)}  \left(\int_{V_p}  \left(\frac{\left|V_q\right|}{2^{|q|r'd}}\right)^{r/r'} \ dx \right)^{1/r} \\
    & \leq C \sup_{q \in \mathcal{P}} \|f \|_{L^r(V_q)} \left|V_p\right|^{1/r}  \frac{1}{2^{|q|d(r'-1)/r'}}.
\end{align*}

Since $r'/(r'-1) = r$, we conclude that : 
\begin{equation}
\label{estimeenablau_qBp}
  \|\nabla u_q\|_{L^r(V_p)} \leq C \sup_{q \in \mathcal{P}} \|f \|_{L^r(V_q)}\left|V_p\right|^{1/r}  \frac{1}{2^{|q|d/r}}.
\end{equation}
In addition, Property \ref{iv)Br} ensures that the number of $q$ such that $W_p \cap V_q \neq \emptyset$ is bounded uniformly with respect to $q$. Since the sequence $\displaystyle \left(\frac{1}{2^{|q|d/r}}\right)_{q \in \mathbb{Z}^d}$ is summable, we finally deduce that : 
$$ \lim_{N \rightarrow \infty} \sum_{q \in \mathcal{P}, \ |q|\leq N}  \|\nabla u_q\|_{L^r(V_p)} < \infty.$$
Therefore, we have established the absolute convergence of $S_N$ in $\left(L^r(V_p)\right)^d$ for every $p \in \mathcal{P}$, that is the convergence of $S_N$ in $\left(L^r_{loc}(\mathbb{R}^d)\right)^d$. We denote by $T$ the limit of $S_N$ in $\left(L^r_{loc}(\mathbb{R}^d)\right)^d$. Letting $N$ go to the infinity in \eqref{equation_U_N_Br}, we obtain : 
$$- \operatorname{div}\left(a_{per} T \right) = \operatorname{div}(f) \quad \text{in }\mathbb{R}^d.$$
We next prove that there exists $u$ in $\mathcal{D}'(\mathbb{R}^d)$ such that $T = \nabla u$ showing that, for every $i,j$ in $\{1,...,d\}$, we have $\partial_i T_j = \partial_j T_j$ in $\mathcal{D}'(\mathbb{R}^d)$. We denote by $\langle .,. \rangle_{\mathcal{D}, \mathcal{D}'}$ the duality bracket in $\mathcal{D}'(\mathbb{R}^d)$ and we have : 
\begin{align*}
\langle \partial_i T_j , \varphi \rangle_{\mathcal{D}, \mathcal{D}'} & = - \langle  T_j , \partial_i \varphi \rangle_{\mathcal{D}, \mathcal{D}'} \\
& = - \sum_{q \in \mathcal{P}} \langle \partial_j u_q, \partial_i \varphi \rangle_{\mathcal{D}, \mathcal{D}'}.
\end{align*}
The last equality is justified by the normal convergence of $S_N$ in $L^r_{loc}(\mathbb{R}^d)$. We next use the Schwarz lemma and we have : 
\begin{align*}
\langle \partial_i T_j , \varphi \rangle_{\mathcal{D}, \mathcal{D}'} & = \sum_{q \in \mathcal{P}} \langle \partial_i \partial_j u_q,  \varphi \rangle_{\mathcal{D}, \mathcal{D}'} = \sum_{q \in \mathcal{P}} \langle \partial_j \partial_i u_q,  \varphi \rangle_{\mathcal{D}, \mathcal{D}'} \\
& = - \sum_{q \in \mathcal{P}} \langle  \partial_i u_q, \partial_j \varphi \rangle_{\mathcal{D}, \mathcal{D}'} = \langle \partial_j T_i , \varphi \rangle_{\mathcal{D}, \mathcal{D}'}.
\end{align*}
Finally, we obtain that $\partial_i T_j = \partial_j T_j$ and therefore, there exists $u \in \mathcal{D}'(\mathbb{R}^d)$ such that $T = \nabla u$. In addition, $u$ is a solution to \eqref{eqper1} in $\mathcal{D}'(\mathbb{R}^d)$. Finally, since $\nabla u$ belongs to $L^r_{loc}(\mathbb{R}^d)^d$, the result of \cite[corollary 2.1]{deny1954espaces} ensures that $u \in W^{1,r}_{loc}(\mathbb{R}^d)$.  

\textit{Step 2 :  The case $r<d$}

 In this step only, we assume that $r<d$. Our aim here is to show that the sequence $U_N$ is also convergent in $L^r_{loc}(\mathbb{R}^d)$. We use again the behavior of the Green function and for every $q \in \mathcal{P}$ such that $W_p \cap~V_q =~\emptyset$, we have : 
 
 \begin{align*}
     \|u_q\|_{L^r(V_p)} & \leq C \sup_{q \in \mathcal{P}} \|f \|_{L^r(V_q)}  \left(\int_{V_p} \left(\int_{V_q}  \frac{1}{|x-y|^{r'(d-1)}}dy\right)^{r/r'} \ dx \right)^{1/r} \\
    & \leq C \sup_{q \in \mathcal{P}} \|f \|_{L^r(V_q)} \left|V_p\right|^{1/r}  \frac{1}{2^{|q|((r'-1)d/r'-1)}}.
\end{align*}
In addition, we remark that $(r'-1)/r'=1/r$ and we obtain :
$$\|u_q\|_{L^r(V_p)}  \leq C \sup_{q \in \mathcal{P}} \|f \|_{L^r(V_q)} \left|V_p\right|^{1/r}  \frac{1}{2^{|q|(d/r-1)}}.$$
Since $r<d$, we have $d/r >1$ and we deduce that :
$$ \lim_{N \rightarrow \infty} \sum_{q \in \mathcal{P}, \ |q|\leq N}  \| u_q\|_{L^r(V_p)} < \infty.$$
Therefore, we have established the convergence of $U_N$ in $L^p_{loc}(\mathbb{R}^d)$. In addition, using the uniqueness of the limit in $\mathcal{D}'(\mathbb{R}^d)$, we deduce that the function $u$ defined in the first step is actually equal to $\displaystyle \lim_{N \rightarrow \infty} U_N = \int_{\mathbb{R}^d} \nabla_y G_{per}(.,y) f(y) dy$.

\textit{Step 3 :  Proof of estimate \eqref{estimperBP}}

Next, we have to establish estimate \eqref{estimperBP}. Let $p$ be in $\mathcal{P}$, we start by splitting $\nabla u$ in two parts : 
\begin{align*}
\nabla u & = \displaystyle \sum_{\substack{q \in \mathcal{P},\\ V_q \cap W_p \neq \emptyset}} \nabla u_q + \sum_{\substack{q \in \mathcal{P},\\ V_q \cap W_p = \emptyset}} \nabla u_q  \\
& =  \nabla I_{1,p}(x) + \nabla I_{2,p}(x).
\end{align*}
We denote $\mathcal{P}_p$ the set of indices $q\in \mathcal{P}$ such that $V_q \cap W_p \neq \emptyset$. First of all, since $\mathcal{P}_p$ is finite according to Property \ref{iv)Br}, the function $\displaystyle f \sum_{\substack{q \in \mathcal{P}_{p}}} 1_{V_q}$ belongs to $\left(L^{r}(\mathbb{R}^d)\right)^d$. The results of \cite{avellaneda1991lp} therefore ensure that $I_{1,p}$ is actually a solution in $\mathcal{D}'(\mathbb{R}^d)$ to : 
\begin{equation}
    -\operatorname{div}\left(a_{per} \nabla I_{1,p}\right) = \operatorname{div}\left(f \sum_{\substack{q \in \mathcal{P}_{p}}} 1_{V_q}\right) \quad \text{in } \mathbb{R}^d, 
\end{equation}
such that $\nabla I_{1,p} \in L^r(\mathbb{R}^d)$ and there exists a constant $C>0$ independent of $p$ and $f$ such that : 
\begin{equation}
  \left\|\nabla I_{1,p}\right\|_{L^r(V_p)} \leq  \|\nabla I_{1,p}\|_{L^r(\mathbb{R}^d)} \leq C \left\| f \sum_{\substack{q \in \mathcal{P}_{p}}} 1_{V_q} \right\|_{L^r(\mathbb{R}^d)}.
\end{equation}
Finally, we use a triangle inequality and Property \ref{iv)Br} of $W_p$ to conclude there exists $C>0$ independent of $p$
such that : 
\begin{equation}
\label{estimJ_1Bp}
   \|\nabla I_{1,p} \|_{L^r(V_p)} \leq C \sup_{q \in \mathcal{P}} \|f\|_{L^r(V_q)}.
\end{equation}
Next, for $\nabla I_{2,p}$ we proceed exactly as in the proof of lemma \ref{lemmeexistenceperiodique1} (see the proof of estimate \eqref{estimunifnablaI2p} for details) using Holder inequalities, the asymptotic behavior of $G_{per}$ and properties of $W_p$ in order to show the existence of $C>0$ such that : 
$$ \| \nabla I_{2,p} \|_{L^r(V_p)} \leq  C \sup_{q \in \mathcal{P}} \|f \|_{L^r(V_q)}\left|V_p\right|^{1/r}  \frac{1}{2^{|p|d/r}}.$$
In addition, since $\left|V_p\right| \leq C 2^{|p|d}$, we obtain : 
\begin{equation}
\label{estimJ_2Bp}
  \| \nabla I_{2,p} \|_{L^r(V_p)} \leq  C \sup_{q \in \mathcal{P}} \|f \|_{L^r(V_q)}.  
\end{equation}
Finally we use a triangle inequality and estimates \eqref{estimJ_1Bp} and \eqref{estimJ_2Bp} to conclude that : 
$$ \| \nabla u \|_{L^r(V_p)} \leq  \| \nabla I_{1,p} \|_{L^r(V_p)} + \| \nabla I_{2,p} \|_{L^r(V_p)} \leq C \sup_{q \in \mathcal{P}} \|f \|_{L^r(V_q)}.$$
We have established \eqref{estimperBP}.
\end{proof}

Now that we have defined a particular solution $u$ in $W^{1,r}_{loc}(\mathbb{R}^d)$ to \eqref{eqper1}, we have to show that the function $\nabla u$ belongs to $\left(\mathcal{B}^r(\mathbb{R}^d)\right)^d$ as soon as $f \in \left(\mathcal{B}^r(\mathbb{R}^d)\right)^d$. This result is given in the following lemma.

\begin{lemme}
\label{existenceperBP2}
Let $f\in \left(\mathcal{B}^r(\mathbb{R}^d)\right)^d$, then the function $u$ defined in Lemma \ref{lemmeexistenceperiodiqueBP} satisfies $\nabla u \in \left(\mathcal{B}^r(\mathbb{R}^d)\right)^d$. 
\end{lemme}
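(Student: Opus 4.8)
The plan is to mirror the proof of Lemma~\ref{existenceper} (the case $r=2$) step by step, replacing $L^2$-norms by $L^r$-norms and Cauchy--Schwarz inequalities by H\"older inequalities throughout. Concretely, since $f\in\left(\mathcal{B}^r(\mathbb{R}^d)\right)^d$, let $f_\infty\in\left(L^r(\mathbb{R}^d)\right)^d$ be the associated limit function and, using Step~1 of Lemma~\ref{lemmeexistenceperiodiqueBP} (or Lemma~\ref{reultatexistenceL2} in the case $r<d$), define $u_\infty$ solving $-\operatorname{div}(a_{per}\nabla u_\infty)=\operatorname{div}(f_\infty)$ on $\mathbb{R}^d$ with $\nabla u_\infty\in\left(L^r(\mathbb{R}^d)\right)^d$. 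By periodicity of $a_{per}$, subtracting the $2^p$-translation of this equation from \eqref{eqper1} gives that $u_p:=u-\tau_{-p}u_\infty$ solves $-\operatorname{div}\left(a_{per}\nabla u_p\right)=\operatorname{div}\left(f-\tau_{-p}f_\infty\right)=\operatorname{div}(f_p)$. The goal is then to prove $\displaystyle\lim_{|p|\to\infty}\|\nabla u_p\|_{L^r(V_p)}=0$, which is exactly the statement $\nabla u\in\left(\mathcal{B}^r(\mathbb{R}^d)\right)^d$.

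First I would fix $\varepsilon>0$ and invoke Proposition~\ref{uniformmajorationBr} to get a radius $R>0$ such that $\left(\int_{V_q\cap B_R(2^q)^c}|f-\tau_{-p}f_\infty|^r\right)^{1/r}<\varepsilon$ for all $p,q\in\mathcal{P}$. For each $p$, introduce the set $W_p$ of Proposition~\ref{openconstruction} and split $u_p(x)=\int_{W_p}\nabla_y G_{per}(x,y)f_p(y)\,dy+\int_{\mathbb{R}^d\setminus W_p}\nabla_y G_{per}(x,y)f_p(y)\,dy=I_{1,p}(x)+I_{2,p}(x)$, exactly as in Lemma~\ref{existenceper}. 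For $\nabla I_{1,p}$, the elliptic-regularity argument of Lemma~\ref{lemmeexistenceperiodique1} (now in $L^r$, using \cite[Theorem A]{avellaneda1991lp} and interior $W^{1,r}$ estimates) yields $\|\nabla I_{1,p}\|_{L^r(V_p)}\le C\|f_p\|_{L^r(W_p)}$; writing $W_p\setminus V_p$ as a bounded union of sets $V_q\cap W_p$ with $q\ne p$, Property~(iii) of $W_p$ forces $D(2^q,W_p)\ge C2^{|p|}$ so that $V_q\cap W_p\subset V_q\cap B_R(2^q)^c$ once $2^{|p|}>R$, and Property~(iv) bounds the number of such $q$; combined with $f\in\mathcal{B}^r$ on $V_p$ itself, this gives $\|f_p\|_{L^r(W_p)}\to 0$ as $|p|\to\infty$. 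For $\nabla I_{2,p}$, split the integral over $\left(V_q\setminus W_p\right)\cap B_R(2^q)^c$ and over $\left(V_q\setminus W_p\right)\cap B_R(2^q)$, giving $J_{1,p}+J_{2,p}$; using estimate~\eqref{estimgreen2}, H\"older's inequality, the geometric bounds $|V_q|\le C2^{d|q|}$ (Propositions~\ref{volumeVx},~\ref{norm2p}), Property~(ii) of $W_p$ ($D(V_q\setminus W_p,V_p)\ge C2^{|p|}$), and the summability of $\left(2^{-|q|d/r}\right)_q$ exactly as in Step~1 of Lemma~\ref{lemmeexistenceperiodiqueBP}, one obtains $\|J_{1,p}\|_{L^\infty(V_p)}\le C\,2^{-d|p|/r}\,\varepsilon$ and $\|J_{2,p}\|_{L^\infty(V_p)}\le C\,R^d\,|p|\,2^{-|p|d}\sup_q\|f_p\|_{L^r(V_q)}$, hence (multiplying by $|V_p|^{1/r}\le C2^{d|p|/r}$) $\|\nabla I_{2,p}\|_{L^r(V_p)}\le C\varepsilon+CR^d|p|2^{-d|p|/r}\le C\varepsilon$ for $|p|$ large. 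Letting $\varepsilon\to 0$ and combining by the triangle inequality yields $\lim_{|p|\to\infty}\|\nabla u_p\|_{L^r(V_p)}=0$, which is the claim. Uniqueness (up to a constant) of the solution with $\nabla u\in\left(\mathcal{B}^r(\mathbb{R}^d)\right)^d$ follows from the uniqueness results of Section~\ref{section3Br} (the analogue of Corollary~\ref{remarkuniqueness}).

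The main obstacle is the same one flagged in Remark~\ref{remarkcorrectordim2} for $r=2$ but more severe here: when $r\ge d$, the kernel estimate \eqref{estimgreen1} is \emph{not} integrable enough to make the series $U_N=\sum_q u_q$ itself converge, so $u$ is only well-defined through its gradient $T=\lim_N S_N$, and one must be careful that all the manipulations above are legitimate at the level of $\nabla u=T$ rather than of $u$. This is precisely why Lemma~\ref{lemmeexistenceperiodiqueBP} was phrased so that $u$ is constructed via $S_N$ in general and only via \eqref{defuintegralBp} when $r<d$; the present proof therefore carries out the whole splitting argument on $\nabla u_p$ directly, and the fact that $\partial_i T_j=\partial_j T_i$ (established in Step~1 of Lemma~\ref{lemmeexistenceperiodiqueBP}) is what guarantees $T$ is a genuine gradient throughout. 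The only other point requiring attention is that the convolution/regularity estimates in $L^r$ used for $\nabla I_{1,p}$ are those of \cite{avellaneda1991lp} rather than the Hilbertian Caccioppoli argument, but these are exactly the tools already cited in Lemma~\ref{lemmeexistenceperiodiqueBP}, so no new ingredient is needed.
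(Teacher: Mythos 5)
Your proposal is correct and follows precisely the route the paper sketches for this lemma: define $\nabla u_\infty$ from $f_\infty$, use periodicity of $a_{per}$ to translate the equation, split $\nabla u_p = \nabla I_{1,p}+\nabla I_{2,p}$ via the set $W_p$ of Proposition~\ref{openconstruction}, invoke the Avellaneda--Lin $L^r$ estimate to bound $\nabla I_{1,p}$ in terms of $\|f_p\|_{L^r(W_p)}$, and control $\nabla I_{2,p}$ through the $|x-y|^{-d}$ decay of $\nabla_x\nabla_y G_{per}$, H\"older, and Proposition~\ref{uniformmajorationBr} exactly as in the $r=2$ proof of Lemma~\ref{existenceper}. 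The only blemish is a small arithmetic slip at the end — multiplying $\|J_{2,p}\|_{L^\infty(V_p)}\lesssim R^{d/r'}|p|\,2^{-d|p|}$ by $|V_p|^{1/r}\lesssim 2^{d|p|/r}$ yields $2^{-d|p|/r'}$ with the conjugate exponent $r'$, not $2^{-d|p|/r}$ as written — which still tends to $0$ as $|p|\to\infty$, so the conclusion is unaffected; and your precaution about carrying the entire splitting argument on $\nabla u$ rather than $u$ when $r\ge d$ is exactly the point one has to watch, since in that regime $u$ itself is only defined through $S_N$.
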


The proof of this result being extremely similar to the proof of Lemma \ref{existenceper} in the case $r=2$, we choose here not to detail it. The idea is to show that $\nabla u$ belongs to $\left(\mathcal{B}^r(\mathbb{R}^d)\right)^d$. In order to find its limit function in $\left(L^{r}(\mathbb{R}^d)\right)^d$ (denoted by $\nabla u_{\infty}$), it is possible to define : 
$$ \nabla u_{\infty} = \int_{\mathbb{R}^d} \nabla_y G_{per}(.,y) f_{\infty}(y) dy,$$
and to prove the convergence of $\nabla u$ to $\nabla u_{\infty}$ in the sense of definition \eqref{spacedefBr}. To this aim, the main idea is to use Proposition \ref{uniformmajorationBr} in order to prove several estimates similar to estimates \eqref{estimunifJ1} and \eqref{estimunifJ2} established in the proof of Lemma \ref{existenceper}. 

\subsection{Uniqueness results}
\label{section3Br}

We next turn to the uniqueness of solution $u$ to \eqref{equationref} such that $\nabla u \in \left(\mathcal{B}^r(\mathbb{R}^d)\right)^d$. In our approach, such a result of uniqueness is actually essential to prove the existence of a corrector stated in Theorem \ref{theoreme4}. Indeed, as we shall see in the proof of Lemma \ref{lemme2Br}, the uniqueness is key to establish the continuity of the operator $\nabla \left(- \operatorname{div}(a \nabla .) \right)
^{-1} \operatorname{div}$ from $\left(\mathcal{B}^r(\mathbb{R}^d)\right)^d$ to $\left(\mathcal{B}^r(\mathbb{R}^d)\right)^d$. By contrast with the uniqueness results established in Section \ref{Section4} for the space $\mathcal{B}^2(\mathbb{R}^d)$, the topology of $L^r$ used to define the space $\mathcal{B}^r(\mathbb{R}^d)$ for $r\neq 2$ does not allow us to use some "Hilbertian" techniques here. In order to overcome this difficulty, our idea consists to mainly use both the structure \eqref{coefficientform} of the coefficient $a$ and the results established in the previous section in the case $a=a_{per}$. To this aim, we need to first introduce a uniqueness result in the case $a=a_{per}$ given in Lemma \ref{uniquess_periodic_Br} in order to apply it in the proof of Lemma \ref{lemme2Br} regarding the general case when $a\neq a_{per}$. 

To start with, we introduce two technical lemmas related to the periodic case when $\Tilde{a}=0$. The first lemma is a result of uniqueness and the second establishes a property similar to a Gagliardo-Nirenberg-Sobolev inequality satisfied by the solutions to equation \eqref{eqper1}. 

\begin{lemme}
\label{uniquess_periodic_Br}
Let $r>1$ and $u$ be a solution in $W^{1,r}_{loc}(\mathbb{R}^d)$ to : 
\begin{equation}
    \label{equationhomogeneperiodique}
    - \operatorname{div}(a_{per} \nabla u) = 0 \quad \text{in } \mathbb{R}^d,
\end{equation}
such that : 
\begin{equation}
    \displaystyle \sup_{p \in \mathcal{P}} \int_{V_p} |\nabla u|^r < \infty.
\end{equation}
Then $\nabla u = 0$. 
\end{lemme}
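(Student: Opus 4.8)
The plan is to adapt the strategy of Lemma \ref{lemme1}, but since the $L^r$ norm carries no Hilbert structure the Caccioppoli/hole‑filling inequality used there is no longer available; it will be replaced by the scale‑invariant interior Lipschitz (equivalently $C^{1,\alpha}$) estimate for the periodic operator $-\operatorname{div}(a_{per}\nabla\cdot)$ due to Avellaneda and Lin \cite{avellaneda1987compactness, avellaneda1991lp}. This is precisely the tool that exploits the periodicity of $a_{per}$: it provides, for any solution of \eqref{equationhomogeneperiodique} on $B_{2R}(x_0)$, a bound $\|\nabla u\|_{L^\infty(B_R(x_0))}\le \frac{C}{R}\big(\frac{1}{|B_{2R}|}\int_{B_{2R}(x_0)}|u-c|^r\big)^{1/r}$ with $C$ depending only on $d$, $r$, the ellipticity constant and the H\"older norm of $a_{per}$, and in particular independent of $R$ and $x_0$.

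\textbf{Step 1: a growth bound for $\nabla u$.} Since $(V_p)_{p\in\mathcal P}$ is a partition of $\mathbb R^d$ and, by Corollary \ref{corollogboundVP}, the number of cells $V_p$ meeting a ball $B_R(x_0)$ is $O(\log R)$, the hypothesis $\sup_{p}\int_{V_p}|\nabla u|^r<\infty$ yields, for every $x_0\in\mathbb R^d$ and every $R$ large, $\int_{B_R(x_0)}|\nabla u|^r\le C\,(\log R)\,\sup_{p}\int_{V_p}|\nabla u|^r$.

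\textbf{Step 2: $u$ grows sub-linearly in average.} Combining Step 1 with the Poincar\'e--Wirtinger inequality in $L^r$ (whose constant scales like $R$), one gets $\big(\frac{1}{|B_{2R}|}\int_{B_{2R}(x_0)}|u-\langle u\rangle_{B_{2R}(x_0)}|^r\big)^{1/r}\le C_d\,R\,\big(\frac{1}{|B_{2R}|}\int_{B_{2R}(x_0)}|\nabla u|^r\big)^{1/r}\le C\,R^{1-d/r}(\log R)^{1/r}$, which is $o(R)$ as $R\to\infty$, uniformly in $x_0$ — and, crucially here, for \emph{every} $r>1$, so no case distinction between $r<d$, $r=d$ and $r>d$ is needed. \textbf{Step 3: conclusion.} Applying the Avellaneda--Lin interior Lipschitz estimate recalled above to the solution $u$ of \eqref{equationhomogeneperiodique} on $B_{2R}(x_0)$ with $c=\langle u\rangle_{B_{2R}(x_0)}$, and inserting the bound of Step 2, we obtain $\|\nabla u\|_{L^\infty(B_R(x_0))}\le C\,R^{-d/r}(\log R)^{1/r}\xrightarrow[R\to\infty]{}0$. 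Hence $\nabla u(x_0)=0$; since $x_0$ is arbitrary, $\nabla u\equiv 0$, as claimed.

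The main obstacle is exactly Step 3: replacing the $L^2$ argument of Lemma \ref{lemme1} by a substitute valid in $L^r$. The scale‑invariant regularity theory of Avellaneda--Lin for periodic divergence‑form operators is what supplies this, and it is available with $L^r$ data for every $r>1$; a minor point to settle is that, since $u$ is only assumed to lie in $W^{1,r}_{loc}(\mathbb R^d)$, one should first invoke standard elliptic regularity for the H\"older‑continuous coefficient $a_{per}$ to know $u\in C^{1,\alpha}_{loc}(\mathbb R^d)$, so that the pointwise value $\nabla u(x_0)$ in Step 3 makes sense. An equivalent route, closer to the structure of Section \ref{Section3}, is to first upgrade $\sup_p\int_{V_p}|\nabla u|^r<\infty$ to $\nabla u\in L^\infty(\mathbb R^d)$ by applying the same interior estimate on unit balls (using Corollary \ref{corollogboundVP} again to control $\int_{B_2(x_0)}|\nabla u|^r$ uniformly in $x_0$), then deduce the strict sub-linearity of $u$ from the Morrey-type argument of Proposition \ref{propsouslineariteBr}, and finally conclude via Step 3 exactly as above.
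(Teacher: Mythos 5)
Your proof is correct, but it is quite different from the paper's. The paper argues by differencing: it shows (via a Fubini/H\"older estimate and the De Giorgi--Nash theory) that $u-\tau_{e_i}u$ is a globally bounded solution of the homogeneous equation, invokes Moser's Liouville theorem to conclude that each $u-\tau_{e_i}u$ is constant, deduces that $\nabla u$ is $\mathbb Z^d$-periodic, and finally contradicts $\sup_p\int_{V_p}|\nabla u|^r<\infty$ using that $|V_p|\to\infty$. You instead reuse the mechanism of Lemma \ref{lemme1}: the logarithmic bound of Corollary \ref{corollogboundVP} plus the Poincar\'e--Wirtinger inequality give sublinear $L^r$-mean-oscillation growth of $u$, and the scale-invariant interior Lipschitz estimate of Avellaneda--Lin then forces $\nabla u(x_0)=0$ at every point after sending $R\to\infty$. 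Your route is more direct and, as you note, requires no case split in $r$; the paper's route is more elementary (it only uses De Giorgi--Nash--Moser, not the uniform-in-scale Avellaneda--Lin estimate) and has the structural appeal that the periodicity of $a_{per}$ is exploited explicitly through the translation trick rather than hidden inside the constant of a compactness estimate. Both need the same initial regularity upgrade (Brezis/Serrin and Schauder) to justify pointwise or higher-integrability statements starting from a $W^{1,r}_{loc}$ solution. One small point worth making explicit: the Avellaneda--Lin Lipschitz estimate is usually stated with an $L^2$ or $L^\infty$ right-hand side; the $L^r$ version for arbitrary $r>1$ that you use is obtained by composing it with the De Giorgi--Nash interior $L^\infty$ bound $\|u\|_{L^\infty(B_{3R/2})}\le C\bigl(|B_{2R}|^{-1}\int_{B_{2R}}|u-c|^r\bigr)^{1/r}$, and that intermediate step should be mentioned.
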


\begin{proof}
First of all, since $u$ is a solution in $W^{1,r}_{loc}(\mathbb{R}^d)$ to \eqref{equationhomogeneperiodique} the result of \cite[Theorem 1]{brezis2008conjecture} ensures that $u$ actually belongs to $W^{1,s}_{loc}(\mathbb{R}^d)$ for every $s< \infty$ and is therefore locally Holder continuous according to \cite[Section 5]{moser1961harnack}. Next, for every $k \in \mathbb{Z}^d$, we translate equation \eqref{equationhomogeneperiodique} by $k$ and, using the periodicity of $a_{per}$, we obtain that $u - \tau_k u$ is a solution in $W^{1,r}_{loc}(\mathbb{R}^d)$ to : 
$$-\operatorname{div}(a_{per} \nabla(u - \tau_k u)) = 0 \quad \text{in } \mathbb{R}^d.$$
We claim that for every $i \in \{1,...,d\}$, we have $ \displaystyle \sup_{p \in \mathcal{P}} \|u - \tau_{e_i} u\|_{L^r(V_p)} < \infty$. Indeed, for every $x \in \mathbb{R}^d$, we have : 
$$ u(x + e_i) - u(x)   =  \int_{0}^1 \nabla u(x + te_i).e_i dt .$$
For every $p\in \mathcal{P}$, we use the H\"older inequality and we obtain : 
\begin{align*}
   \left(\int_{V_p} \left| u(x + e_i) - u(x) \right|^r dx\right)^{1/r} & \leq C_1  \left(\int_{V_p}  \int_{0}^1 \left|\nabla u(x + te_i)\right|^r dt dx \right)^{1/r} \\
    & \stackrel{Fubini}{=} C_1 \left(\int_{0}^1  \int_{V_p} \left|\nabla u(x + te_i)\right|^r dx  dt\right)^{1/r}\\
    & \leq C_2 \sup_{p \in \mathcal{P}}  \left(\int_{\textit{V}_p}|\nabla u|^r\right)^{1/r},
\end{align*}
where $C_1$ and $C_2$ are two positive constants which depend only on $d$. Taking the supremum over all $p\in \mathcal{P}$, we obtain the expected result. We can also easily show that, for every $x_0 \in \mathbb{R}^d$, we have 
\begin{equation}
    \|u - \tau_{e_i} u\|_{W^{1,r}(B_2(x_0))} \leq C_3 \sup_{p \in \mathcal{P}} \left(\|u - \tau_{e_i} u\|_{L^r(V_p)} + \|\nabla u - \tau_{e_i} \nabla u\|_{L^r(V_p)}\right),
\end{equation}
where $C_3>0$ does not depend on $x_0$.  
We use again \cite[Theorem 1]{brezis2008conjecture} and we obtain the existence of a constant $C>0$ such that for every $x_0 \in \mathbb{R}^d$ :
\begin{equation}
    \| u - \tau_{e_i} u \|_{H^1(B_1(x_0))} \leq C \|u - \tau_{e_i} u\|_{W^{1,r}(B_2(x_0))}.
\end{equation}
Here, $C$ only depends on the ellipticity constant of $a_{per}$, on $\|a\|_{\mathcal{C}^{0,\alpha}(\mathbb{R}^d)}$, on $d$ and on $r$. Finally the De Giorgi-Nash inequality (see for example \cite[Theorem 8.13 p.176]{MR3099262}) ensures the existence of a constant $C>0$ independent of $x_0$ such that : 
$$\|u - \tau_{e_i} u\|_{L^{\infty}(B_2(x_0))} \leq C \|u - \tau_{e_i} u\|_{L^2(B_2(x_0))},$$
and we conclude that $u - \tau_{e_i} u$ is bounded in $L^{\infty}(\mathbb{R}^d)$. The results of \cite[Section 6]{moser1961harnack} therefore ensure that $u - \tau_{e_i} u$ is constant. 

We next denote by $C_i$ the constant such that $u(x+e_i) - u(x) = C_i$ for every $x\in \mathbb{R}^d$ and we fix $C = (C_i)_{i \in \{1,..,d\}}$. We have that the function $w$ defined by $w(x) = u(x) - C.x$ is periodic and, therefore, $\nabla u = \nabla w + C$ is also periodic. Since  $\displaystyle \sup_{p \in \mathcal{P}}  \int_{\textit{V}_p}|\nabla u|^r < \infty$, we finaly conclude that $\nabla u = 0$. 
\end{proof}

\begin{lemme}
\label{GNS_periodic_Br}
Assume $r<d$ and let $f\in L^r_{loc}(\mathbb{R}^d)^d$ such that $\displaystyle \sup_{p \in \mathcal{P}}  \int_{\textit{V}_p}|f|^r < \infty$. We define $\displaystyle r^* = \frac{dr}{r-d}$. Then, the function $u$ defined by \eqref{defuintegralBp} satisfies $\displaystyle \sup_{p \in \mathcal{P}}  \int_{\textit{V}_p}|u|^{r^*} < \infty$.
\end{lemme}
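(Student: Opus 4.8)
The statement to prove is Lemma \ref{GNS_periodic_Br}: for $r<d$, if $f\in L^r_{loc}(\mathbb{R}^d)^d$ with $\sup_{p}\|f\|_{L^r(V_p)}<\infty$, then the function $u$ defined by $u(x)=\int_{\mathbb{R}^d}\nabla_y G_{per}(.,y).f(y)\,dy$ (which by Lemma \ref{lemmeexistenceperiodiqueBP}, Step 2, is well-defined when $r<d$) satisfies $\sup_{p\in\mathcal{P}}\|u\|_{L^{r^*}(V_p)}<\infty$ with $r^*=\frac{dr}{d-r}$ (note the sign: the Sobolev conjugate is $\frac{dr}{d-r}$, not $\frac{dr}{r-d}$ as misprinted). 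The strategy mimics the proof of estimate \eqref{estimperBP}: fix $p\in\mathcal{P}$, introduce the set $W_p$ from Proposition \ref{openconstruction} with its five properties \ref{i)Br}--\ref{v)Br}, and split $u = I_{1,p}+I_{2,p}$ where $I_{1,p}=\int_{W_p}\nabla_y G_{per}(.,y).f(y)\,dy$ collects the (finitely many, by \ref{iv)Br}) cells meeting $W_p$, and $I_{2,p}$ is the sum over cells $V_q$ with $V_q\cap W_p=\emptyset$.

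\textbf{The near part.} For $I_{1,p}$: the function $f\mathbf{1}_{W_p}$ belongs to $L^r(\mathbb{R}^d)^d$ (finite sum of $L^r(V_q)$-pieces, using \ref{iv)Br}), so by \cite{avellaneda1991lp} the function $I_{1,p}$ solves $-\operatorname{div}(a_{per}\nabla I_{1,p})=\operatorname{div}(f\mathbf{1}_{W_p})$ with $\nabla I_{1,p}\in L^r(\mathbb{R}^d)^d$ and $\|\nabla I_{1,p}\|_{L^r(\mathbb{R}^d)}\leq C\|f\|_{L^r(W_p)}\leq C\sup_q\|f\|_{L^r(V_q)}$. Since $r<d$, the classical Gagliardo--Nirenberg--Sobolev inequality gives $\|I_{1,p}-c_p\|_{L^{r^*}(\mathbb{R}^d)}\leq C\|\nabla I_{1,p}\|_{L^r(\mathbb{R}^d)}$ for a suitable constant $c_p$; but here $I_{1,p}$ is given by the explicit integral formula built from $\nabla_y G_{per}$, which decays, so in fact $I_{1,p}\in L^{r^*}(\mathbb{R}^d)$ directly (one can also see this by a Hardy--Littlewood--Sobolev / weak-Young estimate: $|\nabla_y G_{per}(x,y)|\leq C|x-y|^{1-d}$ and $|x-y|^{1-d}\in L^{d/(d-1),\infty}$, and $\frac{1}{r}+\frac{d-1}{d}-1=\frac{1}{r^*}$, so convolution maps $L^r\to L^{r^*}$). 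Either way we get $\|I_{1,p}\|_{L^{r^*}(V_p)}\leq \|I_{1,p}\|_{L^{r^*}(\mathbb{R}^d)}\leq C\sup_q\|f\|_{L^r(V_q)}$, uniformly in $p$.

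\textbf{The far part and the main obstacle.} For $I_{2,p}$, on $V_p$ every contributing cell $V_q$ satisfies $D(V_q,V_p)\geq C_5 2^{|q|}$ by \ref{v)Br}, so for $x\in V_p$, $y\in V_q$ we have $|x-y|\geq C2^{|q|}$; then $|\nabla_y G_{per}(x,y)|\leq C|x-y|^{1-d}\leq C 2^{-|q|(d-1)}$ and, using $|V_q|\leq C2^{|q|d}$ (Propositions \ref{volumeVx}, \ref{norm2p}) together with the H\"older inequality $\|f\|_{L^1(V_q)}\leq |V_q|^{1-1/r}\|f\|_{L^r(V_q)}$, one obtains $|I_{2,p}(x)|\leq C\sup_q\|f\|_{L^r(V_q)}\sum_{q}2^{|q|d(1-1/r)}2^{-|q|(d-1)}=C\sup_q\|f\|_{L^r(V_q)}\sum_q 2^{-|q|(d/r-1)}$, and since $r<d$ the exponent $d/r-1>0$ and (using that the number of $q$ with $|q|=n$ is bounded, Proposition \ref{numberpointAn}) the series converges. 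Hence $\|I_{2,p}\|_{L^\infty(V_p)}\leq C\sup_q\|f\|_{L^r(V_q)}$, and then $\|I_{2,p}\|_{L^{r^*}(V_p)}\leq|V_p|^{1/r^*}\|I_{2,p}\|_{L^\infty(V_p)}$. The main obstacle is the factor $|V_p|^{1/r^*}\sim 2^{|p|d/r^*}$, which is \emph{not} uniformly bounded; this is the point requiring care. I would fix it by refining the far estimate to extract decay in $|p|$: splitting the sum over $q$ according to $|q|<|p|$ and $|q|\geq|p|$ exactly as in the proof of \eqref{estimunifnablaI2p}/\eqref{estimJ_2Bp} and using that the relevant integral $\int_{V_q\setminus W_p}|x-y|^{-r'(d-1)}\,dy$ (with $r'$ the conjugate exponent, or working directly in $L^1$-in-$y$) produces a bound $C\,2^{-|p|d/r}$, i.e. the pointwise bound improves to $\|I_{2,p}\|_{L^\infty(V_p)}\leq C\,2^{-|p|d/r}\sup_q\|f\|_{L^r(V_q)}$; combined with $|V_p|^{1/r^*}=2^{|p|d/r^*}$ and the arithmetic identity $\frac{1}{r^*}-\frac{1}{r}=-\frac{1}{d}<0$ one gets $\|I_{2,p}\|_{L^{r^*}(V_p)}\leq C\,2^{-|p|(d/r-d/r^*)}\sup_q\|f\|_{L^r(V_q)}=C\,2^{-|p|}\sup_q\|f\|_{L^r(V_q)}$, which is uniformly bounded (indeed summable). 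Adding the two contributions via the triangle inequality and taking the supremum over $p\in\mathcal{P}$ yields $\sup_{p}\|u\|_{L^{r^*}(V_p)}\leq C\sup_{p}\|f\|_{L^r(V_p)}<\infty$, which is the claim. The delicate bookkeeping — making sure the $2^{|p|d/r^*}$ volume blow-up is exactly absorbed by the decay of the kernel contributions, and handling the near part $I_{1,p}$ through a genuine Sobolev embedding rather than just an $L^r$ gradient bound — is the part I expect to demand the most attention.
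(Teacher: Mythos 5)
Your proof follows the paper's own argument quite closely: same decomposition $u=I_{1,p}+I_{2,p}$ with the set $W_p$ from Proposition~\ref{openconstruction}, same Hardy--Littlewood--Sobolev estimate for the near part (the paper checks the identity $1+\tfrac{1}{r^*}=\tfrac{d-1}{d}+\tfrac{1}{r}$ and concludes $\|I_{1,p}\|_{L^{r^*}(\mathbb{R}^d)}\le C\|f1_{W_p}\|_{L^{r}}$ exactly as you do), and same pointwise estimate with dyadic splitting for the far part. You also rightly observe that $r^*$ should be $\tfrac{dr}{d-r}$.

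The one slip is in the far part. You claim $\|I_{2,p}\|_{L^\infty(V_p)}\le C\,2^{-|p|d/r}\sup_q\|f\|_{L^r(V_q)}$, but that is the rate for $\nabla I_{2,p}$ (whose kernel is $|x-y|^{-d}$, cf.\ \eqref{estimunifnablaI2p} and \eqref{estimJ_2Bp}, which you cite), not for $I_{2,p}$ itself. For $I_{2,p}$ the kernel is $|x-y|^{-(d-1)}$; carrying out the H\"older split with exponent $r'=r/(r-1)$ and the two distance bounds ($|x-y|\gtrsim 2^{|p|}$ for $|q|<|p|$, $|x-y|\gtrsim 2^{|q|}$ for $|q|\ge|p|$) one finds the correct rate $\|I_{2,p}\|_{L^\infty(V_p)}\le C\,2^{-|p|(d/r-1)}\sup_q\|f\|_{L^r(V_q)}$. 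Multiplying by $|V_p|^{1/r^*}\sim 2^{|p|d/r^*}$ and using $r^*\!\left(\tfrac{d}{r}-1\right)=d$ then gives a uniform bound that is $O(1)$ in $p$, not the geometric decay $2^{-|p|}$ that your (too strong) rate would produce. The conclusion $\sup_p\|I_{2,p}\|_{L^{r^*}(V_p)}<\infty$ survives either way, so the argument closes, but the exponent cancellation is exact, not strict, and that distinction is worth getting right.
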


\begin{proof}
We fix $p \in \mathcal{P}$. We begin by splitting $u$ in two parts such that for every $x \in V_p$, we have : 
\begin{align*}
u(x) & = \int_{W_p} \nabla_y G_{per}(x,y).f(y) dy + \sum_{q \in \mathcal{P}} \int_{V_q \setminus{W_p}} \nabla_y G_{per}(x,y).f(y) dy \\
& = I_{1,p}(x) + I_{2,p}(x),
\end{align*}
Where $W_p$ is defined by Proposition \ref{openconstruction}.
First of all, we use estimate \eqref{estimgreen1} and we have : 
$$|I_{1,p}(x)| \leq C \int_{\mathbb{R}^d} \frac{1}{|x-y|^{d-1}} |f(y)| 1_{W_p}(y)
dy.$$
and 
$$|I_{2,p}(x)| \leq C \sum_{q \in \mathcal{P}} \int_{V_q \setminus{W_p}}  \frac{1}{|x-y|^{d-1}}|f(y)| dy.$$

Next, since $\displaystyle 1 + \frac{1}{r^*} = \frac{d-1}{d} + \frac{1}{r}$, using the Hardy-Littlewood-Sobolev inequality (see for instance \cite[Theorem 7.25 p. 162]{MR3099262}), we obtain that $I_{1,p}$ belongs to $L^{r^*}(\mathbb{R}^d)$ and there exists a constant $C>0$ independent of $p$ such that :
 $$ \|I_{1,p}\|_{L^{r^*}(\mathbb{R}^d)} \leq C \|f 1_{W_p}\|_{L^{r}(\mathbb{R}^d)},$$
that is, using the property \ref{iv)Br} of $W_p$, there exists $C>0$ such that :
\begin{equation}
\label{estimGNSBr1}
    \|I_{1,p}\|_{L^{r^*}(V_p)} \leq C \sup_{q \in \mathcal{P}} \|f\|_{L^r(V_q)}.
\end{equation}
Next, using Holder inequalities, we repeat the different steps of the proof of Lemma \ref{lemmeexistenceperiodiqueBP} (see in particular the step 2 regarding the case $r<d)$ and we obtain for every $x$ in $V_p$ : 
\begin{align*}
    |I_{2,p}(x)| & \leq C \sum_{q \in \mathcal{P}} \left(\int_{V_q \setminus{W_p}}  \frac{1}{|x-y|^{r'(d-1)}}dy\right)^{1/r'} \left(\int_{V_q \setminus{W_p}} |f(y)|^{r} dy\right)^{1/r} \\
    & \leq C \sup_{q \in \mathcal{P}} \|f\|_{L^r(V_q)} \frac{1}{2^{|p|(d/r-1)}}.
\end{align*}
Since, $r<d$, we have $d/r>1$ and therefore : 
$$ |I_{2,p}(x)| \leq \sup_{q \in \mathcal{P}} \|f\|_{L^r(V_q)} \frac{1}{2^{|p|(d/r-1)}}.$$
Since $\left|V_p\right| \leq C 2^{|p|d}$, we deduce that : 
\begin{equation*}
    \|I_{2,p}\|^{r^*}_{L^{r^*}(V_p)} \leq C \sup_{q \in \mathcal{P}} \|f\|_{L^r(V_q)}^{r^*} \frac{1}{2^{|p|r^*(d/r-1)}} \left|V_p\right|  \leq  C \sup_{q \in \mathcal{P}} \|f\|_{L^r(V_q)}^{r^*} \frac{2^{|p|d}}{2^{|p|r^*(d/r-1)}}.
\end{equation*}
Next, we remark that $\displaystyle r^* \left(\frac{d}{r} - 1\right)= \frac{dr}{r-d}\left(\frac{d}{r} - 1\right) = d$ and we deduce that :
\begin{equation}
\label{estimGNSBr2}
    \|I_{2,p}\|_{L^{r^*}(V_p)} \leq C \sup_{q \in \mathcal{P}} \|f\|_{L^r(V_q)}.
\end{equation}
Using a triangle inequality and estimates \eqref{estimGNSBr1} and \eqref{estimGNSBr2}, we finally conclude that for every $p \in \mathcal{P}$, we have : 
$$  \|u\|_{L^{r^*}(V_p)} \leq \|I_{1,p}\|_{L^{r^*}(V_p)} + \|I_{2,p}\|_{L^{r^*}(V_p)} \leq  C \sup_{q \in \mathcal{P}} \|f\|_{L^r(V_q)}.$$
We conclude the proof considering the supremum over all $p\in \mathcal{P}$.
\end{proof}

Using Lemmas \ref{uniquess_periodic_Br} and \ref{GNS_periodic_Br}, we are now able to establish the uniqueness of a solution $u$ to \eqref{equationref} such that $\nabla u \in \left(\mathcal{B}^r(\mathbb{R}^d)\right)^d$ for every $1<r<\infty$. 

\begin{lemme}
\label{uniquenesslemmaBr}
Let $r> 1$ and $u \in W^{1,r}_{loc}(\mathbb{R}^d)$ be a solution in the sense of distribution to : 
\begin{equation}
\label{equationhomogBr}
 -\operatorname{div}(a\nabla u) = 0 \quad\text{in } \mathbb{R}^d,  
\end{equation}
such that $\displaystyle \sup_{p \in \mathcal{P}}  \int_{\textit{V}_p}|\nabla u|^r < \infty$. Then $\nabla u$ = 0.
\end{lemme}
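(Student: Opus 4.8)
The strategy is to move the equation onto the periodic operator $-\operatorname{div}(a_{per}\nabla\cdot)$ and invoke the uniqueness results already available in that setting. Since $a=a_{per}+\Tilde{a}$, equation \eqref{equationhomogBr} is equivalent to $-\operatorname{div}(a_{per}\nabla u)=\operatorname{div}(\Tilde{a}\nabla u)$, and because $\Tilde{a}\in L^\infty(\mathbb{R}^d)$ the right-hand side $g:=\Tilde{a}\nabla u$ satisfies $\sup_{p}\|g\|_{L^r(V_p)}\le\|\Tilde{a}\|_{L^\infty(\mathbb{R}^d)}\sup_p\|\nabla u\|_{L^r(V_p)}<\infty$. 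Lemma \ref{lemmeexistenceperiodiqueBP} then provides $v\in W^{1,r}_{loc}(\mathbb{R}^d)$ with $-\operatorname{div}(a_{per}\nabla v)=\operatorname{div}(g)$ and $\sup_p\|\nabla v\|_{L^r(V_p)}<\infty$. Hence $u-v\in W^{1,r}_{loc}(\mathbb{R}^d)$ solves the homogeneous periodic equation $-\operatorname{div}(a_{per}\nabla(u-v))=0$ with $\sup_p\|\nabla(u-v)\|_{L^r(V_p)}<\infty$, and Lemma \ref{uniquess_periodic_Br} forces $\nabla u=\nabla v$. Adding the appropriate constant to $u$ (which does not affect $\nabla u$) we may therefore assume $u=v$; in particular, when $r<d$, $u$ is the Green representation \eqref{defuintegralBp} with datum $g=\Tilde{a}\nabla u$.

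The next step is a regularization. From $\sup_p\|\nabla u\|_{L^r(V_p)}<\infty$ together with Corollary \ref{corollogboundVP} (any unit ball meets only $O(\log)$ cells, and the cells near the origin have bounded size) one gets $\nabla u\in L^r_{unif}(\mathbb{R}^d)$; since $a$ is elliptic, bounded and Hölder by \eqref{hypothèses1}--\eqref{hypothèses2}, standard interior estimates (Poincaré--Sobolev on balls, then De Giorgi--Moser local boundedness \cite[Theorem 8.17]{MR3099262}, a Caccioppoli inequality, and the interior gradient estimate \cite[Theorem 5.19 p.87]{MR3099262}) upgrade this to $\nabla u\in L^\infty(\mathbb{R}^d)$; consequently $u$ is strictly sub-linear at infinity by Proposition \ref{propsouslineariteBr} (whose proof uses only the per-cell bound $\sup_p\|\nabla u\|_{L^r(V_p)}<\infty$ and the boundedness of $\nabla u$). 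When $r<d$ this closes the argument: applying the Gagliardo--Nirenberg--Sobolev estimate of Lemma \ref{GNS_periodic_Br} to $u=\int_{\mathbb{R}^d}\nabla_yG_{per}(\cdot,y)\,\Tilde{a}(y)\nabla u(y)\,dy$ yields $\sup_p\|u\|_{L^{r^\ast}(V_p)}<\infty$ with $r^\ast$ the Sobolev conjugate of $r$, hence $u\in L^{r^\ast}_{unif}(\mathbb{R}^d)$ (again by Corollary \ref{corollogboundVP}). Local boundedness of (sub/super)solutions then gives $u\in L^\infty(\mathbb{R}^d)$, and a globally bounded solution of a uniformly elliptic divergence-form equation on $\mathbb{R}^d$ is constant (Moser's Harnack inequality, \cite[Section 6]{moser1961harnack}); therefore $\nabla u=0$.

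The main obstacle is the range $r\ge d$, where Lemma \ref{GNS_periodic_Br} is not available, the Green representation of $u$ no longer converges absolutely, and — the cells being unbounded — one cannot lower the integrability exponent below $d$. Here I would exploit the rarity of $\Tilde{a}$ quantitatively. Starting again from $\nabla u=\Phi_{per}(\Tilde{a}\nabla u)$, where $\Phi_{per}=\nabla(-\operatorname{div}(a_{per}\nabla\cdot))^{-1}\operatorname{div}$ is continuous for the norm $\sup_p\|\cdot\|_{L^r(V_p)}$ by estimate \eqref{estimperBP}, one splits the datum near and far from each point $2^p$ as in the proofs of Lemmas \ref{lemmeexistenceperiodiqueBP} and \ref{existenceperBP2}, using the boundedness of $\nabla u$ and Proposition \ref{uniformmajorationBr} to control the contribution of $\Tilde{a}$ away from the $x_p$ and the asymptotics \eqref{estimgreen1}--\eqref{estimgreen2} of $G_{per}$ for the far field, so as to show first that $\nabla u\in\mathcal{B}^r(\mathbb{R}^d)$ and then that its limit field $\nabla u_\infty\in L^r(\mathbb{R}^d)$ solves $-\operatorname{div}((a_{per}+\Tilde{a}_\infty)\nabla u_\infty)=0$ on $\mathbb{R}^d$; an $L^r$ Liouville-type result (the $L^r$ counterpart of \cite[Lemma 1]{blanc2012possible}, used for $r=2$ in Section \ref{Section4}) gives $\nabla u_\infty=0$. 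Feeding this back shows $\sup_{|p|\ge P}\|\Tilde{a}\nabla u\|_{L^r(V_p)}\to0$, and a final localized use of the estimate behind \eqref{estimperBP}, again separating near and far cells, yields $\sup_p\|\nabla u\|_{L^r(V_p)}=0$. Throughout, the delicate point is that for $r\ne2$ the Caccioppoli/energy argument of Lemma \ref{lemme1} is unavailable, so all decay has to be read off from the geometry of $\mathcal{G}$ (Corollary \ref{corollogboundVP}, Proposition \ref{uniformmajorationBr}) and the Green function estimates, which makes the bookkeeping heavier than in the Hilbertian case.
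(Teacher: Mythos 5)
Your treatment of the range $r<d$ is correct and essentially parallels the paper's argument for $r\le2$: identify $u$ with the Lemma~\ref{lemmeexistenceperiodiqueBP} solution attached to the datum $g=\Tilde{a}\nabla u$, then use the Sobolev estimate of Lemma~\ref{GNS_periodic_Br} to get $\sup_p\|u\|_{L^{r^*}(V_p)}<\infty$, hence $u\in L^{r^*}_{unif}$, local boundedness, and Moser's Liouville theorem. (The detour through $\nabla u\in L^\infty$ and sub-linearity is harmless but not needed for this case.)

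The genuine gap is the range $r\ge d$. Your sketch there — showing $\nabla u\in\mathcal{B}^r(\mathbb{R}^d)$, passing to the limit field $\nabla u_\infty$, invoking an $L^r$ Liouville result, and feeding back through $\Phi_{per}$ — is much heavier than what is actually required, and as written it is not a proof: the claimed decay "$\sup_{|p|\ge P}\|\Tilde a\nabla u\|_{L^r(V_p)}\to0$" and its conversion into $\sup_p\|\nabla u\|_{L^r(V_p)}=0$ are asserted, not established. The paper does not split at $r=d$; it splits at $r=2$ and uses a simple bootstrap for $r>2$ that you missed. Namely, once $u$ is identified (via Lemma~\ref{uniquess_periodic_Br}) with the particular solution of Lemma~\ref{lemmeexistenceperiodiqueBP} for $f=\Tilde a\nabla u$, note that $\Tilde a\in\mathcal{B}^r\cap L^\infty$ satisfies $\sup_p\|\Tilde a\|_{L^r(V_p)}<\infty$, so by H\"older $\sup_p\|\Tilde a\nabla u\|_{L^{r/2}(V_p)}<\infty$; applying estimate~\eqref{estimperBP} at exponent $r/2$ (the constructed solution is the same at every admissible exponent, since the series $S_N$ has the same distributional limit) yields $\sup_p\|\nabla u\|_{L^{r/2}(V_p)}<\infty$. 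Iterating, $\sup_p\|\nabla u\|_{L^{r_n}(V_p)}<\infty$ with $1/r_n=(1+n)/r$; choose $n$ so that $1<r_n\le2$, interpolate with the original $L^r$ bound to get $\sup_p\|\nabla u\|_{L^2(V_p)}<\infty$, and conclude $\nabla u=0$ from the $L^2$ Caccioppoli argument of Lemma~\ref{lemme1}. This covers all $r>2$, in particular all $r\ge d\ge3$, and entirely avoids the $\mathcal{B}^r$/Liouville machinery your sketch relies on.
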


\begin{proof}
We first assume that $r>2$ and we remark that equation \eqref{equationhomogBr} is equivalent to : 
\begin{equation}
\label{equationmodifieeBP}
    - \operatorname{div}\left(a_{per} \nabla u\right) = \operatorname{div}(\Tilde{a}\nabla u) \quad \text{in } \mathbb{R}^d.
\end{equation}
We denote $f = \Tilde{a}\nabla u$. We know that $\Tilde{a}$ belongs to $L^{\infty}(\mathbb{R}^d)$ and we therefore obtain that $\displaystyle \sup_{p \in \mathcal{P}} \|f\|_{L^r(V_p)} < \infty$. The uniqueness result of Lemma \ref{uniquess_periodic_Br} ensures that $u$ is actually the solution defined in Lemma \ref{lemmeexistenceperiodiqueBP} (up to an additive constant). In addition, since $\displaystyle \sup_{p \in \mathcal{P}} \|\Tilde{a}\|_{L^r(V_p)}  <~\infty$, we use the Cauchy-Schwarz inequality and we obtain that 
$$\displaystyle \sup_{p \in \mathcal{P}} \|f\|_{L^{r/2}(V_p)} <\infty.$$
Lemma \ref{lemmeexistenceperiodiqueBP} therefore ensures that $\nabla u$ is such that $\displaystyle \sup_{p \in \mathcal{P}} \|\nabla u\|_{L^{r/2}(V_p)} <~\infty$. We can iterate this argument in order to obtain that $ \displaystyle \sup_{p \in \mathcal{P}} \|\nabla u\|_{L^{r_n}(V_p)} < \infty$ where $\displaystyle \frac{1}{r_n} = \frac{1}{r} + \frac{n}{r}$ for every $n\in \mathbb{N}
^*$ such that $\displaystyle \frac{r}{n}>1$. We have assumed $r >2$ and, it is therefore always possible to find $n\in \mathbb{N}$ such that $1\leq r_n \leq 2$. Thus, since $r_n \leq 2 \leq r$, we deduce that $ \displaystyle \sup_{p \in \mathcal{P}} \|\nabla u\|_{L^{2}(V_p)} < \infty$ by an interpolation result. We finally conclude that $\nabla u = 0$ using the result of Lemma \ref{lemme1} in the case $r=2$. 

We next assume that $r\leq 2$. We know that $u$ is a solution to \eqref{equationmodifieeBP}. Since $r\leq2<d$, $u$ is actually defined by \eqref{defuintegralBp} (up to an additive constant) where $f = \Tilde{a} \nabla u$. Lemma \ref{GNS_periodic_Br} ensures that $\displaystyle \sup_{p \in \mathcal{P}}  \int_{\textit{V}_p}|u|^{r^*} < \infty$. 
Now, since $r^*\geq r$, there exists a constant $C>0$ such that for every $x_0 \in \mathbb{R}^d$, we have : 
$$\|u\|_{L^r(B_2(x_0))} \leq C \|u\|_{L^{r^*}(B_2(x_0))},$$
and the properties of the cells $V_p$ ensure the existence of a constant $C_1>0$ independent of $x_0$ such that  
$$\|u\|_{L^{r^*}(B_2(x_0))} \leq C_1 \sup_{p \in \mathcal{P}}  \int_{\textit{V}_p}|u|^{r^*}.$$
We deduce the existence of $C_2>0$ independent of $x_0$ such that 
\begin{equation}
    \|u\|_{W^{1,r}(B_2(x_0))} \leq  C_2 \left(\sup_{p \in \mathcal{P}}  \left(\int_{\textit{V}_p}|u|^{r^*}\right)^{1/r^*} + \sup_{p \in \mathcal{P}}  \left(\int_{\textit{V}_p}|\nabla u|^{r}\right)^{1/r} \right).
\end{equation}
Therefore, since $u$ is solution to \eqref{equationhomogBr}, we repeat the method of the proof of uniqueness of Lemma \ref{uniquess_periodic_Br} using the result of \cite[Theorem 1]{brezis2008conjecture} and we obtain the existence of a constant $C>0$ such that for every $x_0$ in $\mathbb{R}
^d$, we have 
\begin{equation}
    \| u\|_{H^1(B_1(x_0))} \leq C \left(\sup_{p \in \mathcal{P}}  \left(\int_{\textit{V}_p}|u|^{r^*}\right)^{1/r^*} + \sup_{p \in \mathcal{P}}  \left(\int_{\textit{V}_p}|\nabla u|^{r}\right)^{1/r} \right).
\end{equation}
Using the De Giorgi-Nash inequality, we finally obtain that $u$ belongs to $L^{\infty}(\mathbb{R}^d)$ and, according to \cite[Section 6]{moser1961harnack}, $u$ is constant. Finally we have $\nabla u =0$.
\end{proof}

\subsection{Existence results in the general problem}
\label{section4Br}

In this section, we conclude the study of the equation in the general case (when $\Tilde{a}\neq0$) showing the existence of a solution $u$ to \eqref{equationref} such that $\nabla u \in \left(\mathcal{B}^r(\mathbb{R}^d)\right)^d$. To this end, we first prove in Lemma \ref{lemme2Br} the continuity of the operator $\nabla\left(-\operatorname{div}a\nabla\right)^{-1}\operatorname{div}$ from $\left(\mathcal{B}^r(\mathbb{R}^d)\right)^d$ to $\left(\mathcal{B}^r(\mathbb{R}^d)\right)^d$. We next use this property in order to generalize the existence result of the periodic context (that is when $\Tilde{a}$ = 0) applying the same argument as in the case $r=2$ used in Lemma \ref{lemmexistence}.  

\begin{lemme}
\label{lemme2Br}
There exists a constant $C>0$ such that for every $f \in  \left(\mathcal{B}^r(\mathbb{R}^d)\right)^d$ and $u$ solution in $\mathcal{D}'(\mathbb{R}^d)$ to \eqref{equationref} 
with $\nabla u \in \left(\mathcal{B}^r(\mathbb{R}^d)\right)^d$, we have the following estimate :
\begin{equation}
\label{aprioriestimateBr}
 \|\nabla u\|_{\mathcal{B}^r(\mathbb{R}^d)}  \leq C  \|f\|_{\mathcal{B}^r(\mathbb{R}^d)}. 
\end{equation}
\end{lemme}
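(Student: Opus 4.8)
The plan is to follow exactly the contradiction/compactness-concentration scheme used in the proof of Lemma \ref{lemme2}, adapting each step from the $L^2$-topology to the $L^r$-topology. Assume, for contradiction, that \eqref{aprioriestimateBr} fails: there are sequences $f_n \in \left(\mathcal{B}^r(\mathbb{R}^d)\right)^d$ and associated solutions $u_n$ with $\nabla u_n \in \left(\mathcal{B}^r(\mathbb{R}^d)\right)^d$ satisfying $-\operatorname{div}((a_{per}+\Tilde{a})\nabla u_n) = \operatorname{div}(f_n)$, together with $\|f_n\|_{\mathcal{B}^r(\mathbb{R}^d)} \to 0$ and $\|\nabla u_n\|_{\mathcal{B}^r(\mathbb{R}^d)} = 1$ for all $n$. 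As in the case $r=2$, pick $x_n \in \mathbb{R}^d$ nearly realizing the $L^r_{unif}$-norm of $\nabla u_n$, translate everything by $x_n$, and set $\Bar{u}_n = \tau_{x_n} u_n$, $\Bar{f}_n = \tau_{x_n} f_n$, $\Bar{a}_n = \tau_{x_n} a$.

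First I would reproduce the analysis of the limit coefficient: decompose $x_n$ into its integer part (giving a convergent shift of $a_{per}$ by periodicity and H\"older continuity) plus a possibly unbounded remainder, and split into the three cases of Lemma \ref{lemme2} depending on whether $x_n$ is bounded, or $x_n = 2^{p_n} + t_n$ with $t_n$ bounded, or both $p_n$ and $t_n$ unbounded. In each case $\Bar{a}_n$ converges in $L^\infty_{loc}(\mathbb{R}^d)$ (using the uniform H\"older bound coming from \eqref{hypothèses2}) to a bounded, elliptic coefficient $A = \tau_{\mathbf{x}}a_{per} + \Tilde{A}$, where $\Tilde{A}$ is either a translate of $\Tilde{a}$, a translate of $\Tilde{a}_\infty$, or $0$; here one invokes Proposition \ref{uniformmajorationBr} exactly as Proposition \ref{uniformmajoration} was used before. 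Since $\|\nabla u_n\|_{L^r_{unif}}$ is bounded, $\nabla \Bar{u}_n$ is bounded in $L^r_{unif}$, so up to extraction $\nabla \Bar{u}_n \rightharpoonup \nabla \Bar{u}$ in $L^r_{loc}$, and $\Bar{u}$ solves $-\operatorname{div}(A\nabla \Bar{u}) = 0$. The bound $\sup_p \|\nabla \Bar u\|_{L^r(V_p)} < \infty$ follows from lower semicontinuity of the $L^r$-norm (in the bounded case), while in the unbounded case one uses, as in Lemma \ref{lemme2}, that $V_{p_n} - x_n$ increases to $\mathbb{R}^d$ to get $\nabla\Bar u \in L^r(\mathbb{R}^d)$. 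Either way, the newly-proved uniqueness result Lemma \ref{uniquenesslemmaBr} (in the bounded case) together with the $L^r$-version of the Liouville-type result from \cite[Lemma 1]{blanc2012possible} (in the unbounded case) forces $\nabla \Bar{u} = 0$.

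Next I would upgrade weak convergence to strong convergence on $B_1$: normalize $\int_{B_2}\Bar u_n = 0$, apply Poincar\'e--Wirtinger and then interior $W^{1,r}$ (Calder\'on--Zygmund) elliptic regularity for \eqref{equationref} — the analogue of \cite[Theorem 4.4 p.63]{MR3099262} in $L^r$ — to get $\|\nabla \Bar u_n\|_{L^r(B_1)} \le C(\|\Bar u_n\|_{L^r(B_2)} + \|\Bar f_n\|_{L^r(B_2)})$; combined with Rellich-type compactness of the embedding $W^{1,r}(B_2) \hookrightarrow L^r(B_2)$ and $\nabla\Bar u = 0$, this yields $\nabla\Bar u_n \to 0$ in $L^r(B_1)$, hence $\|\nabla u_n\|_{L^r_{unif}} \to 0$. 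Then, exactly as before, translate \eqref{equationref} by $2^p$ and let $|p|\to\infty$ to see that $\nabla u_{n,\infty}$ solves the $\Tilde{a}_\infty$-perturbed equation with right-hand side $f_{n,\infty}$; an $L^r$ a priori estimate (the analogue of \cite[Proposition 2.1]{blanc2018correctors}) gives $\|\nabla u_{n,\infty}\|_{L^r(\mathbb{R}^d)} \le C\|f_{n,\infty}\|_{L^r(\mathbb{R}^d)} \to 0$. Finally, using Proposition \ref{uniformmajorationBr} and the uniform H\"older continuity of $\Tilde a$ to control $\sup_q\|\Tilde a \nabla u_n\|_{L^r(V_q)}$ by $\tfrac{\varepsilon}{2}+\tfrac{\varepsilon}{2}$ as in Lemma \ref{lemme2}, rewrite the equation as $-\operatorname{div}(a_{per}\nabla u_n) = \operatorname{div}(f_n + \Tilde a\nabla u_n)$ and apply the periodic estimate \eqref{estimperBP} of Lemma \ref{lemmeexistenceperiodiqueBP} to conclude $\sup_p\|\nabla u_n\|_{L^r(V_p)} \to 0$. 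Together with $\|\nabla u_{n,\infty}\|_{L^r} \to 0$ and $\|\nabla u_n\|_{L^r_{unif}} \to 0$, this gives $\|\nabla u_n\|_{\mathcal{B}^r(\mathbb{R}^d)} \to 0$, contradicting $\|\nabla u_n\|_{\mathcal{B}^r(\mathbb{R}^d)} = 1$.

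The main obstacle I anticipate is the identification of the limit equation in the third case (both $p_n$ and $t_n$ unbounded), i.e.\ showing $\Bar{\Tilde{a}}_n \to 0$ strongly in $L^r_{loc}$: this is where one must carefully use the open sets $W_{2^{p_n}}$ of Proposition \ref{openconstruction} to cover any fixed compact $K + 2^{p_n} + t_n$ by finitely many $V_q \setminus B_R(2^q)$ and then invoke the $L^r$ tail estimate of Proposition \ref{uniformmajorationBr} — the argument is structurally the same as for $r=2$, but one must double-check that the number of relevant cells stays bounded uniformly in $n$ and that the H\"older modulus of $\Tilde a$ still gives uniform $L^r$ smallness. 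A secondary technical point is ensuring the De Giorgi--Nash / $W^{1,r}$ regularity tools invoked in the third step are available in the $L^r$ setting with constants depending only on the ellipticity of $A$, $\|a\|_{\mathcal{C}^{0,\alpha}}$, $d$ and $r$; this is standard but should be cited (e.g.\ \cite{MR3099262, giaquinta1983multiple, brezis2008conjecture}) rather than re-proved.
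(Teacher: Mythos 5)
Your proposal is correct and follows essentially the same compactness--concentration scheme the paper uses: the paper's own proof of this lemma is deliberately abbreviated (``we refer the reader to the proof of Lemma~\ref{lemme2} for details''), and your expansion reproduces exactly the $L^r$-adaptation it has in mind, including the three-way case split for the translated coefficient, the use of Lemma~\ref{uniquenesslemmaBr} for the Liouville step, and the reduction to the periodic estimate~\eqref{estimperBP} in the final step. The only cosmetic discrepancy is that for the $L^r$ Liouville argument in the unbounded case the paper points to the $L^r$-defect results of \cite{blanc2018correctors} rather than an ``$L^r$-version of \cite[Lemma~1]{blanc2012possible}'', but these serve the same purpose.
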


\begin{proof}
We follow here the same pattern as the proof of Lemma \ref{lemme2}, that is we argue by contradiction using a compactness-concentration method. To this aim, we assume the existence of a sequence $f_n$ in $\left(\mathcal{B}^r(\mathbb{R}^d)\right)^d$ and an associated sequence of solutions $u_n$ such that $\nabla u_n$ is in $\left(\mathcal{B}^r(\mathbb{R}^d)\right)^d$ and : 
\begin{equation}
\label{9Bp}
-\operatorname{div}((a_{\textit{per}}+\Tilde{a})\nabla u_n) = \operatorname{div}(f_n),
\end{equation}
\begin{equation}
\label{10Bp}
\lim_{n \rightarrow \infty}\|f_n\|_{\mathcal{B}^r(\mathbb{R}^d)} =0,  
\end{equation}
\begin{equation}
\label{11Bp}
\forall n \in \mathbb{N}, \quad \|\nabla u_n\|_{\mathcal{B}^r(\mathbb{R}^d)} = 1.   
\end{equation}
We only give here the main steps of this proof and we refer the reader to the proof of Lemma \ref{lemme2} for details.

\textit{Step 1 : Compactness-concentration method.} Using a property of the supremum, we can find a sequence $x_n$ such that for every $n\in \mathbb{N}$, we have 
$$ \| \nabla u_n\|_{L^r_{\textit{unif}}} \geq \| \nabla u_n \|_{L^r(B_1(x_n))} \geq \| \nabla u_n\|_{L^r_{\textit{unif}}} - \dfrac{1}{n}.$$
In the spirit of the compactness-concentration method, we denote by $\Bar{u}_n = \tau_{x_n}u_n$, $\Bar{f}_n = \tau_{x_n}f_n$ and $\Bar{\Tilde{a}}_n = \tau_{x_n}\Tilde{a}$ and we have for every $n \in \mathbb{N}$ : 
\begin{equation}
\label{limunifBp}
\| \nabla u_n\|_{L^r_{\textit{unif}}} \geq \| \nabla \Bar{u}_n \|_{L^r(B_1)} \geq \| \nabla u_n\|_{L^r_{\textit{unif}}} - \dfrac{1}{n}.
\end{equation} 
In addition, for every $n \in \mathbb{N}$, the sequence $ \Bar{u}_n$ satisfies : 
\begin{equation}
\label{translated_equation_estimBp}
    - \operatorname{div}\left( \Bar{a}_n \nabla \Bar{u}_n \right) = \operatorname{div}\left(\Bar{f_n}\right) \quad \text{in } \mathbb{R}^d,
\end{equation}
and 
\begin{equation}
\label{translated_uniform_estimBp}
  \| \nabla \Bar{u}_n\|_{L^r_{\textit{unif}}} \leq 1.  
\end{equation}
The idea is now to study the behavior of $\nabla \Bar{u}_n$ on the compact subset $B_1$ in order to precise the behavior of $\nabla u_n$ in $L^r_{unif}$.

\textit{Step 2 : Study of the limit function when $n\rightarrow \infty$.} We next use estimate \eqref{translated_uniform_estimBp} and some elliptic regularity properties associated with equation \eqref{translated_equation_estimBp} to deduce the strong convergence of $\Bar{u}_n$ in $W^{1,r}(B_1)$ to a function $u$ solution to : 
$$- \operatorname{div}(A \nabla u) = 0 \quad \text{in } \mathbb{R}^d,$$
where : 
\begin{itemize}
    \item $A\in \left(L^r_{per} + \mathcal{B}^r(\mathbb{R}^d)\right)^{d \times d}$, $\nabla u \in \left(\mathcal{B}^r(\mathbb{R}^d)\right)^d$ if $x_n$ is bounded,
    \item $A \in \left(L^r_{per} + L^r(\mathbb{R}^d)\right)^{d\times d}$, $\nabla u \in \left(L^r(\mathbb{R}^d)\right)^d$ if $x_n$ is not bounded.
\end{itemize}
In both case, using Proposition \ref{uniquenesslemmaBr} or the results of \cite{blanc2018correctors} in the case of local defects in $L^r$, we deduce that $\nabla u = 0$. Using \eqref{limunifBp}, we finally obtain that the sequence $\nabla u_n$ converges to 0 in $L^r_{unif}$.

\textit{Step 3 : Convergence of $\nabla u_n$ to 0 in $\mathcal{B}^r(\mathbb{R}^d)$ and contradiction.}
We finally remark that for every $n$, $\nabla u_n$ is a solution to : 
$$ - \operatorname{div}(a_{per} \nabla u_n) = \operatorname{div}\left(\Tilde{a}\nabla u_n + f_n\right).$$
Therefore, using estimate \eqref{estimperBP} established in the periodic case, the uniform convergence to 0 of $\nabla u_n$ in $L^r_{unif}$ and the properties of $\Tilde{a}$, we finally deduce the convergence of $\nabla u_n$ to 0 in $\mathcal{B}^r(\mathbb{R}^d)$ and we obtain a contradiction.
\end{proof}

We next establish a result regarding the regularity of the solutions $u$ to \eqref{equationref} such that $\nabla u \in \left(\mathcal{B}^r(\mathbb{R}^d)\right)^d$. As we shall see in the sequel, this result allows to establish Lemma \ref{lemmexistenceBr} using a argument of density induced by Proposition \ref{densityBr}. Since the coefficient $a$ satisfies the property of regularity \eqref{hypothèses2}, it also ensures that the gradient of the corrector given by Theorem \ref{theoreme4} belongs to $\mathcal{C}^{0,\alpha}(\mathbb{R}^d)^d$.

\begin{lemme}
\label{regumarityLemmaBr}
Assume $r\geq 2$. Let $f$ be in $\left(\mathcal{B}^r(\mathbb{R}^d) \cap \mathcal{C}^{0,\alpha}(\mathbb{R}^d)\right)^d$ and $u \in L^1_{loc}(\mathbb{R}^d)$ be a solution to \eqref{equationref} such that $\nabla u \in \left(\mathcal{B}^r(\mathbb{R}^d)\right)^d$. Then $\nabla u$ belongs to $\left(\mathcal{C}^{0,\alpha}(\mathbb{R}^d)\right)^d$. 
\end{lemme}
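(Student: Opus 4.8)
The plan is to reduce the problem to a local interior $\mathcal{C}^{0,\alpha}$ estimate and then pass to the uniform bound over all translates, exactly as was done in the periodic-plus-$\mathcal{B}^2$ case in the proof of Lemma \ref{lemmexistence} (see estimate \eqref{estcontinuite2}). First I would note that since $\nabla u \in \left(\mathcal{B}^r(\mathbb{R}^d)\right)^d$ and $r \geq 2$, we have in particular $\nabla u \in \left(L^2_{unif}(\mathbb{R}^d)\right)^d$; indeed the definition \eqref{normdefBr} of the $\mathcal{B}^r$-norm controls $\|\nabla u\|_{L^r_{unif}(\mathbb{R}^d)}$, and on each unit ball $B_1(x)$ the inclusion $L^r \hookrightarrow L^2$ (because $r \geq 2$ and $|B_1(x)|<\infty$) gives a uniform bound on $\|\nabla u\|_{L^2(B_1(x))}$. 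Thus $\nabla u \in \left(L^2_{unif}(\mathbb{R}^d)\right)^d$ with norm controlled by $\|\nabla u\|_{\mathcal{B}^r(\mathbb{R}^d)}$.

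Next, the hypotheses \eqref{hypothèses1} and \eqref{hypothèses2} ensure that $a = a_{per} + \Tilde{a}$ is uniformly elliptic, bounded, and uniformly Hölder continuous on $\mathbb{R}^d$ (using that $a_{per}, \Tilde{a}, \Tilde{a}_\infty \in \mathcal{C}^{0,\alpha}(\mathbb{R}^d)^{d\times d}$). Since $u$ solves $-\operatorname{div}(a\nabla u) = \operatorname{div}(f)$ with $f \in \left(\mathcal{C}^{0,\alpha}(\mathbb{R}^d)\right)^d$, I would apply the interior Schauder estimate for divergence-form equations — precisely the result invoked for \eqref{estcontinuite2}, namely \cite[Theorem 5.19 p.87]{MR3099262} (see also \cite[Theorem 3.2 p.88]{giaquinta1983multiple}) — on each ball $B_2(x)$. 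This yields a constant $C>0$, depending only on the ellipticity constant, $\|a\|_{\mathcal{C}^{0,\alpha}(\mathbb{R}^d)}$, $d$, $r$ and $\alpha$, but not on $x$, such that for every $x \in \mathbb{R}^d$
\begin{equation*}
\|\nabla u\|_{\mathcal{C}^{0,\alpha}(B_1(x))} \leq C \left( \|\nabla u\|_{L^2_{unif}(\mathbb{R}^d)} + \|f\|_{\mathcal{C}^{0,\alpha}(\mathbb{R}^d)} \right).
\end{equation*}
Since the right-hand side is finite and independent of $x$, taking the supremum over $x \in \mathbb{R}^d$ of both $\|\nabla u\|_{L^\infty(B_1(x))}$ and the H\"older seminorm over $B_1(x)$, and controlling the global H\"older seminorm by a standard covering argument (for $|x-y|\geq 1$ one uses the $L^\infty$ bound, for $|x-y|<1$ the local seminorm bound), we conclude $\nabla u \in \left(\mathcal{C}^{0,\alpha}(\mathbb{R}^d)\right)^d$.

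The only mild subtlety — and the place I would be most careful — is making sure the applicability hypotheses of the Schauder estimate are met: one needs $u \in H^1_{loc}$ (or at least $W^{1,2}_{loc}$) locally, which follows from $\nabla u \in L^2_{unif}$ together with $u \in L^1_{loc}$ and \cite[corollary 2.1]{deny1954espaces} as used at the end of the proof of Lemma \ref{lemmexistence}; and one needs the $\mathcal{C}^{0,\alpha}$ regularity of the coefficient, which is where the restriction $r \geq 2$ really enters only through the $L^2_{unif}$ reduction above (the regularity of $a$ itself comes from \eqref{hypothèses2} and is independent of $r$). Everything else is a verbatim transcription of the argument already given for \eqref{estcontinuite2}, so I would simply refer the reader there for the details of the covering argument and the passage from local to uniform H\"older norms.
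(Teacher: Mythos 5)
Your proof is correct and follows essentially the same route as the paper: reduce to $L^2_{unif}$ control via H\"older (using $r\geq 2$), invoke the Schauder-type interior estimate from \cite[Theorem 5.19 p.87]{MR3099262}, and pass to the uniform global H\"older bound by taking the supremum over translates. In fact you state the inclusion in the correct direction ($L^r_{unif}\subset L^2_{unif}$), whereas the paper has a harmless typo writing it the other way round while nevertheless using the correct inequality.
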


\begin{proof}
First of all, if $\nabla u$ belongs to $\left(\mathcal{B}
^r(\mathbb{R}^d)\right)^d$, we have in particular that $\nabla u$ is in $\left(L^r_{unif}(\mathbb{R}^d)\right)^d$. In addition, since $r\geq 2$, using the Holder inequality, we have $L^2_{unif}\subset L^r_{unif}$ and the existence of a constant $C_1 = C_1(d,r) >0$ such that : 
$$\| \nabla u\|_{L^2_{unif}} \leq C_1 \| \nabla u \|_{L^r_{unif}}.$$
Since $a\in \left(\mathcal{C}^{0,\alpha}(\mathbb{R}^d)\right)^{d\times d}$, a direct consequence of a regularity result established in [15, Theorem 5.19 p.87] gives the existence of $C_2 >0$ such that for all $x \in \mathbb{R}^d$ :
\begin{align*}
\|\nabla u\|_{\mathcal{C}^{0,\alpha}(B_{1}(x))} &\leq C_2 \left(\| \nabla u\|_{L^2_{unif}(\mathbb{R}^d)} + \|f\|_{\mathcal{C}^{0,\alpha}(\mathbb{R}^d)}\right)\\
& \leq C_2(C_1+1) \left(\| \nabla u\|_{L^r_{unif}(\mathbb{R}^d)} + \|f\|_{\mathcal{C}^{0,\alpha}(\mathbb{R}^d)}\right).
\end{align*}
Since $C_1$ and $C_2$ are independent of $x$, we directly conclude that $\nabla u \in \left(\mathcal{C}^{0,\alpha}(\mathbb{R}^d)\right)^d$.
\end{proof}

We are now able to conclude the study of equation \eqref{equationref} in our particular case. The next Lemma establishes the existence and the uniqueness of a solution $u$ such that $\nabla u \in \left(\mathcal{B}^r(\mathbb{R}^d)\right)^d$ in the case $r\geq2$.  

\begin{lemme}
\label{lemmexistenceBr}
Let $f \in \left(\mathcal{B}^r(\mathbb{R}^d)\right)^d$ and assume $r\geq2$.  There exists $u\in L^1_{loc}(\mathbb{R}^d)$ solution to \eqref{equationref} such that $\nabla u \in \left(\mathcal{B}^r(\mathbb{R}^d)\right)^d$. 
\end{lemme}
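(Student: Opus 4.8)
The plan is to follow, essentially \emph{verbatim}, the strategy of the proof of Lemma~\ref{lemmexistence} (the case $r=2$), all of whose ingredients are now available for general $r \geq 2$. First I would reduce to the case $f \in \left(\mathcal{B}^r(\mathbb{R}^d) \cap \mathcal{C}^{0,\alpha}(\mathbb{R}^d)\right)^d$: denoting by $\Phi = \nabla\left(-\operatorname{div}a\nabla\right)^{-1}\operatorname{div}$ the inverse operator associated with \eqref{equationref}, Lemma~\ref{lemme2Br} shows that $\Phi$ is continuous for the $\mathcal{B}^r$-norm; hence once existence is proved on the dense subspace $\mathcal{C}^{0,\alpha}(\mathbb{R}^d) \cap \mathcal{B}^r(\mathbb{R}^d)$ (density being Proposition~\ref{densityBr}), a passage to the limit in the Banach space $\mathcal{B}^r(\mathbb{R}^d)$ (Proposition~\ref{BanachBr}) yields existence for every $f \in \left(\mathcal{B}^r(\mathbb{R}^d)\right)^d$.

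For $f \in \left(\mathcal{B}^r(\mathbb{R}^d) \cap \mathcal{C}^{0,\alpha}(\mathbb{R}^d)\right)^d$ I would run the connectedness argument adapted from~\cite{blanc2018correctors}. Set $a_t = a_{per} + t\Tilde{a}$ for $t \in [0,1]$, let $\mathbf{P}(a)$ be the assertion ``equation \eqref{equationref} with coefficient $a$ admits a solution $u$ with $\nabla u \in \left(\mathcal{B}^r(\mathbb{R}^d) \cap \mathcal{C}^{0,\alpha}(\mathbb{R}^d)\right)^d$'', and let $\mathcal{I} = \left\{t \in [0,1] \ \middle| \ \mathbf{P}(a_s) \text{ holds for all } s \leq t\right\}$. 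The set $\mathcal{I}$ is non empty because $t=0$ is the periodic case, settled by Lemmas~\ref{lemmeexistenceperiodiqueBP} and~\ref{existenceperBP2}, the uniform Hölder regularity of the gradient being provided by Lemma~\ref{regumarityLemmaBr} (here the hypothesis $r\geq 2$ is used). It is open: for $t \in \mathcal{I}$ the equation $-\operatorname{div}((a_t + \varepsilon\Tilde{a})\nabla u) = \operatorname{div}(f)$ is equivalent to the fixed-point relation $\nabla u = \Phi_t(\varepsilon\Tilde{a}\nabla u + f)$; by Proposition~\ref{stabilityBr} the map $g \mapsto \Phi_t(\varepsilon\Tilde{a} g + f)$ sends $\left(\mathcal{B}^r(\mathbb{R}^d) \cap \mathcal{C}^{0,\alpha}(\mathbb{R}^d)\right)^d$ into itself, and by Lemmas~\ref{lemme2Br} and~\ref{regumarityLemmaBr} it is a contraction for $\varepsilon$ sufficiently small, so the Banach fixed point theorem gives $[t,t+\varepsilon] \subset \mathcal{I}$. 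It is closed: given $t_n \uparrow t$ with associated solutions $u_n$, rewriting $-\operatorname{div}(a_t \nabla u_n) = \operatorname{div}(f + (a_{t_n} - a_t)\nabla u_n)$ and invoking a uniform-in-$n$ version of the a priori estimate of Lemma~\ref{lemme2Br}, obtained by re-running its compactness--concentration contradiction argument for the coefficients $a_{t_n}$, one gets $(a_{t_n} - a_t)\nabla u_n \to 0$ in $\left(\mathcal{B}^r(\mathbb{R}^d)\right)^d$; hence $\nabla u_n$ is a Cauchy sequence in this Banach space, converges to some $\nabla u$ with $u$ solving the limit equation, and Lemma~\ref{regumarityLemmaBr} again supplies Hölder regularity, so $t \in \mathcal{I}$. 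Since $[0,1]$ is connected, $\mathcal{I} = [0,1]$, thus $\mathbf{P}(a_1) = \mathbf{P}(a)$ holds. Finally, since $\nabla u \in \left(\mathcal{B}^r(\mathbb{R}^d)\right)^d \subset \left(L^1_{loc}(\mathbb{R}^d)\right)^d$, the result of \cite[corollary 2.1]{deny1954espaces} ensures $u \in L^1_{loc}(\mathbb{R}^d)$ (indeed $u \in W^{1,r}_{loc}(\mathbb{R}^d)$).

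The main obstacle, exactly as in the $r=2$ case, is the closedness step: one must exclude blow-up of the constants in the a priori estimate along the sequence $t_n$, which forces a re-run of the concentration--compactness contradiction of Lemma~\ref{lemme2Br} with varying coefficients. The restriction $r \geq 2$ is essential at two points of this adaptation: in Lemma~\ref{regumarityLemmaBr}, used to keep the gradient uniformly Hölder continuous through every fixed-point iteration and limit, and in the uniqueness statement of Lemma~\ref{uniquenesslemmaBr} that underlies the proof of Lemma~\ref{lemme2Br}; neither survives for $r < 2$, which is precisely why the lemma is stated only for $r\geq 2$.
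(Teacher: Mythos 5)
Your proof is correct and takes essentially the same route as the paper, which simply refers back to the $r=2$ connectedness argument of Lemma \ref{lemmexistence} and indicates the key inputs (Lemmas \ref{lemme2Br}, \ref{lemmeexistenceperiodiqueBP}, \ref{existenceperBP2}, and \ref{regumarityLemmaBr}); you have supplied the details the paper leaves implicit. One factual slip in your closing paragraph: the uniqueness Lemma \ref{uniquenesslemmaBr} is stated and proved for all $r>1$, so the restriction $r\geq 2$ in the present lemma comes solely from the regularity Lemma \ref{regumarityLemmaBr}, not from any failure of uniqueness when $r<2$.
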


\begin{proof}
The proof of this Lemma follows the same pattern as that of Lemma \ref{lemmexistence} in the case $r=2$. Therefore, in the sequel we only explain the main strategy to prove the existence result. For a generic coefficient $a$, we define $\mathbf{P}(a)$ the following assertion : "There exists a solution $u\in \mathcal{D}'(\mathbb{R}^d)$ to :
\begin{equation}
\label{eqconnectednessBr}
    -\operatorname{div}\left(a\nabla u\right) = \operatorname{div}(f) \quad \text{in } \mathbb{R}^d
\end{equation}
such that $\nabla u \in \left(\mathcal{B}^r(\mathbb{R}^d)\cap \mathcal{C}^{0,\alpha}(\mathbb{R}^d)\right)^d$." \\
For $t\in  [0,1]$, we denote $a_t = a_{per} + t\Tilde{a}$ and we define the following set : 
\begin{equation}
\mathcal{I} = \left\{t\in [0,1] \ \middle| \ \forall s \in [0,t], \text{$\mathbf{P}(a_s)$ is true}\right\}.
\end{equation}
Our aim here is to prove that $\mathcal{I}$ is non empty, closed and open for the topology of $[0,1]$ in order to use an argument of connexity adapted from \cite{blanc2018correctors}. First, $\mathcal{I}$ is obviously non empty according to the results of Section \ref{section2Br}. In order to prove that $\mathcal{I}$ is open and closed, we apply exactly the same method as in the proof of Lemma \ref{lemmexistence} using the continuity result of Lemma \ref{lemme2Br} and we conclude. 
\end{proof}

\begin{remark}
\label{Remark_r_inferieur}
Since the coefficient $\Tilde{a}$ and its associated limit $\Tilde{a}_{\infty}$ belong to $L^{\infty}(\mathbb{R}^d)$, we will show in the next section that if $\Tilde{a}$ is in $\mathcal{B}^r(\mathbb{R}^d)$ for $r<2$, then $\Tilde{a}$ also belongs to $\mathcal{B}^2(\mathbb{R}^d)$ and the result of Lemma \ref{lemmexistenceBr} is sufficient to establish Theorem \ref{theoreme4}. 
\end{remark}

\subsection{Existence of the corrector}
\label{section5Br}

In this section, we finally give a proof of Theorem \ref{theoreme4}. To this end, it is important to note that, for every $p \in \mathbb{R}^d$, corrector equation \eqref{correcteur} is equivalent to : 
\begin{equation}
\label{corrector_modifié_Br}
    - \operatorname{div}(a \nabla w_p) = \operatorname{div}(\Tilde{a}\left(\nabla w_{per,p} + p \right)) \quad \text{in } \mathbb{R}^d.
\end{equation}
The idea is therefore to use the results of the previous section showing that the function $\Tilde{a}\left(\nabla w_{per,p} + p \right)$ belongs to $\left(\mathcal{B}^r(\mathbb{R}^d)\right)^{d}$.

\begin{proof}[Proof of theorem \ref{theoreme4}]

For every $p \in \mathbb{R}^d$, our aim here is to find a function $\Tilde{w}_p$ where $\nabla \Tilde{w}_p \in \left(\mathcal{B}^r(\mathbb{R}^d)\right)^d$ and such that $w_{per,p} + \Tilde{w}_p$ is a solution to \eqref{correcteur}. First of all, since $w_{per,p}$ is the periodic corrector solution to \eqref{problemeperiodique}, we have remarked below that this problem is equivalent to finding a function $\Tilde{w}_p$ solution to \eqref{corrector_modifié_Br}.
In addition, under assumption \eqref{hypothèses2} of regularity, it is well known that the function $\nabla w_{per,p}$ belongs to $\left(\mathcal{C}^{0,\alpha}(\mathbb{R}^d)\right)^d$ and we can directly show that the periodicity of $\nabla w_{per,p}$ implies that the function $f = \Tilde{a} \left( \nabla w_{per,p} + p\right)$ belongs to $\left(\mathcal{B}^r(\mathbb{R}^d)\cap\mathcal{C}^{0,\alpha}(\mathbb{R}^d)\right)^d$, as soon as $\Tilde{a}$ belongs to $\left(\mathcal{B}^r(\mathbb{R}^d)\right)^{d\times d}$ and satisfies \eqref{hypothèses2}. In order to use the results established in Section \ref{section4Br}, two different cases can be distinguished depending on the value of $r$.

\textit{First case : $r\geq2$}

Since $f$ belongs to $\left(\mathcal{B}^r(\mathbb{R}^d) \cap \mathcal{C}^{0,\alpha}(\mathbb{R}^d)\right)^d$, the existence of a unique (up to an additive constant) solution $\Tilde{w}_p$ to \eqref{corrector_modifié_Br} such that $\nabla \Tilde{w}_p \in \left(\mathcal{B}^r(\mathbb{R}^d)\cap\mathcal{C}^{0,\alpha}(\mathbb{R}^d)\right)^d$ is a direct consequence of Lemmas \ref{regumarityLemmaBr} and \ref{lemmexistenceBr}. 

\textit{Second case : $r<2$}

In this case we remark that $\Tilde{a}$ actually belongs to $\left(\mathcal{B}^2(\mathbb{R}^d)\right)^{d\times d}$. Indeed, assumption \eqref{hypothèses2} ensures that both $\Tilde{a}$ and $\Tilde{a}_{\infty}$, its associated limit function in $L^r(\mathbb{R}^d)$, are bounded in $L^{\infty}(\mathbb{R}^d)$. Therefore, since $r<2$, we have $L^r\cap L^{\infty} \subset L^2\cap L^{\infty}$ and we obtain that $\Tilde{a}_{\infty}$ is in $\left(L^2(\mathbb{R}^d)\right)^{d \times d}$. Moreover, for every $q \in \mathcal{P}$, we have 
$$\lim_{|q| \rightarrow \infty}\|\Tilde{a} - \tau_{-q} \Tilde{a}_{\infty}\|_{L^2(V_q)}^2 \leq M \lim_{|q| \rightarrow \infty} \|\Tilde{a} - \tau_{-q} \Tilde{a}_{\infty}\|_{L^r(V_q)}^r = 0,$$
where we have denoted by $M = \left(\|\Tilde{a}\|_{L^{\infty}(\mathbb{R}^d)}+ \|\Tilde{a}_{\infty}\|_{L^{\infty}(\mathbb{R}^d)}\right)^{2-r}$.
We deduce that $\Tilde{a}$ belongs to $\left(\mathcal{B}^2(\mathbb{R}^d)\right)^{d \times d}$ and finally, that $f \in \left(\mathcal{B}^2(\mathbb{R}^d)\right)^d$. The existence result of the case $r=2$ implies there exists $\Tilde{w}_p$, solution to \eqref{corrector_modifié_Br}, such that $\nabla \Tilde{w}_p \in \left(\mathcal{B}^2(\mathbb{R}^d) \cap \mathcal{C}^{0, \alpha}(\mathbb{R}^d)\right)^d$. We want to show that $\nabla \Tilde{w}_p$ is actually in $\left(\mathcal{B}^r(\mathbb{R}^d)\right)^d$. First of all, for every $q \in \mathcal{P}$, considering a $2
^q$-translation of equation \eqref{corrector_modifié_Br} and using the periodicity of $a_{per}$, we obtain : 
$$ - \operatorname{div}((a_{per} + \tau_q \Tilde{a}) \tau_q (\nabla \Tilde{w}_p)) = \operatorname{div}(\tau_q f).$$
Letting $|q|$ go to the infinity in the above equation, we obtain : 
$$ - \operatorname{div}((a_{per} + \Tilde{a}_{\infty})\nabla \Tilde{w}_{p, \infty}) = \operatorname{div}(f_{\infty}).$$
Since both $\Tilde{a}_{\infty}$ and $f_{\infty}$ belong to $L^r(\mathbb{R}^d) \cap L^2(\mathbb{R}^d)$, a result of uniqueness established in \cite[Proposition 2.1]{blanc2018correctors} ensures that $\nabla \Tilde{w}_{p, \infty} \in \left(L^r(\mathbb{R}^d)\right)^d$. In addition, for every $q \in \mathcal{P}$, we have : 
\begin{align*}
    \Tilde{a} \nabla \Tilde{w}_p - \tau_{-q}\left( \Tilde{a}_{\infty} \nabla \Tilde{w}_{p,\infty}\right) & = \left(\Tilde{a} - \tau_{-q} \Tilde{a}_{\infty}\right) \nabla \Tilde{w}_{p} + \tau_{-q}\Tilde{a}_{\infty}\left(\nabla \Tilde{w}_p - \tau_{-q} \nabla \Tilde{w}_{p,\infty}\right) .\\
    & = I_q^1 + I_q^2.
\end{align*}
Since $\nabla \Tilde{w}_p$ is bounded in $L^{\infty}(\mathbb{R}^d)$ and $\Tilde{a} \in \left(\mathcal{B}^r(\mathbb{R}^d)\right)^{d\times d}$ we directly obtain : 
$$\lim_{|q| \rightarrow \infty} \| I_q^1\|_{L^r(V_q)} = 0.$$ 
Next, we consider $s>1$ such that $\displaystyle \frac{1}{r} = \frac{1}{2} + \frac{1}{s}$. Such a real $s$ always exists since $r<2$ and, in particular, we have $s>r$. Using the Holder inequality and the fact that $\Tilde{a}\in \left(L^r(\mathbb{R}^d) \cap L^{\infty}(\mathbb{R}^d)\right)^{d \times d}$, we obtain : 
\begin{align*}
    \lim_{|q| \rightarrow \infty} \| I_q^2\|_{L^r(V_q)} &\leq  \| \Tilde{a}_{\infty}\|_{L^s(V_q)} \lim_{|q| \rightarrow \infty} \| \nabla \Tilde{w}_p - \tau_{-q} \nabla \Tilde{w}_{p,\infty}\|_{L^2(V_q)}\\
    & \leq  \| \Tilde{a}_{\infty}\|_{L^s(\mathbb{R}^d)}\lim_{|q| \rightarrow \infty} \| \nabla \Tilde{w}_p - \tau_{-q} \nabla \Tilde{w}_{p,\infty}\|_{L^2(V_q)}\\
    & \leq C  \| \Tilde{a}_{\infty}\|_{L^r(\mathbb{R}^d)}\lim_{|q| \rightarrow \infty} \| \nabla \Tilde{w}_p - \tau_{-q} \nabla \Tilde{w}_{p,\infty}\|_{L^2(V_q)} = 0.
\end{align*}
In the last inequality, the constant $C$ only depends on $\|\Tilde{a}_{\infty}\|_{L^{\infty}(\mathbb{R}^d)}$. We finally obtain that 
$$  \lim_{|q| \rightarrow \infty} \| \Tilde{a} \nabla \Tilde{w}_p - \tau_{-q}\left( \Tilde{a}_{\infty} \nabla \Tilde{w}_{p,\infty}\right)\|_{L^r(V_q)}=0.$$
In addition, $\Tilde{a}_{\infty}$ is bounded and we decuce that $\Tilde{a}_{\infty} \nabla \Tilde{w}_{p,\infty} \in \left(L^r(\mathbb{R}^d)\right)^d$. We therefore deduce that $\Tilde{a} \nabla \Tilde{w}_p$ belongs to $\left(\mathcal{B}^r(\mathbb{R}^d)\right)^d$ with $\left(\Tilde{a} \nabla \Tilde{w}_p\right)_{\infty} = \Tilde{a}_{\infty} \nabla \Tilde{w}_{p,\infty}$. To conclude, we finally remark that equation \eqref{corrector_modifié_Br} is equivalent to : 
$$ - \operatorname{div}(a_{per} \nabla \Tilde{w}_p) = \operatorname{div}(\Tilde{a} \nabla \Tilde{w}_p + f).$$
The existence result of Lemma \ref{lemmeexistenceperiodiqueBP} and the uniqueness result of Lemma \ref{uniquess_periodic_Br} in the case $a = a_{per}$ therefore ensure that $\nabla \Tilde{w}_p $ belongs to $\left(\mathcal{B}^r(\mathbb{R}^d)\right)^d$ and we can conclude the proof. 
\end{proof}

\subsection{Homogenization results and convergence rates}
\label{section6Br}

In this section we generalize the method employed in Section \ref{Section5} in the case $r=2$ in order to establish the homogenization of equation \eqref{correcteur} and, in particular, the results of Theorem~\ref{theoreme5}. To this end, we use the corrector $w = \left(w_{e_i}\right)_{i \in \{1,...,d\}}$ given by Theorem \ref{theoreme4}. 

A first crucial step is to determine the limit of the sequence $\left(u^{\varepsilon}\right)_{\varepsilon>0}$ solutions to \eqref{equationepsilon}. First of all, Proposition \ref{moyenneBr} ensures that both the coefficient $\Tilde{a}$ and the gradient of the corrector $\Tilde{w_i}= \Tilde{w}_{e_i}$ (for every $i \in \{1,...,d\}$) have average value zero in the sense \eqref{zeroaverageBr} since they belong to $\mathcal{B}
^r(\mathbb{R}^d)$. In particular, for every $1<r<\infty$, the perturbations of $\mathcal{B}^r(\mathbb{R}^d)$ do not impact the periodic background on average and we can easily generalize the results of Proposition \ref{proplimithomogenization} to the case $r\neq2$. Therefore, we obtain that the the limit $u^*$ (weak-$H^1(\Omega)$ and strong-$L^2(\Omega)$) of the sequence $u^{\varepsilon}$  is actually a solution to \eqref{homog} where $a
^*$ is the same homogenized coefficient as in the periodic case. 

In the sequel, we study the behavior of the approximated sequence of solutions $u^{\varepsilon,1} = u^* + \varepsilon \sum_{i=1}^d \partial_{i}u^* w_{i}(./ \varepsilon)$. In particular, we established here the convergence rates stated in Theorem \ref{theoreme5} describing the convergence to zero of the sequence $R^{\varepsilon} = u^{\varepsilon} - u^{\varepsilon,1}$. We follow here the method employed in Section \ref{Section5} and we consider the divergence-free matrix defined by $M_k^i = a_{i,k}^* - \sum_{j=1}^d a_{i,j}(\delta_{j,k} + \partial_j w_k) \in L
^2_{per} + \mathcal{B}^r(\mathbb{R}^d)$. In our case, the existence of a potential in the form $B = B_{per} + \Tilde{B} \in L^2_{per}(\mathbb{R}^d) + \mathcal{B}^r(\mathbb{R}^d)$ solution to \eqref{equationdefB} is given by the following Lemma. 

\begin{lemme}
Let $\Tilde{M} = \big( \Tilde{M}^i_k\big)_{1 \leq i,k\leq d} \in \mathcal{B}^r(\mathbb{R}^d)^{d \times d}$ such that $\operatorname{div}(\Tilde{M}_k) = 0$, for every $k \in \{1, ...,d\}$. Then there exists a potential $\Tilde{B}^{i,j}_k \in W^{1,r}_{loc}(\mathbb{R}^d)$ such that $\nabla \Tilde{B} \in \mathcal{B}^r(\mathbb{R}^d)$ and for all $i,j,k \in \{1,...,d\}$ :
\begin{align}
\label{potentielperturbeBr}
- \Delta &\Tilde{B}^{i,j}_k = \partial_j\Tilde{M}_k^i - \partial_i \Tilde{M}_k^j,\\
\label{antisymetrieBr}
& \Tilde{B}_k^{i,j} = - \Tilde{B}_k^{j,i}, \\
\label{divergencepotBr}
& \sum_{i=1}^{d} \partial_i \Tilde{B}_k^{i,j} = \Tilde{M}_k^j.
\end{align}
In addition, there exists a constant $C_1>0$ which only depends  of the ambient dimension $d$ and such that : 
\begin{equation}
\label{estimpotentielBr}
\|\nabla \Tilde{B}\|_{\mathcal{B}^r(\mathbb{R}^d)} \leq C_1 \|\Tilde{M}\|_{\mathcal{B}^r(\mathbb{R}^d)}.   
\end{equation}
\end{lemme}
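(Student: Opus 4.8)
The plan is to reduce the statement to the existence results already established for the operator $-\operatorname{div}(a_{per}\nabla\cdot)=\operatorname{div}(f)$ with $a_{per}\equiv 1$ (i.e.\ for $-\Delta$), exactly as was done for the case $r=2$ in Lemma~\ref{lemme_existence_B}. First I would observe that, for fixed $i,j,k$, equation \eqref{potentielperturbeBr} can be rewritten in divergence form
\begin{equation*}
-\Delta \Tilde{B}^{i,j}_k = \operatorname{div}\bigl(\mathcal{M}^{i,j}_k\bigr),
\end{equation*}
where $\mathcal{M}^{i,j}_k$ is the vector field with components $(\mathcal{M}^{i,j}_k)_l = \Tilde{M}^i_k$ if $l=j$, $(\mathcal{M}^{i,j}_k)_l = -\Tilde{M}^j_k$ if $l=i$, and $0$ otherwise. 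Since each $\Tilde{M}^i_k$ belongs to $\mathcal{B}^r(\mathbb{R}^d)$, the field $\mathcal{M}^{i,j}_k$ belongs to $\left(\mathcal{B}^r(\mathbb{R}^d)\right)^d$, so Lemmas~\ref{lemmeexistenceperiodiqueBP}, \ref{existenceperBP2} and \ref{lemme2Br} applied with $a_{per}\equiv 1$ yield a solution $\Tilde{B}^{i,j}_k$ with $\nabla \Tilde{B}^{i,j}_k \in \left(\mathcal{B}^r(\mathbb{R}^d)\right)^d$, together with the a~priori bound \eqref{estimpotentielBr} after summing over $i,j,k$. (When $r<d$ one additionally has the explicit Riesz-potential representation of $\Tilde{B}$ analogous to \eqref{defpotentiel}, which will be convenient for the algebraic identities below.)

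Next I would verify the three algebraic properties. The antisymmetry \eqref{antisymetrieBr} is immediate: swapping $i$ and $j$ changes $\mathcal{M}^{i,j}_k$ into $-\mathcal{M}^{j,i}_k$, hence by the uniqueness result of Lemma~\ref{uniquess_periodic_Br} (in the case $a=a_{per}\equiv1$, which applies since $\nabla\Tilde B\in\mathcal B^r$ and therefore $\sup_p\|\nabla\Tilde B\|_{L^r(V_p)}<\infty$) the corresponding solutions differ only by a constant gradient, which must vanish; alternatively, if one uses the explicit representation, antisymmetry is visible directly from the formula. For the divergence identity \eqref{divergencepotBr}, applying $\sum_i\partial_i$ to \eqref{potentielperturbeBr} and using $\operatorname{div}(\Tilde M_k)=\sum_i\partial_i\Tilde M^i_k=0$ gives
\begin{equation*}
-\Delta\Bigl(\sum_{i=1}^d\partial_i\Tilde B^{i,j}_k\Bigr)
= \sum_{i=1}^d\partial_i\bigl(\partial_j\Tilde M^i_k-\partial_i\Tilde M^j_k\bigr)
= \partial_j\Bigl(\sum_i\partial_i\Tilde M^i_k\Bigr)-\Delta\Tilde M^j_k
= -\Delta\Tilde M^j_k.
\end{equation*}
Thus $v:=\sum_i\partial_i\Tilde B^{i,j}_k-\Tilde M^j_k$ is harmonic; moreover $\nabla v\in\left(\mathcal{B}^r(\mathbb{R}^d)\right)^d$ (since $\nabla\Tilde B$ and $\nabla\Tilde M$ are), so $\sup_p\|\nabla v\|_{L^r(V_p)}<\infty$, and Lemma~\ref{uniquess_periodic_Br} forces $\nabla v=0$; since $v\in\mathcal B^r\subset L^r_{unif}$, a nonzero constant is excluded, so $v=0$, giving \eqref{divergencepotBr}.

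The step I expect to require the most care is making the divergence identity rigorous at the level of distributions rather than just formally: one must be sure that $\sum_i\partial_i\Tilde B^{i,j}_k$ is itself (the gradient-free part of) a well-defined $W^{1,r}_{loc}$ function, that the commutation of $\sum_i\partial_i$ with $-\Delta$ is legitimate, and that the quantity $v$ has enough integrability ($\nabla v\in\mathcal B^r$, $v\in L^r_{unif}$) to invoke the Liouville-type uniqueness of Lemma~\ref{uniquess_periodic_Br}. In the regime $r<d$ this is cleanest through the explicit convolution formula (differentiating under the integral, as in the corollary following Lemma~\ref{lemme_existence_B}); in the regime $r\ge d$ one instead argues by the uniqueness/representation machinery of Section~\ref{section2Br}. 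Once these points are in place, the conclusion $\nabla\Tilde B\in\left(\mathcal{B}^r(\mathbb{R}^d)\right)^d$ with \eqref{estimpotentielBr}, and the identities \eqref{potentielperturbeBr}--\eqref{divergencepotBr}, all follow, and the proof is complete; as in the $r=2$ case, one then sets $B=B_{per}+\Tilde B$ with $B_{per}$ the classical periodic potential for the divergence-free periodic matrix $M_{per}$.
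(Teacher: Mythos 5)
Your overall strategy — rewriting \eqref{potentielperturbeBr} in divergence form $-\Delta\Tilde B^{i,j}_k = \operatorname{div}(\mathcal{M}^{i,j}_k)$ and invoking the periodic-coefficient existence/continuity machinery with $a_{per}\equiv 1$ — is exactly what the paper does: the paper simply refers to Lemma~\ref{lemme_existence_B} (the $r=2$ case) and Lemma~\ref{lemmeexistenceperiodiqueBP}. The existence of $\Tilde B$ with $\nabla\Tilde B\in\left(\mathcal{B}^r(\mathbb{R}^d)\right)^d$ and the a priori bound \eqref{estimpotentielBr} are therefore correctly deduced, and your caution about the unavailability of the explicit Riesz representation for $r\geq d$ is well placed (and is a point the paper itself glosses over by merely referencing the $r=2$ argument).

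There is, however, a genuine gap in your justification of the divergence identity \eqref{divergencepotBr}. You set $v:=\sum_i\partial_i\Tilde B^{i,j}_k - \Tilde M^j_k$, assert that $\nabla v\in\left(\mathcal{B}^r(\mathbb{R}^d)\right)^d$ ``since $\nabla\Tilde B$ and $\nabla\Tilde M$ are,'' and then apply Lemma~\ref{uniquess_periodic_Br}. This does not hold: $\nabla v$ involves \emph{second} derivatives of $\Tilde B$ and \emph{first} derivatives of $\Tilde M$, and neither $D^2\Tilde B$ nor $\nabla\Tilde M$ is controlled by the hypotheses (the statement only assumes $\Tilde M\in\mathcal{B}^r$, and the existence lemmas only yield $\nabla\Tilde B\in\mathcal{B}^r$). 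So the hypothesis $\sup_p\|\nabla v\|_{L^r(V_p)}<\infty$ of Lemma~\ref{uniquess_periodic_Br} is not available, and the appeal to that lemma is not justified. The correct route is more elementary: $v$ is a (distributional, hence smooth) harmonic function, and $v\in\mathcal{B}^r(\mathbb{R}^d)\subset L^r_{unif}(\mathbb{R}^d)$ (each term in $v$ lies in $\mathcal{B}^r$, which is a vector space). By the mean value property on unit balls, $|v(x)|\leq |B_1|^{-1/r}\|v\|_{L^r_{unif}}$ for every $x$, so $v$ is bounded and hence constant by Liouville; and a nonzero constant cannot belong to $\mathcal{B}^r$ because $\|c\|_{L^r(V_p)}=|c|\,|V_p|^{1/r}\to\infty$ while $\|\tau_{-p}v_\infty\|_{L^r(V_p)}$ stays bounded. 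Thus $v=0$. The same caveat applies mildly to your antisymmetry argument: uniqueness in Lemma~\ref{uniquess_periodic_Br} only gives $\nabla\Tilde B^{i,j}_k=-\nabla\Tilde B^{j,i}_k$, so the additive constants must be normalized consistently (either via the explicit formula when $r<d$, or simply by defining $\Tilde B^{j,i}_k:=-\Tilde B^{i,j}_k$ for $i<j$ and $\Tilde B^{i,i}_k:=0$).
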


This result is actually a consequence of Lemma \ref{lemmeexistenceperiodiqueBP}. For the details of the proof, we refer the reader to associated Lemma \ref{lemme_existence_B} in the case $r=2$. Now that the existence of $B$ has been deal with, we need to study the behavior of the sequences $\varepsilon \Tilde{w}(./ \varepsilon)$  and $\varepsilon \Tilde{B}(./ \varepsilon)$ in $L^{\infty}(\mathbb{R}
^d)$ when $\varepsilon \rightarrow 0$. We shall see that the case $r>d$ is a consequence of estimate \eqref{souslineariteBr} established in Proposition \ref{propsouslineariteBr}. The case $r<d$ is studied in the next lemma.

\begin{lemme}
\label{lemmeborneBr}
Assume $r<d$. Then, the corrector $w=\left(w_i\right)_{i \in \{1,...,d\}}$ defined by Theorem \ref{theoreme4} and the potential $B$ solution to \eqref{equationdefB} are in $L^{\infty}(\mathbb{R}^d)$. 
\end{lemme}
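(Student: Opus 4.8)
The plan is to mimic exactly the structure of the proof of Lemma~\ref{lemmeborne} (the case $r=2$, $d>2$), replacing the Cauchy--Schwarz step by a H\"older step tuned to the exponent $r<d$. As in that proof, $w_{per}$ and $B_{per}$ are periodic hence bounded, so it suffices to show that $\Tilde{w}_i$ (and similarly $\Tilde{B}^{i,j}_k$) is bounded. First I would record, using Corollary~\ref{corol_regularity_corrector} adapted to $\mathcal{B}^r$ (i.e. Lemma~\ref{regumarityLemmaBr} when $r\geq 2$, and the reduction to $\mathcal{B}^2$ carried out in the proof of Theorem~\ref{theoreme4} when $r<2$; but here $r<d$ and the interesting regime is genuinely $\mathcal{B}^r$), that $\nabla\Tilde{w}_i\in\mathcal{C}^{0,\alpha}(\mathbb{R}^d)^d$, so that by Proposition~\ref{stabilityBr} and Assumption~\eqref{hypothèses2} the source term $f=\Tilde{a}\bigl(e_i+\nabla w_{per,i}+\nabla\Tilde{w}_i\bigr)$ lies in $\bigl(L^\infty(\mathbb{R}^d)\cap\mathcal{B}^r(\mathbb{R}^d)\bigr)^d$. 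Since $r<d$, Lemma~\ref{lemmeexistenceperiodiqueBP} gives the explicit representation
\begin{equation}
\Tilde{w}_i(x)=\int_{\mathbb{R}^d}\nabla_y G_{per}(x,y)\,f(y)\,dy,
\end{equation}
and the goal is to bound this integral uniformly in $x$.

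Next I would fix $x\in\mathbb{R}^d$, let $p_x\in\mathcal{P}$ be the index with $x\in V_{p_x}$, set $W_{p_x}=W_{2^{p_x}}$ as in Proposition~\ref{openconstruction}, and split the integral into $I_1(x)$ over $B_1(x)$, $I_2(x)$ over $W_{p_x}\setminus B_1(x)$, and $I_3(x)$ over $\mathbb{R}^d\setminus W_{p_x}$, exactly as in Lemma~\ref{lemmeborne}. For $I_1$ one uses $|f|\in L^\infty$ and estimate~\eqref{estimgreen1}: $\int_{B_1(x)}|x-y|^{-(d-1)}dy$ is a dimensional constant, so $|I_1(x)|\leq C\|f\|_{L^\infty}$. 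For $I_2$ one applies H\"older with exponents $r'$ and $r$ (instead of Cauchy--Schwarz): $|I_2(x)|\leq\bigl(\int_{W_{p_x}\setminus B_1(x)}|x-y|^{-r'(d-1)}dy\bigr)^{1/r'}\,\bigl(\int_{W_{p_x}}|f|^r\bigr)^{1/r}$; the second factor is $\leq C\sup_q\|f\|_{L^r(V_q)}$ by property~\ref{iv)Br} of $W_{p_x}$, and the first factor, using $W_{p_x}\subset B_{C_12^{|p_x|}}(x)$, equals a radial integral $\int_{B_{C_12^{|p_x|}}\setminus B_1}|y|^{-r'(d-1)}dy$ which is bounded by a constant \emph{independent of $p_x$} precisely because $r'(d-1)>d$, equivalently $r<d$ (this is the point where $r<d$ is used, and where $r=d$ is critical). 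For $I_3$ one splits over the cells $V_q$, $q\in\mathcal{P}$, uses H\"older and estimate~\eqref{estimgreen1}, and then reproduces verbatim the summation carried out in Lemma~\ref{lemmeexistenceperiodiqueBP} (see the treatment of $\nabla I_{2,p}$ and of $I_3$ in Lemma~\ref{lemmeborne}): one gets $\sum_{q\in\mathcal{P}}\bigl(\int_{V_q\setminus W_{p_x}}|x-y|^{-r'(d-1)}dy\bigr)^{1/r'}\leq C\sum_{q\in\mathcal{P}}2^{-|q|(d/r-1)}$, which converges since $d/r>1$, again using $r<d$ together with Propositions~\ref{volumeVx}, \ref{norm2p}, \ref{numberpointAn} and Corollary~\ref{corollogboundVP}.

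Collecting the three bounds yields $|\Tilde{w}_i(x)|\leq C\bigl(\|f\|_{L^\infty(\mathbb{R}^d)}+\|f\|_{\mathcal{B}^r(\mathbb{R}^d)}\bigr)$ with $C$ independent of $x$, hence $\Tilde{w}_i\in L^\infty(\mathbb{R}^d)$, and therefore $w_i=w_{per,i}+\Tilde{w}_i\in L^\infty(\mathbb{R}^d)$. The same argument applied to the source $\mathcal{M}^{i,j}_k\in\mathcal{B}^r(\mathbb{R}^d)^d$ of equation~\eqref{potentielperturbeBr} (with $a_{per}\equiv 1$, so $G_{per}$ is the Newtonian kernel and the analogues of \eqref{estimgreen1}--\eqref{estimgreen2} hold) gives $\Tilde{B}^{i,j}_k\in L^\infty(\mathbb{R}^d)$, whence $B=B_{per}+\Tilde{B}\in L^\infty(\mathbb{R}^d)$. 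The main obstacle is purely bookkeeping: one must make sure the radial integral in $I_2$ and the geometric series in $I_3$ really are uniform in $p_x$, which forces $r'(d-1)>d$, i.e. $r<d$; this is exactly why the statement excludes $r\ge d$ and why, as noted in the surrounding remarks, the case $r=d$ is critical and $r>d$ is handled instead through the sub-linearity estimate~\eqref{souslineariteBr} of Proposition~\ref{propsouslineariteBr}.
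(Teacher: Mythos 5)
Your proof follows the same approach as the paper's: you reduce to the periodic representation formula of Lemma \ref{lemmeexistenceperiodiqueBP} for the source $f=\Tilde{a}\bigl(e_i+\nabla w_{per,i}+\nabla\Tilde{w}_i\bigr)\in L^\infty\cap\mathcal{B}^r$, split the Green-function integral into $I_1,I_2,I_3$ over $B_1(x)$, $W_{p_x}\setminus B_1(x)$ and $\mathbb{R}^d\setminus W_{p_x}$, and replace Cauchy--Schwarz by H\"older with exponents $(r,r')$, with the condition $r'(d-1)>d\Leftrightarrow r<d$ controlling $I_2$ and the summability of $\sum_q 2^{-|q|(d/r-1)}$ controlling $I_3$. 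This matches the paper's argument step for step, including the handling of $\Tilde{B}$ via the source $\mathcal{M}^{i,j}_k$ with $a_{per}\equiv 1$.
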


\begin{proof}

For all $i \in \{1,...,d\}$, it is important to remark that $\Tilde{w}_i$ is also a solution to : 
$$-\operatorname{div} \left(a_{per} \nabla \Tilde{w}_i \right) = \operatorname{div}\left(\Tilde{a}\left(e_i + \nabla w_{per,i} + \nabla \Tilde{w}_i \right)\right).$$
We know the gradient of the corrector defined in Theorem \ref{theoreme4} is in $\mathcal{C}^{0,\alpha}(\mathbb{R}^d)$. Next, using Property \ref{stabilityBr}, we deduce that $f = \Tilde{a}\left(e_i + \nabla w_{per,i} + \nabla \Tilde{w}_i \right) \in L^{\infty}(\mathbb{R}^d) \cap \mathcal{B}^r(\mathbb{R}^d)$. Lemmas \ref{lemmeexistenceperiodiqueBP} and \ref{uniquess_periodic_Br} therefore ensure that for every $i \in \{1,...,d\}$, $\Tilde{w}_i$ is defined by : 
\begin{equation}
\Tilde{w}_i(x) = \int_{\mathbb{R}^d} \nabla_y G_{per}(x,y)f(y) dy.
\end{equation}
We want to prove that $\Tilde{w}_i$ actually belongs to $L^{\infty}(\mathbb{R}^d)$. 
In order to prove that the integral is bounded independently of $x$, we follow step by step the method used in the proof of Lemma \ref{lemmeborne}. We first fix $x\in \mathbb{R}^d$ and denote $p_x$ the unique element of $\mathcal{P}$ such that $x \in \textit{V}_{p_x}$. We define $W_{p_{x}} = W_{2^{p_x}}$ such as in Proposition \ref{openconstruction} and we split the integral in three parts  : 
\begin{align*}
\int_{\mathbb{R}^d} \nabla_y G_{per}(x,y)f(y) dy & = \int_{B_1(x)} \nabla_y G_{per}(x,y)f(y) dy +  \int_{W_{p_x} \setminus B_1(x)} \nabla_y G_{per}(x,y)f(y) dy \\
& +  \int_{ \mathbb{R}^d \setminus W_{p_x}} \nabla_y G_{per}(x,y)f(y) dy  = I_1(x) + I_2(x) + I_3(x).
\end{align*}
\\
Firstly, using estimate \eqref{estimgreen1} for the Green function, we obtain
\begin{align*}
\left| I_1(x) \right| &  \leq C \|f\|_{
L^{\infty}(\mathbb{R}^d)} \int_{B_1(x)} \dfrac{1}{\left|x-y\right|^{d-1}}dy \leq C \|f\|_{
L^{\infty}(\mathbb{R}^d)}.
\end{align*}
Where $C$ denotes a positive constant independent of $x$.

Next, we know there exists $C_1>0$ and $C_2>0$ independent of $x$ such that $W_{p_x} \subset B_{C_12^{p_x}(x)}$ and the number of $q \in \mathcal{P}$ such that $V_q \cap W_{p_x} \neq \emptyset$ is bounded by $C_2$ (as a consequence of Proposition \ref{openconstruction}). We therefore use the Holder inequality and we obtain :
\begin{align*}
|I_2(x)| & \leq \int_{W_{p_x}\setminus{B_1(x)}} \dfrac{1}{|x-y|^{d-1}} |f(y)|dy \\
& \leq C_2 \left(\int_{ B_{C_1 2^{p_x}(x)} \setminus{B_1(x)}} \dfrac{1}{|x-y|^{r'(d-1)}} dy\right)^{1/r'} \sup_{p \in \mathcal{P}} \|f\|_{L^r(V_q)},
\end{align*}
where we have denoted by $\displaystyle r' = \frac{r}{r-1}$.

In addition, since $r<d$, we have $\displaystyle (r'-1)(d-1) > 1$ and we deduce that : 
$$\int_{ B_{C_1 2^{p_x}(x)} \setminus{B_1(x)}} \frac{1}{\left|x-y\right|^{r'(d-1)}} dy = \int_{ B_{C_1 2^{p_x}(0)} \setminus{B_1(0)}} \dfrac{1}{\left|y\right|^{r'(d-1)}} dy \leq C\left(1 - \dfrac{1}{2^{|p_x|((r'-1)(d-1)-1)}}\right).$$
We finally obtain :
$$I_2(x) \leq C  \sup_{p \in \mathcal{P}} \|f\|_{L^r(V_q)} \left(1 - \dfrac{1}{2^{|p_x|((r'-1)(d-1)-1)}}\right)^{1/r'} \leq C  \sup_{p \in \mathcal{P}} \|f\|_{L^r(V_q)}.$$

Finally, to bound $I_3(x)$ we split the integral on each cell $V_q$ for $q \in \mathcal{P}$. Using the Holder inequality, we obtain : 
\begin{align*}
\left|I_3(x)\right| & \leq  \sum_{q \in \mathcal{P}} \int_{ V_q \setminus{W_{p_x}}} \left|\nabla_y G_{per}(x,y)f(y) \right| dy  \\
 & \leq \|f\|_{\mathcal{B}^r(\mathbb{R}^d)} \sum_{q \in \mathcal{P} } \left( \int_{V_q \setminus{W_{p_x}}} \left|\nabla_y G_{per}(x,y)\right| ^{r'} dy \right)^{1/r'}.
\end{align*}

Using again estimate \eqref{estimgreen1}, we have : 
\begin{align*}
\sum_{q \in \mathcal{P}} \left( \int_{V_q \setminus{W_{p_x}}} \left| \nabla_y G_{per}(x,y) \right|^{r'} dy \right)^{\frac{1}{r'}} & \leq C \sum_{q \in \mathcal{P}} \left( \int_{V_q \setminus{W_{p_x}}} \dfrac{1}{\left|x-y\right|^{r'(d-1)}} dy \right)^{1/r'}.
\end{align*}
Again, for every $q \in \mathcal{P_{C_0}}$ we have $|V_q| \leq C 2^{|q|}$, and the properties of $W_{p_x}$ (see proposition \ref{openconstruction} for details) ensure that $|x-y| \geq C 2^{|q|}$. We deduce that 
\begin{align*}
 \left|I_3(x)\right| & \leq C \|f\|_{\mathcal{B}^r(\mathbb{R}^d)} \sum_{q \in \mathcal{P}}  \frac{1}{2^{|q|\left(\frac{(r'-1)d}{r'}-1\right)}} < \infty. 
\end{align*}

Finally, we have bounded $\Tilde{w}_i(x)$ independently of $x$ and we deduce that $\Tilde{w}_i \in L^{\infty}(\mathbb{R}^d)$. With the same method we obtain the same result for $B = B_{per} + \Tilde{B}$ which allows us to conclude. 
\end{proof}

We next remind that for every $\varepsilon>0$, the function $R^{\varepsilon}$ is a solution to 
\begin{equation*}
-\operatorname{div}\left(a\left(\dfrac{x}{\varepsilon}\right)\nabla R^{\varepsilon}\right) = \operatorname{div}(H^{\varepsilon}) \quad \text{in } \Omega,
\end{equation*}
where $H^{\varepsilon}$ is defined by \eqref{defHeps}. We are now able to give a complete proof of Theorem \ref{theoreme5} using the results previously established in this study. 

\begin{proof}[Proof of Theorem \ref{theoreme5}]
Here we can exactly repeat the different steps of the proof of Theorem \ref{theoreme3} in the case $r=2$ and we obtain the following estimates : 
\begin{equation}
\label{estimH_Br}
     \|H^{\varepsilon}\|_{L^2(\Omega)} \leq 
C_1 \left(\| \varepsilon w(./\varepsilon) \|_{L^{\infty}(\Omega)} + \| \varepsilon B(./\varepsilon) \|_{L^{\infty}(\Omega)} \right) \| f\|_{L^2(\Omega)} ,
\end{equation}
\begin{equation}
\label{estimationRBr}
\| R^{\varepsilon}\|_{L^2(\Omega)} \leq C_2 \left( \left(\| \varepsilon w(./\varepsilon) \|_{L^{\infty}(\Omega)} + \| \varepsilon B(./\varepsilon) \|_{L^{\infty}(\Omega)} \right) \|f\|_{L^2(\Omega)}   + \|H^{\epsilon}\|_{L^2(\Omega)}\right),
\end{equation}
and for every $\Omega_1 \subset \subset \Omega$ : 
\begin{equation}
\label{estimationnablaRBr}
\|\nabla R^{\varepsilon} \|_{L^2(\Omega_1)} \leq C_3 \left( \| H^{\varepsilon} \|_{L^2(\Omega)} + \| R^{\varepsilon} \|_{L^2(\Omega)} \right),
\end{equation}
where $C_1>0$, $C_2>0$ and $C_3>0$ are independent of $\varepsilon$. We note here that we want to bound $\varepsilon w(./\varepsilon)$ and $\varepsilon B(./\varepsilon)$ in $L^{\infty}(\mathbb{R}^d)$ in order to establish estimates \eqref{estimate1Bp} and \eqref{estimate2Bp}. First, it is well known that $w_{per}$ and $B_{per}$ are in $L^{\infty}$. Secondly, since both $\nabla \Tilde{w}$ and $\nabla \Tilde{B}$ belong to $\left(\mathcal{B}^r(\mathbb{R}^d)\cap L^{\infty}(\mathbb{R}
^d)\right)^d$, Proposition \ref{propsouslineariteBr} (for the case $r>d$) and Lemma \ref{lemmeborneBr} (for the case $r<d$) ensure the existence of a constant $C>0$ independent of $\varepsilon$ such that : 
\begin{align}
\label{estimLinfiniwBr}
    \|\varepsilon \Tilde{w}(./\varepsilon)\|_{L^{\infty}(\mathbb{R}^d)} & \leq C \left(\log |\varepsilon| \right)^{\mu_r} \varepsilon^{\nu_r}, \\
    \label{estimLinifiniBBr}
    \|\varepsilon \Tilde{B}(./\varepsilon)\|_{L^{\infty}(\mathbb{R}^d)} & \leq C \left(\log |\varepsilon| \right)^{\mu_r} \varepsilon^{\nu_r},
\end{align}
where $\nu_r$ and $\mu_r$ are defined by \eqref{def_nu_r} and \eqref{def_mu_r}. To conclude, we finally use \eqref{estimH_Br}, \eqref{estimationRBr}, \eqref{estimationnablaRBr}, \eqref{estimLinfiniwBr} and \eqref{estimLinifiniBBr} and we obtain : 
\begin{equation*}
\|R^{\varepsilon}\|_{L^2(\Omega)} \leq C \left(\log |\varepsilon| \right)^{\mu_r} \varepsilon^{\nu_r} \|f\|_{L^2(\Omega)},
\end{equation*}
and
\begin{equation*}
\|\nabla R^{\varepsilon}\|_{L^2(\Omega_1)} \leq \Tilde{C} \left(\log |\varepsilon| \right)^{\mu_r} \varepsilon^{\nu_r} \|f\|_{L^2(\Omega)},
\end{equation*}
where $C$ and $\Tilde{C}$ are independent of $\varepsilon$. We have proved Theorem \ref{theoreme5}.

\end{proof}
\end{spacing}

\appendix

\end{document}